\documentclass[english,ruled]{article}
\usepackage[T1]{fontenc}
\usepackage[latin9]{inputenc}
\usepackage{geometry}
\geometry{verbose,tmargin=2.5cm,bmargin=2.5cm,lmargin=2.5cm,rmargin=2.5cm}
\usepackage{algorithm, algpseudocode}
\algnewcommand{\LeftComment}[1]{\Statex \(\triangleright\) \texttt{#1}}
\usepackage{amsmath}
\usepackage{amsthm}
\usepackage{amssymb}
\usepackage{enumitem}

\usepackage[draft]{changes}
\renewcommand{\added}{}

\usepackage{subcaption}
\usepackage{mathtools}
\usepackage{mathabx}
\usepackage{float}
\usepackage{caption}
\usepackage{pifont}
\usepackage{babel}
\usepackage{hyperref}
\hypersetup{
	colorlinks=true,
	linkcolor=red,
	citecolor = blue
}
\usepackage{array}
\usepackage{multirow}

\newcommand\myrel[2]{\mathrel{\overset{\makebox[0pt]{\mbox{\tiny\sffamily #1}}}{#2}}}
\newcommand\mathfontsize[2]{
	\mbox{\fontsize{#1}{1}\selectfont\(#2\)} 
}

\usepackage{titlesec}
\titleformat{\section}%
{\normalfont\bfseries}{\thesection}{1em}{}
\titleformat{\subsection}[runin]
{\normalfont\bfseries}{\thesubsection}{1em}{}

\providecommand{\tabularnewline}{\\}

\makeatletter

\theoremstyle{plain}
\newtheorem*{assumption*}{\protect\assumptionname}

\theoremstyle{remark}
\newtheorem*{remark*}{\protect\remarkname}

\theoremstyle{plain}
\newtheorem{remark}{\protect\remarkname}

\theoremstyle{plain}
\newtheorem{theorem}{\protect\theoremname}%

\theoremstyle{definition}
\newtheorem{definition}{\protect\definitionname}

\theoremstyle{plain}
\newtheorem{assumption}{\protect\assumptionname}

\theoremstyle{plain}
\newtheorem{proposition}{\protect\propositionname}

\theoremstyle{plain}
\newtheorem{lemma}{\protect\lemmaname}

\theoremstyle{plain}
\newtheorem{corollary}{\protect\corollaryname}

\theoremstyle{plain}
\newtheorem{eg}{Example}

\usepackage[short]{optidef}
\usepackage{hyperref}

\makeatother

\usepackage{babel}
\providecommand{\assumptionname}{Assumption}
\providecommand{\corollaryname}{Corollary}
\providecommand{\definitionname}{Definition}
\providecommand{\lemmaname}{Lemma}
\providecommand{\propositionname}{Proposition}
\providecommand{\remarkname}{Remark}
\providecommand{\theoremname}{Theorem}

\global\long\def\tsum{{\textstyle {\sum}}}%

\numberwithin{equation}{section}

\newcommand{\SFO}{\textit{SFO}}

\newcommand{\GD}{\texttt{LCPG}}

\newcommand{\SGD}{\texttt{LCSPG}}

\newcommand{\SQP}{\texttt{SQP}}
\newcommand{\GSQP}{\texttt{GSQP}}
\newcommand{\MBA}{\texttt{MBA}}

\newcommand{\conex}{\texttt{ConEx}}

\newcommand{\SVRG}{\texttt{LCSVRG}}
\global\long\def\trans{{\mathrm T}}

\renewcommand\frac[2]{\tfrac{#1}{#2}}

\begin{document}

\global\long\def\vertiii#1{\left\vert \kern-0.25ex  \left\vert \kern-0.25ex  \left\vert #1\right\vert \kern-0.25ex  \right\vert \kern-0.25ex  \right\vert }%
\global\long\def\matr#1{\bm{#1}}%
\global\long\def\til#1{\tilde{#1}}%
\global\long\def\wtil#1{\widetilde{#1}}%
\global\long\def\wh#1{\widehat{#1}}%
\global\long\def\mcal#1{\mathcal{#1}}%
\global\long\def\mbb#1{\mathbb{#1}}%
\global\long\def\mtt#1{\mathtt{#1}}%
\global\long\def\ttt#1{\texttt{#1}}%
\global\long\def\inner#1#2{\langle#1,#2\rangle}%
\global\long\def\binner#1#2{\big\langle#1,#2\big\rangle}%
\global\long\def\Binner#1#2{\Big\langle#1,#2\Big\rangle}%
\global\long\def\br#1{\left(#1\right)}%
\global\long\def\bignorm#1{\bigl\Vert#1\bigr\Vert}%
\global\long\def\Bignorm#1{\Bigl\Vert#1\Bigr\Vert}%
\global\long\def\setnorm#1{\Vert#1\Vert_{-}}%
\global\long\def\rmn#1#2{\mathbb{R}^{#1\times#2}}%
\global\long\def\deri#1#2{\frac{d#1}{d#2}}%
\global\long\def\pderi#1#2{\frac{\partial#1}{\partial#2}}%
\global\long\def\onebf{\mathbf{1}}%
\global\long\def\zerobf{\mathbf{0}}%

\global\long\def\norm#1{\lVert#1\rVert}%
\global\long\def\bnorm#1{\big\Vert#1\big\Vert}%
\global\long\def\Bnorm#1{\Big\Vert#1\Big\Vert}%

\global\long\def\brbra#1{\big(#1\big)}%
\global\long\def\Brbra#1{\Big(#1\Big)}%
\global\long\def\rbra#1{(#1)}%
\global\long\def\sbra#1{[#1]}%
\global\long\def\bsbra#1{\big[#1\big]}%
\global\long\def\Bsbra#1{\Big[#1\Big]}%
\global\long\def\cbra#1{\{#1\}}%
\global\long\def\bcbra#1{\big\{#1\big\}}%
\global\long\def\Bcbra#1{\Big\{#1\Big\}}%

\global\long\def\grad{\nabla}%
\global\long\def\Expe{\mathbb{E}}%
\global\long\def\rank{\text{rank}}%
\global\long\def\range{\text{range}}%
\global\long\def\diam{\text{diam}}%
\global\long\def\epi{\text{epi }}%
\global\long\def\inte{\operatornamewithlimits{int}}%
\global\long\def\cov{\text{Cov}}%
\global\long\def\argmin{\operatornamewithlimits{argmin}}%
\global\long\def\argmax{\operatornamewithlimits{argmax}}%
\global\long\def\tr{\operatornamewithlimits{tr}}%
\global\long\def\dis{\operatornamewithlimits{dist}}%
\global\long\def\sign{\operatornamewithlimits{sign}}%
\global\long\def\prob{\mathbb{P}}%
\global\long\def\st{\operatornamewithlimits{s.t.}}%
\global\long\def\dom{\text{dom}}%
\global\long\def\diag{\text{diag}}%
\global\long\def\and{\text{and}}%
\global\long\def\st{\text{s.t.}}%
\global\long\def\Var{\operatornamewithlimits{Var}}%
\global\long\def\raw{\rightarrow}%
\global\long\def\law{\leftarrow}%
\global\long\def\Raw{\Rightarrow}%
\global\long\def\Law{\Leftarrow}%
\global\long\def\vep{\varepsilon}%
\global\long\def\dom{\operatornamewithlimits{dom}}%

\global\long\def\Lbf{\mathbf{L}}%

\global\long\def\Cbb{\mathbb{C}}%
\global\long\def\Ebb{\mathbb{E}}%
\global\long\def\Fbb{\mathbb{F}}%
\global\long\def\Nbb{\mathbb{N}}%
\global\long\def\Rbb{\mathbb{R}}%
\global\long\def\extR{\widebar{\mathbb{R}}}%
\global\long\def\Pbb{\mathbb{P}}%
\global\long\def\Acal{\mathcal{A}}%
\global\long\def\Bcal{\mathcal{B}}%
\global\long\def\Ccal{\mathcal{C}}%
\global\long\def\Dcal{\mathcal{D}}%
\global\long\def\Fcal{\mathcal{F}}%
\global\long\def\Gcal{\mathcal{G}}%
\global\long\def\Hcal{\mathcal{H}}%
\global\long\def\Ical{\mathcal{I}}%
\global\long\def\Kcal{\mathcal{K}}%
\global\long\def\Lcal{\mathcal{L}}%
\global\long\def\Mcal{\mathcal{M}}%
\global\long\def\Ncal{\mathcal{N}}%
\global\long\def\Ocal{\mathcal{O}}%
\global\long\def\Pcal{\mathcal{P}}%
\global\long\def\Ucal{\mathcal{U}}%
\global\long\def\Scal{\mathcal{S}}%
\global\long\def\Tcal{\mathcal{T}}%
\global\long\def\Xcal{\mathcal{X}}%
\global\long\def\Ycal{\mathcal{Y}}%
\global\long\def\Ubf{\mathbf{U}}%
\global\long\def\Pbf{\mathbf{P}}%
\global\long\def\Ibf{\mathbf{I}}%
\global\long\def\Ebf{\mathbf{E}}%
\global\long\def\Abs{\boldsymbol{A}}%
\global\long\def\Qbs{\boldsymbol{Q}}%
\global\long\def\Lbs{\boldsymbol{L}}%
\global\long\def\Pbs{\boldsymbol{P}}%
\global\long\def\i{i}%
\global\long\def\Ibb{\mathbb{I}}

\DeclarePairedDelimiterX{\inprod}[2]{\langle}{\rangle}{#1, #2}
\DeclarePairedDelimiter\abs{\lvert}{\rvert}
\DeclarePairedDelimiter{\bracket}{ [ }{ ] }
\DeclarePairedDelimiter{\paran}{(}{)}
\DeclarePairedDelimiter{\braces}{\lbrace}{\rbrace}
\DeclarePairedDelimiterX{\gnorm}[3]{\lVert}{\rVert_{#2}^{#3}}{#1}
\DeclarePairedDelimiter{\floor}{\lfloor}{\rfloor}
\DeclarePairedDelimiter{\ceil}{\lceil}{\rceil}
\setlength{\abovedisplayskip}{2pt}
\setlength{\belowdisplayskip}{2pt}

\allowdisplaybreaks

\title{Level Constrained First Order Methods for %
	Function Constrained Optimization\thanks{DB was partially supported by NSF grant CCF 2245705. QD was partially supported by China NSFC grant 11831002, 72394364. GL was partially supported by AHA grant 23CSA1052735 and USDA grant 2020-67021-31526. Part of the work was done while QD was at Shanghai University of Finance and Economics.}
}
\author{Digvijay Boob \thanks{dboob@smu.edu, Operations Research and Engineering Management, Southern Methodist University.} \qquad 
	Qi Deng \thanks{contact.qdeng@gmail.com, Antai College of Economics \& Management, Shanghai Jiao Tong University.} \qquad 
	Guanghui Lan \thanks{george.lan@isye.gatech.edu, Industrial and Systems Engineering, Georgia Institute of Technology.\\
	Corresponding authors: Digvijay Boob and Qi Deng.}
}
\maketitle
\begin{abstract}
We present a new feasible proximal gradient method for constrained optimization where both the objective and constraint functions are given by summation of a smooth, possibly nonconvex function and a convex simple function. The algorithm converts the original problem into a sequence of convex subproblems. Either exact or approximate solutions of convex subproblems can be computed efficiently in many cases. For the inexact case, computing the solution of the subproblem requires evaluation of at most one gradient/function-value of the original objective and constraint functions. An important feature of the algorithm is the constraint level parameter. By carefully increasing this level for each subproblem, we provide a simple solution to overcome the challenge of bounding the Lagrangian multipliers, and show that the algorithm follows a strictly feasible solution path till convergence to the stationary point. Finally, we develop a simple, proximal gradient descent type analysis, showing that the complexity bound of this new algorithm is comparable to gradient descent for the unconstrained setting which is new in the literature. 
Exploiting this new design and analysis technique, we extend our algorithms to some more challenging constrained optimization problems where 1) the objective is a stochastic or finite-sum function, and 2) structured nonsmooth functions replace smooth components of both objective and constraint functions. Complexity results for these problems also seem to be new in the literature. We also show that our method can be applied for convex function constrained problems where we show complexities similar to the proximal gradient method.
\end{abstract}

\section{Introduction}

In this paper, we study the following constrained optimization problem
\begin{equation} \label{prob:main}
	\begin{aligned}
		\min_{x\in\Rbb^d} &\quad  \psi_0(x)\coloneq f_0(x)+\chi_0(x) \\
		\st & \quad  \psi_i(x) \coloneq f_i(x)+\chi_i(x) \le \eta_i,\quad i=1,\dots,m.
	\end{aligned}
\end{equation}
where $\psi_{i}(x)$ is a composite function that sums up  functions $f_i(x)$ and $\chi_{i}(x)$. Here, $f_i,  i ={0}, 1, \dots, m$, are smooth functions,  $\chi_0(x)$ is  a proper, convex, lower-semicontinuous (lsc) function and $\chi_{i}(x), i=1,\dots,m$, are convex continuous functions over the domain of $\chi_{0}$ (i.e. $\dom{\chi_{0}}$). We  consider that $\chi_{i}, i =0, \dots, m$ are `simple' functions, namely,  a feasible optimization problem of the form below
\begin{equation}\label{eq:subproblem_conceptual}
	\min_{x \in \Rbb^d}\ \bcbra{\gnorm{x-a^0}{}{2} + \chi_{0}(x): \gnorm{x-a^i}{}{2} + \chi_{i}(x) \le b^i, i = 1, \dots, m.}
\end{equation} can be solved to efficiently obtain either an exact solution or an inexact solution of desired accuracy. Note that if $\chi_{i} =0 , i= 1, \dots, m$, then \eqref{eq:subproblem_conceptual} becomes a proximal operator for function $\chi_{0}$ on the intersection of balls. 
If we further assume  $\chi_{0} = 0$, then \eqref{eq:subproblem_conceptual} is a special type of quadratically constrained quadratic programming (QCQP) that can be solved efficiently because all the Hessians are identity matrices. 
In addition, we consider the case where constraints $f_i,  i =1, \dots, m$, are 
	structured nonsmooth functions that can be approximated by smooth functions (also called smoothable functions). 
Note that problem \eqref{prob:main} covers a variety of convex and nonconvex function constrained optimization depending on the assumptions of $f_i,  i = 0, \dots, m$. 

Nonlinear optimization with function constraints is a classical topic in continuous optimization.  
While the earlier study focused on asymptotic performance,  recent work has put more emphasis on   the complexity analysis of algorithms, mainly driven by the growing interest in large-scale optimization and machine learning. For most of our discussion on the complexity analysis, we generally require convergence to an $\epsilon$-approximate KKT point (c.f. Definition~\ref{def:type1-KKT}).
Penalty methods \cite{cartis2014on,WangMaYuan17-1,lin2019inexact}, including augmented Lagrangian methods \cite{LanMon13-1,LanMon16-1,Xu2019-1,li2021augmented},  is one popular approach for constrained optimization. 
In \cite{CartisGouldToint11-1}, Cartis et al. presented an exact penalty method by minimizing a sequence of convex composition functions.
When the penalty weight is bounded, this method solves $\Ocal(1/\epsilon)$ trust region subproblems. If the penalty weight is unbounded, the complexity is of $\Ocal(1/\epsilon^{2.5})$ to reach an $\epsilon$-KKT point. In a subsequent work \cite{cartis2014on}, the authors provided a target following method that achieves the complexity of $\Ocal(1/\epsilon)${, regardless of the growth of the penalty parameter}. In \cite{WangMaYuan17-1}, Wang et al. extended the penalty method for solving constrained problems where the objective takes the expectation form. 
Sequential quadratic programming (SQP) is another important approach for constrained optimization. Typically, SQP  involves linearization of the constraints, quadratic approximation of the objective, and possibly some trust region constraint for {the} convergence guarantee \cite{burke1989sequential,burke1989robust}. 
The recent work \cite{Facchinei19} established a unified convergence analysis  of SQP (GSQP) in more general settings  where the feasibility and constraint qualification may or may not hold. 
Different from the standard SQP, the Moving Balls Approximation (\MBA) method \cite{Auslender2010moving} follows a feasible solution path and transforms the initial problem into a diagonal QCQP. 
A subsequent work \cite{Bolte2016majorization} presented a unified analysis of {\MBA} and other variants of SQP methods. Under the assumption of Kurdyka-{\L}ojasiewicz (KL) property, they establish  the global convergence rates which depend on the {\L}ojasiewicz exponent.

Despite much progress in prior works, there are some significant remaining issues. Specifically, most of the analysis is carried out for only smooth optimization and requires that the exact optimal solution of the convex subproblem is readily available.
Unfortunately, both assumptions can be unrealistic in many large-scale applications.
To overcome these issues, \cite{boob2019proximal,ma2020quadratically,lin2019inexact} presented some new proximal point algorithms that iteratively solve strongly convex proximal subproblems inexactly using first-order methods. A significant computational advantage is that first-order methods only need to compute a relatively easy proximal gradient mapping in each iteration. 
 In particular, \cite{boob2019proximal} proposed to solve the proximal point subproblem by a new first-order primal-dual  method called {\conex}.  Under some strict feasibility assumption, they derived the total complexities of the overall algorithm for which the objective and constraints can be either stochastic or deterministic, and either nonsmooth or smooth. 
 Notably, for nonconvex and smooth constrained problems, inexact proximal point~\cite{boob2019proximal}  requires $\Ocal(1/\epsilon^{1.5})$ function/gradient evaluations. A similar complexity bound is obtained by  the  proximal point penalty method \cite{lin2019inexact} when a feasible point is available. 
 Nevertheless, at this point, it may  be difficult to directly compare the efficiency of the proximal point with the earlier approach, given that  very different  oracles are employed in each method.
  The inexact proximal point method appears to be less efficient in terms of gradient and function value computations since  first-order penalty method~\cite{cartis2014on} and a variant of SQP \cite{Facchinei19} (where the surrogate is formed by first-order approximation) has an  $\Ocal(1/\epsilon)$ complexity bound.
Nevertheless, it might be more efficient if the corresponding proximal mapping is much easier to solve than the subproblems in penalty or SQP methods.

In this paper, we attempt to alleviate some of the aforementioned issues in solving nonconvex constrained optimization. 
Our main contribution is {the development of}  a novel {\bf L}evel {\bf C}onstrained {\bf P}roximal {\bf G}radient (\GD) method for constrained optimization,
based on the following key ideas. 

First, we convert the original  problem  \eqref{prob:main} into a sequence of simple convex subproblems of the form \eqref{eq:subproblem_conceptual} for which  an exact or an approximate solution can be computed efficiently. 
In particular, solving the subproblem requires at most one gradient and function value computation for $f_i,  i =0, \dots, m$. This phenomenon is quite similar to simple single-loop methods even though \GD~method can be multi-loop since we allow for an inexact solution of \eqref{eq:subproblem_conceptual} %
using some kind of iterative scheme.  %

Second,  starting from a strictly feasible initial point and carefully controlling the feasibility levels of the subproblem constraints,  we ensure that {\GD} follows a strictly feasible solution path. This also allows us to deal with nonsmooth constraints where $\chi_i$ is not necessarily $0$ and further extends {\GD} to the inexact case where the subproblem admits an approximate solution. Even though subtle, the level-control design is crucial in bounding the Lagrange multipliers under the well-known Mangasarian-Fromovitz constraint qualification (MFCQ)~\cite{manga67,boob2019proximal}. Subsequently, we also show asymptotic convergence of the \GD~method.

Third, we offer a new insight into the complexity analysis of {\GD} as a gradient descent type method, which could be of independent interest. 
When the objective and constraints are nonconvex composite,  we aim to find a first-order $\epsilon$-KKT point (c.f. Definition \ref{def:type1-KKT}) under the aforementioned MFCQ assumption. 
We can show that \GD~method converges in $O(1/\epsilon)$ iterations. %
Furthermore, each subproblem requires at most one function-value and gradient computation. %
The net outcome is that {\em gradient complexity} of our method is of $O(1/\epsilon)$. Notice that the number of iterations required by the proximal point method under MFCQ  is also $O(1/\epsilon)$ (see \cite[Theorem 5]{boob2019proximal}). However, each iteration of this method requires $O(1/\epsilon^{0.5})$ gradient computation, and hence its total gradient complexity can be bounded by $O(1/\epsilon^{1.5})$. This is much worse than \GD~method. We compare with some significant lines of work in Table~\ref{table:complexity}.

Exploiting the intrinsic connection between \GD~and proximal gradient (without function constraints), we extend \GD~to a variety of cases.
1) We can show a similar $O(1/\epsilon)$ gradient complexity for an inexact \GD~method for which the subproblem is solved to a pre-specified accuracy. 
If we  assume  $\chi_{i} = 0$, then the corresponding subproblem  \eqref{eq:subproblem_conceptual} (i.e. diagonal QCQP) can be efficiently solved by a customized interior point method in logarithmic time. 
In the more general setting when $\chi_{i} \neq 0$, we propose to solve \eqref{eq:subproblem_conceptual} by the first-order method {\conex}, which has very cheap iterations.
2) We also extend \GD~method to stochastic (\SGD) and variance-reduced (\SVRG) variants when $f_0$ is either stochastic or finite-sum function, respectively. \SGD~and \SVRG~ require $O(1/\epsilon^2)$ (similar to SGD \cite{saeed-lan-nonconvex-2013}) and $O(\sqrt{n}/\epsilon)$ (similar to SVRG \cite{Reddi2016proximal}) stochastic gradients, respectively, where $n$ is the number of components in the finite-sum objective. The complexity of variants of \GD~method for stochastic cases can also be seen in Table~\ref{tab:count-sfo}. 3) We consider the case when function $f_i, i =0,1, \dots, m$ are nondifferentiable but contain a smooth saddle structure (referred to as {\em structured nonsmooth}). We extend \GD~method for such nonsmooth nonconvex function constrained problem using Nesterov's smoothing scheme \cite{Nesterov2005smooth}. In this case, \GD~method requires $O(1/\epsilon^2)$ gradients.

We show that the GD-type analysis of the \GD~method can be extended to the convex case. In particular, when the objective and constraint functions are convex, we %
show that \GD~method requires $O(1/\epsilon)$ gradient computations for smooth and composite constrained problems, and this complexity improves to  $O(\log{({1}/{\epsilon})})$  when the objective is smooth and strongly-convex. %
Furthermore, we develop the complexity of inexact variants of \GD~method by leveraging the analysis of gradient descent with inexact projection oracles \cite{Schmidt2011convergence}. Inexact \GD~method maintains the gradient complexity of $O(1/\epsilon)$ and $O(\log(1/\epsilon))$ for convex and strongly convex problems, respectively.

Throughout our analysis, 
we require that the Lagrange multipliers for the convex subproblems of type \eqref{eq:subproblem_conceptual} are bounded. This problem is addressed in different ways in arguably all works in the literature. In this paper, we show that under the assumption of MFCQ,  Lagrange multipliers associated with the sequence of subproblems remain bounded by a quantity specified as $B$. Even then, the value of $B$ cannot be estimated a priori. Fortunately, this bound is not needed in the implementation of our methods. However, it plays a role in complexity analysis. Hence, our comparison with the existing complexity literature (e.g., proximal point method of \cite{boob2019proximal}) is valid when bound $B$ on the sequence of Lagrange multipliers largely depends on the problem itself and not on the sequence of subproblems. One can easily see that such uniform bounds on Lagrange multipliers hold under the strong feasibility constraint qualification~\cite{boob2019proximal}, a similar uniform Slater's condition~\cite{ma2020quadratically} or for nonsmooth nonconvex relaxation in the application of sparsity constrained optimization \cite{Boob2020feasible}. The problem of comparing bounds $B$ on Lagrange multipliers requires getting into specific applications, which is not the purpose of this paper. Hence, throughout our comparison with existing literature, we assume that bound $B$ for different methods is of a similar order.

\providecommand{\tabularnewline}{\\}
\begin{table}[t]
\centering
\renewcommand{\arraystretch}{1}
\begin{tabular}{c|c|c|c|c|c|c}
\hline 
\multirow{3}{*}{Algorithms} & \multirow{3}{1.7cm}{\centering Function\\ type} & \multirow{3}{*}{Composite?} & \multirow{3}{*}{Inexact?} & \multirow{3}{*}{CQ} & \multirow{3}{*}{Criteria} & \multirow{3}{1.7cm}{Gradient\\ complexity$^\dagger$}\tabularnewline
 &  &  &  &  & &\tabularnewline
 &  &  &  &  & &\tabularnewline
\hline 
\MBA~\cite{Auslender2010moving} & \textbf{str cvx} & no &no & Feasibility+MFCQ & Opt. gap & $\mathcal{O}(\log(1/\epsilon))$ \tabularnewline
\hline 
{\GSQP}~\cite{Facchinei19} & \textbf{noncvx} & no &no & Extended MFCQ & KKT & $\mathcal{O}(1/\epsilon)$\\
\hline 
\texttt{IPP} & \multirow{2}{*}{\textbf{noncvx}} & \multirow{2}{*}{yes} & \multirow{2}{*}{yes} & \multirow{2}{*}{Strong feasibility} & \multirow{2}{*}{KKT} & \multirow{2}{*}{$\mathcal{O}(1/\epsilon^{1.5})$}\tabularnewline
+{\conex}~\cite{boob2019proximal}& &  &  &  &  & \tabularnewline
\hline 
\multirow{4}{*}{\texttt{IPPP}~\cite{lin2019inexact}} & \multirow{4}{*}{\textbf{noncvx}} & \multirow{4}{*}{no} & \multirow{4}{*}{yes} &\multirow{2}{2.8cm}{\centering Feasibility \\ + Nonsingularity}   & \multirow{4}{*}{KKT} & \multirow{2}{*}{$\mathcal{O}(1/\epsilon^{1.5})$}\tabularnewline
 &  &  & & &  &  \tabularnewline
\cline{5-5} \cline{7-7} 
&  &  & & \multirow{2}{*}{Nonsingularity} &  & \multirow{2}{*}{$\mathcal{O}(1/\epsilon^{2})$}\tabularnewline
& & & & & &\tabularnewline
\hline 
\multirow{2}{*}{\texttt{IQRC}\cite{ma2020quadratically}} &\multirow{2}{*}{\bf noncvx} & \multirow{2}{*}{yes*} & \multirow{2}{*}{yes} & \multirow{2}{*}{Uniform Slater} &\multirow{2}{*}{KKT} & \multirow{2}{*}{$\mathcal{O}(1/\epsilon^{1.5})$} \tabularnewline
& & & & & &\tabularnewline
\hline%
\multirow{3}{1.8cm}{\centering {\GD}\\ (this work)} & \textbf{cvx} & \multirow{3}{*}{yes} & \multirow{3}{*}{yes} & \multirow{3}{*}{Slater+MFCQ} & Opt. gap & $\mathcal{O}(1/\epsilon)$\tabularnewline
\cline{2-2} \cline{6-7} %
& \textbf{str cvx} &  &  & & Opt. gap & $\mathcal{O}(\log(1/\epsilon))$\tabularnewline
\cline{2-2} \cline{6-7} %
& \textbf{noncvx} &  &  & & KKT & $\mathcal{O}(1/\epsilon)$\tabularnewline
\hline 
\end{tabular}
\caption{Comparison of algorithm function/gradient evaluation complexities. {\bf cvx:} convex, {\bf str cvx:} strongly convex, and {\bf noncvx:} nonconvex. \\
	For convex problems, we consider the complexity to reach a feasible solution with  $O(\epsilon)$-optimality gap. 
	For nonconvex problems, we consider the complexity to reach an approximate KKT solution that satisfies $\Vert\partial\mathcal{L}\Vert_{-}^{2}\le\epsilon$.
	Note that different works have quite different error measurements of the complementary slackness. For example, in our translation, \cite{Facchinei19,lin2019inexact} requires an $\mathcal{O}(\sqrt{\epsilon})$ error on the complementary slackness and feasibility. Our measure requires $0$-feasibility error and $\mathcal{O}(\epsilon)$-complementary slackness error.\\
	* \texttt{IQRC} does not explicitly discuss the composite case. Their subproblem oracle can be upgraded to handle proximal cases relatively easily.\\
    $\dagger$ Different methods have different costs for solving the subproblem. Some methods require explicit gradient computations for solving the subproblems and hence, are expected to be quite computationally costly. Some methods (including ours) have simple subproblems. See Remark \ref{rem:grad_compx_vs_comp_compx} for a detailed discussion. Hence, comparing total computational complexity is not possible. We instead compare gradient complexities to provide a realistic estimate of the computational effort of each of these methods.
    }
\label{table:complexity}
\end{table}

\begin{table}
	\centering{}%
	\begin{tabular}{c|c|c|c}
		\hline 
		& {\GD} & {\SGD} & {\SVRG}\tabularnewline
		\hline 
		Complexity & $\Ocal(n\vep^{-1})$ & $\Ocal(\vep^{-2})$ & $\Ocal\big(n^{1/2}\vep^{-1}\big)$\tabularnewline
		\hline 
	\end{tabular}
	\caption{Total number of stochastic gradient evaluations to obtain $\protect\Ocal\big(\vep,\vep\big)$ randomized KKT points in the finite sum problem.}
	\label{tab:count-sfo}
\end{table}

\paragraph{Comparison with MBA method} 
Auslender et al. \cite{Auslender2010moving} provided a Moving Balls Approximation (\MBA) method for smooth constrained problems, i.e. $\chi_i(x), i=0, \dots, m$, are not present. They use Lipschitz continuity of constraint gradients along with MFCQ to ensure that the subproblems satisfy Slater's conditions (see \cite[Proposition 2.1(iii)]{Auslender2010moving}). A similar result is also used in \cite{Zhaosong2021} where they provide a line-search version of {\MBA} for functions satisfying certain KL properties. The {\MBA} method was studied for semi-algebraic functions in \cite{Bolte2016majorization} where they used the KL-property of semi-algebraic functions. The work \cite{Auslender2010moving} also provides the complexity guarantee for constrained programs with a smooth and strongly convex objective. Our results differ from the past studies in the following several aspects. First, we do not assume any KL property on the class of functions, hence making the method applicable to a wider class of problems. Second, we show complexity analysis for a variety of cases, e.g., stochastic, finite-sum, or structured nonsmooth cases. Note that complexity results are not known for the \MBA~type method even for the purely smooth problem. Third, we show  complexity results for both convex and strongly convex cases which strictly subsumes the results in \cite{Auslender2010moving}. Fourth, it should be noted that \cite{Auslender2010moving} also considered the efficiency of solving subproblems. They proposed an accelerated gradient method that obtains $\Ocal(1/\sqrt{\epsilon})$ complexity for solving the dual of the QCQP subproblem. However, it is unclear what accuracy is enough for ensuring asymptotic convergence of the whole algorithm. %
\paragraph{Comparison with generalized SQP}
The work \cite{Facchinei19} developed the first complexity analysis of the generalized SQP (GSQP) method by using a novel ghost penalty approach. 
Different from our  feasible method, they consider a general setting where the feasibility and constraint qualification may or may not hold. They show that SQP-type methods have an $\Ocal(1/\epsilon^2)$ complexity for reaching some $\epsilon$-approximate generalized stationary point. 
Under an extended MFCQ condition, they established an improved complexity $\Ocal(1/\epsilon)$ for reaching  the scaled-KKT point, which matches our complexity result under a similar MFCQ assumption. 
  Notably, both their analysis and ours rely on MFCQ to show that the global upper bound (constant $B$) on the  multipliers of the subproblems exists. 
However, to obtain the best $\Ocal(1/\epsilon)$ complexity, GSQP  explicitly relies on the value of such unknown upper bound to determine the stepsize, which appears to be rather challenging in practical use.
In contrast, our algorithm does not involve  the constant $B$ in the algorithm implementation; we only require the Lipschitz constants of the gradients, which is standard for gradient descent methods.

\paragraph{Outline} This paper is organized as follows: 
Section~\ref{sec:notation} describes notations and assumptions. It also provides various definitions used throughout the paper. 
Section~\ref{sec:Gradient-Descent} presents the \GD~method which uses exact solutions of subproblems. It also establishes asymptotic convergence and convergence rate results. 
Section~\ref{subsec:Stochastic-Gradient-Descent} and Section~\ref{subsec:Variance-Reduced-Gradient} provides the \SGD~and \SVRG~method for stochastic and finite-sum problems, respectively. Section \ref{sec:nonsmooth} shows the extension of {\GD} for nonsmooth nonconvex function constraints. 
Section~\ref{sec:inexact-GD} introduces the inexact \GD~method and provides its complexity analysis when the subproblems are inexactly solved by an interior point method or first-order method. 
Finally, Section \ref{sec:convex-programming} extends \GD~method for convex optimization problems and establishes its complexity for both strongly convex and convex problems.

\section{Notations and assumptions}\label{sec:notation}

\paragraph{Notations.}
$\Rbb^n_+$ stands for the non-negative orthant in $\Rbb^n$.
We use $\gnorm{\cdot}{}{}$ to express the Euclidean norm. For a set $\Xcal$,
we define $\gnorm{\Xcal}{-}{}=\text{dist}(0,\Xcal)=\inf\big\{\gnorm{x}{}{},x\in\Xcal\big\}$. If $\Xcal$ is a convex set then we denote its normal cone at $x$ as $N_\Xcal(x)$. Furthermore, we denote the dual cone of the normal cone at $x$ as $N^*_\Xcal(x)$.
Let $e$ be a vector full of elements one. For simplicity, we denote $[m]=\{1,2,\ldots, m\}$, 
$f(x)=[f_{1}(x),\ldots,f_{m}(x)]^{\trans}$, $\chi(x)=[\chi_{1}(x),\ldots,\chi_{m}(x)]^{\trans}$,
and $\psi(x)=[\psi_{1}(x),\psi_{2}(x),\ldots,\psi_{m}(x)]^{\trans}$.
For vectors $x, y\in \Rbb^m$, $x\le y$ is understood as $x_i\le y_i$ for $i\in[m]$.

\begin{assumption}[General]\label{ass:obj-cst}
We assume that the optimal value of problem~\eqref{prob:main} is finite: $\psi_{0}^{*}>-\infty$. Furthermore, the objective and constraint functions have the following properties.
\begin{enumerate}[label=\textbf{\arabic*:},ref=1.\arabic*]
\item \label{assum:chi} $\chi_{0}$ is a proper, convex, and lower semi-continuous (lsc) function. %
Moreover, we assume that for all $i = 1, \dots, m$, the function $\chi_{i}(x)$ is convex
continuous over $\dom\chi_0$.
\item \label{assum:f0-lip}$f_{i}(x)$ is $L_{i}$-Lipschitz smooth on $\dom \chi_0$: $\gnorm{\nabla f_{i}(x)-\nabla f_{i}(y)}{}{}\le L_{i}\gnorm{x-y}{}{}$ for any $x,y\in\dom\chi_0$. For brevity, we  denote $L=[L_{1},\ldots,L_{m}]^{\trans}$. 

\item \label{assum:bounded_set} The feasible set for \eqref{prob:main}, i.e., $\bigcap_{i\in[m]}\{x:\psi_i(x) \le \eta_i\} \cap \dom \chi_{0}$ is nonempty and  compact\footnote{This assumption is used to ensure the limit point of a certain sequence in the analysis. There are other conditions that can ensure the existence of a limit point as we will discuss in Remark \ref{rem:limit-point_x^k}.}.
\end{enumerate}
\end{assumption}

The Lagrangian function of problem~(\ref{prob:main}) is denoted by 
\begin{equation}
\Lcal(x,\lambda)=\psi_{0}(x)+\tsum_{i=1}^{m}\lambda_{i}\sbra{\psi_{i}(x)-\eta_{i}}.\label{eq:lagrange-0}
\end{equation}
For functions $\psi_i$, we denote its subdifferential %
as 
\[\partial \psi_{i}(x) = \{\grad f_{i}(x) \}+ \partial \chi_{i}(x),i = 0, \dots, m,\]
where the sum is in the Minkowski sense. {Note that this definition of the subdifferential for nonconvex functions was first proposed in \cite{boob2019proximal}. Moreover, $\partial \psi_i = \{\grad f_i \}$ when $\psi_i$ is a ``purely'' smooth nonconvex function and $\partial \psi_i = \partial \chi_i$ when $\psi_i$ is a nonsmooth convex function. Hence, it is a valid definition for the subdifferential of a nonconvex function. 
Below, we define the KKT condition using the above subdifferential.}
\begin{definition}
[KKT condition] \label{def:kkt-cond}We say  $x\in\dom {\chi_0}$ is a  KKT point of problem~\eqref{prob:main} if $x$ is feasible and there exists a vector $\lambda\in\Rbb_{+}^{m}$
such that
\begin{equation}\label{eq:kkt-1}
0  \in\partial_{x}\Lcal(x,\lambda), \quad 0  =\tsum_{i=1}^m\lambda_{i}\big[\psi_{i}(x)-\eta_{i}\big].
\end{equation}
The values $\{\lambda_{i}\}$ are called Lagrange multipliers. 
\end{definition}

It is known that the KKT condition is necessary for optimality under the assumption of certain constraint qualifications (c.f. \cite{bertsekasnonlinear}). Our result will be based on a variant of the Mangasarian-Fromovitz constraint qualification, which is formally given below.
\begin{definition}
[MFCQ]We say that a point $x$ satisfies the Mangasarian-Fromovitz constraint qualification for \eqref{prob:main} if there exists a vector $z \in -N^*_{\dom{\chi_{0}}}(x)$ such that
\begin{equation}
\max_{v\in\partial\psi_{i}(x)}\langle v,z\rangle<0,\quad i\in\Acal(x), \label{eq:mfcq}
\end{equation}
where $\Acal(x)=\{i:1\le i\le m,\psi_{i}(x)=\eta_{i}\}$.
\end{definition}

\begin{proposition}
[Necessary condition]Let $x$ be a local optimal solution of Problem~\eqref{prob:main}. If
it satisfies MFCQ~\eqref{eq:mfcq}, then there is a vector $\lambda\in\Rbb_{+}^{m}$
such that the KKT condition~\eqref{def:kkt-cond} holds.
\end{proposition}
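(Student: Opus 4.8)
The plan is to reduce the statement to a standard constrained-optimization fact, namely that MFCQ is a constraint qualification implying that first-order optimality (the Fritz John conditions with nonzero objective multiplier) holds at a local minimum. The only subtlety here is that we are in a composite, possibly nonsmooth-convex setting: the ``constraint functions'' are $\psi_i = f_i + \chi_i$ with $f_i$ smooth and $\chi_i$ convex, the objective $\psi_0 = f_0 + \chi_0$ has $\chi_0$ possibly extended-valued lsc convex, and the relevant subdifferential is the Minkowski-sum object $\partial\psi_i(x) = \{\nabla f_i(x)\} + \partial\chi_i(x)$ introduced above. So I would carry out the argument directly rather than quoting a black-box theorem, to make sure the subdifferential calculus lines up.

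First, let $x^\ast$ be the local minimizer satisfying MFCQ, with active set $\Acal(x^\ast) = \{i : \psi_i(x^\ast) = \eta_i\}$ and let $z \in -N^\ast_{\dom\chi_0}(x^\ast)$ be the MFCQ direction, so $\max_{v\in\partial\psi_i(x^\ast)}\langle v,z\rangle < 0$ for $i\in\Acal(x^\ast)$. Because $\chi_0, \chi_i$ are convex and the $f_i$ are smooth, each $\psi_i$ (restricted to $\dom\chi_0$) admits a legitimate first-order expansion in the direction $z$: for $t>0$ small, $\psi_i(x^\ast + tz) \le \psi_i(x^\ast) + t\max_{v\in\partial\psi_i(x^\ast)}\langle v,z\rangle + o(t)$ for active $i$ (using convexity of $\chi_i$ for the upper bound on its directional derivative plus smoothness of $f_i$), while for inactive $i$ continuity keeps the constraint slack; and $z\in -N^\ast_{\dom\chi_0}(x^\ast)$ together with convexity of $\dom\chi_0$ ensures $x^\ast + tz \in \dom\chi_0$ for small $t$ and $\chi_0(x^\ast + tz) - \chi_0(x^\ast)$ is controlled by the directional derivative. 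Hence $z$ is a strictly feasible descent-preserving direction for the constraint system: there is a full neighborhood of directions around $z$ that keep all constraints satisfied to first order.

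Second, with this feasibility of the direction $z$ in hand, I would invoke the Fritz John / Fritz–John-type necessary condition for the composite problem: at the local minimum $x^\ast$ there exist multipliers $\lambda_0 \ge 0$, $\lambda_i \ge 0$ for $i\in\Acal(x^\ast)$, not all zero, and a subgradient inclusion $0 \in \lambda_0\,\partial\psi_0(x^\ast) + \sum_{i\in\Acal(x^\ast)} \lambda_i\,\partial\psi_i(x^\ast) + N_{\dom\chi_0}(x^\ast)$, which for the composite functions unfolds (via the sum rule for a smooth plus convex function, valid here) to $0 \in \lambda_0(\nabla f_0(x^\ast) + \partial\chi_0(x^\ast)) + \sum_i \lambda_i(\nabla f_i(x^\ast) + \partial\chi_i(x^\ast)) + N_{\dom\chi_0}(x^\ast)$. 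Then the MFCQ direction rules out $\lambda_0 = 0$: if $\lambda_0 = 0$, pairing the inclusion with $z$ and using $\langle n, z\rangle \le 0$ for every $n \in N_{\dom\chi_0}(x^\ast)$ (since $z \in -N^\ast_{\dom\chi_0}(x^\ast)$ means $\langle -z, n\rangle \ge 0$), we would get $0 = \sum_{i\in\Acal}\lambda_i\langle v_i, z\rangle + \langle n,z\rangle$ with each $\langle v_i,z\rangle < 0$ and $\langle n,z\rangle\le 0$, forcing all $\lambda_i = 0$, contradicting nontriviality. Dividing through by $\lambda_0 > 0$, absorbing the normal-cone term into $\partial\chi_0$ where convenient (or keeping it, as the paper's KKT Definition~\ref{def:kkt-cond} does via $\partial_x\Lcal$), and setting $\lambda_i = 0$ for inactive $i$ gives $0 \in \partial_x\Lcal(x^\ast,\lambda)$ together with complementary slackness $\sum_i \lambda_i[\psi_i(x^\ast)-\eta_i] = 0$, which is exactly Definition~\ref{def:kkt-cond}.

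The main obstacle I anticipate is the careful handling of the extended-valued/lsc objective term $\chi_0$ and the normal cone $N_{\dom\chi_0}(x^\ast)$ in the subdifferential calculus: one must be sure the sum rule $\partial(\psi_0 + \sum\lambda_i\psi_i)(x^\ast) = \partial\psi_0(x^\ast) + \sum\lambda_i\partial\psi_i(x^\ast) + N_{\dom\chi_0}(x^\ast)$ holds (which it does, since each $\chi_i, i\ge1$ is finite and continuous on $\dom\chi_0$ and only $\chi_0$ contributes a domain constraint, so no delicate qualification is needed for the sum itself), and that the MFCQ direction $z$ genuinely lies in the tangent-type cone along which all of $x^\ast + tz$ stay feasible — this is where the condition $z\in -N^\ast_{\dom\chi_0}(x^\ast)$ is used rather than a plain ``$z$ points into $\dom\chi_0$'' statement. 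Everything else is the classical separation/Fritz–John argument adapted to subgradients. (Indeed this proposition essentially restates the constraint-qualification result already used in \cite{boob2019proximal}, so I would expect the paper to cite that or give a one-line pointer.)
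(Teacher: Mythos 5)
The paper does not actually prove this proposition: it is stated as a known fact, with the surrounding text pointing to \cite{bertsekasnonlinear} (``It is known that the KKT condition is necessary for optimality under the assumption of certain constraint qualifications''). So there is no in-paper proof to compare against line by line; your write-up is a reconstruction of the standard argument, and its core logic is sound. In particular, the decisive step --- take the Fritz John inclusion $0\in\lambda_0\partial\psi_0(x^*)+\sum_{i\in\Acal(x^*)}\lambda_i\partial\psi_i(x^*)+N_{\dom\chi_0}(x^*)$, pair it with the MFCQ direction $z$ using $\langle n,z\rangle\le 0$ for $n\in N_{\dom\chi_0}(x^*)$ and $\langle v_i,z\rangle<0$ for active $i$, and conclude $\lambda_0>0$ --- is exactly the mechanism the paper itself deploys later, in the proof of Theorem~\ref{prop:lcgd:bound-dual}, to rule out unbounded subproblem multipliers. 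Two points deserve tightening. First, $z\in-N^*_{\dom\chi_0}(x^*)$ only places $z$ in the tangent cone of the convex set $\dom\chi_0$ at $x^*$, which is the \emph{closure} of the cone of feasible directions; it does not by itself give $x^*+tz\in\dom\chi_0$ for all small $t>0$, so the feasibility claim in your first paragraph needs an approximation argument --- or can simply be dropped, since it is not needed once you take the Fritz John conditions (which hold at a local minimum with no constraint qualification) as the starting point. Second, the Fritz John rule in this composite, extended-valued setting ($\chi_0$ proper lsc, hence possibly non-Lipschitz at boundary points of its domain) is itself a nontrivial piece of nonsmooth calculus that you invoke as a black box; that is acceptable for a result the paper also treats as standard, but it is where the real work is hidden, and a citation to a precise nonsmooth multiplier rule (e.g.\ Clarke's) would make the sketch complete.
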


Next, we introduce some optimality measures before formally presenting any algorithms. It is   natural to characterize algorithm performance by measuring the error of satisfying the KKT condition. Towards this goal, we have the following definition.
\begin{definition}\label{def:type1-KKT}
We say that  $x$ is an $\epsilon$ type-I (approximate) KKT
point if it is feasible (i.e. $\psi(x)\le\eta$), and there exists a vector $\lambda\in\Rbb_{+}^{m}$ satisfying the following
conditions:
\begin{align*}
\gnorm{ \partial_{x}\Lcal(x,\lambda)}{-}{2} & \le {\epsilon}\\
-\tsum_{i=1}^m \lambda_{i}\sbra{\psi_{i}(x)-\eta_{i}} & \le {\epsilon}.%
\end{align*}
Moreover,  $x$ is a randomized ${\epsilon}$ type-I KKT point
if both $x$ and $\lambda$ are {feasible} randomized primal-dual solutions that satisfy 
\begin{align*}
	\Ebb\bracket{\gnorm{ \partial_{x}\Lcal(x,\lambda)}{-}{2}} & \le {\epsilon}\\
	\Ebb\bracket{-\tsum_{i=1}^m \lambda_{i}\sbra{\psi_{i}(x)-\eta_{i}} }& \le {\epsilon},
\end{align*}
where the expectation is taken over the randomness of  $x$ and $\lambda$.
\end{definition}
Besides the above definition, we invoke a second optimality measure which will help analyze the performance of a proximal algorithm (see, for example, \cite{boob2019proximal}).  
Therein, it is arguably  more convenient to approach the proximity of some nearly stationary points. 
\begin{definition}\label{def:type2-KKT}
	We say that $x$ is a ${(\epsilon, \nu)}$  type-II  KKT point if there exists an ${\epsilon}$ type-I KKT point $\hat{x}$ and $\gnorm{x-\hat{x}}{}{2}\le \nu$. Similarly, $x$ is a randomized ${(\epsilon, \nu)}$  type-II KKT point if $\hat{x}$ is  a random vector and $\mathbb{E}[\gnorm{x-\hat{x}}{}2 ]\le {\nu}$.
\end{definition}

\section{A proximal gradient method\label{sec:Gradient-Descent}}

We present the level constrained proximal gradient  ({\GD}) method in Algorithm~\ref{alg:lcgd}. 
The main idea of this algorithm is to turn the original nonconvex problem into a sequence of relatively easier subproblems that involve some convex surrogate functions $\psi^k_i(x)$ ($0\le i\le m$) and variable constraint levels $\eta^k$: 
\begin{equation}\label{subproblem}
	\begin{aligned}
		\min_{x\in{\Rbb^d}} &\quad \psi_0^k(x)  \\
		\st & \quad \psi_i^k(x) \le \eta_i^k,\quad i\in [m].
	\end{aligned}
\end{equation}
Above, we take the surrogate function $\psi_{i}^{k}(x)$ ($0\le i\le m$) by partially linearizing $\psi_i(x)$ at $x^k$ and adding the proximal term $\tfrac{L_i}{2}\gnorm{x-x^k}{}{2}$:
\begin{align}
\psi_{i}^{k}(x)& \coloneq f_{i}(x^{k})+\big\langle\nabla f_{i}(x^{k}),x-x^{k}\big\rangle+\tfrac{L_{i}}{2}\gnorm{x-x^{k}}{}{2}+\chi_{i}(x).\label{eq:psik-1}
\end{align}
It should be noted that our algorithm may not be well-defined if it were to be initialized by an infeasible solution $x^0$. Furthermore, we require the initial point to be strictly feasible with respect to the nonlinear constraints $\psi(x)\le \eta$.
Therefore, we explicitly state this assumption below and assume it holds throughout the paper.
\begin{assumption}[Strict feasibility]\label{ass:strict-feasible} There exist a point $x^0\in \dom\chi_0$ and a vector $\eta^0\in\Rbb^m$ satisfying
\[
\psi_i(x^0)<\eta^0 < \eta.
\]
\end{assumption}
With a strictly feasible solution,  we assume that  the constraint levels $\{\eta^k\}$ are incrementally updated and  converge to  the constraint levels  for the original problem:
\[
\lim_{k\raw\infty} \eta_i^k=\eta_i,\quad i\in[m].
\]

\begin{algorithm}
	\begin{algorithmic}[1]
		\State {\bf Input: } $x^{0}$, $\eta^{0}$;
		\For{k=0,1,2,...,K}
			\State For $i=0,1,\ldots,m$, set $\psi_{i}^{k}(x)$ by (\ref{eq:psik-1});
			\State Update $x^{k+1}$ by solving (\ref{subproblem});
			\State Choose $\delta^{k}>0$ and update $\eta^{k+1}$ by 
			\begin{align}
				\eta_{i}^{k+1}& =\eta_{i}^{k}+\delta_{i}^{k} < \eta_i, & i=1,2,\ldots,m.\label{eq:update-eta}	
			\end{align}
		\EndFor
		\State 
		{\bf Output: }$x^{\hat{k}+1}$ where $\hat{k}$ is a random index sampled from $\{0,1,2,...,K\}$.
	\end{algorithmic}
	\caption{Level constrained proximal gradient method ({\GD}) \label{alg:lcgd}}
\end{algorithm}

The following Lemma will be used many times throughout the rest of the paper.
\begin{lemma}
[Three-point inequality]\label{lem:3pt}Let $g:\Rbb^d\raw (-\infty, \infty]$ be a  proper lsc convex function and
\[
x^{+}=\argmin_{z\in {\Rbb^d}}\left\{ g(z)+\tfrac{\gamma}{2}\gnorm{z-y}{}{2}\right\} ,
\]
then for any $x\in \Rbb^d$, we have 
\begin{equation}
g(x^{+})-g(x)\le\tfrac{\gamma}{2}\brbra{\gnorm{x-y}{}{2}-\gnorm{x^{+}-y}{}{2}-\gnorm{x-x^{+}}{}{2}}.\label{eq:3point}
\end{equation}
\end{lemma}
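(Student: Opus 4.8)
The statement to prove is the classical three-point inequality for proximal minimization of a convex function. The plan is to exploit the first-order optimality condition of the minimization defining $x^+$, together with the strong convexity of the quadratic term.

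First I would write down the optimality condition for $x^+ = \argmin_{z}\{g(z) + \tfrac{\gamma}{2}\gnorm{z-y}{}{2}\}$. Since the objective is a proper lsc convex function with a full-domain smooth quadratic part, the optimality condition reads $0 \in \partial g(x^+) + \gamma(x^+ - y)$, i.e. there exists a subgradient $s \in \partial g(x^+)$ with $s = \gamma(y - x^+)$. Then for any $x \in \Rbb^d$, convexity of $g$ gives $g(x) \ge g(x^+) + \inner{s}{x - x^+} = g(x^+) + \gamma\inner{y - x^+}{x - x^+}$, hence
\[
g(x^+) - g(x) \le \gamma\inner{x^+ - y}{x - x^+}.
\]
The remaining step is purely algebraic: apply the polarization identity $2\inner{x^+ - y}{x - x^+} = \gnorm{x - y}{}{2} - \gnorm{x^+ - y}{}{2} - \gnorm{x - x^+}{}{2}$, which yields exactly \eqref{eq:3point}.

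An alternative (and perhaps cleaner) route avoids invoking subdifferential calculus explicitly: one notes that $h(z) \coloneq g(z) + \tfrac{\gamma}{2}\gnorm{z-y}{}{2}$ is $\gamma$-strongly convex, so for its minimizer $x^+$ one has $h(x) \ge h(x^+) + \tfrac{\gamma}{2}\gnorm{x - x^+}{}{2}$ for all $x$. Expanding $h(x)$ and $h(x^+)$ and rearranging the quadratic terms then produces \eqref{eq:3point} directly. I would likely present the subgradient version since it is the most transparent, but either works.

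The main obstacle, such as it is, is merely bookkeeping with the inner products and ensuring the signs in the polarization identity come out correctly; there is no genuine difficulty here. One mild technical point worth a sentence is that the minimizer $x^+$ exists and is unique because $h$ is proper, lsc, and strongly convex (coercive), so the $\argmin$ is well-defined — but given the hypotheses stated this is immediate and need not be belabored.
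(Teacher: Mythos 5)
Your proof is correct: the subgradient optimality condition $0\in\partial g(x^{+})+\gamma(x^{+}-y)$ combined with convexity of $g$ and the polarization identity $2\inner{x^{+}-y}{x-x^{+}}=\gnorm{x-y}{}{2}-\gnorm{x^{+}-y}{}{2}-\gnorm{x-x^{+}}{}{2}$ yields \eqref{eq:3point} exactly, and your alternative via the quadratic-growth bound $h(x)\ge h(x^{+})+\tfrac{\gamma}{2}\gnorm{x-x^{+}}{}{2}$ for the $\gamma$-strongly convex objective is equally valid. The paper states this lemma without proof (it is the standard three-point inequality for the proximal step), so there is no in-paper argument to compare against; your treatment, including the remark that $x^{+}$ exists and is unique by coercivity, is complete.
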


Next, we  present some important properties of the generated solutions in the following theorem.
\begin{proposition}\label{prop:lcgd-suff-descent}
Suppose that Assumption~\refeq{ass:strict-feasible} holds, then Algorithm~\refeq{alg:lcgd} has the following properties.
\begin{enumerate}
\item The sequence $\big\{ x^{k}\big\}$ {is well-defined and is feasible for problem \eqref{prob:main}}. $\{x^k\}$ satisfies the sufficient descent
property:
\begin{equation}
\tfrac{L_{0}}{2}\gnorm{x^{k+1}-x^{k}}{}{2}\le\psi_{0}(x^{k})-\psi_{0}(x^{k+1}).\label{eq:sufficient-decrease}
\end{equation}
Moreover, the sequence of objective values $\big\{\psi_{0}(x^{k})\big\}$
is monotonically decreasing and $\lim_{k\raw\infty}\psi_{0}(x^{k})$
exists.
\item  
There exists a vector
$\lambda^{k+1}\in\Rbb_{+}^{m}$ such that the KKT condition holds:
\begin{equation}
\begin{aligned}\partial\psi_{0}^{k}(x^{k+1})+\tsum_{i=1}^{m}\lambda_{i}^{k+1}\partial\psi_{i}^{k}(x^{k+1}) & \ni0\\
\lambda_{i}^{k+1}\bsbra{\psi_{i}^{k}(x^{k+1})-\eta_{i}^{k}} & =0,\quad i\in[m].
\end{aligned}
\label{eq:kkt-subprob}
\end{equation}
\end{enumerate}
\end{proposition}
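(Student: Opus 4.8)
The plan is to prove both parts by a single induction on $k$, the workhorse being that every surrogate $\psi_i^k$ globally majorizes $\psi_i$. Concretely, the $L_i$-smoothness of $f_i$ (Assumption~\ref{assum:f0-lip}) and the standard descent lemma give $f_i(x)\le f_i(x^k)+\langle\nabla f_i(x^k),x-x^k\rangle+\tfrac{L_i}{2}\gnorm{x-x^k}{}{2}$ for all $x\in\dom\chi_0$, hence $\psi_i(x)\le\psi_i^k(x)$ for $i=0,\dots,m$, while evaluating \eqref{eq:psik-1} at $x^k$ gives the exact match $\psi_i^k(x^k)=\psi_i(x^k)$. Also, each $\psi_i^k$ is convex and $\psi_0^k$ is $L_0$-strongly convex.

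First I would run the induction showing that $x^k$ is well-defined, lies in $\dom\chi_0$, is feasible for \eqref{prob:main}, and moreover satisfies $\psi_i(x^k)\le\eta_i^{k-1}$ for $i\in[m]$ (with the convention $\eta^{-1}=\eta^0$). Base case: Assumption~\ref{ass:strict-feasible} gives $x^0\in\dom\chi_0$ and $\psi_i(x^0)<\eta_i^0$, so $x^0$ is strictly feasible for subproblem~\eqref{subproblem} at $k=0$. Inductive step: since $\delta_i^{k-1}>0$ in \eqref{eq:update-eta}, the inductive hypothesis yields $\psi_i^k(x^k)=\psi_i(x^k)\le\eta_i^{k-1}<\eta_i^k$, so $x^k$ is strictly feasible for subproblem~\eqref{subproblem} at step $k$; this nonempty, closed, convex feasible set together with the $L_0$-strong convexity and lower semicontinuity of $\psi_0^k$ makes $x^{k+1}=\argmin$ well-defined and unique, and finiteness of the optimal value (bounded above by $\psi_0^k(x^k)=\psi_0(x^k)<\infty$) forces $\chi_0(x^{k+1})<\infty$, i.e. $x^{k+1}\in\dom\chi_0$. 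Then $\psi_i(x^{k+1})\le\psi_i^k(x^{k+1})\le\eta_i^k<\eta_i$ for all $i\in[m]$, which gives both feasibility of $x^{k+1}$ for \eqref{prob:main} and closes the induction.

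For \eqref{eq:sufficient-decrease} I would apply Lemma~\ref{lem:3pt} with $\gamma=L_0$, $y=x^k$, test point $x=x^k$, and $g$ equal to the sum of the affine map $z\mapsto f_0(x^k)+\langle\nabla f_0(x^k),z-x^k\rangle$, the function $\chi_0$, and the indicator of the (closed, convex) feasible set $C^k$ of \eqref{subproblem}; then $x^{k+1}=\argmin_z\{g(z)+\tfrac{L_0}{2}\gnorm{z-x^k}{}{2}\}$ and, since $x^k\in C^k$, the lemma yields $g(x^{k+1})\le g(x^k)-L_0\gnorm{x^{k+1}-x^k}{}{2}=\psi_0(x^k)-L_0\gnorm{x^{k+1}-x^k}{}{2}$. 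Using $x^{k+1}\in C^k$ and the $L_0$-smoothness of $f_0$ to bound $g(x^{k+1})\ge\psi_0(x^{k+1})-\tfrac{L_0}{2}\gnorm{x^{k+1}-x^k}{}{2}$ and rearranging gives \eqref{eq:sufficient-decrease}; monotonicity of $\{\psi_0(x^k)\}$ is then immediate, and $\lim_k\psi_0(x^k)$ exists because $\psi_0(x^k)\ge\psi_0^*>-\infty$ by feasibility of $x^k$ and Assumption~\ref{ass:obj-cst}. For Part~2, the strict feasibility of $x^k$ for \eqref{subproblem} is Slater's condition, so convex duality yields $\lambda^{k+1}\in\Rbb_+^m$ such that $x^{k+1}$ minimizes the convex function $\psi_0^k(\cdot)+\tsum_{i=1}^m\lambda_i^{k+1}(\psi_i^k(\cdot)-\eta_i^k)$ with $\lambda_i^{k+1}[\psi_i^k(x^{k+1})-\eta_i^k]=0$; writing its first-order optimality condition and applying the subdifferential sum rule (legitimate since the $\chi_i$, $i\ge1$, are continuous on $\dom\chi_0$ and the $f_i$ are smooth) produces \eqref{eq:kkt-subprob}.

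The main obstacle is the bookkeeping in the induction: one has to carry the strict chain $\psi_i(x^k)\le\eta_i^{k-1}<\eta_i^k<\eta_i$ through every step at once, because the left strict inequality is exactly what keeps each subproblem Slater-feasible (needed for Part~2) while the comparison with $\eta_i$ is what keeps the iterates feasible for \eqref{prob:main}. The majorization $\psi_i\le\psi_i^k$ is a one-line consequence of smoothness, so the genuine work lies in organizing this induction and in the minor technical points (attainment of the subproblem minimum, membership in $\dom\chi_0$, closedness of $C^k$, and validity of the sum rule).
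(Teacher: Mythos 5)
Your proposal is correct and follows essentially the same route as the paper's proof: induction using the majorization $\psi_i\le\psi_i^k$ and the chain $\psi_i(x^k)=\psi_i^k(x^k)\le\eta_i^{k-1}<\eta_i^k<\eta_i$ for well-definedness and feasibility, the three-point inequality at $x=x^k$ combined with the descent lemma for \eqref{eq:sufficient-decrease}, and Slater's condition from strict subproblem feasibility for Part~2. The only difference is that you spell out a few technical points (attainment and uniqueness of the subproblem minimizer, membership in $\dom\chi_0$, validity of the sum rule) that the paper leaves implicit.
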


\begin{proof}
Part 1). First, we show that $\{x^k\}$ is a well-defined sequence, namely, $\Xcal_k\cap\dom{\chi_{0}}$ is a nonempty set where $\Xcal_k=\big\{ x:\psi_{i}^{k}(x)\le\eta_{i}^{k}\big\}$. This result clearly holds for $k = 0$ by Assumption \ref{ass:strict-feasible}. We show the general case $(k>0)$ by induction. Suppose that $x^k$ is well-defined, i.e., $\Xcal_{k-1}\cap \dom{\chi_{0}}$ is nonempty. Then, by the definition of $\psi_{i}^{k}, \psi_{i}^{k-1}$ and the definition $x^k$, we have $x^k \in \dom{\chi_{0}}$ and
	\begin{equation}\label{eq:x^k_feasibility}
		\psi_{i}^k(x^k) = \psi_{i}(x^k) \le \psi_{i}^{k-1}(x^{k}) \le \eta^{k-1}_i < \eta_{i}^k \hspace{1em} \text{for all } i \in [m].
\end{equation} 
Here, the first inequality follows due to the  smoothness of $f_i(x)$ which ensures for all $x$, 
\[f_i(x)  \le f_i(x^{k-1})  + \inprod{\grad f_i(x^{k-1})}{x-x^{k-1}} + \tfrac{L_i}{2}\gnorm{x-x^{k-1}}{}{2},\quad \forall i \in [m].\]
This is equivalent to $x^k \in\Xcal_k\cap\dom{\chi_{0}}$, implying that $\Xcal_k \cap \dom{\chi_{0}}$ is nonempty. We conclude that $x^{k+1}$ is well-defined. Hence, by induction, we conclude that $\{x^k\}$ is a well-defined sequence. Furthermore, in view of $x^k\in \dom{\chi_{0}}$, relation~\eqref{eq:x^k_feasibility} and the fact that $\eta^k_i < \eta_i$, we have $x^k \in \dom{\chi_{0}} \cap\{x: \psi_{i}(x)\le \eta_{i},\ i = 1, \dots,m\}$. Hence, the whole sequence $\{x^k\}$ remains feasible for the original problem.

Now, let us apply Lemma~\ref{lem:3pt} with $g(x)=\inprod{\nabla f_{0}(x^{k})}{x}+\chi_{0}(x)+\onebf_{\Xcal_k}(x)$, {$y=x^k$} and $\gamma=L_{0}$.
Then, for any $x \in \dom{\chi_0} \cap \Xcal_k$, we have 
\begin{equation*}
\inprod{\nabla f_{0}(x^{k})}{x^{k+1}-x} + \chi_{0}(x^{k+1})-\chi_{0}(x)\le
{\frac{L_0}{2}\brbra{\gnorm{x-x^k}{}{2}-\gnorm{x-x^{k+1}}{}{2}-\gnorm{x^k-x^{k+1}}{}{2}}}.
\label{eq:3point2}
\end{equation*}
Placing $x=x^{k}$ in the above relation, we have 
\begin{equation} \label{eq:int_rel20}
\inprod{\nabla f_{0}(x^{k})}{x^{k+1}-x^{k}}+\chi_{0}(x^{k+1})-\chi_{0}(x^{k})\le-L_{0}\gnorm{x^{k+1}-x^{k}}{}{2}.
\end{equation}
Moreover, since $f_{0}(\cdot)$ is Lipschitz smooth, we have that
\[
f_{0}(x^{k+1})\le f_{0}(x^{k})+\inprod{f_{0}(x^{k})}{x^{k+1}-x^{k}}+\tfrac{L_{0}}{2}\gnorm{x^{k+1}-x^{k}}{}{2}.
\]
Summing up the above two inequalities and using the definition $\psi_{0}=f_{0}+\chi_{0}$,
we conclude~\eqref{eq:sufficient-decrease}. Hence, the sequence $\big\{\psi_{0}(x^{k})\big\}$ is monotonically decreasing.
The convergence of $\big\{\psi_{0}(x^{k})\big\}$ follows from the lower boundedness assumption.

Part 2). 
Note that  \eqref{eq:x^k_feasibility} ensures the strict feasibility of $x^k$ w.r.t. the constraint set $\Xcal_k$ for the $k$-th subproblem. Therefore, Slater's condition for \eqref{subproblem} and the optimality of $x^{k+1}$ imply that there must exist a vector $\lambda^{k+1} \in \Rbb^m_+$ satisfying KKT condition (\ref{eq:kkt-subprob}).
Hence, we complete the proof.
\end{proof}
{In order to show convergence to the KKT solutions, we need the following constraint qualifications.} 
\begin{assumption}[{Uniform} MFCQ]
\label{assu:y-bounded}
 All the feasible points of  problem~\eqref{prob:main} satisfy MFCQ.
\end{assumption}
We state the main asymptotic convergence property of {\GD} in the following theorem.
\begin{theorem}\label{prop:lcgd:bound-dual}%
Suppose that Assumption~\refeq{assu:y-bounded} holds, then we have the following conclusions.
\begin{enumerate}
	\item The  dual  solutions $\{\lambda^{k+1}\}$ are bounded from above. Namely, there exists a constant $B>0$ such that 
\begin{equation}\label{eq:lambda-bound}
\sup_{0\le k\le \infty}\gnorm{\lambda^{k+1}}{}{}<B.	
\end{equation}
	\item Every limit point of Algorithm~\refeq{alg:lcgd} is a KKT point.
\end{enumerate}
\end{theorem}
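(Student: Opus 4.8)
The plan is to establish the two conclusions in sequence, with the multiplier bound \eqref{eq:lambda-bound} being the substantive part and the limit-point statement following from it by a standard passage-to-the-limit argument. For part 1, I would argue by contradiction: suppose $\sup_k \gnorm{\lambda^{k+1}}{}{} = \infty$, so along a subsequence $\gnorm{\lambda^{k+1}}{}{} \to \infty$. Since $\{x^k\}$ lies in the compact feasible set of \eqref{prob:main} (Assumptions~\ref{assum:bounded_set} and the feasibility established in Proposition~\ref{prop:lcgd-suff-descent}), along a further subsequence $x^k \to \bar x$ and $x^{k+1}\to\bar x$ as well — here I would use the sufficient-descent inequality \eqref{eq:sufficient-decrease} together with the convergence of $\{\psi_0(x^k)\}$ to conclude $\gnorm{x^{k+1}-x^k}{}{}\to 0$. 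Normalizing, $\lambda^{k+1}/\gnorm{\lambda^{k+1}}{}{}\to \bar\mu$ for some $\bar\mu\in\Rbb^m_+$ with $\gnorm{\bar\mu}{}{}=1$. Dividing the stationarity relation in \eqref{eq:kkt-subprob} by $\gnorm{\lambda^{k+1}}{}{}$ and taking limits — using that $\partial\psi_0^k(x^{k+1})$ stays bounded (since $\nabla f_0$ is continuous on the compact set, the proximal gradient terms vanish, and $\partial\chi_0$ is locally bounded on the interior of its domain / outer semicontinuous) while the $\partial\psi_i^k$ terms converge appropriately via outer semicontinuity of $\partial\chi_i$ and continuity of $\nabla f_i$ — yields $0 \in \sum_{i=1}^m \bar\mu_i \partial\psi_i(\bar x)$. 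Crucially, the complementary-slackness part of \eqref{eq:kkt-subprob} combined with the strict feasibility chain $\psi_i(x^k)=\psi_i^k(x^k)<\eta_i^k$ (from \eqref{eq:x^k_feasibility}) forces $\bar\mu_i = 0$ whenever $i\notin\Acal(\bar x)$: if $\bar\mu_i>0$ then $\lambda_i^{k+1}>0$ infinitely often, so $\psi_i^k(x^{k+1})=\eta_i^k$, and taking limits with $\eta_i^k\to\eta_i$ gives $\psi_i(\bar x)=\eta_i$.

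So $\bar\mu$ is supported on $\Acal(\bar x)$, is nonzero, nonnegative, and satisfies $0\in\sum_{i\in\Acal(\bar x)}\bar\mu_i\partial\psi_i(\bar x)$. Now I would invoke Uniform MFCQ (Assumption~\ref{assu:y-bounded}) at $\bar x$: there is $z\in -N^*_{\dom\chi_0}(\bar x)$ with $\max_{v\in\partial\psi_i(\bar x)}\inner{v}{z}<0$ for every $i\in\Acal(\bar x)$. Pairing the inclusion $0\in\sum \bar\mu_i\partial\psi_i(\bar x)$ with $z$ gives $0 = \sum_{i\in\Acal(\bar x)}\bar\mu_i\inner{v_i}{z}$ for suitable $v_i\in\partial\psi_i(\bar x)$; since every term is $\le \bar\mu_i\cdot(\text{negative})$ and at least one $\bar\mu_i>0$, the sum is strictly negative — a contradiction. (I need to be slightly careful that the normal-cone/dual-cone term arising from the indicator $\onebf_{\Xcal_k}$ in the subproblem is handled correctly in the limit; the condition $z\in -N^*_{\dom\chi_0}(\bar x)$ is exactly what neutralizes the $\dom\chi_0$ piece, while the $N_{\Xcal_k}$ piece decomposes into the active constraint normals, which is what produces the $\partial\psi_i(\bar x)$ combination above.) This yields the uniform bound $B$.

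For part 2, let $\bar x$ be any limit point, say $x^{k_j}\to\bar x$. As above $x^{k_j+1}\to\bar x$ too, and by part 1 the multipliers $\{\lambda^{k_j+1}\}$ are bounded, hence (passing to a subsequence) converge to some $\bar\lambda\in\Rbb^m_+$. Feasibility of $\bar x$ for \eqref{prob:main} follows since each $x^k$ is feasible and the feasible set is closed. Passing to the limit in the stationarity inclusion of \eqref{eq:kkt-subprob} — the proximal terms $L_i(x^{k_j+1}-x^{k_j})\to 0$, $\nabla f_i(x^{k_j})\to\nabla f_i(\bar x)$, and the subgradients of $\chi_i$ converge into $\partial\chi_i(\bar x)$ by outer semicontinuity, while the closedness of the graph of the sum handles the combination — gives $0\in\partial\psi_0(\bar x)+\sum_{i=1}^m\bar\lambda_i\partial\psi_i(\bar x)$, i.e. $0\in\partial_x\Lcal(\bar x,\bar\lambda)$. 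For complementary slackness, from $\lambda_i^{k_j+1}[\psi_i^{k_j}(x^{k_j+1})-\eta_i^{k_j}]=0$ and the bounds $\psi_i^{k_j}(x^{k_j+1})\le\eta_i^{k_j}<\eta_i$ with $\eta_i^{k_j}\to\eta_i$, one gets $\bar\lambda_i[\psi_i(\bar x)-\eta_i]=0$ in the limit after re-expressing $\psi_i^{k_j}(x^{k_j+1})$ in terms of $\psi_i(x^{k_j+1})$ using smoothness and the vanishing increment. Hence $\bar x$ is a KKT point.

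The main obstacle I anticipate is the careful bookkeeping of the subdifferential limits: since $\partial\chi_i$ is only outer semicontinuous (not continuous), and the subproblem stationarity packages the objective subgradient, the constraint subgradients, and the normal cone of $\Xcal_k$ (which itself moves with $k$) into one inclusion, extracting clean limits requires either a Carathéodory-style representation of each $\partial\psi_i^k(x^{k+1})$ element or an appeal to closedness of the relevant graphs plus local boundedness on the interior of $\dom\chi_0$. Relatedly, one must ensure $\bar x$ actually lies where these subdifferentials are well-behaved — which is why the strict-feasibility solution path (so $x^k$ stays off the boundary of the level sets, and the Slater point keeps us in $\inte\dom\chi_0$ in the relevant applications) is doing real work here.
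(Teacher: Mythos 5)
Your overall strategy for Part 1 is exactly the paper's: argue by contradiction, use compactness plus the sufficient-descent inequality to get $x^k,x^{k+1}\to\bar x$, normalize $u^k=\lambda^{k+1}/\gnorm{\lambda^{k+1}}{}{}\to\bar u$, use complementary slackness and $\eta_i^k\to\eta_i$ to show $\bar u$ is supported on $\Acal(\bar x)$, and contradict MFCQ by pairing the resulting inclusion with the MFCQ direction $z\in -N^*_{\dom\chi_0}(\bar x)$. Part 2 likewise matches the paper's outline.

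The one step where you diverge, and where your argument has a genuine gap, is how you derive the limiting inclusion $0\in\tsum_i\bar u_i\,\partial\psi_i(\bar x)+N_{\dom\chi_0}(\bar x)$. You propose to divide the pointwise stationarity inclusion in \eqref{eq:kkt-subprob} by $\gnorm{\lambda^{k+1}}{}{}$ and pass to the limit using outer semicontinuity of the subdifferentials, asserting that $\partial\psi_0^k(x^{k+1})$ stays bounded because $\partial\chi_0$ is locally bounded on the interior of its domain. That assertion is not justified: $\chi_0$ is only proper, convex, lsc, so the particular subgradient $w_0^k\in\partial\chi_0(x^{k+1})$ selected by stationarity can blow up (indeed it is forced to balance the unbounded term $\tsum_i\lambda_i^{k+1}\partial\psi_i^k(x^{k+1})$), and if $\bar x$ lies on the boundary of $\dom\chi_0$ there is no local boundedness to appeal to. Once the individual selected subgradients $w_0^k$, $w_i^k$ are unbounded, you cannot extract convergent subsequences of each piece separately, and the outer limit of the sum need not decompose into $N_{\dom\chi_0}(\bar x)+\tsum_i\bar u_i\,\partial\chi_i(\bar x)$ without an additional qualification argument; your parenthetical acknowledges the issue but does not resolve it. The paper avoids this entirely by passing to the limit at the level of \emph{function values}: it uses the fact that $x^{k+1}$ minimizes the Lagrangian $\Lcal_k(\cdot,\lambda^{k+1})$ over $\dom\chi_0$ (relation \eqref{eq:lag-opt-x_k}), divides that inequality by $\gnorm{\lambda^{k+1}}{}{}$, takes limits using only continuity of $f_i,\chi_i$ and lsc of $\chi_0$ (after restricting to a bounded sublevel set $\{\psi_0^k\le l_0\}$ to control the objective term), concludes that $\bar x$ minimizes the limit function over $\dom\chi_0$, and only then writes the stationarity condition, which cleanly produces the $N_{\dom\chi_0}(\bar x)$ term. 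The same remark applies to your Part 2: the paper again works with the variational inequality \eqref{eq:middle-06} rather than with subdifferential limits. To repair your write-up, replace the subdifferential-limit step with this function-value argument (or supply a horizon-subgradient decomposition, which is considerably more work).
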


\begin{proof}
Part 1). First, we can immediately see that $\{x^k\}$ is a bounded sequence and hence the limit point exists. This result is from Assumption \ref{assum:bounded_set} and the feasibility of $x^k$ for problem \eqref{prob:main} (c.f. Proposition~\ref{prop:lcgd-suff-descent}, Part 1). 
Without loss of generality, we can assume $\lim_{k\raw\infty}x^{k}=\bar{x}$.
For the sake of contradiction, suppose that $\lambda^{k+1}$ is unbounded,
then passing to a subsequence if necessary, we can assume $\gnorm{\lambda^{k+1}}{}{}\raw\infty$
for simplicity. 
Note that we also have $\lim_{k\raw0}\gnorm{x^{k+1}-x^{k}}{}{2}=0$ due
to the sufficient descent property~(\ref{eq:sufficient-decrease}).
From the KKT condition~(\ref{eq:kkt-subprob}), we have 
\begin{equation}\label{eq:lag-opt-x_k}
\psi_{0}^k(x)+\inprod{\lambda^{k+1}}{\psi^{k}(x)-\eta^{k}} \ge \psi_{0}^k(x^{k+1})+\inprod{\lambda^{k+1}}{\psi^{k}(x^{k+1})-\eta^{k}}, \hspace{1em} \forall x {\ \in \dom{\chi_{0}} }.
\end{equation}
Let $X := \bigcap_{i\in[m]}\{x:\psi_i(x) \le \eta_i\} \cap \dom \chi_{0}$ be the feasible domain for problem \eqref{prob:main}. Due to the fact that $x^k \in X$ (Proposition \ref{prop:lcgd-suff-descent}), boundedness of $X$ (Assumption \ref{assum:bounded_set}) and strong convexity of $\psi^k_{0}$, there exists $l_0 \in \Rbb$ such that $X \subset \{x: \psi_{0}^k(x) < l_0\}$ for all $k$. Then, using \eqref{eq:lag-opt-x_k} for all $x \in \dom{\chi_{0}} \cap \{\psi_{0}^k(x) \le l_0\}$ and dividing both sides by $\gnorm{\lambda^{k+1}}{}{}$, we have 
\begin{equation}
\psi_{0}^k(x)/\gnorm{\lambda^{k+1}}{}{}+\big\langle\lambda^{k+1}/\gnorm{\lambda^{k+1}}{}{},\psi^{k}(x)\big\rangle\ge\psi_{0}^k(x^{k+1})/\gnorm{\lambda^{k+1}}{}{}+\big\langle\lambda^{k+1}/\gnorm{\lambda^{k+1}}{}{},\psi^{k}(x^{k+1})\big\rangle.\label{eq:middle-03}
\end{equation}
Let us take $k\raw\infty$ on both sides of (\ref{eq:middle-03}). Note that \added{for all $x \in \dom{\chi_0} \cap \{\psi_0^k(x) \le l_0\}$}, we have
\begin{align}
	&\lim_{k\raw\infty}\psi_{0}^k(x)/\gnorm{\lambda^{{k+1}}}{}{} =0, \quad \lim_{k\raw\infty}\psi_{0}^k(x^{{k+1}})/\gnorm{\lambda^{{k+1}}}{}{} =0,  \label{eq:lim-mid-1} \\
	&\lim_{k\raw\infty}\psi_{i}^{k}(x) =f_{i}(\bar{x})+\langle\nabla f_{i}(\bar{x}),x-\bar{x}\rangle+\tfrac{L_{i}}{2}\gnorm{x-\bar{x}}{}{2}+\chi_{i}(x),\ i\in[m],\label{eq:lim-mid-2}\\
	&\lim_{k\raw\infty}\psi_{i}^{k}(x^{k+1}) =f_{i}(\bar{x})+\chi_{i}(\bar{x})=\psi_{i}(\bar{x}),\ i\in[m], \label{eq:middle-02}
\end{align}
where  (\ref{eq:lim-mid-1}) is due to  boundedness of $\psi_{0}^k(x)$ \added{on $\dom \chi_{0} \cap \{\psi_{0}^k(x) \le l_0\}$} and  boundedness of $\psi_{0}^k(x^{k+1})$ since $x^{k+1} \in X$ which is a bounded set. Moreover,  (\ref{eq:lim-mid-2}) and (\ref{eq:middle-02}) use the continuity of $f_i(x)$ and $\chi_i(x)$, $i\in[m]$.
Next, we consider the sequence $\{u^{k}=\lambda^{k+1}/\gnorm{\lambda^{k+1}}{}{}\}$.
Since $\gnorm{u^{k}}{}{}$ is a bounded sequence, it has a convergent subsequence.
Let $\bar{u}$ be a limit point of $\{u^{k}\}$ and the subsequence
$\{j_{k}\}\subseteq \{1,2,...,\}$ such that $\lim_{k\raw\infty}u^{j_{k}}=\bar{u}$.
Since the subsequence of a convergent sequence is also convergent, we pass to the subsequence $j_{k}$ in~(\ref{eq:middle-03}) and
apply (\ref{eq:lim-mid-1}), (\ref{eq:lim-mid-2}) and (\ref{eq:middle-02}), yielding
\begin{equation}\label{eq:bound-mid-1}
\tsum_{i=1}^{m}\bar{u}_{i}\big[\langle\nabla f_{i}(\bar{x}),x-\bar{x}\rangle+\tfrac{L_{i}}{2}\gnorm{x-\bar{x}}{}{2}+\chi_{i}(x)\big]\ge\tsum_{i=1}^{m}\bar{u}_{i}\chi_{i}(\bar{x}),
\end{equation}
\added{for all $x \in \dom{\chi_{0}} \cap \{\psi_0^k(x) \le l_0\}$.} Hence, $\bar{x}$ minimizes $\tsum_{i=1}^{m}\bar{u}_{i}\big[\langle\nabla f_{i}(\bar{x}),x-\bar{x}\rangle+\tfrac{L_{i}}{2}\gnorm{x-\bar{x}}{}{2}+\chi_{i}(x)\big]$ \added{on $\dom{\chi_{0}} \cap \{\psi_0^k(x) \le l_0\}$}.
Now noting $\bar{x} \in X \subset \{\psi_0^k(x) < l_0\}$ %
and using the stationarity condition for optimality of $\bar{x}$, we have 
\begin{equation}
0\in\tsum_{i=1}^{m}\bar{u}_{i}\big[\nabla f_{i}(\bar{x})+\partial\chi_{i}(\bar{x})\big] + \added{N_{\dom{\chi_{0}}}(\bar{x}) }=\tsum_{i=1}^{m}\bar{u}_{i}\partial\psi_{i}(\bar{x}) + \added{N_{\dom{\chi_{0}}}(\bar{x}) },\label{eq:middle-04}
\end{equation}
where we dropped the constraint $\psi_0^k(x) \le l_0$ due to complementary slackness and the fact that $\psi_{0}^k(\bar{x}) < l_0$.

Let $\Bcal=\{i:\bar{u}_{i}>0\}$, then we must have $\lim_{k\raw\infty}\lambda_{i}^{j_{k}}=\lim_{k\raw\infty}\bar{u}_{i}\gnorm{\lambda^{j_{k}}}{}{}=\infty$
for $i\in\Bcal$. Based on complementary slackness, we have $\psi_{i}^{j_{k}}(x^{j_{k}+1})=\eta_{i}^{j_{k}}$
for any $i\in\Bcal$ for large enough $k$. Due to~(\ref{eq:middle-02}), we have the limit:
$\psi_{i}(\bar{x})=\eta_{i}$. Therefore, the $i$-th constraint is
active at $\bar{x}$, i.e. $i\in\Acal(\bar{x})$. In view of~(\ref{eq:middle-04}),
there exists subgradients $v_{i}\in\partial\psi_{i}(\bar{x}), i \in [m]$ \added{and $v_0 \in N_{\dom{\chi_{0}}}(\bar{x})$} such that 
\begin{equation}
0=v_0 + \tsum_{i\in\Bcal}\bar{u}_{i}v_{i} .\label{eq:middle-05}
\end{equation}
However, equation~(\ref{eq:middle-05}) contradicts the MFCQ assumption.
This is because MFCQ guarantees the existence of $z \added{\in -N^*_{\dom{\chi_{0}}}(\bar{x})}$ such that $\langle z,v_{i}\rangle<0$
for all $i\in\Acal(\bar{x})$, which implies
\begin{equation*}
0 = \langle z,v_0+\tsum_{i\in\Bcal}\bar{u}_{i}v_{i}\rangle\le \tsum_{i\in\Bcal}\bar{u}_{i}\langle z,v_{i}\rangle \le \tsum_{i\in\Bcal} \bar{u}_i \max_{v \in \partial \psi_{i}(\bar{x})}\inprod{z}{v}<0,
\end{equation*}
where the first inequality follows since $z \in -N_{\dom{\chi_{0}}}(\bar{x})$ and $v_0 \in N_{\dom{\chi_{0}}}(\bar{x})$ implying that $\inprod{z}{v_0} \le 0$, the second inequality follow since $\bar{u}_i \ge 0$ and $v_i \in \partial \psi_i(\bar{x})$, and the last strict inequality follows due to uniform MFCQ (c.f. Assumption \ref{assu:y-bounded} and relation \eqref{eq:mfcq}) and $\bar{u}_i > 0$ for at least one $i \in \Bcal$.
This clearly leads to a contradiction. Hence, we conclude that $\{\lambda^{k+1}\}$ is a bounded sequence and conclude the proof.

\added{Part 2).} 
Without loss of generality, we assume that $\bar{x}$ is the only limit point.
Since $\{\lambda^{k+1}\}$ is a bounded sequence, there exists a limit
point $\bar{\lambda}$. Passing to a subsequence if necessary, we
have $\lambda^{k+1}\raw\bar{\lambda}$. 

From the optimality condition $0\in\partial_{x}\Lcal_{k}(x^{k+1},\lambda^{k+1})$,
for any $x$, we have
\begin{align}
 & \big\langle\nabla f_{0}(x^{k})+\tsum_{i=1}^{m}\lambda_i^{k+1}\nabla f_{i}(x^{k}),x^{k+1}-x\big\rangle+\chi_{0}(x^{k+1})-\chi_{0}(x)+\langle\lambda^{k+1},\chi(x^{k+1})-\chi(x)\big\rangle\nonumber \\
 & \quad\le\tfrac{L_{0}+\langle\lambda^{k+1},L\rangle}{2}\big[\gnorm{x-x^{k}}{}{2}-\gnorm{x^{k+1}-x^{k}}{}{2}-\gnorm{x-x^{k+1}}{}{2}\big].\label{eq:middle-06}
\end{align}
Let us take $k\raw\infty$ on both sides of~(\ref{eq:middle-06}).
We note that  $\lim_{k\raw\infty}\gnorm{x^{k}-x^{k+1}}{}{}=0$ due to Proposition~\ref{prop:lcgd-suff-descent}, $\lim_{k\raw\infty}\added{\chi_i(x^{k+1})=\chi_i(\bar{x})}$ due to the continuity of $\chi_{i}$ ($i\in[m]$), and $\chi_{0}(\bar{x})\le\liminf_{k\raw\infty}\chi_{0}(x^{k})$ due to the lower semi-continuity of $\chi_{0}(\cdot)$. 
It then follows that
\begin{equation}\label{eq:opt_interemediate_KKT}
	\big\langle\nabla f_{0}(\bar{x})+\tsum_{i=1}^{m}\bar{\lambda}_i\nabla f_{i}(\bar{x}),\bar{x}-x\big\rangle+\chi_{0}(\bar{x})-\chi_{0}(x)+\langle\bar{\lambda},\chi(\bar{x})-\chi(x)\big\rangle\le0.
\end{equation}
In other words, $\bar{x}$ is the minimizer of  convex optimization problem $\min_x\big\langle\nabla f_0(\bar{x})+\tsum_{i=1}^{m}\bar{\lambda}_i\nabla f_{i}(\bar{x}),x\big\rangle+\chi_{0}(x)+\bar{\lambda}\big[\chi(x)-\eta\big]$ over \added{$\dom \chi_0$}. 
Hence we have 
\begin{equation}
0\in\nabla f_0(\bar{x})+\partial\chi_{0}(\bar{x})+\langle\bar{\lambda},\partial\psi(\bar{x})\rangle.\label{eq:kkt-mid-1}
\end{equation}
In addition, using the complementary slackness, we have
\begin{align}
0 & =\lim_{k\raw\infty}\tsum_{i=1}^{m}\lambda_{i}^{k+1}\big[\psi_{i}^{k}(x^{k+1})-\eta_{i}^{k}\big]\nonumber \\
 & =\lim_{k\raw\infty}\tsum_{i=1}^{m}\lambda_{i}^{k+1}\big[f_{i}(x^{k})+\langle\nabla f_{i}(x^{k}),x^{k+1}-x^{k}\rangle+\tfrac{L_{i}}{2}\gnorm{x^{k+1}-x^{k}}{}{2}+\chi_{i}(x^{k+1})-\eta_{i}^{k}\big]\nonumber \\
 & =\tsum_{i=1}^{m}\bar{\lambda}_{i}\big[f_{i}(\bar{x})+\chi_{i}(\bar{x})-\eta_{i}\big]\nonumber \\
 & =\langle\bar{\lambda},\psi(\bar{x})-\eta\rangle,\label{eq:kkt-mid2}
\end{align}
due to the convergence $\lim_{k\raw\infty}\lambda^{k+1}=\bar{\lambda}$,
$\lim_{k\raw\infty}\eta^{k}=\eta$, $\lim_{k\raw\infty}\chi(x^{k+1})=\chi(\bar{x})$
and $\lim_{k\raw\infty}\gnorm{x^{k+1}-x^{k}}{}{}=0$. Putting (\ref{eq:kkt-mid-1})
and (\ref{eq:kkt-mid2}) together, we conclude that $(\bar{x},\bar{\lambda})$
satisfies the KKT condition.
\end{proof}

\begin{remark}\label{rem:limit-point_x^k}
	To show the existence of a limit point $\bar{x}$, we  use Assumption~\ref{assum:bounded_set} to ensure that the sequence $x^k$ remains in a bounded domain. 
	For the sake of conciseness, we henceforth assume the existence of a limit point $\bar{x}$ and do not delve into the technical assumption used to ensure this condition.
	Moreover, it should be noted that the boundedness property can be obtained under other assumptions, e.g., %
	assuming the compactness of sublevel set $\{x: \psi_{0}(x) \le \psi_{0}(x^0)\}$  and using the sufficient descent condition \eqref{eq:sufficient-decrease},  we can immediately show the existence of $\bar{x}$. However,  it appears to be more challenging to show convergence via this approach when sufficient descent condition fails (e.g., in the forthcoming stochastic optimization). 
\end{remark}

\subsection{Dependence of $B$ on the constraint qualification.} \label{sec:B_bounded} In Theorem \ref{prop:lcgd:bound-dual}, we proved existence of a bound $B$ on the dual multiplier. However, the size of that bound still remains unknown. Through Example \ref{ex:SCAD-2D} below, we observe that the limiting behaviour of the sequence $\lambda^k$ (which largely governs the size of $B$) is closely tied to the magnitude of the number $c(\bar x)$, where 
$$c(x) := -\min_{\gnorm{z}{}{} \le 1}\max_{v \in \partial \psi_i(x)} \inprod{v}{z}.$$ 
Here, the inner max follows from the relation \eqref{eq:mfcq} and outer min tries to find the best possible $z$ that ensures MFCQ. It is observed that if $c(\bar{x})$ is a large positive number, then MFCQ is strongly satisfied and $B$ is a reasonable bound. In contrast, if $c(\bar{x})$ is close to $0$, then $B$ can get quite large. 
\begin{eg} \label{ex:SCAD-2D}
Consider a two dimensional optimization problem with SCAD constraint: $\min_{x} \psi_0(x)$ subject to $\psi_1(x) \le \eta_1$ where $\psi_0(x) = 7-x_1$ and $\psi_1(x) = \beta\gnorm{x}{1}{} - \tsum_{i =1}^2 h_{\beta, \theta} (x_i)$. Note that %
\begin{equation}\label{eq:scad}
h_{\beta,\theta}(u) = \begin{cases} 0 &\text{if $|u| \le \beta$}; \\
\tfrac{(|u|-\beta)^2}{2(\theta-1)} &\text{if } \beta \le |u| \le \beta \theta;\\
\beta |u| - \tfrac{(\theta+1)\beta^2}{2} &\text{if } |u| \ge \beta\theta \end{cases}.
\end{equation} 
This function fits our framework with the smoothness parameter $\tfrac{1}{\theta-1}$. 
Lets consider $\beta = 1, \theta = 5$, the level $\eta_1  = 3$ and limit point $\bar{x} = (5,0)$. Clearly, the constraint is active at $\bar{x}$ and $h$ is $\tfrac{1}{4}$-smooth function. 
Then, $\partial \psi(\bar{x}) = \{\begin{bmatrix}0 \\ t \end{bmatrix}: t \in [-1,1]\}$ as per the definition. 
Then, one can see that uniform MFCQ is violated at the limit point $\bar{x}$. Indeed, 
\[ \max_{v \in \partial \psi(x)} \inprod v z = \max_{t \in  [-1,1]} tz_2 = \abs{z_2} \nless 0, \]
implying $c(\bar x) = 0$. Furthermore, no $\lambda$ can be found satisfying the KKT condition
\[  \begin{bmatrix}
    -1 \\ 0
\end{bmatrix}+ \lambda \begin{bmatrix}
    0 \\ t
\end{bmatrix} = \begin{bmatrix}
    0 \\ 0
\end{bmatrix},\]
for all $t \in [-1,1]$.
Hence, as we get close to this limit point, bound on $\gnorm{\lambda^k}{}{}$ will get arbitrarily large. Easy way to see this fact is to construct a subproblem at the limit point itself. After observing the feasible region for the subproblem at $(5,0)$, it is clear that it has only one feasible solution $(5,0)$ which gives rise to degeneracy. See Figure \ref{fig:MFCQ_unsatisfied} for more details. Figure \ref{fig:at4_0} shows the well-behaved subproblem at an interior point while Figure \ref{fig:limit_MFCQ_unsatisfied} show the degeneracy at the limit.

However, as we change level $\eta_1$ to any value either above or below $3$, we do not get any violation of MFCQ. It also gives nondegenerate feasible sets at limit point and $\lambda^k$ remains bounded for all $k$. 
See Figure \ref{fig:MFCQ_satisfied} below for more details. In particular, if $\eta_1 = 2.5 < 3$, then $\bar{x} = (3,0)$ is the limit point. At this point, we have $\partial \psi(\hat{x}) = \{\begin{bmatrix}0.5 \\ t \end{bmatrix}: t \in [-1,1]\}$. Moreover, we have
\[ \max_{v \in \partial \psi(\bar{x})} \inprod{v}{z} = 0.5z_1 + \max_{t \in [-1,1]}tz_2 = 0.5z_1 + |z_2|. \]
Choosing the unit vector $z = (z_1, z_2) = (-1,0)$, we obtain that point $\bar{x}$ satisfies MFCQ with $c(\bar{x}) = 0.5$. Hence, even when the search point reaches the limit point $\hat{x}$ (i.e., $\epsilon \to 0$), the $\lambda^k$ still exists. (See, in particular, Figure \ref{fig:limit_MFCQ_satisfied} whose subproblem at $\bar{x}$ has a nonempty interior).
\end{eg}
\begin{minipage}{0.48\textwidth}
    \vspace{-2em}
    \begin{figure}[H]
    \centering
    \caption{The nonconvex constraint $\psi_1(x) \le \eta_1$ where $\eta_1 =3$. The dotted blue curves are the subproblem constraint for two different points. Since the MFCQ is violated at $(5,0)$, the subproblem reduces to a single feasible point at the limit point $(5,0)$.}
    \label{fig:MFCQ_unsatisfied}
    \begin{subfigure}[b]{0.48\textwidth}
        \centering
        \captionsetup{justification=raggedright}
        \includegraphics[width=0.99\textwidth]{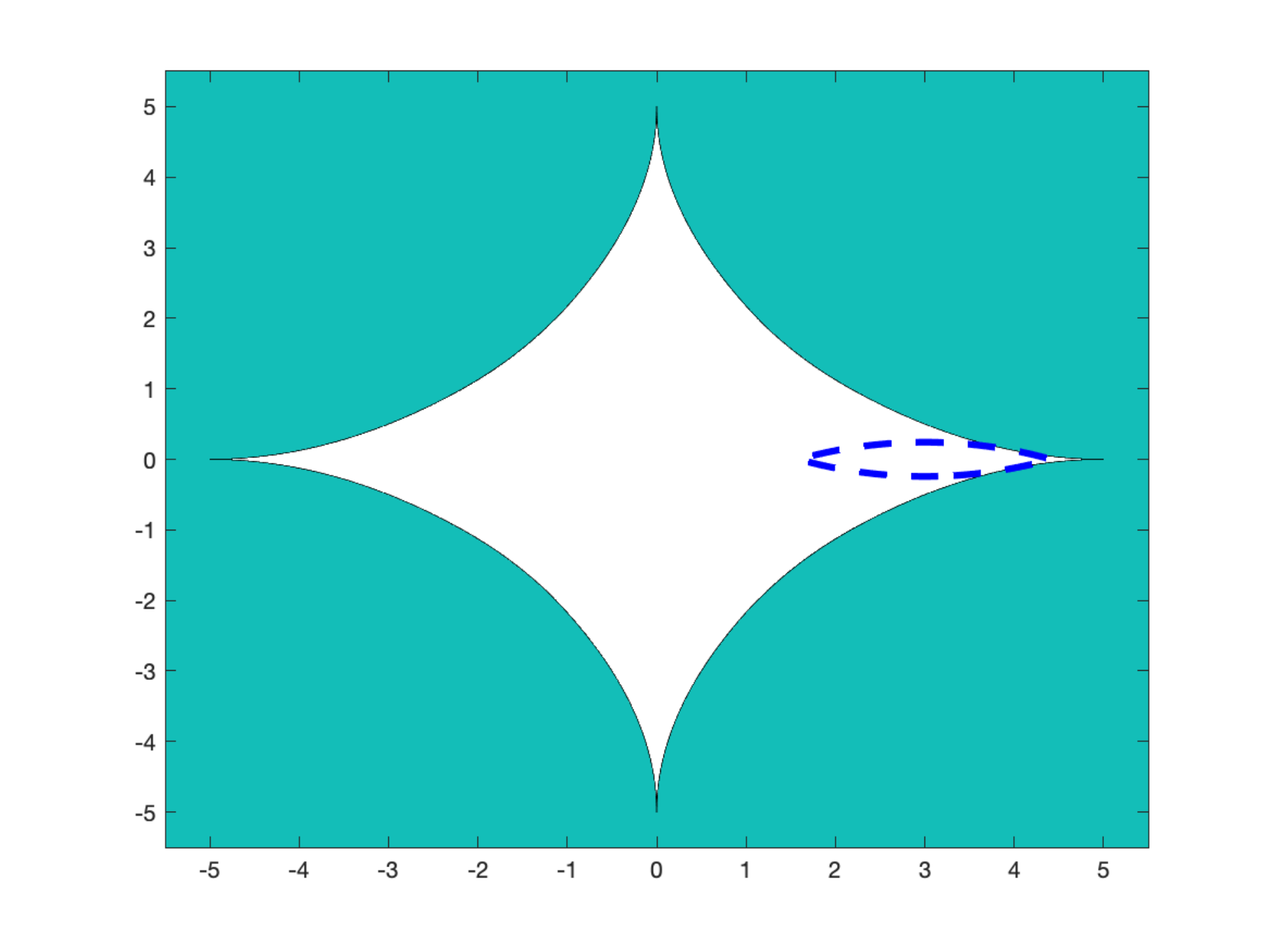}
        \caption{Dotted constraint for the subproblem at (4,0)}\label{fig:at4_0}
    \end{subfigure}
    \hfill
    \begin{subfigure}[b]{0.48\textwidth}
        \centering
        \includegraphics[width=0.99\textwidth]{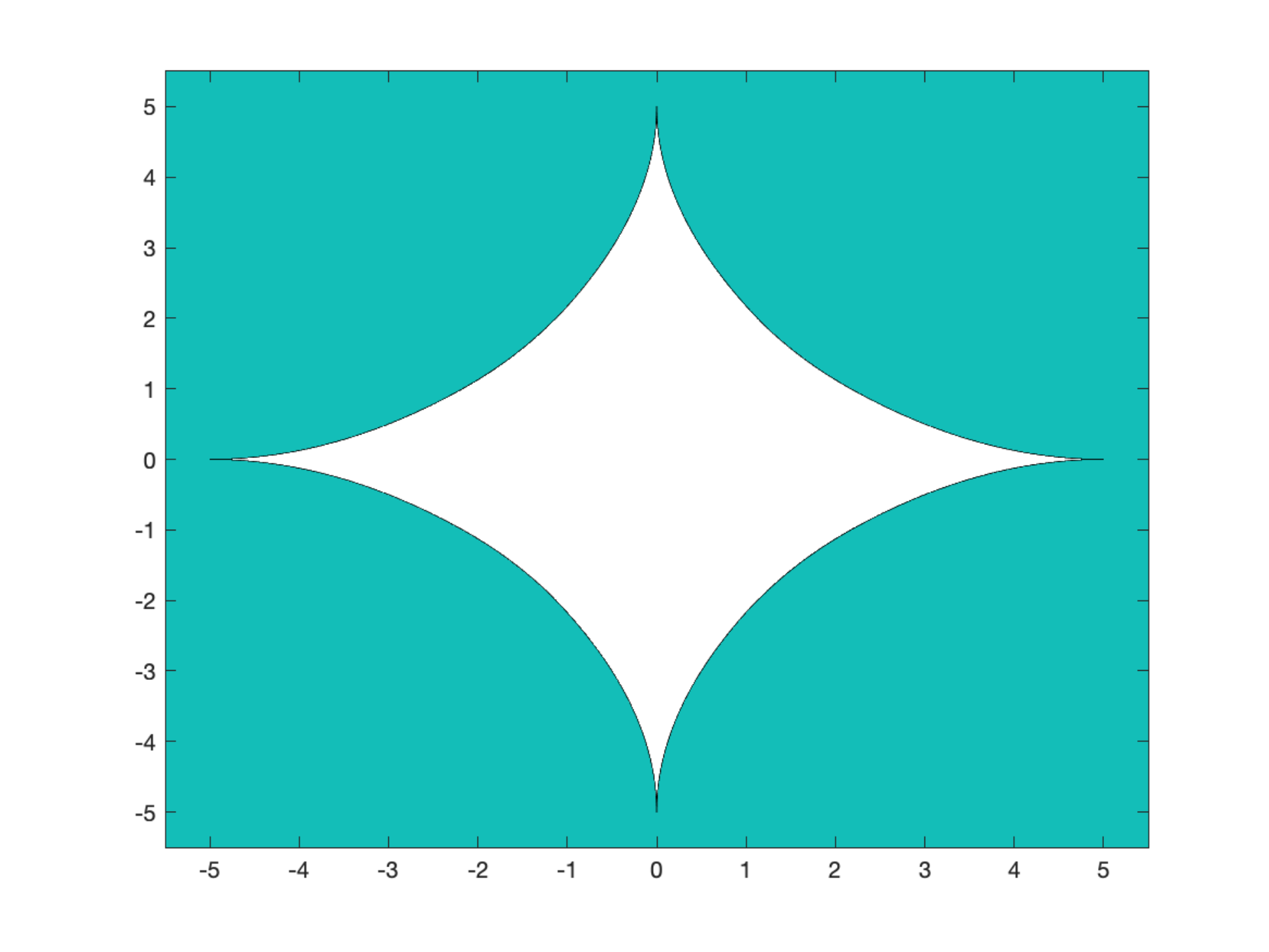}    
        \caption{Degenerate limiting subproblem at (5,0) }\label{fig:limit_MFCQ_unsatisfied}
    \end{subfigure}
\end{figure}    
\end{minipage}
\hfill
\begin{minipage}{0.48\textwidth}
    \vspace{-2em}
    \begin{figure}[H]
        \centering
        \caption{The nonconvex constraint $\psi_1(x)\le \eta_1$ where $\eta_1 = 2.5$. The dotted blue curves are subproblem constraint for two different points. Since the MFCQ is satisfied, the limiting subproblem constraint at $(3,0)$ is still a full dimensional set with nonempty interior. %
        }\label{fig:MFCQ_satisfied}
        \begin{subfigure}[b]{0.48\textwidth}
            \centering
            \includegraphics[width=0.99\linewidth]{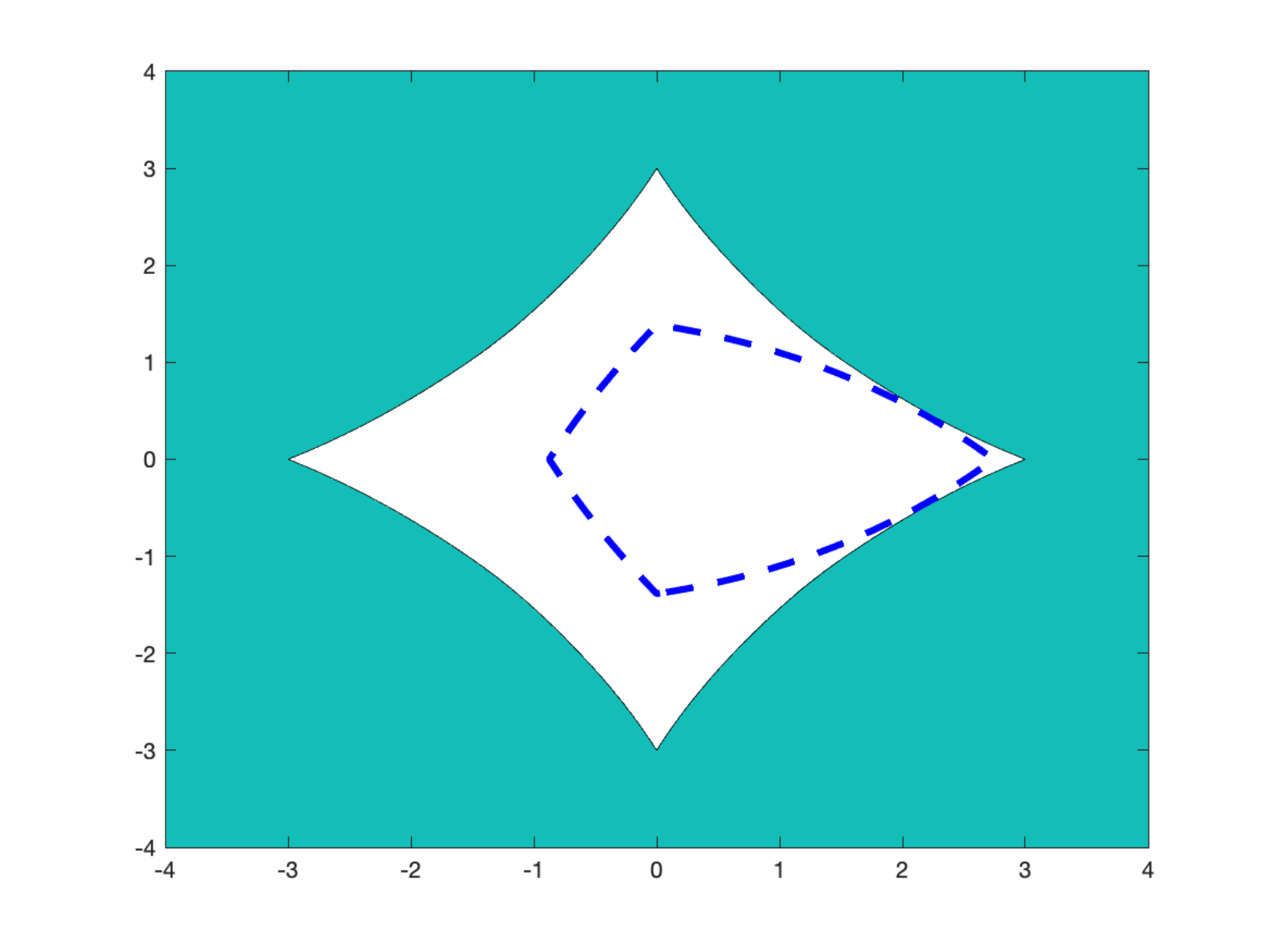}
            \caption{Dotted constraint for the subproblem at (2,0)}%
        \end{subfigure}
        \hfill
        \begin{subfigure}[b]{0.48\textwidth}
            \centering
            \includegraphics[width=0.99\linewidth]{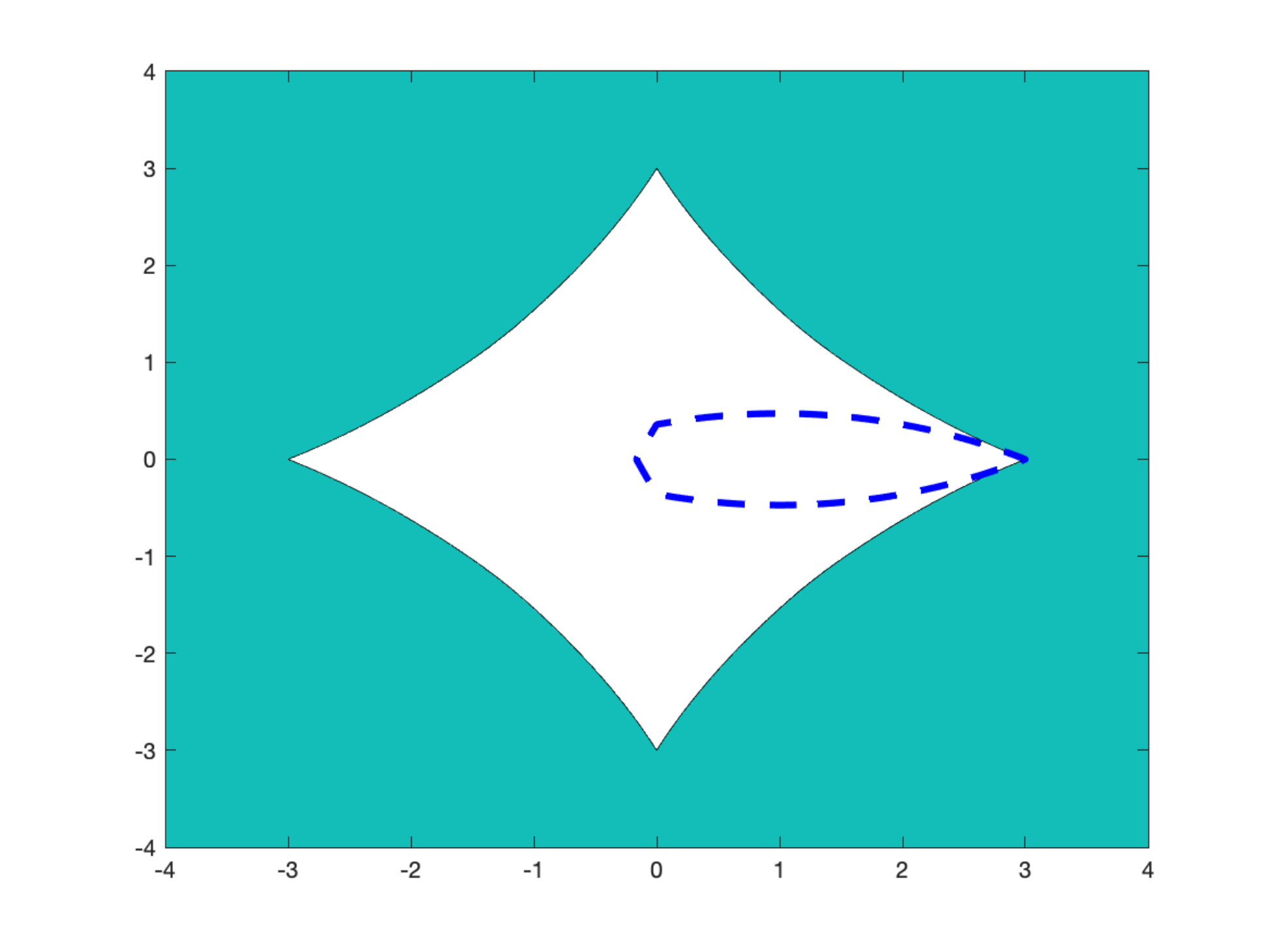}    
            \caption{Constraint for limiting subproblem at (3,0)} \label{fig:limit_MFCQ_satisfied}
        \end{subfigure}
    \end{figure}
\end{minipage}

In view of the above example, we see that the limiting behavior of $\gnorm{\lambda^k}{}{}$ (and by implication the order of $B$) is closely related to the ``strength'' of the constraint qualification MFCQ at the limit point. In order to get an apriori bound $B$, we use a somewhat stronger yet verifiable constraint qualification called as {\em strong feasibility} which is a slight modification of the CQ proposed in \cite[Assumption 3]{boob2019proximal}.

\begin{assumption}[Strong feasibility CQ] \label{ass:strong_feas}
    There exists $\hat{x} \in \Xcal := \bigcap_{i\in[m]}\{x:\psi_i(x) \le \eta_i\} \cap \dom \chi_{0}$ such that 
    \begin{equation}
        \psi_i(\hat{x}) \le \eta^0_i - 2L_iD_\Xcal^2
    \end{equation}
    where $D_\Xcal := \max_{x_1, x_2 \in \Xcal} \gnorm{x_1-x_2}{}{}$ is the diameter of the set $\Xcal$.
\end{assumption}
In view of Assumption \ref{assum:bounded_set}, we note that $\Xcal$ is a bounded set. Hence, $D_\Xcal$ and (consequently) Assumption \ref{ass:strong_feas} are well-defined. See \cite{boob2019proximal} for a connection between Assumption \ref{ass:strong_feas} and Assumption \ref{assu:y-bounded}. Below, we show that strong feasibility CQ leads to a fixed apriori bound on $\lambda^k$.

\begin{lemma}\label{lem:strong_feas_prior_bound}
    Suppose Assumption \ref{ass:strong_feas} is satisfied. Then, $\gnorm{\lambda^k}{1}{} \le B := \tfrac{\psi_0(\hat{x}) - \psi_0^* + L_0D_\Xcal^2}{L_{\min}D_\Xcal^2}$
\end{lemma}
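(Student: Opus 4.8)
The plan is to use the KKT conditions of the subproblem \eqref{eq:kkt-subprob} together with the test point $\hat{x}$ from Assumption~\ref{ass:strong_feas}, exploiting the fact that $\hat{x}$ has a uniform ``slack'' of $2L_iD_\Xcal^2$ in every constraint. First I would recall from Proposition~\ref{prop:lcgd-suff-descent} that at iteration $k$ there is $\lambda^{k}\in\Rbb^m_+$ with $0\in\partial\psi_0^{k-1}(x^{k})+\sum_i\lambda_i^{k}\partial\psi_i^{k-1}(x^{k})$ and complementary slackness $\lambda_i^{k}[\psi_i^{k-1}(x^{k})-\eta_i^{k-1}]=0$. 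Since the aggregated function $\psi_0^{k-1}+\sum_i\lambda_i^{k}\psi_i^{k-1}$ is convex and $x^{k}$ minimizes it over $\dom\chi_0$, evaluating the resulting variational inequality (or just convexity) at the feasible point $\hat{x}$ gives
\[
\psi_0^{k-1}(x^{k})+\tsum_i\lambda_i^{k}\psi_i^{k-1}(x^{k})\;\le\;\psi_0^{k-1}(\hat{x})+\tsum_i\lambda_i^{k}\psi_i^{k-1}(\hat{x}).
\]

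Next I would lower-bound the left side and upper-bound the right side. By complementary slackness $\sum_i\lambda_i^{k}\psi_i^{k-1}(x^{k})=\sum_i\lambda_i^{k}\eta_i^{k-1}\ge \sum_i\lambda_i^{k}\eta_i^{0}$ (using $\eta^{k-1}\ge\eta^{0}$, which holds by the initialization $\eta^0$ and the monotone updates \eqref{eq:update-eta}, since $\eta^{k}$ increases from $\eta^0$ toward $\eta$), and $\psi_0^{k-1}(x^{k})\ge \psi_0(x^{k})\ge\psi_0^*$ by the descent relation built into $\psi_0^{k-1}$. On the right, $\psi_0^{k-1}(\hat{x})=f_0(x^{k-1})+\langle\nabla f_0(x^{k-1}),\hat{x}-x^{k-1}\rangle+\tfrac{L_0}{2}\|\hat{x}-x^{k-1}\|^2+\chi_0(\hat{x})\le f_0(\hat x)+\tfrac{L_0}{2}\|\hat x-x^{k-1}\|^2+\cdots$; more cleanly, since both $x^{k-1}$ and $\hat x$ lie in $\Xcal$ (as $x^{k-1}$ is feasible by Proposition~\ref{prop:lcgd-suff-descent}), the quadratic term is at most $\tfrac{L_0}{2}D_\Xcal^2$, and standard smoothness gives $\psi_0^{k-1}(\hat x)\le \psi_0(\hat x)+L_0D_\Xcal^2$. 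Similarly, for each $i\in[m]$, $\psi_i^{k-1}(\hat x)\le \psi_i(\hat x)+L_iD_\Xcal^2\le \eta_i^0-2L_iD_\Xcal^2+L_iD_\Xcal^2=\eta_i^0-L_iD_\Xcal^2$, where the second inequality is Assumption~\ref{ass:strong_feas}.

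Combining, $\psi_0^*+\sum_i\lambda_i^{k}\eta_i^0\le \psi_0(\hat x)+L_0D_\Xcal^2+\sum_i\lambda_i^{k}(\eta_i^0-L_iD_\Xcal^2)$, i.e. $\sum_i\lambda_i^{k}L_iD_\Xcal^2\le \psi_0(\hat x)-\psi_0^*+L_0D_\Xcal^2$. Bounding $L_i\ge L_{\min}$ on the left yields $L_{\min}D_\Xcal^2\|\lambda^{k}\|_1\le \psi_0(\hat x)-\psi_0^*+L_0D_\Xcal^2$, which is exactly the claimed bound $B$. The main obstacle I anticipate is purely bookkeeping: making sure the two smoothness ``transfer'' inequalities $\psi_i^{k-1}(\hat x)\le\psi_i(\hat x)+L_iD_\Xcal^2$ are applied in the correct direction (the surrogate $\psi_i^{k-1}$ over-estimates $\psi_i$, so this is the descent lemma used the ``easy'' way) and that $D_\Xcal$ genuinely dominates $\|\hat x-x^{k-1}\|$ — both $\hat x$ and the whole iterate sequence must be verified to lie in $\Xcal$, which is supplied by Proposition~\ref{prop:lcgd-suff-descent}(1) and Assumption~\ref{assum:bounded_set}. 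A minor indexing subtlety is aligning $\lambda^{k}$ with subproblem $k-1$ versus the paper's $\lambda^{k+1}$ with subproblem $k$; I would state the result for all $k\ge1$ and note it is uniform.
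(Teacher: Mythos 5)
Your proposal is correct and follows essentially the same route as the paper's proof: both use the Lagrangian optimality of the subproblem evaluated at the test point $\hat{x}$, transfer between the surrogate $\psi_i^k$ and the true $\psi_i$ via smoothness (picking up the $L_iD_\Xcal^2$ and $L_0D_\Xcal^2$ terms), and invoke the uniform slack $2L_iD_\Xcal^2$ from Assumption~\ref{ass:strong_feas} together with complementary slackness and $\psi_0(x^{k+1})\ge\psi_0^*$ to isolate $\inprod{\lambda^{k+1}}{L}D_\Xcal^2$ on one side. The only cosmetic difference is that you compare against $\eta^0$ via the monotonicity of $\eta^k$, whereas the paper keeps $\eta^k-\psi^k(\hat{x})\ge LD_\Xcal^2$ together; the two are equivalent.
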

\begin{proof}
    Note that 
    \begin{align}
        \psi_i^k(\hat{x}) &= f_i(x^k) + \inprod{\grad f_i(x^k)}{\hat{x} - x^k} + \tfrac{L_i}{2} \gnorm{\hat{x} - x^k}{}{2} + \chi_i(\hat{x}) \nonumber\\
        & \le f_i(\hat{x}) + \chi_i(\hat{x}) + L_i\gnorm{\hat{x} -x^k}{}{2} \nonumber\\
        &= \psi_i(\hat{x}) + L_i\gnorm{\hat{x} -x^k}{}{2} \le \eta^0_i-L_iD_\Xcal^2 < \eta^k_i - L_iD_\Xcal^2, \label{eq:subprob_feas_under_strong_feas}
    \end{align}
    where first inequality uses $f_i(\hat{x}) \ge f_i(x^k) + \inprod{\grad f_i(x^k)}{\hat{x} - x^k} - \tfrac{L_i}{2} \gnorm{\hat{x} -x^k}{}{2}$ (follows due to $L_i$-smoothness of $f_i$), and second inequality follows by Assumption \ref{ass:strong_feas} along with the fact that $x^k \in \Xcal$ (see Proposition \ref{prop:lcgd-suff-descent}). 
    
    In view of \eqref{eq:subprob_feas_under_strong_feas}, we have strict feasibility of subproblem \eqref{subproblem} for all $k$ implying that $\lambda^{k+1}$ exists. Hence, we have $x^{k+1} = \argmin_{x} \psi^k_0(x) + \inprod{\lambda^{k+1}}{\psi^k(x)}$. Then, for all $x \in \dom \chi_0$, we have
    \begin{align*}
         \psi^k_0(x^{k+1}) &= \psi^k_0(x^{k+1}) + \inprod{\lambda^{k+1}}{\psi^k(x^{k+1}) -\eta^k} \\
         &\le \psi^k_0(x) + \inprod{\lambda^{k+1}}{\psi^k(x) - \eta^k} 
    \end{align*}
    where  equality follows from the complementary slackness of the KKT condition, and  inequality is due to optimality of $x^{k+1}$. Using $x = \hat{x}$ in the above relation and combining with  \eqref{eq:subprob_feas_under_strong_feas}, we obtain
    \begin{align}
        \psi^k_0(\hat{x}) - \psi^k_0(x^{k+1}) \ge \inprod{\lambda^{k+1}}{ \eta^k-\psi^k(\hat{x})} \ge \inprod{\lambda^{k+1}}{L}D_\Xcal^2 \ge \gnorm{\lambda^{k+1}}{1}{}L_{\min}D_\Xcal^2 \label{eq:strong_feas_bound_int_rel1}
    \end{align}
    Finally, note that 
    \begin{align*}
        \psi^k_0(\hat{x}) - \psi^k_0(x^{k+1}) \le \psi_0(\hat{x}) + L_0\gnorm{\hat{x} -x^k}{}{2} -\psi_0(x^{k+1}) \le \psi_0(\hat{x}) - \psi_0^* + L_0D_\Xcal^2
    \end{align*}
    where first inequality follows by \eqref{eq:subprob_feas_under_strong_feas} for $i = 0$ and $\psi^k_0(x) \ge \psi_0(x)$, and second inequality follows by the definition of $\psi_0^*$ and $D_\Xcal$. Combining the above relation with \eqref{eq:strong_feas_bound_int_rel1}, we get the result. Hence, we conclude the proof.
\end{proof}

The  discussion above implies that the value of $B$ is intricately related to the constraint qualification. While uniform MFCQ is unverifiable and does not allow for a priori bounds on $B$, it is widely used in nonlinear programming  to ensure the existence of such a bound \cite{bertsekasnonlinear}. As observed in Figure~\ref{fig:limit_MFCQ_unsatisfied} and Figure~\ref{fig:limit_MFCQ_satisfied}, the actual value of $B$ depends on the closeness of the MFCQ violation at the limit point. This situation is rare, but the current assumptions do not eliminate that possibility.  Problems of this nature are ill-conditioned, and to our knowledge, no algorithm can ensure bounds on the dual in such a situation. The existing literature deals with this issue in two ways: One track assumes existence of $B$ (similar to Theorem \ref{prop:lcgd:bound-dual}) and performs the complexity or convergence analysis; A second track assumes a stronger constraint qualification that removes the ill-conditioned problems and shows more explicit bound on the dual (similar to Lemma \ref{lem:strong_feas_prior_bound}). %
We perform our analysis for both cases. To conclude, we henceforth assume that the bound $B$ is a constant and do not delve into the discussion on related constraint qualification. To substantiate that the bound $B$ is small, we perform detailed numerical experiments in Section \ref{sec:experiments}.

\subsection{Convergence rate analysis of \GD~method.}
Our main goal now is to develop some non-asymptotic convergence rates for Algorithm~\ref{alg:lcgd}. \begin{lemma}
\label{lem:lk-bound}In Algorithm~\refeq{alg:lcgd}, for $k=1,2,...,$
we have
\begin{align}
\gnorm{ \partial_{x}\Lcal(x^{k+1},\lambda^{k+1})}{-}{}  & \le2\big(L_{0}+\inner{\lambda^{k+1}}{L}\big)\gnorm{x^{k+1}-x^{k}}{}{}.\label{eq:bound-L-1}
\end{align}
\end{lemma}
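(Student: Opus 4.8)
The plan is to start from the subproblem KKT condition in Proposition~\ref{prop:lcgd-suff-descent}, Part~2), which states that $0 \in \partial\psi_0^k(x^{k+1}) + \tsum_{i=1}^m \lambda_i^{k+1}\partial\psi_i^k(x^{k+1})$. Unpacking the definition \eqref{eq:psik-1} of the surrogates $\psi_i^k$, each $\partial\psi_i^k(x^{k+1})$ equals $\grad f_i(x^k) + L_i(x^{k+1}-x^k) + \partial\chi_i(x^{k+1})$. So there exist subgradients $g_i \in \partial\chi_i(x^{k+1})$, $i=0,1,\dots,m$, with
\[
0 = \grad f_0(x^k) + L_0(x^{k+1}-x^k) + g_0 + \tsum_{i=1}^m \lambda_i^{k+1}\big[\grad f_i(x^k) + L_i(x^{k+1}-x^k) + g_i\big].
\]
On the other hand, $\partial_x\Lcal(x^{k+1},\lambda^{k+1}) = \grad f_0(x^{k+1}) + \partial\chi_0(x^{k+1}) + \tsum_{i=1}^m \lambda_i^{k+1}\big[\grad f_i(x^{k+1}) + \partial\chi_i(x^{k+1})\big]$. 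The key observation is that the \emph{same} subgradients $g_i$ are admissible in this Minkowski sum, so the vector
\[
w := \grad f_0(x^{k+1}) + g_0 + \tsum_{i=1}^m \lambda_i^{k+1}\big[\grad f_i(x^{k+1}) + g_i\big]
\]
belongs to $\partial_x\Lcal(x^{k+1},\lambda^{k+1})$, and hence $\gnorm{\partial_x\Lcal(x^{k+1},\lambda^{k+1})}{-}{} \le \gnorm{w}{}{}$.

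The second step is simply to subtract the two displayed identities: since the first one equals zero, $w = w - 0$ telescopes, with all the $g_i$ terms cancelling, to
\[
w = \big[\grad f_0(x^{k+1}) - \grad f_0(x^k)\big] - L_0(x^{k+1}-x^k) + \tsum_{i=1}^m \lambda_i^{k+1}\Big(\big[\grad f_i(x^{k+1}) - \grad f_i(x^k)\big] - L_i(x^{k+1}-x^k)\Big).
\]
Now apply the triangle inequality together with $L_i$-Lipschitz smoothness of each $f_i$ (Assumption~\ref{assum:f0-lip}), which gives $\gnorm{\grad f_i(x^{k+1}) - \grad f_i(x^k)}{}{} \le L_i\gnorm{x^{k+1}-x^k}{}{}$ for $i=0,1,\dots,m$. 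Each bracketed difference plus its proximal term is then bounded by $2L_i\gnorm{x^{k+1}-x^k}{}{}$, and using $\lambda_i^{k+1}\ge 0$ we obtain $\gnorm{w}{}{} \le 2\big(L_0 + \tsum_{i=1}^m \lambda_i^{k+1}L_i\big)\gnorm{x^{k+1}-x^k}{}{} = 2\big(L_0 + \inner{\lambda^{k+1}}{L}\big)\gnorm{x^{k+1}-x^k}{}{}$, which is exactly \eqref{eq:bound-L-1}.

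I do not anticipate a serious obstacle here; the lemma is essentially bookkeeping around the definition of the nonconvex subdifferential. The one point requiring a little care is justifying that the subgradient selections $g_i \in \partial\chi_i(x^{k+1})$ furnished by the subproblem optimality condition can be reused verbatim in $\partial_x\Lcal(x^{k+1},\lambda^{k+1})$ — this is precisely why the paper adopts the Minkowski-sum definition $\partial\psi_i = \{\grad f_i\} + \partial\chi_i$ (attributed to \cite{boob2019proximal}), so that the surrogate and the true Lagrangian differ only in where the smooth gradients are evaluated, while sharing the same convex-subgradient component. Once that is noted, the rest is the triangle inequality and smoothness.
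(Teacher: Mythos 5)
Your proposal is correct and follows essentially the same route as the paper: the paper's relation \eqref{eq:relation-L} is exactly your subtraction step written as a set identity, and both arguments then combine the optimality condition $0\in\partial_x\Lcal_k(x^{k+1},\lambda^{k+1})$ with $L_i$-smoothness and the triangle inequality to get the factor $2(L_0+\inner{\lambda^{k+1}}{L})$. Your explicit selection of the subgradients $g_i\in\partial\chi_i(x^{k+1})$ just makes the Minkowski-sum bookkeeping more transparent; there is no substantive difference.
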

\begin{proof}
Let $\Lcal_{k}$ be the Lagrangian function of subproblem~\eqref{subproblem}:
\begin{align}
\Lcal_{k}(x,\lambda) & =\psi_{0}^{k}(x)+\tsum_{i=1}^{m}\lambda_{i}\sbra{\psi_{i}^{k}(x)-\eta_{i}^{k}}. \label{eq:lagrange-1}
\end{align}
Using \eqref{eq:lagrange-0} and \eqref{eq:lagrange-1}, we have 
\begin{align}
 & \partial_x\Lcal_{k}(x^{k+1},\lambda^{k+1})\nonumber \\
 &\quad = \nabla f_{0}(x^{k})+L_{0}(x^{k+1}-x^{k})+\partial\chi_{0}(x^{k+1})+\tsum_{i=1}^{m}\lambda_{i}^{k+1}\big[\nabla f_{i}(x^{k})+L_{i}(x^{k+1}-x^{k})+\partial_x\chi_{i}(x^{k+1})\big]\nonumber \\
 &\quad = \partial_x\Lcal(x^{k+1},\lambda^{k+1})+\nabla f_{0}(x^{k})-\nabla f_{0}(x^{k+1})+\tsum_{i=1}^{m}\lambda_{i}^{k+1}\big[\nabla f_{i}(x^{k})-\nabla f_{i}(x^{k+1})\big]\nonumber \\
 &\qquad +\big(L_{0}+\langle\lambda^{k+1},L\rangle\big)\big(x^{k+1}-x^{k}\big).\label{eq:relation-L}
\end{align}
Using the smoothness of $f_i(x)$, the optimality condition $0\in\partial_x\Lcal_{k}(x^{k+1},\lambda^{k+1})$ and the triangle inequality, we obtain
\begin{align}
\gnorm{\partial_x\Lcal(x^{k+1},\lambda^{k+1})}{-}{} & \le 2L_{0}\gnorm{x^{k+1}-x^{k}}{}{}+2\inner{\lambda^{k+1}}{L}\gnorm{x^{k+1}-x^{k}}{}{}.\label{eq:bound-L-1-1}
\end{align}
Hence we conclude the proof.
\end{proof}
In view of Lemma~\ref{lem:lk-bound}, we derive the complexity of
{\GD} to attain approximate KKT solutions in the following theorem.

\begin{theorem}\label{thm:lcgd:sum-bound}
Let $\alpha_k>0$  $(k=0,1,..,K)$ be a non-decreasing sequence  and suppose that Assumption~\refeq{assu:y-bounded} holds, then
there exists a constant $B>0$ such that 
\begin{align}
\tsum_{k=0}^{K}\alpha_k\gnorm{ \partial_{x}\Lcal(x^{k+1},\lambda^{k+1})}{-}{2} & \le{8(L_{0}+B\gnorm{L}{}{})^{2}} D^2\alpha_K,\label{eq:sum-lag-square-1}\\
\tsum_{k=0}^{K} \alpha_k \langle \lambda^{k+1}, \vert\psi(x^{k+1})-\eta\vert \rangle  
& \le 2B\gnorm{L}{}{}D^2\alpha_K  + B \tsum_{k=0}^K\alpha_k\gnorm{\eta-\eta^k}{}{}, \label{eq:sum-slack-1}
\end{align}
where $D=\sqrt{\tfrac{\psi_{0}(x^{0})-\psi_{0}^{*}}{L_{0}}}$.  Moreover, if we choose the index $\hat{k}\in\{0,1,...,K\}$ with probability $\prob(\hat{k}=k)=\alpha_k/(\tsum_{k=0}^K\alpha_k)$, then $x^{\hat{k}+1}$ is a randomized $\epsilon_K$ type-I KKT point with 
\begin{equation}\label{eq:epsi-1-2}
	\epsilon_K=\tfrac{1}{\tsum_{k=0}^K\alpha_k} \max \bcbra{ {8(L_{0}+B\gnorm{L}{}{})^{2}}D^2\alpha_K,\ 2B\gnorm{L}{}{} D^2 \alpha_K + B \tsum_{k=0}^K\alpha_k\gnorm{\eta-\eta^k}{}{}}
\end{equation}
\end{theorem}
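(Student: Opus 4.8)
The plan is to combine three ingredients already established: the per-iteration stationarity estimate of Lemma~\ref{lem:lk-bound}, the uniform dual bound $\gnorm{\lambda^{k+1}}{}{}<B$ of Theorem~\ref{prop:lcgd:bound-dual}, and the telescoping sufficient-descent inequality \eqref{eq:sufficient-decrease}. For the first display \eqref{eq:sum-lag-square-1}, I would square \eqref{eq:bound-L-1}, use Cauchy--Schwarz and the dual bound to replace $L_{0}+\inner{\lambda^{k+1}}{L}$ by $L_{0}+B\gnorm{L}{}{}$, giving $\gnorm{\partial_{x}\Lcal(x^{k+1},\lambda^{k+1})}{-}{2}\le 4(L_{0}+B\gnorm{L}{}{})^{2}\gnorm{x^{k+1}-x^{k}}{}{2}$. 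Then multiply by $\alpha_k$, sum over $k=0,\dots,K$, invoke monotonicity $\alpha_k\le\alpha_K$ to pull $\alpha_K$ out, and apply \eqref{eq:sufficient-decrease} in the form $\gnorm{x^{k+1}-x^{k}}{}{2}\le\frac{2}{L_0}\brbra{\psi_0(x^k)-\psi_0(x^{k+1})}$; the sum telescopes to $\frac{2}{L_0}\brbra{\psi_0(x^0)-\psi_0(x^{K+1})}\le 2D^2$, which yields exactly the stated right-hand side $8(L_{0}+B\gnorm{L}{}{})^{2}D^2\alpha_K$.

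For the complementary-slackness bound \eqref{eq:sum-slack-1}, I would exploit that $x^{k+1}$ is feasible for \eqref{prob:main} (Proposition~\ref{prop:lcgd-suff-descent}, Part~1), so $\abs{\psi(x^{k+1})-\eta}=\eta-\psi(x^{k+1})$ componentwise, and decompose $\eta_i-\psi_i(x^{k+1})=(\eta_i-\eta_i^k)+(\eta_i^k-\psi_i^k(x^{k+1}))+(\psi_i^k(x^{k+1})-\psi_i(x^{k+1}))$. The middle term vanishes after pairing with $\lambda_i^{k+1}$ by the complementary slackness in \eqref{eq:kkt-subprob}; the last term is controlled by the two-sided consequence of $L_i$-smoothness, $0\le\psi_i^k(x^{k+1})-\psi_i(x^{k+1})\le L_i\gnorm{x^{k+1}-x^{k}}{}{2}$. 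Summing over $i$ and using the dual bound twice (Cauchy--Schwarz on $\inner{\lambda^{k+1}}{\eta-\eta^k}$ and on $\inner{\lambda^{k+1}}{L}$) gives $\inner{\lambda^{k+1}}{\abs{\psi(x^{k+1})-\eta}}\le B\gnorm{\eta-\eta^k}{}{}+B\gnorm{L}{}{}\gnorm{x^{k+1}-x^{k}}{}{2}$. Multiplying by $\alpha_k$, summing, and reusing the telescoped bound $\sum_{k=0}^K\alpha_k\gnorm{x^{k+1}-x^{k}}{}{2}\le 2D^2\alpha_K$ produces \eqref{eq:sum-slack-1}.

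Finally, for the randomized KKT claim, with $\prob(\hat k=k)=\alpha_k/\brbra{\tsum_{j=0}^K\alpha_j}$ the expectations $\Ebb\bracket{\gnorm{\partial_x\Lcal(x^{\hat k+1},\lambda^{\hat k+1})}{-}{2}}$ and $\Ebb\bracket{-\tsum_{i}\lambda_i^{\hat k+1}(\psi_i(x^{\hat k+1})-\eta_i)}$ equal $\brbra{\tsum\alpha_k}^{-1}$ times the left-hand sides of \eqref{eq:sum-lag-square-1} and \eqref{eq:sum-slack-1} respectively (for the second, again using feasibility so that the sign is correct), so both are bounded by $\epsilon_K$; together with feasibility of $x^{\hat k+1}$ this matches Definition~\ref{def:type1-KKT}. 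I expect the only genuinely delicate step to be the complementary-slackness decomposition: getting the absolute value right requires the feasibility of the iterates, and the surrogate-gap estimate $\psi_i^k(x^{k+1})-\psi_i(x^{k+1})\le L_i\gnorm{x^{k+1}-x^{k}}{}{2}$ must be the lower-smoothness bound rather than the (trivial) upper one; everything else is bookkeeping with Cauchy--Schwarz, monotonicity of $\alpha_k$, and the telescoping sum.
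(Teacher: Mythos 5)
Your proposal is correct and follows essentially the same route as the paper: Lemma~\ref{lem:lk-bound} plus the dual bound $B$ for the stationarity term, the three-way split of $\psi_i(x^{k+1})-\eta_i$ with complementary slackness and the lower-smoothness surrogate-gap estimate for the slackness term, and the telescoped sufficient-descent sum $\tsum_{k=0}^K\alpha_k\gnorm{x^{k+1}-x^k}{}{2}\le 2D^2\alpha_K$. The only (immaterial) differences are that you pull $\alpha_K$ out of the sum directly via nonnegativity of the squared norms where the paper uses Abel summation, and you resolve $\vert\psi_i(x^{k+1})-\eta_i\vert$ by feasibility where the paper uses the triangle inequality; both yield identical bounds.
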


\begin{proof}
From the sufficient descent property~(\ref{eq:sufficient-decrease}), we have
\begin{align}\label{eq:weighted-square-sum}
\tsum_{k=0}^K {\alpha_k}\gnorm{x^k-x^{k+1}}{}{2}  &\le \tfrac{2}{L_0}  \tsum_{k=0}^K \alpha_k \bsbra{\psi_0(x^k)-\psi_0(x^{k+1})} \nonumber \\
 & =  \tfrac{2}{L_0}\bsbra{\alpha_0\psi_0(x^0) + \tsum_{k=1}^{K}(\alpha_k-\alpha_{k-1})\psi_0(x^k) - \alpha_K\psi_0(x^{K+1}) } \nonumber \\
&  \le \tfrac{2\alpha_K}{L_0} \bsbra{\psi_0(x^0) - \psi_0(x^{K+1})} \nonumber\\
& \le \tfrac{2\alpha_K}{L_0} \bsbra{\psi_0(x^0) - \psi_0(x^{*})}  = 2\alpha_K D^2, 
\end{align}
where the  second inequality uses the monotonicity of sequence $\psi_0(x^k)$. 
In view of Theorem~\ref{prop:lcgd:bound-dual} and Cauchy-Schwarz inequality, we have $\langle\lambda^{k+1},L\rangle\le \gnorm{\lambda^{k+1}}{}{}\gnorm{L}{}{} \le B\gnorm{L}{}{}$.
This relation and (\ref{eq:bound-L-1}) implies
\[
\setnorm{\partial_{x}\Lcal(x^{k+1},\lambda^{k+1})}^{2}\le 4{(L_{0}+B\gnorm{L}{}{})^{2}} \gnorm{x^k-x^{k+1}}{}{2}.
\]
Combining the above inequality with (\ref{eq:weighted-square-sum}) immediately yields \eqref{eq:sum-lag-square-1}.

Next, we bound the error of complementary slackness. We have
\begin{align}
\tsum_{i=1}^m\lambda_{i}^{k+1}\vert\psi_{i}(x^{k+1})-\eta_{i}\vert 
& =\tsum_{i=1}^m \lambda_{i}^{k+1}\big|\psi_i^k(x^{k+1})-\eta_i^k -( \eta_i-\eta_i^k) + \psi_{i}(x^{k+1})-\psi_{i}^k(x^{k+1})\big|\nonumber \\
 & \le \tsum_{i=1}^m\big[\lambda_{i}^{k+1}\big|\psi_{i}^{k}(x^{k+1})-\eta_{i}^{k}\big|+\lambda_{i}^{k+1}(\eta_{i}-\eta_{i}^{k})\nonumber \\
 & \quad+\lambda_{i}^{k+1}\big\vert f_{i}(x^{k+1})-f_{i}(x^{k})-\langle\nabla f_{i}(x^{k}),x^{k+1}-x^{k}\rangle-\tfrac{L_{i}}{2}\gnorm{x^{k+1}-x^{k}}{}{2}\big\vert \big]\nonumber \\
 & \le \tsum_{i=1}^m\lambda_{i}^{k+1}(\eta_{i}-\eta_{i}^{k})+\lambda_{i}^{k+1}{L}_{i}\gnorm{x^{k+1}-x^{k}}{}{2}\nonumber \\
 & \le B \gnorm{\eta-\eta^k}{}{} + B\gnorm{L}{}{} \gnorm{x^{k+1}-x^k}{}2 \label{eq:bound-cs-k}
\end{align}
 where the first inequality uses the triangle
inequality, the second inequality uses complementary slackness and the Lipschitz smoothness
of $f_{i}(\cdot)$, and the last inequality follows from Cauchy-Schwartz inequality and boundedness  of $\lambda^{k+1}$. Summing up (\ref{eq:bound-cs-k})  weighted by $\alpha_k$ for $k=0,...,K$,
we have 
\begin{align*}
\tsum_{k=0}^{K}\alpha_k \langle \lambda^{k+1}, \vert\psi(x^{k+1})-\eta\vert \rangle 
& \le \tsum_{k=0}^{K} \alpha_k \big[{B\gnorm{L}{}{}}\gnorm{x^{k+1}-x^k}{}2 + B \gnorm{\eta-\eta^k}{}{} \big].
\end{align*}
Combining the above result with \eqref{eq:weighted-square-sum} gives \eqref{eq:sum-slack-1}. Finally, the fact that $x^{\hat{k}+1}$ is a randomized $\epsilon_K$ type-I KKT point for $\epsilon_K$ defined in \eqref{eq:epsi-1-2} is immediately follows from \eqref{eq:sum-lag-square-1}, \eqref{eq:sum-slack-1} and Definition~\ref{def:type1-KKT}.
\end{proof}
The following corollary shows that the output of Algorithm~\ref{alg:lcgd}
is a randomized $\Ocal(1/K)$ KKT
point under more specific parameter selection.
\begin{corollary}\label{cor:rate-lcgd}
In Algorithm~\refeq{alg:lcgd}, %
suppose that all the assumptions of Theorem~\refeq{thm:lcgd:sum-bound} hold. Set $\delta^{k}=\tfrac{\eta-\eta^{0}}{(k+1)(k+2)}$
and $\alpha_k=k+1$. Then  $x^{\hat{k}+1}$ is a randomized $\epsilon$ Type-I KKT point with
\begin{equation}
	\epsilon = \tfrac{2}{K+2}\max\bcbra{8(L_{0}+B\gnorm{L}{}{})^{2}D^2, 2B\gnorm{L}{}{} D^2 + B \gnorm{\eta-\eta^0}{}{}}
\end{equation}
\end{corollary}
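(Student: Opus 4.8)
The plan is to specialize the general bound of Theorem~\ref{thm:lcgd:sum-bound} to the concrete parameter choice $\alpha_k = k+1$ and $\delta^k = \tfrac{\eta-\eta^0}{(k+1)(k+2)}$, and simply track the elementary sums. First I would verify the hypotheses of Theorem~\ref{thm:lcgd:sum-bound}: the sequence $\alpha_k = k+1$ is indeed positive and non-decreasing, and Assumption~\refeq{assu:y-bounded} is already assumed, so the constant $B>0$ and the randomized $\epsilon_K$ type-I KKT conclusion apply verbatim with $\alpha_K = K+1$.

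Next I would compute the normalizing factor $\tsum_{k=0}^K \alpha_k = \tsum_{k=0}^K (k+1) = \tfrac{(K+1)(K+2)}{2}$, so that $\tfrac{\alpha_K}{\tsum_{k=0}^K\alpha_k} = \tfrac{K+1}{(K+1)(K+2)/2} = \tfrac{2}{K+2}$. This already handles the first term inside the $\max$ in \eqref{eq:epsi-1-2}, giving the coefficient $\tfrac{2}{K+2}\cdot 8(L_0 + B\gnorm{L}{}{})^2 D^2$ and, in the second term, $\tfrac{2}{K+2}\cdot 2B\gnorm{L}{}{}D^2$.

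The only piece that needs a small computation is the telescoping-type sum $\tsum_{k=0}^K \alpha_k \gnorm{\eta - \eta^k}{}{}$ appearing in \eqref{eq:sum-slack-1} and \eqref{eq:epsi-1-2}. With the prescribed update, $\eta^{k} = \eta^0 + \tsum_{j=0}^{k-1}\delta^j = \eta^0 + (\eta-\eta^0)\tsum_{j=0}^{k-1}\tfrac{1}{(j+1)(j+2)} = \eta^0 + (\eta-\eta^0)\brbra{1 - \tfrac{1}{k+1}}$, using the partial-fraction identity $\tfrac{1}{(j+1)(j+2)} = \tfrac{1}{j+1} - \tfrac{1}{j+2}$. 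Hence $\eta - \eta^k = \tfrac{1}{k+1}(\eta - \eta^0)$, so $\gnorm{\eta - \eta^k}{}{} = \tfrac{1}{k+1}\gnorm{\eta-\eta^0}{}{}$ and therefore $\alpha_k\gnorm{\eta-\eta^k}{}{} = \gnorm{\eta-\eta^0}{}{}$ for every $k$, which gives $\tsum_{k=0}^K \alpha_k\gnorm{\eta-\eta^k}{}{} = (K+1)\gnorm{\eta-\eta^0}{}{}$. Multiplying by $B$ and dividing by $\tsum_{k=0}^K\alpha_k = \tfrac{(K+1)(K+2)}{2}$ yields $\tfrac{2}{K+2} B \gnorm{\eta-\eta^0}{}{}$, matching the claimed second term.

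Finally I would assemble the pieces: substituting all of the above into \eqref{eq:epsi-1-2} gives exactly
\[
\epsilon = \tfrac{2}{K+2}\max\bcbra{8(L_{0}+B\gnorm{L}{}{})^{2}D^2,\ 2B\gnorm{L}{}{} D^2 + B \gnorm{\eta-\eta^0}{}{}},
\]
which is the statement of the corollary. I do not anticipate a genuine obstacle here — the result is a direct corollary and the lone nontrivial observation is the telescoping of $\tsum \tfrac{1}{(j+1)(j+2)}$ that makes $\alpha_k \gnorm{\eta-\eta^k}{}{}$ constant in $k$; everything else is bookkeeping. One minor point worth stating explicitly is that $\delta^k > 0$ and $\eta^{k+1} = \eta^k + \delta^k < \eta$ hold for this choice (since $\eta^k$ is an increasing convex combination of $\eta^0$ and $\eta$ and never reaches $\eta$), so the update rule \eqref{eq:update-eta} and hence Algorithm~\refeq{alg:lcgd} is well-posed with these parameters.
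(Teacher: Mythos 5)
Your proof is correct and follows essentially the same route as the paper's: compute $\tsum_{k=0}^K\alpha_k=\tfrac{(K+1)(K+2)}{2}$, telescope $\tsum_{j}\tfrac{1}{(j+1)(j+2)}$ to get $\eta-\eta^k=\tfrac{1}{k+1}(\eta-\eta^0)$ and hence $\tsum_{k=0}^K\alpha_k\gnorm{\eta-\eta^k}{}{}=(K+1)\gnorm{\eta-\eta^0}{}{}$, then substitute into \eqref{eq:epsi-1-2}. The extra remark on well-posedness of the $\eta^k$ update is a harmless (and correct) addition not present in the paper's proof.
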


\begin{proof}
Notice that $\alpha_K=K+1$, $\tsum_{k=0}^K \alpha_k = \tfrac{(K+1)(K+2)}{2}$.
Moreover, for $i\in[m]$ and $k\ge0$, we have
\[
\eta_{i}^{k}=\eta_{i}^{0}+\tsum_{i=0}^{k-1}\delta^{i}=\eta_{i}^{0}+(\eta_{i}-\eta_{i}^{0})\tsum_{i=0}^{k-1}\tfrac{1}{(i+1)(i+2)}=\tfrac{k}{k+1}\eta_{i}+\tfrac{1}{k+1}\eta_{i}^{0},
\]
which implies that $\tsum_{k=0}^{K}\alpha_k\gnorm{\eta-\eta^{k}}{}{}=(K+1) \gnorm{\eta-\eta^0}{}{}$. Plugging these values into \eqref{eq:epsi-1-2} gives us the desired conclusion.
\end{proof}
\begin{remark}
Corollary~\ref{cor:rate-lcgd} shows that the gradient complexity of {\GD} for smooth composite constrained problems is on a par with that of gradient descent for unconstrained optimization problems.
To the best of our knowledge, this is the first complexity result for a constrained problem where the constraint functions can be nonsmooth and nonconvex.
Note that the convergence rate involves the unknown bound $B$ on the Lagrangian multipliers. The presence of such a constant is not new in nonlinear programming literature~\cite{Auslender2010moving,CartisGouldToint11-1,facchinei2017feasible,cartis2014on,Facchinei19}. 
Fortunately,  we can safely implement \GD~method since the step-size scheme does not rely on $B$. 
On the other hand,  the bound $B$ is often a problem-dependent quantity. E.g., in \cite{boob2019proximal} authors show a class of problems for which an a priori bound $B$ can be established, or \cite{Boob2020feasible} shows the exact value of $B$ for a class of nonconvex relaxations of sparse optimization problems. In such cases, our comparisons are arguably fair.  
Hence, throughout the paper, we make comparative statements under the assumption that $B$ largely depends on the problem.
\end{remark}

\section{Stochastic optimization \label{sec:Stochastic-optimization}}

The goal of this section is to extend our proposed 
framework to stochastic constrained optimization where the objective  $f_0$ is an expectation function: 
\begin{equation}\label{eq:f0-expectation}
f_0(x)\coloneq\Ebb_{\xi\in\Xi} \sbra{F(x,\xi)}.
\end{equation}
Here, $F(x,\xi)$ is differentiable and $\xi$ denotes a random variable following a certain distribution $\Xi$. Directly evaluating either the objective  $f_0$ or its gradient can be computationally challenging due to the stochastic nature of the problem. To address this, we introduce the following additional assumptions.
\begin{assumption}
The information of $f_{0}$ is available via a stochastic first-order
oracle \textup{({\SFO})}. Given any input $x$ and a random sample $\xi$,
\textup{\SFO} outputs a stochastic gradient $\nabla F(x,\xi)$ such that
\begin{equation*}
\Ebb\big[\nabla F(x,\xi)\big] =\nabla f_0(x),\ \textup{and } \
\Ebb\bsbra{\norm{\nabla F(x,\xi)-\nabla f_0(x)}^{2}}\le\sigma^{2},
\end{equation*}
for some $\sigma\in(0,\infty)$. 
\end{assumption}

\subsection{Level constrained stochastic proximal gradient \label{subsec:Stochastic-Gradient-Descent}}
In Algorithm~\ref{alg:lc-sgd}, we present a stochastic variant of {\GD} for solving problem\ \ref{prob:main} with $f_0$ defined by~\eqref{eq:f0-expectation}.
As observed in \eqref{eq:Gk} and \eqref{eq:psi0-sgd}, the \SGD~method uses a mini-batch of random samples to estimate the true gradient in each iteration. It should be noted that the value $f_0(x^k)$ is presented in \eqref{eq:psi0-sgd} only for the ease of description, it is not required when solving \eqref{subproblem}.
\begin{algorithm}
	\begin{algorithmic}[1]
		\State {\bf Input: }$x^{0}$, $\eta^{0}<\eta$, $b_k$, $\delta^{k}$;
		\For{$k=0,1,\dots,K$}
			\State Sample a mini-batch $B_{k}$ of size $b_{k}$ and compute 
			\begin{equation}
				G^{k}=\frac{1}{b_{k}}\tsum_{i=1}^{m_{k}}\nabla F(x^{k},\xi_{i,k});\label{eq:Gk}
			\end{equation}
			\State Set $\psi_{0}^{k}(x)$ by 
			\begin{equation}
				\psi_{0}^{k}(x)\coloneq \big\langle G^{k},x\big\rangle+\frac{\gamma_{k}}{2}\norm{x-x^{k}}^{2}+\chi_{0}(x);\label{eq:psi0-sgd}
			\end{equation}
			\State For $i=1,\ldots,m$, set $\psi_{i}^{k}(x)$ by (\ref{eq:psik-1});
			\State Update $x^{k+1}$ by (\ref{subproblem}) and update $\eta^{k+1}$ by (\ref{eq:update-eta});
		\EndFor
		{\bf Output: }$x^{\hat{k}+1}$ where $\hat{k}$ is a random index sampled from $\{0,1,2,...,K\}$.
	\end{algorithmic}
	\caption{Level constrained stochastic proximal gradient ({\SGD}) \label{alg:lc-sgd}}
\end{algorithm}
Note that the proximal point method of \cite{boob2019proximal} does not need to account for a stochastic nonconvex problem separately since they solve corresponding stochastic convex subproblems using a ConEx method developed in their work. On the contrary, {\SGD} directly applies to stochastic nonconvex function constrained problems and convex subproblems are deterministic in nature. Hence, we need to develop asymptotic convergence and convergence rates for the \SGD~method separately.

Let $\zeta^{k}=G^{k}-\nabla f(x^{k})$ denote the error of gradient
estimation. We have 
\[
\Ebb\big[\norm{\zeta^{k}}^{2}\big]=\frac{1}{b_{k}^{2}}\tsum_{i=1}^{b_{k}}\Ebb_{\xi}\big[\norm{\nabla F(x^{k},\xi_{i,k})-\nabla f(x^{k})}^{2}\big]\le\frac{\sigma^{2}}{b_{k}}.
\]

The following proposition summarizes some important properties of the generated solutions of {\SGD}.
\begin{proposition}
\label{lem:sgd-suff-decrease}In Algorithm~\ref{alg:lc-sgd}, for
any $\beta_{k}\in(0,2\gamma_{k}-L_{0})$, we have 
\begin{equation}
\psi_{0}(x^{k+1})\le\psi_{0}(x^{k})-\frac{2\gamma_{k}-\beta_{k}-L_{0}}{2}\norm{x^{k+1}-x^{k}}^{2}+\frac{\norm{\zeta^{k}}^{2}}{2\beta_{k}}.\label{eq:sufficient-decrease-stochastic}
\end{equation}
Moreover, there exists a vector $\lambda^{k+1}\in\Rbb^m_+$ such that the KKT condition~\eqref{eq:kkt-subprob} \textup{(with $\psi_0^k$ defined in~\eqref{eq:psi0-sgd})} holds.
\end{proposition}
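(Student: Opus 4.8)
The proof mirrors the argument for the deterministic case (Proposition~\ref{prop:lcgd-suff-descent}), with an extra term accounting for the gradient estimation error. For the sufficient-descent inequality~\eqref{eq:sufficient-decrease-stochastic}, first I would apply the three-point inequality (Lemma~\ref{lem:3pt}) with $g(x) = \inprod{G^k}{x} + \chi_0(x) + \onebf_{\Xcal_k}(x)$, $y = x^k$, and $\gamma = \gamma_k$, then plug in $x = x^k$ to obtain
\[
\inprod{G^k}{x^{k+1}-x^k} + \chi_0(x^{k+1}) - \chi_0(x^k) \le -\tfrac{\gamma_k}{2}\gnorm{x^{k+1}-x^k}{}{2}.
\]
Next, using $L_0$-smoothness of $f_0$ gives $f_0(x^{k+1}) \le f_0(x^k) + \inprod{\grad f_0(x^k)}{x^{k+1}-x^k} + \tfrac{L_0}{2}\gnorm{x^{k+1}-x^k}{}{2}$. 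Adding these and writing $\grad f_0(x^k) = G^k - \zeta^k$, the cross term contributes $-\inprod{\zeta^k}{x^{k+1}-x^k}$, which I would bound by Young's inequality as $\tfrac{\beta_k}{2}\gnorm{x^{k+1}-x^k}{}{2} + \tfrac{1}{2\beta_k}\gnorm{\zeta^k}{}{2}$. Collecting the coefficients of $\gnorm{x^{k+1}-x^k}{}{2}$ yields $-\tfrac{2\gamma_k - \beta_k - L_0}{2}$, which is exactly~\eqref{eq:sufficient-decrease-stochastic}; the condition $\beta_k \in (0, 2\gamma_k - L_0)$ simply ensures this coefficient is negative, i.e. that we have genuine descent up to noise.

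For the second claim (existence of $\lambda^{k+1}$ satisfying the KKT system~\eqref{eq:kkt-subprob}), the argument is essentially identical to Part~2 of Proposition~\ref{prop:lcgd-suff-descent}: I would first re-verify that $x^k$ is strictly feasible for the $k$-th subproblem constraint set $\Xcal_k$. This part does not depend on the objective at all—relation~\eqref{eq:x^k_feasibility} only uses the $L_i$-smoothness of $f_i$ for $i \in [m]$ and the update rule $\eta^{k-1}_i < \eta^k_i$, both of which are unchanged in {\SGD}. Hence Slater's condition holds for subproblem~\eqref{subproblem}, and combined with the optimality of $x^{k+1}$ (the subproblem objective $\psi_0^k$ from~\eqref{eq:psi0-sgd} is still a proper lsc strongly convex function, so a minimizer exists and is unique), strong duality gives the desired multiplier vector $\lambda^{k+1} \in \Rbb^m_+$ and the complementary slackness conditions.

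One subtlety worth a sentence: the well-definedness of $\{x^k\}$ (i.e. $\Xcal_k \cap \dom\chi_0 \neq \emptyset$) should be established by induction first, exactly as in Proposition~\ref{prop:lcgd-suff-descent}, since everything downstream presumes $x^{k+1}$ exists. The main—and really the only—difference from the deterministic proof is the bookkeeping of the $\zeta^k$ term in the descent step; I expect no genuine obstacle, as the splitting $G^k = \grad f_0(x^k) + \zeta^k$ together with Young's inequality handles it cleanly. The inequality is stated deterministically (pathwise in $\zeta^k$), so no expectation needs to be taken at this stage; the bound $\Ebb[\gnorm{\zeta^k}{}{2}] \le \sigma^2/b_k$ will only be invoked later when summing over $k$ to extract a convergence rate.
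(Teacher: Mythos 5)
Your plan is correct and, modulo presentation, is the same argument the paper gives: the paper obtains the first inequality by evaluating the subproblem Lagrangian $\Lcal_k(\cdot,\lambda^{k+1})$ at $x=x^k$ and invoking complementary slackness plus strict feasibility of $x^k$, whereas you fold the constraint set into the objective via $\onebf_{\Xcal_k}$ and apply Lemma~\ref{lem:3pt} directly; the two are equivalent, and your route has the mild advantage of not presupposing the multiplier $\lambda^{k+1}$ before its existence is established. The handling of the noise term via $G^k=\nabla f_0(x^k)+\zeta^k$ and Young's inequality, and the second claim via the unchanged induction on strict feasibility of $\Xcal_k$ and Slater's condition, both match the paper.

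One arithmetic slip to fix: applying Lemma~\ref{lem:3pt} with $x=y=x^k$ gives
\begin{equation*}
\inprod{G^k}{x^{k+1}-x^k}+\chi_0(x^{k+1})-\chi_0(x^k)\le \tfrac{\gamma_k}{2}\brbra{0-\gnorm{x^{k+1}-x^k}{}{2}-\gnorm{x^k-x^{k+1}}{}{2}}=-\gamma_k\gnorm{x^{k+1}-x^k}{}{2},
\end{equation*}
i.e.\ the two squared-distance terms coincide and you get the full coefficient $-\gamma_k$, not the $-\tfrac{\gamma_k}{2}$ you display (compare \eqref{eq:int_rel20} in the deterministic case, which has $-L_0$, not $-L_0/2$). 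With $-\tfrac{\gamma_k}{2}$ your bookkeeping would only yield the weaker coefficient $-\tfrac{\gamma_k-\beta_k-L_0}{2}$; the full $-\gamma_k$ is what produces the stated $-\tfrac{2\gamma_k-\beta_k-L_0}{2}$, so the constant you claim at the end is right and the intermediate display is a transcription error rather than a conceptual gap.
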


\begin{proof}
By the KKT condition, $x^{k+1}$ is the minimizer of $\Lcal_{k}(\cdot,\lambda^{k+1})$.
Therefore, we have 
\begin{equation}
\Lcal_{k}(x^{k+1},\lambda^{k+1})+\frac{\gamma_{k}+\langle\lambda^{k+1},L\rangle}{2}\norm{ x^{k+1}-x}^{2}\le\Lcal_{k}(x,\lambda^{k+1}),\quad\forall x\in\Xcal.\label{eq:middle-01}
\end{equation}
Placing $x=x^{k}$ in (\ref{eq:middle-01}) and using (\ref{eq:lagrange-1}),
we have
\begin{align}
& \inprod{ G^{k}}{x^{k+1}-x^{k}} + \chi_{0}(x^{k+1})-\chi_{0}(x^{k}) \nonumber\\
\le{} & \tsum_{i=1}^{m}\lambda_{i}^{k+1}\big[\psi_{i}^{k}(x^{k})-\eta_{i}^{k}\big]-\lambda_{i}^{k+1}\big[\psi_{i}^{k}(x^{k+1})-\eta_{i}^{k}\big] -\gamma_{k}\norm{x^{k+1}-x^{k}}^{2}\nonumber\\
\le{} & -\gamma_{k}\norm{x^{k+1}-x^{k}}^{2}, \label{eq:sgd-mid-01}
\end{align}
where the second inequality is due to the complementary slackness
$\lambda_{i}^{k+1}\big[\psi_{i}^{k}(x^{k+1})-\eta_{i}^{k}\big]=0$ and
strict feasibility $\lambda_{i}^{k+1}\big[\psi_{i}^{k}(x^{k})-\eta_{i}^{k}\big]=\lambda_{i}^{k+1}\big[\psi_{i}(x^{k})-\eta_{i}^{k}\big]<0.$
Using~(\ref{eq:sgd-mid-01}) and Lipschitz smoothness of $f_0$, we have
\begin{align}
\psi_{0}(x^{k+1}) %
 & \le f_{0}(x^{k})+\langle\nabla f_{0}(x^{k}),x^{k+1}-x^{k}\rangle+\frac{L_{0}}{2}\big\Vert x^{k+1}-x^{k}\big\Vert^{2}+\chi_{0}(x^{k+1})\nonumber \\
 & = f_{0}(x^{k})+\chi_{0}(x^{k+1})+\langle G^{k},x^{k+1}-x^{k}\rangle+\frac{L_{0}}{2}\big\Vert x^{k+1}-x^{k}\big\Vert^{2}-\langle\zeta^{k},x^{k+1}-x^{k}\rangle\nonumber \\
 & \le\psi_{0}(x^{k})-\frac{2\gamma_{k}-L_{0}}{2}\norm{x^{k+1}-x^{k}}^{2}-\langle\zeta^{k},x^{k+1}-x^{k}\rangle\nonumber \\
 & =\psi_{0}(x^{k})-\frac{2\gamma_{k}-\beta_k-L_{0}}{2}\norm{x^{k+1}-x^{k}}^{2}+\norm{\zeta^{k}}\cdot\norm{x^{k+1}-x^{k}}-\frac{\beta_k}{2}\norm{x^{k+1}-x^{k}}^{2}\nonumber \\
 & \le\psi_{0}(x^{k})-\frac{2\gamma_{k}-\beta_{k}-L_{0}}{2}\norm{x^{k+1}-x^{k}}^{2}+\frac{\norm{\zeta^{k}}^{2}}{2\beta_{k}}.\label{eq:bound-phi-01}
\end{align}
Above, the last inequality uses the fact $-\frac{a}{2} x^2+bx\le \frac{b^2}{2a}$ for any $x,b\in \Rbb, a>0$.
Showing the existence of the KKT condition follows a similar argument of proving part 2, Proposition~\ref{prop:lcgd-suff-descent}.
\end{proof}
We prove a technical result in the following lemma which plays a crucial role in proving dual boundedness.
\begin{lemma}\label{lem:a.s.convergence}
	Let $\{X_k\}_{k \ge 1}$ be a sequence of random vectors such that $\Ebb[X_k] = {0}$ for all $k \ge 1$ and $\tsum_{k=1}^\infty \sigma_k^2 \le M < \infty$ where $\sigma_k := \sqrt{\Ebb[\gnorm{X_k}{}{2}]}$. Then, $\lim_{k\raw \infty} X_k = 0$ almost surely \textup{(a.s.)}.
\end{lemma}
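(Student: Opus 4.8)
The plan is to deduce almost-sure convergence from convergence in $L^2$ along the full sequence together with the summability of the second moments, via the Borel--Cantelli lemma. First I would observe that summability $\tsum_{k\ge 1}\sigma_k^2 \le M < \infty$ forces $\sigma_k^2 = \Ebb[\gnorm{X_k}{}{2}] \to 0$, so $X_k \to 0$ in $L^2$ and hence in probability; but this alone does not give the almost-sure statement, so the summability must be used more quantitatively. The key step is a Chebyshev/Markov estimate: for any fixed $\vep > 0$,
\[
\tsum_{k=1}^\infty \Pbb\big(\gnorm{X_k}{}{} > \vep\big) \le \tsum_{k=1}^\infty \frac{\Ebb[\gnorm{X_k}{}{2}]}{\vep^2} = \frac{1}{\vep^2}\tsum_{k=1}^\infty \sigma_k^2 \le \frac{M}{\vep^2} < \infty.
\]
By the (first) Borel--Cantelli lemma, the event $\{\gnorm{X_k}{}{} > \vep \text{ infinitely often}\}$ has probability zero, i.e. almost surely $\gnorm{X_k}{}{} \le \vep$ for all sufficiently large $k$.

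To upgrade this to $\lim_{k\to\infty} X_k = 0$ a.s., I would intersect these events over a countable family of $\vep$'s: let $A_j$ be the (full-measure) event that $\limsup_{k}\gnorm{X_k}{}{} \le 1/j$. Then $A := \bigcap_{j\ge 1} A_j$ still has probability one, and on $A$ we have $\limsup_k \gnorm{X_k}{}{} \le 1/j$ for every $j$, hence $\limsup_k \gnorm{X_k}{}{} = 0$, which is exactly $X_k \to 0$. This completes the argument.

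The routine calculations here are genuinely routine; the only place requiring any care is making sure the reduction to a countable family of thresholds is done correctly (one cannot simply take the intersection over all $\vep > 0$ uncountably many times without argument, but the monotonicity of the events in $\vep$ makes the countable intersection over $\vep = 1/j$ suffice). I do not anticipate a substantive obstacle: the statement is a standard consequence of Borel--Cantelli, and the hypothesis $\tsum_k \sigma_k^2 < \infty$ is precisely what is needed to make the series of tail probabilities summable. An alternative route — bounding $\Ebb[\tsum_k \gnorm{X_k}{}{2}] = \tsum_k \sigma_k^2 < \infty$, so that $\tsum_k \gnorm{X_k}{}{2} < \infty$ a.s. and therefore its terms tend to zero — is even shorter and avoids Borel--Cantelli entirely; I would likely present this version, as it is the cleanest.
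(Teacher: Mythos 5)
Your proposal is correct and rests on the same key estimate as the paper: Chebyshev's inequality gives $\prob(\gnorm{X_k}{}{} \ge \vep) \le \sigma_k^2/\vep^2$, and summability of $\sigma_k^2$ makes the tail probabilities summable, which kills the event $\{\limsup_k \gnorm{X_k}{}{} \ge \vep\}$. The paper packages exactly this as a direct contradiction argument (assuming $\prob(\limsup_k \gnorm{X_k}{}{} \ge \vep) \ge c$ and pushing the tail sum below $c$) rather than citing Borel--Cantelli by name, so the two proofs are essentially identical in content; your countable intersection over $\vep = 1/j$ and the monotone-convergence alternative at the end are both sound.
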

\begin{proof}
	We prove this result by contradiction.  If the result does not hold then there exists $\epsilon>0$ and $c > 0$ such that 
	\begin{equation}\label{eq:a.s.requirement}
		\prob\Brbra{\limsup_{k} \gnorm{X_k}{}{} \ge \epsilon} \ge c.
	\end{equation}
	However, due to Chebyshev's inequality, we have $\prob\rbra{\gnorm{X_k}{}{} \ge \epsilon} \le \tfrac{\sigma_k^2}{\epsilon^2}$. Since $\sigma_k^2$ is summable, there exists $k_0$ such that $\tsum_{k =k_0}^\infty \prob(\gnorm{X_k}{}{} \ge \epsilon) \le \tsum_{k =k_0}^\infty\tfrac{\sigma_k^2}{\epsilon^2} < c$. Therefore, we have
	\begin{align*}
		\prob\Brbra{\limsup_{k}\gnorm{X_k}{}{} \ge \epsilon} &= \prob\Brbra{\limsup_{k\ge k_0} \gnorm{X_k}{}{} \ge \epsilon}
		\le \tsum_{k =k_0}^\infty\prob\rbra{\gnorm{X_k}{}{} \ge \epsilon}<c.
	\end{align*}
	The above relation contradicts \eqref{eq:a.s.requirement}. Hence, we have $\lim_{k\raw \infty} X_k = 0$ a.s.
\end{proof}

In the following theorem, we present the main  asymptotic property of {\SGD}.
\begin{theorem}\label{thm:sgd-dual-bound}
 Suppose that $\tsum_{k=0}^\infty b_k^{-1}<\infty$,	then we have that $\lim_{k\raw \infty}\gnorm{x^{k+1}-x^k}{}{}=0$, a.s.
	Moreover, suppose that Assumption~\refeq{assu:y-bounded} holds, $\gamma_k<\infty$, $\beta_k$ is lower bounded and $2\gamma_k-\beta_k-L_0>0$,
then we have that 1) $\sup_{k} \gnorm{\lambda^k}{}{}<\infty$ a.s., and 2) all the limit points of Algorithm~\refeq{alg:lc-sgd} satisfy the KKT condition, a.s.\\
\end{theorem}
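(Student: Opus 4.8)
The plan is to mirror the deterministic analysis of Theorem~\ref{prop:lcgd:bound-dual} while replacing deterministic limits with almost-sure statements obtained from the summability hypothesis $\sum_k b_k^{-1} < \infty$. First I would establish the almost-sure convergence $\gnorm{x^{k+1}-x^k}{}{} \to 0$. Starting from the sufficient-descent inequality \eqref{eq:sufficient-decrease-stochastic} in Proposition~\ref{lem:sgd-suff-decrease}, pick $\beta_k$ so that $c := \inf_k (2\gamma_k - \beta_k - L_0) > 0$ (allowed by the hypotheses), telescope over $k = 0, \dots, K$, and use $\psi_0(x^{K+1}) \ge \psi_0^*$ to get $\tfrac{c}{2}\sum_{k=0}^\infty \gnorm{x^{k+1}-x^k}{}{2} \le \psi_0(x^0) - \psi_0^* + \sum_{k=0}^\infty \tfrac{1}{2\beta_k}\gnorm{\zeta^k}{}{2}$. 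Taking expectations and using $\Ebb[\gnorm{\zeta^k}{}{2}] \le \sigma^2/b_k$ together with $\sum_k b_k^{-1} < \infty$ and the lower bound on $\beta_k$ shows $\Ebb[\sum_{k} \gnorm{x^{k+1}-x^k}{}{2}] < \infty$, hence $\sum_k \gnorm{x^{k+1}-x^k}{}{2} < \infty$ a.s., which forces $\gnorm{x^{k+1}-x^k}{}{} \to 0$ a.s.

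Next I would handle the dual boundedness, claim~1. Working on the almost-sure event where $\gnorm{x^{k+1}-x^k}{}{} \to 0$ (and, if needed, where $\zeta^k \to 0$, which follows from Lemma~\ref{lem:a.s.convergence} applied to $X_k = \zeta^k$ since $\Ebb[\gnorm{\zeta^k}{}{2}] \le \sigma^2/b_k$ is summable), I repeat the contradiction argument of Theorem~\ref{prop:lcgd:bound-dual}, Part~1. Suppose along a subsequence $\gnorm{\lambda^{k+1}}{}{} \to \infty$; the surrogate objective is now $\psi_0^k(x) = \inprod{G^k}{x} + \tfrac{\gamma_k}{2}\gnorm{x-x^k}{}{2} + \chi_0(x)$, and since $G^k = \nabla f_0(x^k) + \zeta^k$ with $x^k \to \bar x$ and $\zeta^k \to 0$ a.s., the limiting surrogate constraint structure is the same as in the deterministic case: $\psi_i^k \to f_i(\bar x) + \inprod{\nabla f_i(\bar x)}{\cdot - \bar x} + \tfrac{L_i}{2}\gnorm{\cdot - \bar x}{}{2} + \chi_i$ for $i \in [m]$, and $\psi_i^k(x^{k+1}) \to \psi_i(\bar x)$. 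Dividing the Lagrangian optimality inequality by $\gnorm{\lambda^{k+1}}{}{}$, passing to a further subsequence so that $u^k = \lambda^{k+1}/\gnorm{\lambda^{k+1}}{}{} \to \bar u$, and taking limits yields exactly relation \eqref{eq:middle-04}–\eqref{eq:middle-05}; the uniform MFCQ of Assumption~\ref{assu:y-bounded} then produces the same contradiction. One subtlety I would flag: $\gamma_k$ appears in $\psi_0^k$, so I need $\gamma_k$ bounded (which is assumed) to conclude $\psi_0^k(x)/\gnorm{\lambda^{k+1}}{}{} \to 0$ and $\psi_0^k(x^{k+1})/\gnorm{\lambda^{k+1}}{}{} \to 0$ on the relevant bounded sublevel set, using boundedness of $\{x^k\}$ from Assumption~\ref{assum:bounded_set} and feasibility of $x^k$.

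For claim~2, having $\sup_k \gnorm{\lambda^k}{}{} < \infty$ a.s., I take a limit point $(\bar x, \bar\lambda)$ of $(x^k, \lambda^k)$ along a subsequence and reproduce Part~2 of the proof of Theorem~\ref{prop:lcgd:bound-dual}: pass to the limit in the subproblem optimality inequality (the analog of \eqref{eq:middle-06}, now with $G^k$ in place of $\nabla f_0(x^k)$; the error term $\inprod{\zeta^k}{x^{k+1}-x}$ vanishes since $\zeta^k \to 0$ a.s. and the iterates are bounded), use $\gnorm{x^{k+1}-x^k}{}{} \to 0$, continuity of $\chi_i$ for $i \in [m]$, and lower semicontinuity of $\chi_0$, to obtain $0 \in \nabla f_0(\bar x) + \partial\chi_0(\bar x) + \inprod{\bar\lambda}{\partial\psi(\bar x)}$, and pass to the limit in complementary slackness using $\eta^k \to \eta$ to get $\inprod{\bar\lambda}{\psi(\bar x) - \eta} = 0$. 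Feasibility of $\bar x$ follows as in Proposition~\ref{lem:sgd-suff-decrease} since each $x^k$ is feasible.

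I expect the main obstacle to be bookkeeping the almost-sure events carefully: the argument requires that on a single probability-one event we simultaneously have $\zeta^k \to 0$, $\gnorm{x^{k+1}-x^k}{}{} \to 0$, and the existence of the limit point $\bar x$, and then the deterministic-style subsequence extraction (first for $\lambda^{k+1}/\gnorm{\lambda^{k+1}}{}{}$, then possibly nested subsequences) must be carried out pathwise. A secondary technical point is making sure the boundedness of $\gamma_k$ (and the lower bound on $\beta_k$) is genuinely all that is needed for the normalization limits \eqref{eq:lim-mid-1}; everything else is a direct transcription of the deterministic proof.
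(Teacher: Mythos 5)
Your proposal is correct and, for the two substantive claims (dual boundedness and KKT limit points), follows essentially the same route as the paper: apply Lemma~\ref{lem:a.s.convergence} to $\zeta^k$ to get $\zeta^k\to 0$ a.s., work pathwise on the intersection of the probability-one events, and transcribe the contradiction/limit arguments of Theorem~\ref{prop:lcgd:bound-dual} with $G^k$ in place of $\nabla f_0(x^k)$ and the extra $\gamma_k$-proximal term handled by the boundedness of $\gamma_k$. The paper organizes the pathwise argument slightly more formally through the events $\Ucal$, $\Acal$, $\Bcal$ and the inclusion $\Ucal\subseteq\Acal^c\cup\Bcal^c$, but the content is identical to your "work on a single full-measure event and extract subsequences" plan.

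The one place where you genuinely diverge is the first claim, $\gnorm{x^{k+1}-x^k}{}{}\to 0$ a.s. You telescope \eqref{eq:sufficient-decrease-stochastic}, take expectations, and conclude $\Ebb\bsbra{\tsum_k \gnorm{x^{k+1}-x^k}{}{2}}<\infty$, hence a.s. summability of the squared steps. The paper instead applies the Robbins--Siegmund supermartingale convergence theorem twice, first to $\psi_0(x^k)$ and then to $\psi_0(x^k)+C_k$ with $C_k$ the accumulated descent. Both are valid; your route is more elementary (it needs only monotone convergence and the fact that a nonnegative random variable with finite expectation is finite a.s.), while the paper's route additionally delivers the a.s. existence of $\lim_k\psi_0(x^k)$, which is not needed for the stated theorem. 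Both arguments, yours and the paper's, quietly upgrade the hypotheses "$\beta_k$ lower bounded and $2\gamma_k-\beta_k-L_0>0$" to uniform positive lower bounds (your $c=\inf_k(2\gamma_k-\beta_k-L_0)>0$); this is consistent with how the paper uses the assumption, so it is not a gap relative to the intended reading, but it is worth stating explicitly as you did.
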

\begin{proof}
First, we fix notations. Let $(\Omega, \Fcal, \Pbb)$   be the probability space defined over the sampling minibatches $B_0,B_1,\ldots,$. Let  $\Ebb_k[\cdot]$  be the expectation conditioned on the sub $\sigma$-algebra generating $B_0,B_1,\ldots, B_{k-1}$.
Applying it to~\eqref{eq:sufficient-decrease-stochastic} gives 
\[
\Ebb_k\sbra{\psi_0(x^{k+1})} \le \psi_0\rbra{x^k} + \frac{\sigma^2}{2b_k\beta_k}.
\]
In view of the super-martingale convergence theorem~\cite{robbins1985a}, we have that
\begin{equation}\label{eq:sgd-asymp-01}
	\lim_{k\raw\infty}\psi_0(x^k) \ \text{exists and is finite a.s., when } \tsum_{k=0}^\infty (b_k\beta_k)^{-1}<\infty.
\end{equation}
Let $C_{k+1}=\tsum_{s=0}^k\frac{2\gamma_s-\beta_s-L_0}{2}\gnorm{x^{s+1}-x^s}{}{2}$ for $k\ge 0$ and $C_0=0$. We have
\[
\Ebb_k\sbra{\psi_0(x^{k+1})+C_{k+1}} \le \psi_0\rbra{x^k} +C_k + \frac{\sigma^2}{2b_k\beta_k}.
\]
Applying the super-martingale convergence theorem~\cite{robbins1985a}  again we can show that the limit of $\psi_0(x^k)+C_k$ exists a.s. Together with \eqref{eq:sgd-asymp-01} and lower-boundedness of $\beta_k$ and $2\gamma_k-\beta_k-L_0$, we have that $\lim_{k\raw\infty}\gnorm{x^{k+1}-x^k}{}{2}=0$, a.s.

Next, we prove the boundedness of $\gnorm{\lambda^k}{}{}$. Let us consider the events
\begin{align*}
\Ucal &=\Bcbra{\omega\in\Omega: \sup_k \gnorm{\lambda^k(\omega)}{}{}=\infty}, \ \Acal=\Bcbra{\omega\in\Omega: \sup_k\gnorm{G^{k}(\omega)}{}{}<\infty},\\
\Bcal &=\Bcbra{\omega\in\Omega: \lim_k \gnorm{x^{k+1}(\omega)-x^k(\omega)}{}{}=0}.
\end{align*}
We just argued $\prob(\Bcal)=1$. It is easy to see that if both conditions (i) $\prob(\Acal)=1$ and (ii) $\Ucal \subseteq \Acal^c\cup \Bcal^c$ hold, then 
 we have $\prob(\Ucal)\le \prob(\Acal^c)+\prob(\Bcal^c)=0$. Hence $\{\lambda^k\}$ is a bounded sequence a.s.

Since $\{b_k^{-1}\}$ is summable, we have $\tsum_{k=1}^\infty \Ebb[\gnorm{\zeta^k}{}{2}] \le \tfrac{\sigma^2}{b_k} < \infty$. Hence, using Lemma \ref{lem:a.s.convergence}, we have that $\lim_{k\raw \infty} \zeta^k = 0$ a.s.
Due to the boundedness of $\nabla f(x^k)$, we have that $G^k=\zeta^k+\nabla f(x^k)$ is bounded, a.s.

We prove condition (ii) by contradiction. Suppose that our claim fails.  We take an element $\omega\in \Ucal \cap (\Acal\cap \Bcal)$ and then pass to a  subsequence $\{j_k\}$ such that $\lim_{k\raw\infty} \gnorm{\lambda^{j_k}(\omega)}{}{}=\infty$. In the rest of the proof, we skip $\omega$ for brevity. Passing to another subsequence if necessary, let $\bar{x}$ be a limit point of $\{x^{j_k}\}$. By our presumption, $\bar{x}$ satisfies MFCQ.
Moreover, the KKT condition implies that
\begin{eqnarray}
& \inner{G^{j_k}}{x^{j_k+1}}+\chi_0(x^{j_k+1})+\frac{\gamma_{j_k}}{2}\gnorm{x^{j_k+1}-x^{j_k}}{}{2} +\inner{\lambda^{j_k+1}}{\psi^{j_k}(x^{j_k+1})} \nonumber \\
&\quad \le\inner{G^{j_k}}{x} +\chi_0(x)+\frac{\gamma_{j_k}}{2}\gnorm{x-x^{j_k}}{}{2} +\inner{\lambda^{j_k+1}}{\psi^{j_k}(x)},\quad\forall x\in\dom \chi_0.\label{eq:sgd-asymp-02}
\end{eqnarray}
Dividing both sides by $\gnorm{\lambda^{j_k+1}}{}{}$ gives 
\begin{eqnarray}
& \bsbra{\inner{G^{j_k}}{x^{j_k+1}}+\chi_0(x^{j_k+1})+\frac{\gamma_{j_k}}{2}\gnorm{x^{j_k+1}-x^{j_k}}{}{2}}/\gnorm{\lambda^{j_k+1}}{}{} +\inner{u^{k}}{\psi^{j_k}(x^{j_k+1})}\nonumber \\
&\quad \le \bsbra{\inner{G^{j_k}}{x} +\chi_0(x)+\frac{\gamma_{j_k}}{2}\gnorm{x-x^{j_k}}{}{2}}/\gnorm{\lambda^{j_k+1}}{}{} +\inner{u^{k}}{\psi^{j_k}(x)},\quad\forall x\in\dom \chi_0.\label{eq:sgd-asymp-03}
\end{eqnarray}
where we denote ${u}^{k}=\frac{\lambda^{j_k+1}}{\norm{\lambda^{j_k+1}}}$. 
Since $\{u^k\}$ is bounded, passing to a subsequence if needed, we have $\lim_{k\raw\infty}u^{k}=\bar{u}$.
 Since $\omega\in \Acal \cap\Bcal$, $\{G^{j_k}\}$ is bounded and $\{\frac{\gamma_{j_k}}{2}\gnorm{x^{j_k+1}-x^{j_k}}{}{2}\}$ converges to 0. Therefore,  taking $k\raw\infty$ on both sides of \eqref{eq:sgd-asymp-03}, we have 
 \begin{equation}
 \inner{\bar{u}}{\chi(\bar{x})}\le \binner{\bar{u}}{\psi(\bar{x})+\inner{\nabla\psi(\bar{x})}{x-\bar{x}}+\frac{L}{2}\norm{x-\bar{x}}^2+\chi(x)},\ \forall x\in \dom \chi_0. \label{eq:sgd-asymp-04}
 \end{equation}
Analogous to the proof of Theorem~\ref{prop:lcgd:bound-dual}, it is easy to show that $\bar{x}$ violates MFCQ, which however, contradicts Assumption~\ref{assu:y-bounded}. As a consequence of this argument, we have $\Ucal \subseteq \Acal^c \cup \Bcal^c$. Hence, we claim that the event $\sup_k \gnorm{\lambda^k}{}{}<\infty$ will happen a.s. and complete our proof of the boundedness condition.

Next, we prove asymptotic convergence to KKT solutions. For any random element $\omega$, let $\bar{x}(\omega)$ be any limit point of $\{x^k\}$. Passing to some subsequence if necessary, we assume that $\lim_{k\raw\infty}x^k=\bar{x}$ and $\lim_{k\raw\infty}\lambda^{k+1}=\bar{\lambda}$.
 \begin{align*}
 & \inprod{G^k+ \nabla f(x^{k})\lambda^{k+1}}{x^{k+1}-x}+\chi_{0}(x^{k+1})-\chi_{0}(x)+\langle\lambda^{k+1},\chi(x^{k+1})-\chi(x)\big\rangle\nonumber \\
 & \quad\le\frac{\gamma_k+\langle\lambda^{k+1},L\rangle}{2}\brbra{\norm{x-x^{k}}^{2}-\norm{x^{k+1}-x^{k}}^{2}-\norm{x-x^{k+1}}^{2}}.%
\end{align*}
Moreover, we have
\begin{align*}
	\inner{G^k}{x^{k+1}-x} 
	& = \inner{\nabla f_0(x^k)}{x^{k+1}-x}+\inner{\zeta^k}{x^{k+1}-x} \nonumber\\
	& = \inner{\nabla f_0(x^k)}{x^{k+1}-x} + \inner{\zeta^k}{x^{k+1}-x^k} + \inner{\zeta^k}{x^{k}-x}\nonumber \\
	& \ge \inner{\nabla f_0(x^k)}{x^{k+1}-x} - \norm{\zeta^k}\norm{x^{k+1}-x^k} + \inner{\zeta^k}{x^{k}-x}. %
\end{align*}
Combining the above two results, we have
\begin{align*}
 & \inprod{\nabla f_0(x^k)+ \nabla f(x^{k})\lambda^{k+1}}{x^{k+1}-x}+\chi_{0}(x^{k+1})-\chi_{0}(x)+\inprod{\lambda^{k+1}}{\chi(x^{k+1})-\chi(x)}\nonumber \\
 & \quad\le\frac{\gamma_k+\langle\lambda^{k+1},L\rangle}{2}\brbra{\norm{x-x^{k}}^{2}-\norm{x^{k+1}-x^{k}}^{2}-\norm{x-x^{k+1}}^{2}} + \norm{\zeta^k}\norm{x^{k+1}-x^k} + \inner{\zeta^k}{x-x^{k}}\nonumber\\
 & \quad \le \frac{\gamma_k+\langle\lambda^{k+1},L\rangle}{2}\brbra{\norm{x-x^{k}}^{2}-\norm{x-x^{k+1}}^{2}} + \frac{\norm{\zeta^k}^2}{2(\gamma_k+\langle\lambda^{k+1},L\rangle)} + \inner{\zeta^k}{x-x^{k}}.
\end{align*}
Taking $k\raw\infty$ in the above relation and noting that almost surely we have $\lim_{k\raw\infty}\zeta^k = 0$ and $\lim_{k\raw\infty} \norm{x^k-x^{k+1}}=0$, then
\[
 \inprod{\nabla f_0(\bar{x})+ \nabla f(\bar{x})\lambda^{k+1}}{\bar{x}-x}+\chi_{0}(\bar{x})-\chi_{0}(x)+\inprod{\lambda^{k+1}}{\chi(\bar{x})-\chi(x)}\le  0, \quad a.s. %
\]
Using an argument similar to the one in Theorem~\ref{prop:lcgd:bound-dual}, we can show that $\bar{x}$ is almost surely a KKT point. 
 \end{proof}

Our next goal is to develop the iteration complexity of Algorithm~\ref{alg:lc-sgd}. 
To achieve this goal, we need to assume that the dual is uniformly bounded, namely, condition~\eqref{eq:lambda-bound} holds for all the random events. While this condition is stronger than the almost sure boundedness of $\lambda^{k+1}$ shown by Theorem~\ref{thm:sgd-dual-bound}, it is indeed satisfied in many scenarios, e.g., when strong feasibility (Assumption \ref{ass:strong_feas}) holds or other scenarios described in \cite{boob2019proximal,Boob2020feasible}.
We present the main complexity result in the following theorem.
\begin{theorem}
\label{thm:main-sgd} Suppose that condition~\eqref{eq:lambda-bound} holds. Then, the
sequence $\{(x^{k+1},\lambda^{k+1})\}$ satisfies that
\begin{align}
 \tsum_{k=0}^{K}\frac{\alpha_k(2\gamma_{k}-\beta_{k}-L_{0})}{4(\gamma_{k}+L_{0}+2B\norm{L})^{2}}\norm{\partial_x\Lcal(x^{k+1},\lambda^{k+1})}_{-}^{2} 
 &\le  L_0D^2\alpha_K  + \tsum_{k=0}^{K}\Big(\frac{\alpha_k(2\gamma_{k}-\beta_{k}-L_{0})}{2(\gamma_{k}+L_{0}+2B\norm{L})^{2}}+\frac{\alpha_K}{2\beta_{k}}\Big)\norm{\zeta^{k}}^{2} \label{eq:lkbound-sgd-1} \\
 \tsum_{k=0}^{K}\alpha_k\big(2\gamma_{k}-\beta_{k}-L_{0}\big)\langle\lambda^{k+1},\vert\psi(x^{k+1})-\eta\vert\rangle  
 &\le  2 BL_0\norm{L}D^2\alpha_K+B\norm{L}  \tsum_{k=0}^{K}\frac{\alpha_K\norm{\zeta^{k}}^{2}}{\beta_{k}}  \nonumber\\
 & \quad +B\tsum_{k=0}^{K} \alpha_k\big(2\gamma_{k}-\beta_{k}-L_{0}\big)\norm{\eta-\eta^{k}}.\label{eq:csbound-sgd-1}
\end{align}
\end{theorem}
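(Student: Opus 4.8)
The plan is to mirror the proof of Theorem~\ref{thm:lcgd:sum-bound}, replacing the exact sufficient-descent inequality by its stochastic counterpart \eqref{eq:sufficient-decrease-stochastic} and carrying the gradient-noise terms $\norm{\zeta^k}^2$ through every estimate. First I would derive the stochastic analogue of Lemma~\ref{lem:lk-bound}. Starting from the subproblem optimality condition $0\in\partial_x\Lcal_k(x^{k+1},\lambda^{k+1})$, which holds by Proposition~\ref{lem:sgd-suff-decrease}, with $\psi_0^k$ now given by \eqref{eq:psi0-sgd}, the same manipulation that led to \eqref{eq:relation-L} gives
\[
\partial_x\Lcal_k(x^{k+1},\lambda^{k+1})=\partial_x\Lcal(x^{k+1},\lambda^{k+1})+\big(G^k-\nabla f_0(x^{k+1})\big)+\tsum_{i=1}^m\lambda_i^{k+1}\big(\nabla f_i(x^k)-\nabla f_i(x^{k+1})\big)+\big(\gamma_k+\inner{\lambda^{k+1}}{L}\big)(x^{k+1}-x^k).
\]
Writing $G^k-\nabla f_0(x^{k+1})=\zeta^k+\big(\nabla f_0(x^k)-\nabla f_0(x^{k+1})\big)$, invoking the $L_i$-smoothness of each $f_i$, the triangle inequality, and the uniform dual bound $\norm{\lambda^{k+1}}<B$ assumed in \eqref{eq:lambda-bound}, I obtain
\[
\norm{\partial_x\Lcal(x^{k+1},\lambda^{k+1})}_-\le\norm{\zeta^k}+\big(\gamma_k+L_0+2B\norm{L}\big)\norm{x^{k+1}-x^k},
\]
and hence, by $(a+b)^2\le 2a^2+2b^2$,
\[
\tfrac{\norm{\partial_x\Lcal(x^{k+1},\lambda^{k+1})}_-^2}{4(\gamma_k+L_0+2B\norm{L})^2}\le\tfrac{\norm{\zeta^k}^2}{2(\gamma_k+L_0+2B\norm{L})^2}+\tfrac12\norm{x^{k+1}-x^k}^2.
\]

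Second, I would bound the weighted movement $\tsum_{k=0}^K\alpha_k(2\gamma_k-\beta_k-L_0)\norm{x^{k+1}-x^k}^2$. Rearranging \eqref{eq:sufficient-decrease-stochastic} and telescoping over $k=0,\dots,K$ — using $2\gamma_k-\beta_k-L_0>0$, $\psi_0(x^{K+1})\ge\psi_0^*$, and $D^2=(\psi_0(x^0)-\psi_0^*)/L_0$ — yields $\tsum_{k=0}^K(2\gamma_k-\beta_k-L_0)\norm{x^{k+1}-x^k}^2\le 2L_0D^2+\tsum_{k=0}^K\norm{\zeta^k}^2/\beta_k$. Since $\{\alpha_k\}$ is nondecreasing, $\alpha_k\le\alpha_K$ and each summand is nonnegative, so
\[
\tsum_{k=0}^K\alpha_k(2\gamma_k-\beta_k-L_0)\norm{x^{k+1}-x^k}^2\le 2L_0D^2\alpha_K+\alpha_K\tsum_{k=0}^K\norm{\zeta^k}^2/\beta_k.
\]
Multiplying the last display of the previous paragraph by $\alpha_k(2\gamma_k-\beta_k-L_0)$, summing over $k$, and substituting this estimate for the $\tfrac12\sum_k\alpha_k(2\gamma_k-\beta_k-L_0)\norm{x^{k+1}-x^k}^2$ piece produces exactly \eqref{eq:lkbound-sgd-1}.

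Third, for the complementary-slackness bound \eqref{eq:csbound-sgd-1}, I would reuse the deterministic chain \eqref{eq:bound-cs-k} essentially verbatim — it only uses complementary slackness and strict feasibility of the $k$-th subproblem, $L_i$-smoothness of $f_i$, the level update \eqref{eq:update-eta}, and $\norm{\lambda^{k+1}}<B$, none of which is affected by the stochasticity of $f_0$ — to get $\inner{\lambda^{k+1}}{\vert\psi(x^{k+1})-\eta\vert}\le B\norm{\eta-\eta^k}+B\norm{L}\norm{x^{k+1}-x^k}^2$. Multiplying by $\alpha_k(2\gamma_k-\beta_k-L_0)$, summing over $k$, and plugging in the bound on $\tsum_k\alpha_k(2\gamma_k-\beta_k-L_0)\norm{x^{k+1}-x^k}^2$ from the second step gives \eqref{eq:csbound-sgd-1}. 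The main obstacle relative to the deterministic Theorem~\ref{thm:lcgd:sum-bound} is that $\{\psi_0(x^k)\}$ is no longer monotone, so the clean Abel-summation step \eqref{eq:weighted-square-sum} must be replaced by the cruder $\alpha_k\le\alpha_K$ reduction while keeping precise track of the accumulated noise $\tsum_k\norm{\zeta^k}^2/\beta_k$; one must also work under the uniform dual bound \eqref{eq:lambda-bound}, which is genuinely stronger than the almost-sure boundedness established in Theorem~\ref{thm:sgd-dual-bound} and is therefore imposed as a hypothesis of the statement.
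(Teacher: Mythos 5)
Your proposal is correct and follows essentially the same route as the paper: the same decomposition of $\partial_x\Lcal_k$ into $\partial_x\Lcal$ plus smoothness, proximal, and noise terms, the same $(a+b)^2\le 2a^2+2b^2$ step, and the same reuse of \eqref{eq:bound-cs-k} for the complementary-slackness part. The only difference is bookkeeping: the paper absorbs the noise into an auxiliary monotone sequence $C_k=\psi_0(x^k)-\tsum_{s<k}\norm{\zeta^s}^2/(2\beta_s)$ and Abel-sums, whereas you telescope the unweighted descent inequality and then apply $\alpha_k\le\alpha_K$ — both yield exactly the bound $\alpha_K$ times the telescoped quantity, hence identical constants in \eqref{eq:lkbound-sgd-1} and \eqref{eq:csbound-sgd-1}.
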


\begin{proof}
First, appealing to (\ref{eq:psi0-sgd}), (\ref{eq:lagrange-0}) and (\ref{eq:lagrange-1}),
we have
\begin{align}
\partial_x\Lcal_{k}(x^{k+1},\lambda^{k+1}) & =\partial_x\Lcal(x^{k+1},\lambda^{k+1})+\nabla f_{0}(x^{k})-\nabla f_{0}(x^{k+1})\nonumber \\
 & \quad +\tsum_{i=1}^{m}\lambda_{i}^{k+1}\big[\nabla f_{i}(x^{k})-\nabla f_{i}(x^{k+1})\big] +\big(\gamma_{k}+\langle\lambda^{k+1},L\rangle\big)\big(x^{k+1}-x^{k}\big)+\zeta^{k}.\nonumber %
\end{align}
It follows that
\begin{align}
& \setnorm{\partial_x\Lcal(x^{k+1},\lambda^{k+1})}\nonumber \\
\le{} & \norm{\nabla f_{0}(x^{k})-\nabla f_{0}(x^{k+1})}+\norm{\zeta^{k}}+\tsum_{i=1}^{m}\lambda_{i}^{k+1}\norm{\nabla f_{i}(x^{k})-\nabla f_{i}(x^{k+1})}+(\gamma_{k}+\langle\lambda^{k+1},L\rangle)\norm{x^{k+1}-x^{k}}\nonumber\\
\le{} & \big(\gamma_{k}+L_{0}+2\langle\lambda^{k+1}, L\rangle\big)\norm{x^{k+1}-x^{k}}+\norm{\zeta^{k}}.\nonumber %
\end{align}
In view of the above result and basic inequality $(a+b)^{2}\le2a^{2}+2b^{2}$,
we have 
\begin{equation}
\setnorm{\partial_x\Lcal(x^{k+1},\lambda^{k+1})}^{2}\le 2\big(\gamma_{k}+L_{0}+2B\norm{L}\big)^{2}\norm{x^{k+1}-x^{k}}^{2}+2\norm{\zeta^{k}}^{2}.\label{eq:middle-3}
\end{equation}
Let us denote an auxiliary sequence 
$C_k=
\begin{cases}
\psi_0(x^0) & k=0\\
\psi_0(x^k) -\tsum_{s=0}^{k-1}\frac{\norm{\zeta^s}^2}{2\beta_s} & k>0
\end{cases}.$
Proposition~\ref{lem:sgd-suff-decrease} implies that  
\begin{equation}\label{eq:middle-10}
\frac{2\gamma_{k}-\beta_{k}-L_{0}}{2}\norm{x^{k+1}-x^k}^2\le C_k - C_{k+1}.
\end{equation}
Putting this relation and (\ref{eq:middle-3})
together, we have
\begin{align}
\frac{2\gamma_{k}-\beta_{k}-L_{0}}{4(\gamma_{k}+L_{0}+2B\norm{L})^{2}}\setnorm{\partial_x\Lcal(x^{k+1},\lambda^{k+1})}^{2} 
 &\le C_k-C_{k+1}+ \frac{2\gamma_{k}-\beta_{k}-L_{0}}{2(\gamma_{k}+L_{0}+2B\norm{L})^{2}} \norm{\zeta^{k}}^{2}.\label{eq:middle-07}
\end{align}
Summing up (\ref{eq:middle-07}) over $k=0, 1,\ldots,K$ weighted by $\alpha_k$ leads to
\begin{align}
& \tsum_{k=0}^{K}\frac{\alpha_k(2\gamma_{k}-\beta_{k}-L_{0})}{4(\gamma_{k}+L_{0}+2B\norm{L})^{2}}\norm{\partial_x\Lcal(x^{k+1},\lambda^{k+1})}^{2}  \nonumber\\
  &\quad \le \tsum_{k=0}^K\alpha_k\rbra{C_k-C_{k+1}}  +\tsum_{k=0}^{K}\frac{\alpha_k(2\gamma_{k}-\beta_{k}-L_{0})}{2(\gamma_{k}+L_{0}+2B\norm{L})^{2}}\norm{\zeta^{k}}^{2}.\nonumber %
\end{align}
Moreover, since $\{C_k\}$ is monotonically decreasing, we have
\begin{align}
  \tsum_{k=0}^K\alpha_k\rbra{C_k-C_{k+1}}  
 &  \le \alpha_0 C_0+\tsum_{k=1}^K\rbra{\alpha_k-\alpha_{k-1}}C_k-\alpha_K C_{K+1} \nonumber\\
 &  \le \alpha_K \rbra{C_0 - C_{K+1}} \le L_0D^2\alpha_K +\alpha_K\tsum_{k=0}^K \frac{\norm{\zeta^k}^2}{2\beta_k}.\nonumber
\end{align}
Combining the above two relations leads to our first result~\eqref{eq:lkbound-sgd-1}.

For the second part, note that (\ref{eq:bound-cs-k}) remains valid
in the stochastic setting. Putting (\ref{eq:bound-cs-k}) and (\ref{eq:middle-10}) together, we obtain
\begin{equation*}
	(2\gamma_{k}-\beta_{k}-L_{0})\langle\lambda^{k+1},\vert\psi(x^{k+1})-\eta\vert \rangle 
	\le 2 B\norm{{L}} \rbra{C_k -C_{k+1}} + B(2\gamma_k-\beta_k-L_0)\norm{\eta-\eta^k}.
\end{equation*}
Multiplying both ends by $\alpha_k$ and then summing up the resulting terms over $k=0,\ldots, K$ gives (\ref{eq:csbound-sgd-1}).
\end{proof}
We next obtain more specific convergence rate by choosing the parameters
properly.
\begin{corollary}\label{cor:rate_lcsgd}
In Algorithm {\SGD}, set $\gamma_{k}=L_{0}$, $\beta_{k}=L_{0}/2$,
$\alpha_{k}=k+1$,  $b_{k}=K+1$ and $\delta^k = \frac{(\eta-\eta^0)}{(k+1)(k+2)}$. Then $x^{\hat{k}+1}$ is a randomized $\epsilon$ type-I KKT point with
\begin{equation*}
\epsilon =\frac{4}{K+2} \max \bcbra{8(L_0+B\norm{L})^{2} \brbra{D^2 + \frac{17\sigma^2}{16L_0^2}}, 2B\norm{L}D^2 +\frac{2B\norm{L}\sigma^2}{L_0^2}+\frac{B\norm{\eta-\eta^{0}}}{2}}.
\end{equation*}
\end{corollary}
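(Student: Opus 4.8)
The plan is to specialize Theorem~\ref{thm:main-sgd} (whose standing hypothesis~\eqref{eq:lambda-bound} we keep in force) to the stated parameters and then take total expectations. First I would substitute $\gamma_k=L_0$ and $\beta_k=L_0/2$: this gives $2\gamma_k-\beta_k-L_0=L_0/2>0$, so the hypotheses of Theorem~\ref{thm:main-sgd} are met, and $\gamma_k+L_0+2B\norm{L}=2(L_0+B\norm{L})$, so the per-iteration coefficients become $k$-free, namely $\tfrac{\alpha_k(2\gamma_k-\beta_k-L_0)}{4(\gamma_k+L_0+2B\norm{L})^2}=\tfrac{(k+1)L_0}{32(L_0+B\norm{L})^2}$, $\tfrac{\alpha_k(2\gamma_k-\beta_k-L_0)}{2(\gamma_k+L_0+2B\norm{L})^2}=\tfrac{(k+1)L_0}{16(L_0+B\norm{L})^2}$, and $\tfrac{\alpha_K}{2\beta_k}=\tfrac{K+1}{L_0}$. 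With $\alpha_k=k+1$ one has $\tsum_{k=0}^K\alpha_k=\tfrac{(K+1)(K+2)}{2}$ and $\alpha_K=K+1$, so $\tfrac{\alpha_K}{\tsum_{k=0}^K\alpha_k}=\tfrac{2}{K+2}$, which is what yields the $\Ocal(1/K)$ rate.

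Next I would handle the stochastic error. Taking expectations in \eqref{eq:lkbound-sgd-1} and \eqref{eq:csbound-sgd-1} and using $\Ebb[\norm{\zeta^k}^2]\le\sigma^2/b_k=\sigma^2/(K+1)$ — which is precisely why $b_k=K+1$ is chosen — the weighted sums of $\norm{\zeta^k}^2$ collapse: $\tsum_{k=0}^K(k+1)\,\Ebb[\norm{\zeta^k}^2]\le\tfrac{\sigma^2}{K+1}\cdot\tfrac{(K+1)(K+2)}{2}=\tfrac{\sigma^2(K+2)}{2}$ and $\tsum_{k=0}^K\alpha_K\,\Ebb[\norm{\zeta^k}^2]\le\sigma^2(K+1)$. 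For the constraint levels, the choice $\delta^k=\tfrac{\eta-\eta^0}{(k+1)(k+2)}$ telescopes exactly as in the proof of Corollary~\ref{cor:rate-lcgd}, giving $\eta-\eta^k=\tfrac{1}{k+1}(\eta-\eta^0)$, hence $\tsum_{k=0}^K\alpha_k(2\gamma_k-\beta_k-L_0)\norm{\eta-\eta^k}=\tfrac{L_0}{2}(K+1)\norm{\eta-\eta^0}$.

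Finally I would divide the expected version of \eqref{eq:lkbound-sgd-1} by $\tsum_{k=0}^K\tfrac{\alpha_k(2\gamma_k-\beta_k-L_0)}{4(\gamma_k+L_0+2B\norm{L})^2}=\tfrac{L_0}{32(L_0+B\norm{L})^2}\cdot\tfrac{(K+1)(K+2)}{2}$, and the expected version of \eqref{eq:csbound-sgd-1} by $\tsum_{k=0}^K\alpha_k(2\gamma_k-\beta_k-L_0)=\tfrac{L_0}{2}\cdot\tfrac{(K+1)(K+2)}{2}$. Because $\hat k$ is drawn with $\Pbb(\hat k=k)=\alpha_k/\tsum_{j=0}^K\alpha_j$, the left-hand sides are exactly $\Ebb[\gnorm{\partial_x\Lcal(x^{\hat k+1},\lambda^{\hat k+1})}{-}{2}]$ and $\Ebb[\inner{\lambda^{\hat k+1}}{\vert\psi(x^{\hat k+1})-\eta\vert}]\ge\Ebb[-\tsum_{i=1}^m\lambda_i^{\hat k+1}(\psi_i(x^{\hat k+1})-\eta_i)]$; moreover each $x^{k+1}$ is feasible by the same induction as in Proposition~\ref{prop:lcgd-suff-descent}. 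Collecting the two numerical bounds under the $\max$ and invoking Definition~\ref{def:type1-KKT} then gives the stated $\epsilon$.

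The computation is otherwise routine; the one delicate point is the bookkeeping of the two differently weighted $\norm{\zeta^k}^2$ sums — one carrying weight $\alpha_k$ and one the constant weight $\alpha_K$. It is precisely the $\alpha_K\norm{\zeta^k}^2/\beta_k$ term that would pin $\epsilon$ at $\Ocal(1)$ unless the batch size grows with $K$; verifying that $b_k=K+1$ forces $\tsum_k\alpha_K\,\Ebb[\norm{\zeta^k}^2]/\beta_k=\Ocal(K)$, so that after division by $\tsum_k\alpha_k=\Theta(K^2)$ every contribution is $\Ocal(1/K)$, is the crux of the argument. The constant $\tfrac{17\sigma^2}{16L_0^2}$ is then just what remains after collecting the two $\sigma^2$ contributions to the Lagrangian-stationarity bound, with $D^2$ and $\norm{\eta-\eta^0}$ appearing after the two divisions.
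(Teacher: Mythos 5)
Your argument is essentially identical to the paper's proof: both specialize Theorem~\ref{thm:main-sgd} to the given parameters, use $\Ebb[\norm{\zeta^k}^2]\le\sigma^2/b_k=\sigma^2/(K+1)$ together with the telescoping $\eta-\eta^k=(\eta-\eta^0)/(k+1)$, and then divide by $\tsum_{k=0}^K\alpha_k=(K+1)(K+2)/2$ via the sampling distribution of $\hat k$, with the same bookkeeping of the two differently weighted $\norm{\zeta^k}^2$ sums. One caveat you inherit from the paper rather than introduce: carrying out the final division for the stationarity bound actually yields $\tfrac{64(L_0+B\norm{L})^2}{K+2}\bigl(D^2+\tfrac{17\sigma^2}{16L_0^2}\bigr)$, i.e.\ twice the first branch of the stated $\max$, so the displayed constant is off by a benign factor of $2$ in that branch (the complementary-slackness branch checks out exactly).
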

\begin{proof}
Plugging in the value of $\gamma_k$, $\alpha_k$, $\beta_k$ in the relation~(\ref{eq:lkbound-sgd-1}) and taking expectation over all the randomness, we have
\begin{align*}
&\frac{L_0}{32(L_0+B\norm{L})^{2}}  \tsum_{k=0}^{K}(k+1)\Ebb\sbra{\setnorm{\partial_x\Lcal(x^{k+1},\lambda^{k+1})}    ^{2}} \nonumber\\
  & \quad \le L_0D^2(K+1) + \tsum_{k=0}^{K}\Big(\frac{L_0(k+1)}{16(L_{0}+B\norm{L})^{2}}+\frac{k+1}{L_0}\Big)\Ebb\sbra{\norm{\delta^{k}}^{2}} \nonumber\\
&\quad  \le  L_0D^2(K+1) + \frac{17\sigma^2}{16 L_0} (K+1).
\end{align*}
Moreover, due to the random sampling of $\hat{k}$, we have
\[
	\Ebb_{\hat{k}}\big[\setnorm{\partial_{x}\Lcal(x^{\hat k+1},\lambda^{\hat{k}+1})}^2\big]  =\frac{2}{(K+1)(K+2)}\tsum_{k=0}^{K}(k+1)\setnorm{\partial_{x}\Lcal(x^{k+1},\lambda^{k+1})}^{2}.
\]
Combining the above two results, we have
\[
	\Ebb\bsbra{\setnorm{\partial_x\Lcal(x^{\hat{k}+1},\lambda^{\hat{k}+1})}^{2}}  \le \frac{32(L_0+B\norm{L})^{2}}{K+2} \Brbra{D^2 + \frac{17\sigma^2}{16L_0^2}}.
\]
Second, plugging in the values of $\gamma_k$, $\beta_k$ and $\delta^k$ in (\ref{eq:csbound-sgd-1}), we have 
\begin{equation}
 \frac{L_{0}}{2}\tsum_{k=0}^{K}(k+1)\langle\lambda^{{k+1}},\vert\psi(x^{{k}+1})-\eta\vert\rangle 
 \le  2BL_0 \norm{L}D^2(K+1) + 2B\norm{L}\frac{\sigma^2(K+1)}{L_0}+\frac{BL_0\norm{\eta-\eta^{0}}(K+1)}{2}\label{eq:sgd-mid-02}.
\end{equation}
It then follows from (\ref{eq:sgd-mid-02}) and the definition of $\hat{k}$ that
\[\Ebb\big[\langle\lambda^{\hat{k}+1},\vert\psi(x^{\hat{k}+1})-\eta\vert\rangle\big]  \le \frac{4}{K+2}\Big\{2B\norm{L}D^2 +\frac{2B\norm{L}\sigma^2}{L_0^2}+\frac{B\norm{\eta-\eta^{0}}}{2}\Big\}.  
\]
This completes our proof.
\end{proof}
\begin{remark}
In order to obtain some $\epsilon$-error in satisfying the type I KKT condition,
	  {\SGD} requires a number of $\Ocal(\vep^{-2})$  calls to the  {\SFO}, which matches the complexity bound of stochastic gradient descent for unconstrained nonconvex optimization~\cite{saeed-lan-nonconvex-2013}.
	 Moreover,  due to minibatching, {\SGD} obtains an even better $\Ocal(\vep^{-1})$ complexity in the number of evaluations of $f_i(x)$ and $\nabla f_i(x)$ ($i\in[m]$).
 \end{remark}
\subsection{Level constrained stochastic variance reduced gradient descent\label{subsec:Variance-Reduced-Gradient}}

We consider the finite sum problem:
\begin{equation}
f_{0}(x)=\frac{1}{n}\tsum_{i=1}^{n}F(x,\xi_{i}),\label{eq:finite-sum}
\end{equation}
where each $F(x,\xi_{i})$ is Lipschitz smooth with the parameter $L_{0}$, $i=1,2,\ldots, n$.
To further improve the convergence performance in this setting,
we present a new variant of the stochastic gradient method by extending the
stochastic variance reduced gradient descent to the constrained setting. 

\begin{algorithm}
	\begin{algorithmic}[1]
		\State	{\bf Input: }$x^{0}$, $x^{-1}$, $\eta_{0}<\eta$, $T$;
		\For{$k=0,1,\ldots,K$}
			\If{$k\%T==0$}
			\State $G^{k}=\nabla f_{0}(x^{k})$;
			\Else
			\State Sample a mini-batch $B_{k}$ of size $b$ uniformly at random and compute 
			\begin{equation}
				G^{k}=\frac{1}{b}\tsum_{i\in B_{k}}\big[\nabla F(x^{k},\xi_{i})-\nabla F(x^{k-1},\xi_{i})\big]+G^{k-1};\label{eq:Gk-svrg}
			\end{equation}
			\EndIf
			\State Set $\psi_{0}^{k}(x)$ by \eqref{eq:psi0-sgd} and set $\psi_i^{k}(x)$ by \eqref{eq:psik-1} for $i\in[m]$;
			\State Update $x^{k+1}$ by (\ref{subproblem}) and update $\eta^{k+1}$ by (\ref{eq:update-eta});
		\EndFor
	\end{algorithmic}
\caption{Level constrained stochastic variance-reduced gradient descent ({\SVRG})
\label{alg:lc-svrg}}
\end{algorithm}

We present the level constrained stochastic variance-reduced gradient
descent (\SVRG) in Algorithm~\ref{alg:lc-svrg}, which extends the nonconvex
variance reduced mirror descent (see \cite{lan2020first}) to handle
nonlinear constraint. Algorithm~\ref{alg:lc-svrg} can be viewed
as a double-loop algorithm in which the outer loop computes the full
gradient $\nabla f(x^{k})$ once every $T$ iterations and the nested
loop performs stochastic proximal gradient updates based on an unbiased estimator of
the true gradient. In this view, we let $k$ indicate the $t$-th iteration
at the $r$-th epoch, for some values $t$ and $r$. Then we use $k$
and $(r,t)$ interchangeably throughout the rest of this section.
We keep the notation $\zeta^{k}$ (or $\zeta^{(r,j)}$) for expressing
$G^{k}-\nabla f(x^{k})$ and note that $\zeta^{(r,0)}=0$.

Our next goal is to develop some iteration complexity results of {\SVRG}. We skip the asymptotic analysis since it is similar to that of {\SGD}. The following Lemma (see~\cite[Lemma 6.10]{lan2020first}) presents a key insight of Algorithm~\ref{alg:lc-svrg}
that the variance is controlled by the point distances. We provide
proof for completeness. 
\begin{lemma}
\label{lem:svrg-noise-bound}In Algorithm~\refeq{alg:lc-svrg}, $G^{k}$
is an unbiased estimator of $\nabla f_0(x^{k})$. Moreover, Let $(r,t)$
correspond to $k$. If $t>0$, then we have 
\[
\Ebb\big[\norm{\zeta^{(r,t)}}^{2}\big]\le\frac{L_{0}^{2}}{b}\tsum_{i=0}^{t-1}\Ebb\big[\norm{x^{(r,i+1)}-x^{(r,i)}}^{2}\big].
\]
\end{lemma}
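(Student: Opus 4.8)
The plan is to exploit the recursive structure of the estimator \eqref{eq:Gk-svrg} together with a martingale argument carried out over a single epoch. Fix the epoch index $r$ and let $\mathcal{F}_{(r,t)}$ denote the $\sigma$-algebra generated by all the randomness up to and including the choice of $x^{(r,t)}$ but \emph{before} drawing the minibatch $B_{(r,t)}$; in particular $x^{(r,t)}$, $x^{(r,t-1)}$ and $G^{(r,t-1)}$ are all $\mathcal{F}_{(r,t)}$-measurable. Subtracting $\nabla f_0(x^{(r,t)})$ from both sides of \eqref{eq:Gk-svrg} and using $f_0=\frac1n\tsum_i F(\cdot,\xi_i)$, I would first record the one-step recursion
\[
\zeta^{(r,t)}=\zeta^{(r,t-1)}+\Delta_t,\qquad \Delta_t:=\frac1b\tsum_{i\in B_{(r,t)}}\bsbra{\nabla F(x^{(r,t)},\xi_i)-\nabla F(x^{(r,t-1)},\xi_i)}-\bsbra{\nabla f_0(x^{(r,t)})-\nabla f_0(x^{(r,t-1)})}.
\]
Because $B_{(r,t)}$ is drawn uniformly and $f_0$ is the average of the $F(\cdot,\xi_i)$, each per-sample term $\nabla F(x^{(r,t)},\xi_i)-\nabla F(x^{(r,t-1)},\xi_i)$ has conditional mean $\nabla f_0(x^{(r,t)})-\nabla f_0(x^{(r,t-1)})$, so $\Ebb[\Delta_t\mid\mathcal{F}_{(r,t)}]=0$. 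Combined with $\zeta^{(r,0)}=0$, an induction on $t$ gives $\Ebb[\zeta^{(r,t)}]=0$, i.e.\ the claimed unbiasedness of $G^k$.

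For the variance bound I would use that $\zeta^{(r,t-1)}$ is $\mathcal{F}_{(r,t)}$-measurable while $\Ebb[\Delta_t\mid\mathcal{F}_{(r,t)}]=0$, so the cross term vanishes and
\[
\Ebb\bsbra{\norm{\zeta^{(r,t)}}^2\mid\mathcal{F}_{(r,t)}}=\norm{\zeta^{(r,t-1)}}^2+\Ebb\bsbra{\norm{\Delta_t}^2\mid\mathcal{F}_{(r,t)}}.
\]
It then remains to control $\Ebb[\norm{\Delta_t}^2\mid\mathcal{F}_{(r,t)}]$. Conditioned on $\mathcal{F}_{(r,t)}$, $\Delta_t$ is the centred average of $b$ i.i.d.\ copies of $g_\xi:=\nabla F(x^{(r,t)},\xi)-\nabla F(x^{(r,t-1)},\xi)$ (assuming the minibatch is sampled with replacement; without replacement only improves the constant), so its conditional second moment equals $\tfrac1b$ times the variance of a single $g_\xi$, which is at most $\tfrac1b\Ebb_\xi[\norm{g_\xi}^2]$, and by $L_0$-Lipschitz smoothness of each $F(\cdot,\xi)$ this is at most $\tfrac{L_0^2}{b}\norm{x^{(r,t)}-x^{(r,t-1)}}^2$. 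Substituting this, taking total expectations, and telescoping from index $t$ down to $0$ with $\zeta^{(r,0)}=0$ yields
\[
\Ebb\bsbra{\norm{\zeta^{(r,t)}}^2}\le\frac{L_0^2}{b}\tsum_{j=1}^{t}\Ebb\bsbra{\norm{x^{(r,j)}-x^{(r,j-1)}}^2}=\frac{L_0^2}{b}\tsum_{i=0}^{t-1}\Ebb\bsbra{\norm{x^{(r,i+1)}-x^{(r,i)}}^2},
\]
which is the assertion.

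The smoothness estimate and the telescoping are routine; the one place that needs care is the choice of filtration — namely that, conditioned on $\mathcal{F}_{(r,t)}$, $\Delta_t$ is a \emph{centred}, i.i.d.-averaged quantity. This single fact is what simultaneously makes the cross term in the second-moment recursion vanish and makes the unbiasedness induction go through; once the conditioning is set up correctly, the remainder is just bookkeeping with the $(r,t)\leftrightarrow k$ correspondence.
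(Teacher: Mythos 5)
Your proposal is correct and follows essentially the same route as the paper's own proof: the identical martingale decomposition $\zeta^{(r,t)}=\zeta^{(r,t-1)}+\Delta_t$ with the cross term killed by conditional unbiasedness, the same bound $\Ebb\bsbra{\norm{\Delta_t}^2}\le\frac{1}{b}\Ebb_\xi\norm{g_\xi}^2\le\frac{L_0^2}{b}\norm{x^{(r,t)}-x^{(r,t-1)}}^2$ via variance-of-an-average plus $L_0$-smoothness, and the same telescoping from $\zeta^{(r,0)}=0$. Your explicit setup of the filtration $\mathcal{F}_{(r,t)}$ is if anything slightly more careful than the paper's writeup, but the substance of the argument is the same.
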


\begin{proof}
We prove the first part by induction. When $k=0$, we have $G^{0}=\nabla f_0(x^{0})$.
Then for $k>0$, if $k\%T==0$, we have $G^{k}=\nabla f(x^{k})$ by
definition. Otherwise, we have 
\[
\Ebb_{k}\big[G^{k}\big]=\nabla f(x^{k})-\nabla f(x^{k-1})+G^{k-1}=\nabla f_0(x^{k})
\]
by induction hypothesis $\Ebb_{k-1}\big[G^{k-1}\big]=\nabla f(x^{k-1})$.

Next, we estimate the variance of the stochastic gradient. Appealing to~\eqref{eq:Gk-svrg},
we have
\begin{align*}
\Ebb_{k}\big[\norm{\zeta^{k}}^{2}\big] & =\Ebb\big[\big\Vert\frac{1}{b}\tsum_{i\in B_{k}}\big[\nabla F(x^{k},\xi_{i})-\nabla F(x^{k-1},\xi_{i})\big]+G^{k-1}-\nabla f(x^{k})\big\Vert^{2}\big]\\
 & =\Ebb\big[\big\Vert\frac{1}{b}\tsum_{i\in B_{k}}\big[\nabla F(x^{k},\xi_{i})-\nabla F(x^{k-1},\xi_{i})\big]-\big[\nabla f(x^{k})-\nabla f(x^{k-1})\big]+\zeta^{k-1}\big\Vert^{2}\big]\\
 & =\Ebb\big\Vert\frac{1}{b}\tsum_{i\in B_{k}}\big[\nabla F(x^{k},\xi_{i})-\nabla F(x^{k-1},\xi_{i})-\nabla f(x^{k})+\nabla f(x^{k-1})\big]\big\Vert^{2}+\big\Vert\zeta^{k-1}\big\Vert^{2}\\
 & \le\frac{1}{b^{2}}\tsum_{i\in B_{k}}\Ebb_{\xi}\norm{\nabla F(x^{k},\xi)-\nabla F(x^{k-1},\xi)}^{2}+\big\Vert\zeta^{k-1}\big\Vert^{2}\\
 & \le\frac{L_{0}^{2}}{b}\norm{x^{k}-x^{k-1}}^{2}+\big\Vert\zeta^{k-1}\big\Vert^{2},
\end{align*}
where the third equality uses the independence of $B_{k}$ and $\zeta^{k-1}$, the first inequality uses the bound $\Var(x)\le\Ebb\norm{x}^{2}$,
and the second inequality uses the Lipschitz smoothness of $F(\cdot,\xi)$.
Taking expectation over all the randomness generating $B_{(r,1)},B_{(r,2)},\ldots,B_{(r,t)}$,
we have
\[
\Ebb\big[\norm{\zeta^{k}}^{2}\big]\le\frac{L_0^{2}}{b}\tsum_{i=1}^{t}\Ebb\big[\norm{x^{(r,i)}-x^{(r,i-1)}}^{2}\big].
\]
\end{proof}
The next Lemma shows that the generated solutions satisfy a property of  sufficient descent on expectation.
\begin{lemma}
\label{lem:suff-decrease-svrg}Assume that $\gamma_{k}=\gamma$ and
$\beta_{k}=\beta$ and
$\tilde{L} \coloneq \frac{2\gamma- \beta- L_{0}}{2}-\frac{L_{0}^{2}(T-1)}{2\beta b}>0.$
Then we have
\begin{equation}
\tilde{L}\tsum_{j=0}^{t}\Ebb\big[\norm{x^{(r,j+1)}-x^{(r,j)}}^{2}\big]\le\Ebb\big[\psi_{0}(x^{(r,0)})\big]-\Ebb\big[\psi_{0}(x^{(r,t+1)})\big],\quad 0\le t<T \label{eq:suff-decrease-svrg}.
\end{equation}
\end{lemma}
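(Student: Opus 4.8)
The plan is to combine the per-iteration stochastic descent inequality with the variance bound of Lemma~\ref{lem:svrg-noise-bound}, then telescope. First I would observe that relation~\eqref{eq:sufficient-decrease-stochastic} in Proposition~\ref{lem:sgd-suff-decrease} applies verbatim inside an epoch of {\SVRG}: its proof only uses the optimality of $x^{k+1}$ for subproblem~\eqref{subproblem} with $\psi_0^k$ given by~\eqref{eq:psi0-sgd}, the Lipschitz smoothness of $f_0$, and the definition $\zeta^k=G^k-\nabla f_0(x^k)$ — none of which relies on how $G^k$ is formed. Hence, with $\gamma_k=\gamma$, $\beta_k=\beta$ constant inside epoch $r$, for each $0\le j\le t$ we have
\[
\psi_0(x^{(r,j+1)})\le \psi_0(x^{(r,j)})-\tfrac{2\gamma-\beta-L_0}{2}\norm{x^{(r,j+1)}-x^{(r,j)}}^2+\tfrac{\norm{\zeta^{(r,j)}}^2}{2\beta}.
\]
Summing this over $j=0,\dots,t$ telescopes the $\psi_0$ terms, and taking expectation over all the randomness yields
\[
\Ebb\big[\psi_0(x^{(r,t+1)})\big]\le \Ebb\big[\psi_0(x^{(r,0)})\big]-\tfrac{2\gamma-\beta-L_0}{2}\tsum_{j=0}^t\Ebb\big[\norm{x^{(r,j+1)}-x^{(r,j)}}^2\big]+\tfrac{1}{2\beta}\tsum_{j=0}^t\Ebb\big[\norm{\zeta^{(r,j)}}^2\big].
\]

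The next step is to control the noise term $\tsum_{j=0}^t\Ebb[\norm{\zeta^{(r,j)}}^2]$. Using $\zeta^{(r,0)}=0$ and Lemma~\ref{lem:svrg-noise-bound},
\[
\tsum_{j=0}^t\Ebb\big[\norm{\zeta^{(r,j)}}^2\big]=\tsum_{j=1}^t\Ebb\big[\norm{\zeta^{(r,j)}}^2\big]\le \tfrac{L_0^2}{b}\tsum_{j=1}^t\tsum_{i=0}^{j-1}\Ebb\big[\norm{x^{(r,i+1)}-x^{(r,i)}}^2\big].
\]
I would then swap the order of summation to get $\tfrac{L_0^2}{b}\tsum_{i=0}^{t-1}(t-i)\,\Ebb[\norm{x^{(r,i+1)}-x^{(r,i)}}^2]$, and bound $t-i\le t\le T-1$ (using $t<T$), followed by extending the sum index range to $j=0,\dots,t$; this gives
\[
\tsum_{j=0}^t\Ebb\big[\norm{\zeta^{(r,j)}}^2\big]\le \tfrac{L_0^2(T-1)}{b}\tsum_{j=0}^t\Ebb\big[\norm{x^{(r,j+1)}-x^{(r,j)}}^2\big].
\]

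Substituting this back into the telescoped inequality and collecting the quadratic terms produces the coefficient $\tfrac{2\gamma-\beta-L_0}{2}-\tfrac{L_0^2(T-1)}{2\beta b}=\tilde L$, which is the claimed~\eqref{eq:suff-decrease-svrg}. The only delicate point — and the step I expect to warrant the most care — is the double-sum manipulation and the uniform-over-the-epoch bound $t-i\le T-1$; everything else is either borrowed directly (the stochastic descent inequality, Lemma~\ref{lem:svrg-noise-bound}) or routine telescoping. A minor bookkeeping caveat is to make sure the ``reset'' at $j=0$ ($\zeta^{(r,0)}=0$) is used and that the constancy of $\gamma_k,\beta_k$ within the epoch is what lets the per-step coefficients factor out of the sum.
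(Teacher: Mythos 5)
Your proposal is correct and follows essentially the same route as the paper: apply the per-iteration stochastic descent inequality \eqref{eq:sufficient-decrease-stochastic} within the epoch, telescope, invoke Lemma~\ref{lem:svrg-noise-bound} for the variance term, and bound the resulting double sum by $(T-1)$ times the single sum before collecting coefficients into $\tilde L$. The only cosmetic difference is that you make the summation swap and the inequality $t-i\le T-1$ explicit where the paper bounds the double sum directly; the argument is identical.
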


\begin{proof}
In view of (\ref{lem:sgd-suff-decrease}), at the $j$-th iteration
of the $r$-th epoch, we have 
\begin{align*}
\psi_{0}(x^{(r,j+1)}) & \le\psi_{0}(x^{(r,j)})-\frac{2\gamma-\beta-L_{0}}{2}\norm{x^{(r,j+1)}-x^{(r,j)}}^{2}+\frac{\norm{\zeta^{(r,j)}}^{2}}{2\beta}. %
\end{align*}
Summing up the above result over $j=0,1,2,...,t$ ($t<T$) and using Lemma~\ref{lem:svrg-noise-bound},
we have 
\begin{align*}
& \frac{2\gamma-\beta-L_{0}}{2}\tsum_{j=0}^{t}\Ebb\big[\norm{x^{(r,j+1)}-x^{(r,j)}}^{2}\big]\nonumber\\
\le{} & \Ebb\big[\psi_{0}(x^{(r,0)})\big]-\Ebb\big[\psi_{0}(x^{(r,t+1)})\big]+\frac{L_{0}^{2}}{2\beta b}\tsum_{j=0}^{t}\tsum_{i=0}^{j-1}\Ebb\big[\norm{x^{(r,i+1)}-x^{(r,i)}}^{2}\big]\\
\le{} & \Ebb\big[\psi_{0}(x^{(r,0)})\big]-\Ebb\big[\psi_{0}(x^{(r,t+1)})\big]+\frac{L_{0}^{2}t}{2\beta b}\tsum_{i=0}^{t-1}\Ebb\big[\norm{x^{(r,i+1)}-x^{(r,i)}}^{2}\big] \\
\le{} & \Ebb\big[\psi_{0}(x^{(r,0)})\big]-\Ebb\big[\psi_{0}(x^{(r,t+1)})\big]+\frac{L_{0}^{2}(T-1)}{2\beta b}\tsum_{i=0}^{t-1}\Ebb\big[\norm{x^{(r,i+1)}-x^{(r,i)}}^{2}\big].
\end{align*}
Here we use $\tsum_{j=0}^{-1}\cdot=0$. 
\end{proof}
We present the main convergence property of Algorithm~\ref{alg:lc-svrg}
in the next theorem.
\begin{theorem}
\label{thm:main-svrg}Suppose that condition~\eqref{eq:lambda-bound} and assumptions of Lemma~\refeq{lem:suff-decrease-svrg} hold, $b\ge2T$ and $K=r_0T+j_0$ for some $r_0, j_0 \ge 0$. Let $\{\alpha_k\}$ be a non-decreasing sequence and $\{\alpha_{(r,j)}\}$ be its equivalent form in $(r,j)$ notations. Suppose that $\alpha_{(r,j)}=\alpha_{(r,0)}$ for $j=1,2,...,T-1$. Then we have 
\begin{align}
\tsum_{k=0}^{K}\alpha_k\Ebb\bsbra{\setnorm{\partial_x\Lcal(x^{k+1},\lambda^{k+1})}^{2}} & \le 8\til{L}^{-1}L_0(\gamma+L_{0}+B\norm{L})^{2}D^2 \alpha_{(r_0,0)},\label{eq:sum-lag-svrg}\\
\tsum_{k=0}^{K}\alpha_k\Ebb\big[\langle\lambda^{k+1},\vert\psi(x^{k+1})-\eta\vert\rangle\big] & \le B\tsum_{k=0}^K \alpha_k \gnorm{\eta - \eta_k}{}{} + B\til{L}^{-1}\gnorm{L}{}{}L_0D^2 \alpha_{(r_0,0)}.\label{eq:sum-cs-svrg}
\end{align}
Moreover, if we take $T=\lceil\sqrt{n}\rceil$,
$b=8T,\gamma=L_{0},\beta=L_{0}/2,$ and $\alpha_k= T\,\lfloor k/T\rfloor+1$, and set $\delta^{k}=\frac{\eta-\eta^{0}}{(k+1)(k+2)}$.
Then $x^{\hat{k}+1}$ is a randomized Type-I $\epsilon$-KKT point with
\begin{equation}\label{eq:svrg-epsi-kkt}
	\epsilon = \frac{K+1}{(K-T+1)^2}\max\bcbra{128(2L_{0}+B\norm{L})^{2}D^{2}, B\gnorm{\eta-\eta^0}{}{} + 16B\gnorm{L}{}{}D^2}.
\end{equation}
\end{theorem}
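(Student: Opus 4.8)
The plan is to follow the blueprint of the proof of Theorem~\ref{thm:main-sgd}, replacing the shrinking-mini-batch control of the gradient error $\zeta^k$ by the variance-reduction estimate of Lemma~\ref{lem:svrg-noise-bound}. First I would reproduce the step leading to \eqref{eq:middle-3}: combining the subproblem optimality condition $0\in\partial_x\Lcal_k(x^{k+1},\lambda^{k+1})$ with the smoothness of each $f_i$ and the dual bound $\langle\lambda^{k+1},L\rangle\le B\norm{L}$ from~\eqref{eq:lambda-bound} gives, for every $k$,
\[
\setnorm{\partial_x\Lcal(x^{k+1},\lambda^{k+1})}^{2}\le 2\big(\gamma+L_{0}+2B\norm{L}\big)^{2}\norm{x^{k+1}-x^{k}}^{2}+2\norm{\zeta^{k}}^{2},
\]
and the complementary-slackness estimate \eqref{eq:bound-cs-k} carries over verbatim. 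Thus the whole proof reduces to bounding the weighted sums $\tsum_{k=0}^{K}\alpha_k\Ebb[\norm{x^{k+1}-x^{k}}^{2}]$ and $\tsum_{k=0}^{K}\alpha_k\Ebb[\norm{\zeta^{k}}^{2}]$.

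For the point-distance sum I would apply the epoch-level descent of Lemma~\ref{lem:suff-decrease-svrg} with $t=T-1$, which telescopes over one full inner loop into $\til{L}\tsum_{j=0}^{T-1}\Ebb[\norm{x^{(r,j+1)}-x^{(r,j)}}^{2}]\le\Ebb[\psi_{0}(x^{(r,0)})]-\Ebb[\psi_{0}(x^{(r+1,0)})]$ (for the final epoch one uses $t=j_0$ and $\psi_0(x^{K+1})$ in place of $\psi_0(x^{(r_0+1,0)})$). Because the weights are constant inside each epoch, $\alpha_{(r,j)}=\alpha_{(r,0)}$, I multiply by $\alpha_{(r,0)}$ and sum over $r=0,\dots,r_0$; an Abel summation that uses the monotonicity of $\{\alpha_{(r,0)}\}$ and the monotone decrease of $\{\Ebb[\psi_0(x^{(r,0)})]\}$ then yields $\tsum_{k=0}^{K}\alpha_k\Ebb[\norm{x^{k+1}-x^{k}}^{2}]\le\til{L}^{-1}\alpha_{(r_0,0)}(\psi_0(x^0)-\psi_0^{*})=\til{L}^{-1}L_0D^2\alpha_{(r_0,0)}$. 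For the noise sum, within the $r$-th epoch Lemma~\ref{lem:svrg-noise-bound}, together with $\zeta^{(r,0)}=0$ and $b\ge 2T$, gives $\tsum_{j=0}^{T-1}\Ebb[\norm{\zeta^{(r,j)}}^{2}]\le\tfrac{L_0^{2}(T-1)}{b}\tsum_{j=0}^{T-1}\Ebb[\norm{x^{(r,j+1)}-x^{(r,j)}}^{2}]\le\tfrac{L_0^{2}}{2}\tsum_{j=0}^{T-1}\Ebb[\norm{x^{(r,j+1)}-x^{(r,j)}}^{2}]$; weighting by $\alpha_{(r,0)}$ and summing over epochs turns this into $\tsum_{k=0}^{K}\alpha_k\Ebb[\norm{\zeta^{k}}^{2}]\le\tfrac{L_0^{2}}{2}\tsum_{k=0}^{K}\alpha_k\Ebb[\norm{x^{k+1}-x^{k}}^{2}]$. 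Substituting these two bounds into the displayed Lagrangian inequality and into \eqref{eq:bound-cs-k}, then weighting by $\alpha_k$ and summing over $k=0,\dots,K$, produces \eqref{eq:sum-lag-svrg} and \eqref{eq:sum-cs-svrg} after the additive $L_0^{2}$ is absorbed into the leading constant.

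For the ``moreover'' part I would substitute the prescribed parameters. With $\gamma=L_0$, $\beta=L_0/2$, $b=8T$ one computes $\til{L}=\tfrac{L_0}{4}-\tfrac{L_0(T-1)}{8T}\ge\tfrac{L_0}{8}$, so $\til{L}^{-1}L_0\le 8$; with $\alpha_k=T\lfloor k/T\rfloor+1$ the weights are epoch-constant and non-decreasing, $\alpha_{(r_0,0)}=T\lfloor K/T\rfloor+1\le K+1$, and $\tsum_{k=0}^{K}\alpha_k\ge\tfrac{(K-T+1)^{2}}{2}$; and with $\delta^{k}=\tfrac{\eta-\eta^{0}}{(k+1)(k+2)}$ one gets, exactly as in Corollary~\ref{cor:rate-lcgd}, $\eta-\eta^{k}=\tfrac{1}{k+1}(\eta-\eta^{0})$, whence $\tsum_{k=0}^{K}\alpha_k\norm{\eta-\eta^{k}}\le(K+1)\norm{\eta-\eta^{0}}$ since $\alpha_k\le k+1$. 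Dividing \eqref{eq:sum-lag-svrg} and \eqref{eq:sum-cs-svrg} by $\tsum_{k=0}^{K}\alpha_k$ and recalling that $\hat k$ is drawn with $\prob(\hat k=k)=\alpha_k/\tsum_{j=0}^{K}\alpha_j$, Definition~\ref{def:type1-KKT} then gives \eqref{eq:svrg-epsi-kkt}.

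I expect the main obstacle to be the bookkeeping of the two-index $(r,j)$ notation: one must telescope the epoch-level descent of Lemma~\ref{lem:suff-decrease-svrg} against the epoch-constant weights while simultaneously bounding the \emph{intra-epoch} accumulation of the noise terms $\norm{\zeta^{(r,j)}}^{2}$ by point distances, and one must verify that the final, possibly truncated, epoch does not break either telescoping. Beyond that, the argument is a routine adaptation of the {\SGD} analysis.
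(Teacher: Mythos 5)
Your proposal is correct and follows essentially the same route as the paper's proof: the reduction via \eqref{eq:middle-3} and \eqref{eq:bound-cs-k}, the intra-epoch noise bound from Lemma~\ref{lem:svrg-noise-bound} with $b\ge 2T$, the epoch-level telescoping of Lemma~\ref{lem:suff-decrease-svrg} against the epoch-constant weights, and the same parameter substitutions ($\til{L}\ge L_0/8$, $\alpha_k\le k+1$, $\tsum_{k=0}^K\alpha_k\ge (K-T+1)^2/2$) for the final rate. The only difference is cosmetic ordering: you bound the two weighted sums globally before substituting, whereas the paper combines them epoch by epoch.
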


\begin{proof}
First, using Lemma~\ref{lem:svrg-noise-bound} and the assumption that $b\ge2T$, for any $t\le T-1$ we have
\begin{align}
\tsum_{j=0}^{t}\Ebb\bsbra{\norm{\zeta^{(r,j)}}^{2}} 
 & \le\frac{L_{0}^{2}}{b}\tsum_{j=0}^{t}\tsum_{i=0}^{j-1}\Ebb\big[\norm{x^{(r,i+1)}-x^{(r,i)}}^{2}\big]\nonumber \\
 & \le\frac{L_{0}^{2}t}{b}\tsum_{j=0}^{t-1}\Ebb\big[\norm{x^{(r,i+1)}-x^{(r,i)}}^{2}\big]\nonumber \\
 & \le \frac{L_{0}^{2}}{2}\tsum_{j=0}^{t-1}\Ebb\big[\norm{x^{(r,i+1)}-x^{(r,i)}}^{2}.\label{eq:middle-12}
\end{align}
Note that (\ref{eq:middle-3}) still holds. Therefore, 
combining (\ref{eq:middle-3})  and (\ref{eq:middle-12}) leads to
\begin{align*}
\tsum_{j=0}^{t}\Ebb\big[\setnorm{\Lcal(x^{(r,j+1)},\lambda^{(r,j+1)})}^{2}\big] 
& \le 2\big(\gamma+L_{0}+2B\norm{L}\big)^{2}\tsum_{j=0}^{t}\Ebb\big[\norm{x^{(r,j+1)}-x^{(r,j)}}^{2}\big]+2\tsum_{j=0}^{t}\Ebb\big[\norm{\zeta^{(r,j)}}^{2}\big] \\
 & \le 8(\gamma+L_{0}+B\norm{L})^{2}\tsum_{j=0}^{t}\Ebb\big[\norm{x^{(r,j+1)}-x^{(r,j)}}^{2}\big]. %
\end{align*}
It then follows from Lemma~\ref{lem:suff-decrease-svrg} that
\begin{align}\label{eq:middle-11}
	\tsum_{j=0}^{t}\Ebb\big[\setnorm{\partial_x\Lcal(x^{(r,j+1)},\lambda^{(r,j+1)})}^{2}\big] 
	& \le 8\til{L}^{-1}(\gamma+L_{0}+B\norm{L})^{2}\Ebb\big[\psi_{0}(x^{(r,0)})-\psi_{0}(x^{(r,t+1)})\big]. 
\end{align}
Let $K=r_0T+j_0$. Summing up the above inequality weighted by $\alpha_k$ and exchanging the notation                                                                                                             $\alpha_k \leftrightarrow \alpha_{(r,j)}$, then we have 
\begin{align}
&\tsum_{k=0}^{K}\alpha_k\Ebb\big[\setnorm{\partial_x\Lcal(x^{k+1},\lambda^{k+1})}^{2}\big] \nonumber\\
&\quad = \tsum_{r=0}^{r_0-1}\tsum_{j=0}^{T-1}\alpha_{(r,j)}\Ebb \bsbra{\setnorm{\partial_x\Lcal(x^{(r,j+1)},\lambda^{(r,j+1)})}^{2}}
+ \tsum_{j=0}^{j_0}\alpha_{(r_0,j)}\Ebb \bsbra{\setnorm{\partial_x\Lcal(x^{(r_0,j+1)},\lambda^{(r_0,j+1)})}^{2}} \nonumber\\
&\quad \le 8\til{L}^{-1}(\gamma+L_{0}+B\norm{L})^{2}\Bcbra{\tsum_{r=0}^{r_0-1}\alpha_{(r,0)} \Ebb\sbra{\psi_0(x^{(r,0)})-\psi_0(x^{(r+1,0)})}+\alpha_{(r_0,0)}\Ebb\sbra{\psi_0(x^{(r_0,0)}) - \psi_0(x^{(r_0,j_0+1)})}} \nonumber\\
&\quad \le 8\til{L}^{-1}(\gamma+L_{0}+B\norm{L})^{2}\alpha_{(r_0,0)}\Ebb\sbra{\psi_0(x^{(0,0)})- \psi_0(x^{(r_0,j_0+1)})}\nonumber \\
& \quad \le 8\til{L}^{-1}L_0(\gamma+L_{0}+B\norm{L})^{2}D^2 \alpha_{(r_0,0)}.
\end{align}
Above, the first inequality applies \eqref{eq:middle-11} and uses $x^{(r,T)}=x^{(r+1,0)}$
while the second inequality uses the monotonicity of $\{\psi_0(x^k)\}$ and an argument similar to \eqref{eq:weighted-square-sum}.

The second part is similar to the argument of Theorem~\ref{thm:main-sgd}.
Particularly, combining \eqref{eq:bound-cs-k} and \eqref{eq:suff-decrease-svrg} gives
\begin{equation}
	\tsum_{j=0}^t \Ebb \inner{\lambda^{(r,j+1)}}{\vert \psi_i(x^{(r,j+1)})-\eta\vert} \le B\til{L}^{-1}\gnorm{L}{}{}\Ebb\sbra{\psi_0(x^{(r,0)}) - \psi_0 (x^{(r,t+1)}) }+B\tsum_{j=0}^t\bnorm{\eta-\eta^{(r,j+1)}}
\end{equation}
Consequently, using the above relation and an argument similar to show \eqref{eq:weighted-square-sum}, we deduce
\begin{align*}
&\tsum_{k=0}^{K} \alpha_k\Ebb\,\langle\lambda^{k+1},\vert\psi(x^{k+1})-\eta\vert\rangle  \nonumber \\
& \quad = \tsum_{r=0}^{r_0-1}\tsum_{j=0}^{T-1} \alpha_{(r,j)} \Ebb\,\langle\lambda^{(r,j+1)},\vert\psi(x^{(r,j+1)})-\eta\vert\rangle 
	+  \tsum_{j=0}^{j_0}\alpha_{(r_0,j)} \Ebb\,\langle\lambda^{(r_0,j+1)},\vert\psi(x^{(r_0,j+1)})-\eta\vert\rangle 
	\nonumber \\
& \quad = B\tsum_{k=0}^K \alpha_k \gnorm{\eta - \eta_k}{}{} \nonumber\\
	&\quad\quad\quad +B\til{L}^{-1}\gnorm{L}{}{}\Ebb\Bcbra{ \tsum_{r=0}^{r_0-1} \alpha_{(r,0)} \sbra{\psi_0(x^{(r,0)})- \psi_0(x^{(r+1,0)})} + \alpha_{(r_0,0)}\sbra{\psi_0(x^{(r_0,0)}) - \psi_0(x^{(r_0,j_0+1)})} }\nonumber\\
&\quad \le B\tsum_{k=0}^K \alpha_k \gnorm{\eta - \eta_k}{}{} + B\til{L}^{-1}\gnorm{L}{}{}L_0D^2 \alpha_{(r_0,0)}.
\end{align*}
Therefore, we complete the proof of \eqref{eq:sum-lag-svrg} and \eqref{eq:sum-cs-svrg}.

Using the provided parameter setting, we have $\tilde{L}=\frac{2\gamma-L_{0}-\beta}{2}-\frac{L_{0}^{2}(T-1)}{2\beta b}\ge\frac{L_{0}}{4}-\frac{L_{0}}{8}=\frac{L_{0}}{8}$. Moreover, since $\alpha_k=T\,\lfloor k/T\rfloor  + 1$,  we have $\alpha_{k}\le T\cdot k / T+1\le k+1$. It is easy to check 
\[\tsum_{k=0}^K\alpha_k= T + \tsum_{k=T}^K \paran[\big]{\big\lfloor \frac{k}{T}\big\rfloor T + 1} \ge T+  \tsum_{k=T}^K  \bracket[\big]{\bracket[\big]{\frac{k}{T}-1} T+1}  \ge \frac{(K-T+1)^2}{2}.\]
\end{proof}

\begin{remark}
It is interesting to compare the performance of {\SVRG} with the other level constrained first-order methods in the finite sum setting~\eqref{eq:finite-sum}. 
Similar to {\GD}, {\SVRG} runs for $\Ocal(\vep^{-1})$ iterations to compute Type-I $\epsilon$-KKT point.
Moreover, {\SVRG} has an appealing feature that the number of stochastic gradient $\nabla F(x,\xi)$ computed  can be significantly reduced for a large value of $n$.
Specifically,   Algorithm~\ref{alg:lc-svrg} requires
 a full gradient $\nabla f_0(x)$ every $T$ iterations, which contributes $N_{1}=\Ocal\big(n\big\lceil\frac{K}{T}\big\rceil\big)=\Ocal(\sqrt{n}K)$
stochastic gradient computations. During the other iterations,
Algorithm~\ref{alg:lc-svrg} invokes a batch of size $b=\Ocal(T)$
each time, exhibiting a complexity of $N_{2}=\Ocal\big(bK\big)=\Ocal(\sqrt{n}K)$.
Therefore, the total number of stochastic gradient computations is
$N=N_{1}+N_{2}=\Ocal\big(\sqrt{n}K\big).$ This is better than the $O(nK)$ stochastic gradients needed by {\GD}. Moreover, it is better than the bound $O(K^2)$ of {\SGD} when $K$ is at an order larger than $\Omega(\sqrt{n})$, which corresponds to a higher accuracy regime of $\epsilon \ll \tfrac{1}{\sqrt{n}}$. 
The complexities of all the proposed algorithms for getting some $\vep$-KKT solutions are listed in Table\ \ref{tab:count-sfo}.
\end{remark}
\begin{remark}
While we mainly discuss the finite-sum objective~\eqref{eq:finite-sum}, it is possible to extend the variance reduction technique to handle the expectation-based objective~\eqref{eq:f0-expectation} and improve the $\Ocal(\vep^{-2})$ bound of \SGD{} to $\Ocal(\vep^{-3/2})$. To achieve this goal, we  impose an additional assumption  that $F(x,\xi)$ is $L_0$-Lipschitz smooth for each $\xi$ in the support set. We choose to omit a detailed discussion on this particular extension, as the technical development for this can be readily derived from the arguments in Sec. 6.5.2.~\cite{Lan19} and  our previous analysis.
\end{remark}

\section{Smooth optimization of nonsmooth constrained problems\label{sec:nonsmooth}}

In this section, we consider the constrained problem~\eqref{prob:main} with nonsmooth objective and nonsmooth constraint functions. We assume that $f_i$ ($i\in \{0,1,...,m\}$) exhibits a difference-of-convex (DC) structure $f_i(x) := g_i(x) - h_i(x)$:  1) $h_i$ is an \added{$L_{h_i}$}-Lipschitz-smooth convex function and 2) $g_i$ is a structured nonsmooth convex function:
\begin{align*}
	g_i(x) = \max_{y_i \in \Ycal_i} \inprod{A_ix}{y_i} - p_i(y_i), 
\end{align*} 
where $A_i \in \Rbb^{a_i\times n}$ is a linear mapping, $\Ycal_i \subset \Rbb^{a_i}$ is a convex compact set and $p_i:\Ycal_i \to \Rbb$ is a convex continuous function. 
In view of such a nonsmooth structure, we can not simply apply the {\GD} method, as the crucial quadratic upper-bound on $f_i(x)$ does not hold in the nonsmooth cases. 
 However, as pointed out by Nesterov~\cite{Nesterov2005smooth}, the nonsmooth convex function $g_i$ can be closely approximated by a smooth convex function. Let us denote $\wh{y}_i \coloneq \argmin_{y_i \in \Ycal_i} \gnorm{y_i}{}{}$, $D_{\Ycal_i} \coloneq \max_{y_i \in \Ycal_i} \gnorm{y_i-\wh{y}_i}{}{}$ and define the approximation function
\[g_i^{\beta_i}(x) := \max_{y_i \in \Ycal_i} \inprod{A_ix}{y_i} - p(y_i) - \tfrac{\beta_i}{2}\gnorm{y_i-\wh{y}_i}{}2, \  f^{\beta_i}_{i}(x) := g^{\beta_i}_{i}(x) - h_i(x),\ \text{where } \beta_i > 0.\]
Given some properly chosen smoothing parameter $\beta_i$, we propose to apply {\GD} to solve the following smooth approximation problem:
\begin{mini}
	{x}{\psi_0^{\beta_0}(x)=f_0^{\beta_0}(x)+\chi_0(x)}{}{}\label{prob:smooth_main}
	\addConstraint{\psi_i^{\beta_i}(x)=f_i^{\beta_i}(x)+\chi_{i}(x)}{\le \eta_i\quad i=1,\dots,m}.
\end{mini}

Prior to the analysis of our algorithm, we need to develop some properties of the smooth function $f_i^{\beta_i}$. We first present a key Lemma which builds some important connection between the quadratic approximation of smooth function and Lipschitz smoothness. The proof is left in Appendix \ref{appx:tight_Lipschitz_const}.
\begin{lemma}\label{lem:tight_lipschitz-const}
	Suppose $p(\cdot)$ is continuously differentiable function satisfying 
	\begin{equation}\label{eq:lower-upper-curvature}
		-\tfrac{\mu}{2} \gnorm{x-y}{}{2} \le p(x) -p(y) -\inprod{\grad p(y)}{x-y} \le \tfrac{L}{2}\gnorm{x-y}{}{2},
	\end{equation}
	for all $x,y$. Then, $p(\cdot)$ satisfies 
	\begin{equation}\label{eq:tight-Lipschitz-const}
		\gnorm{\grad p(x) - \grad p(y)}{}{} \le \max\{L ,  \mu\} \gnorm{x-y}{}{}.
	\end{equation}
\end{lemma}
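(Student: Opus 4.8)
The plan is to reduce the claimed gradient estimate to two convexity statements and then finish with an elementary quadratic inequality. First I would introduce the auxiliary $C^{1}$ functions
\[
\phi(x)\coloneq\tfrac{L}{2}\gnorm{x}{}{2}-p(x),\qquad \varphi(x)\coloneq p(x)+\tfrac{\mu}{2}\gnorm{x}{}{2}.
\]
A direct expansion gives, for all $x,y$,
\[
\phi(x)-\phi(y)-\inprod{\grad\phi(y)}{x-y}=\tfrac{L}{2}\gnorm{x-y}{}{2}-\bigl(p(x)-p(y)-\inprod{\grad p(y)}{x-y}\bigr),
\]
so the right-hand inequality in \eqref{eq:lower-upper-curvature} is exactly the first-order convexity condition for $\phi$; symmetrically, the left-hand inequality in \eqref{eq:lower-upper-curvature} is the first-order convexity condition for $\varphi$. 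Hence both $\phi$ and $\varphi$ are convex, and since $\phi+\varphi=\tfrac{L+\mu}{2}\gnorm{\cdot}{}{2}$, the function $\tfrac{L+\mu}{2}\gnorm{\cdot}{}{2}-\phi=\varphi$ is convex as well.

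Next I would use that $\phi$ is convex and admits a quadratic over-estimate of modulus $L+\mu$ (this is precisely the convexity of $\varphi$, rewritten). By the standard Baillon--Haddad equivalence for smooth convex functions --- equivalently, a short argument applying the descent inequality for $\phi$ at the minimizer of $z\mapsto\phi(z)-\inprod{\grad\phi(x)}{z}$ and symmetrizing --- this yields the cocoercivity estimate
\[
\inprod{\grad\phi(x)-\grad\phi(y)}{x-y}\ \ge\ \tfrac{1}{L+\mu}\,\gnorm{\grad\phi(x)-\grad\phi(y)}{}{2},\qquad\forall x,y.
\]
Since $\grad\phi=L\cdot\mathrm{Id}-\grad p$, I would set $u\coloneq x-y$, $d\coloneq\grad p(x)-\grad p(y)$ and $a\coloneq\grad\phi(x)-\grad\phi(y)=Lu-d$, substitute $a$ into the cocoercivity inequality, multiply through by $L+\mu>0$, and simplify to obtain
\[
\gnorm{d}{}{2}\ \le\ L\mu\,\gnorm{u}{}{2}+(L-\mu)\,\inprod{d}{u}.
\]

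To conclude, bound $(L-\mu)\inprod{d}{u}\le|L-\mu|\,\gnorm{d}{}{}\gnorm{u}{}{}$ by Cauchy--Schwarz; the case $u=0$ is trivial, and otherwise dividing by $\gnorm{u}{}{2}$ and writing $t\coloneq\gnorm{d}{}{}/\gnorm{u}{}{}\ge0$ gives $t^{2}-|L-\mu|\,t-L\mu\le0$. The larger root of the quadratic on the left equals $\tfrac{|L-\mu|+\sqrt{(L-\mu)^{2}+4L\mu}}{2}=\tfrac{|L-\mu|+(L+\mu)}{2}=\max\{L,\mu\}$, so $t\le\max\{L,\mu\}$, which is exactly \eqref{eq:tight-Lipschitz-const}. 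The degenerate cases $\mu=0$ ($p$ convex and $L$-smooth), $L=0$ ($-p$ convex and $\mu$-smooth) and $L=\mu=0$ ($p$ affine) are immediate specializations of the same inequality.

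The only real obstacle I anticipate is squeezing out the \emph{tight} constant $\max\{L,\mu\}$. The naive route --- noting merely that $\grad\phi$ is $(L+\mu)$-Lipschitz and then using $\grad p=L\cdot\mathrm{Id}-\grad\phi$ together with the triangle inequality --- only delivers the much weaker bound $2L+\mu$. Obtaining $\max\{L,\mu\}$ is precisely why one must invoke cocoercivity of $\grad\phi$ rather than just its Lipschitz continuity, and why the final step must be the quadratic inequality in $t$ instead of a triangle-inequality estimate.
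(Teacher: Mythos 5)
Your proof is correct, and it takes a genuinely different route from the one in the paper. The paper's argument (Appendix A) regularizes $p$ by mollification to obtain $C^\infty$ approximants $p_n$, shows that the two-sided curvature bound \eqref{eq:lower-upper-curvature} is inherited by $p_n$, invokes Taylor's theorem to deduce $\gnorm{\grad^2 p_n(x)}{}{}\le\max\{L,\mu\}$ pointwise, integrates the Hessian along the segment $[y,x]$ to get the Lipschitz estimate for $\grad p_n$, and finally lets $n\to\infty$. You instead stay entirely at first order: you observe that \eqref{eq:lower-upper-curvature} says exactly that $\phi=\tfrac{L}{2}\gnorm{\cdot}{}{2}-p$ and $\varphi=p+\tfrac{\mu}{2}\gnorm{\cdot}{}{2}$ are convex, that convexity of $\varphi=\tfrac{L+\mu}{2}\gnorm{\cdot}{}{2}-\phi$ is the descent-type upper bound for $\phi$ with modulus $L+\mu$, so the Baillon--Haddad/cocoercivity inequality applies to $\grad\phi$ with constant $\tfrac{1}{L+\mu}$; substituting $\grad\phi=L\cdot\mathrm{Id}-\grad p$ and solving the resulting quadratic inequality in $t=\gnorm{d}{}{}/\gnorm{u}{}{}$ gives the root $\tfrac{|L-\mu|+(L+\mu)}{2}=\max\{L,\mu\}$ exactly (I checked the algebra: cocoercivity yields $\gnorm{d}{}{2}\le L\mu\gnorm{u}{}{2}+(L-\mu)\inprod{d}{u}$ as you claim). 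What your approach buys: it requires only $C^1$ regularity and standard convex-analytic facts, avoids the mollification machinery and the somewhat informal $o(\gnorm{x-y}{}{2})$ Taylor-remainder step of the paper's proof, and transfers verbatim to any open convex domain. What the paper's approach buys is a more transparent geometric picture --- the constant $\max\{L,\mu\}$ appears directly as an operator-norm bound on the (smoothed) Hessian. Your remark that the naive triangle-inequality route only gives $2L+\mu$, and that cocoercivity is what recovers the tight constant, is exactly the right diagnosis; the only hypotheses you use implicitly are $L,\mu\ge 0$ (needed for $\sqrt{(L+\mu)^2}=L+\mu$ and for $L+\mu>0$ in the cocoercivity step, with the case $L=\mu=0$ handled separately as you note), which is also implicit in the paper's statement.
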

In smooth approximation, it is shown in \cite{Nesterov2005smooth} that $g_i^{\beta_i}$ is a Lipschitz smooth function and it approximates the function value of $g_i$ with some $\Ocal(\beta_i)$-error: 
\begin{align}\label{eq:int_rel5}
	& g_i^{\beta_i}(x)  \le g_i(x) \le g_i^{\beta_i}(x) + \tfrac{\beta_i D_{\Ycal_i}^2}{2}, \qquad \forall x, \\
	& \gnorm{\nabla g_i^{\beta_i}(x) - \nabla g_i^{\beta_i}(z)}{}{}  \le L^{\beta_i}_{g_i} \gnorm{x-z}{}{},\quad \forall x, z,\quad L^{\beta_i}_{g_i} :=\tfrac{\gnorm{A_i}{}2}{\beta_i}.
\end{align} 
Similar properties of  $f_i^{\beta_i}$ are developed in the following proposition. 
\begin{proposition}\label{prop:upp-low-curv-nonsmooth}
We have the following properties about the approximation function $f_i^{\beta_i}$ $(\beta_i>0)$.
\begin{enumerate}
	\item Let $ \bar{\beta}_i \in [0, \beta_i]$, then we have 
	\begin{equation} \label{eq:apprx-bound}
	f_i^{\beta_i}(x) \le  f_i^{\bar{\beta}_i}(x) \le f_i^{\beta_i}(x) + \tfrac{(\beta_i-\bar{\beta}_i) D_{\Ycal_i}^2}{2}.
\end{equation} 
	\item $f_i^{\beta_i}(x)$ has upper curvature $L^{\beta_i}_{g_i}$ and negative lower curvature $-L_{h_i}$, namely,
	\begin{align}
		f_i^{\beta_i}(x) & \le f_i^{\beta_i}(y)+\langle\nabla 	f_i^{\beta_i}(y),x-y\rangle+\frac{L^{\beta_i}_{g_i}}{2}\|x-y\|^{2},\label{eq:upper-curve} \\
		f_i^{\beta_i}(x) & \ge f_i^{\beta_i}(y)+\langle\nabla f_i^{\beta_i}(y),x-y)-\frac{L_{h_i}}{2}\|x-y\|^{2}.\label{eq:lower-curve}
	\end{align}

	\item $f_i^{\beta_i}$ is  Lipschitz smooth with modulus $L_i^{\beta_i}  := \max\{ L_{g_i}^{\beta_i}, L_{h_i} \}$. Namely, for any $x,y$, we have 
\begin{equation}
\gnorm{ \grad f_i^\beta(x) - \grad f_i^\beta(y)}{}{} \le L_i^{\beta_i} \gnorm{x-y}{}{}.\label{eq:tight-Lip-const}
\end{equation}

\end{enumerate}
	\end{proposition}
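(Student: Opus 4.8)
The plan is to derive all three parts from the decomposition $f_i^{\beta_i}=g_i^{\beta_i}-h_i$ together with the smoothing estimates in \eqref{eq:int_rel5} and Lemma~\ref{lem:tight_lipschitz-const}; the argument is essentially bookkeeping, with no serious obstacle.

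For Part~1, I would work directly at the level of the inner maximization. Fix $x$, fix $y_i\in\Ycal_i$, and let $\bar{\beta}_i\in[0,\beta_i]$. Since $0\le\tfrac{\beta_i-\bar{\beta}_i}{2}\gnorm{y_i-\wh{y}_i}{}{2}\le\tfrac{\beta_i-\bar{\beta}_i}{2}D_{\Ycal_i}^2$, the quantity $\inner{A_ix}{y_i}-p_i(y_i)-\tfrac{\bar{\beta}_i}{2}\gnorm{y_i-\wh{y}_i}{}{2}$ lies between $\inner{A_ix}{y_i}-p_i(y_i)-\tfrac{\beta_i}{2}\gnorm{y_i-\wh{y}_i}{}{2}$ and the same expression plus $\tfrac{(\beta_i-\bar{\beta}_i)D_{\Ycal_i}^2}{2}$. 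Taking the supremum over $y_i\in\Ycal_i$ preserves both bounds and yields $g_i^{\beta_i}(x)\le g_i^{\bar{\beta}_i}(x)\le g_i^{\beta_i}(x)+\tfrac{(\beta_i-\bar{\beta}_i)D_{\Ycal_i}^2}{2}$; subtracting $h_i(x)$ gives \eqref{eq:apprx-bound}. This is a minor generalization of \eqref{eq:int_rel5}, recovered at $\bar{\beta}_i=0$.

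For Part~2, I would use that $g_i^{\beta_i}$ is convex and $L_{g_i}^{\beta_i}$-Lipschitz smooth (Nesterov's smoothing, \eqref{eq:int_rel5}) while $h_i$ is convex and $L_{h_i}$-Lipschitz smooth, and that $\nabla f_i^{\beta_i}(y)=\nabla g_i^{\beta_i}(y)-\nabla h_i(y)$ by linearity. The upper-curvature bound \eqref{eq:upper-curve} follows by adding the descent-lemma inequality $g_i^{\beta_i}(x)\le g_i^{\beta_i}(y)+\inner{\nabla g_i^{\beta_i}(y)}{x-y}+\tfrac{L_{g_i}^{\beta_i}}{2}\gnorm{x-y}{}{2}$ to the convexity inequality $-h_i(x)\le -h_i(y)-\inner{\nabla h_i(y)}{x-y}$. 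The lower-curvature bound \eqref{eq:lower-curve} follows symmetrically, by adding convexity of $g_i^{\beta_i}$, namely $g_i^{\beta_i}(x)\ge g_i^{\beta_i}(y)+\inner{\nabla g_i^{\beta_i}(y)}{x-y}$, to the sign-flipped descent lemma $-h_i(x)\ge -h_i(y)-\inner{\nabla h_i(y)}{x-y}-\tfrac{L_{h_i}}{2}\gnorm{x-y}{}{2}$.

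For Part~3, I would invoke Lemma~\ref{lem:tight_lipschitz-const} with $p=f_i^{\beta_i}$, $L=L_{g_i}^{\beta_i}$, and $\mu=L_{h_i}$: the two-sided estimate \eqref{eq:lower-upper-curvature} is exactly Part~2, and $f_i^{\beta_i}$ is continuously differentiable since $g_i^{\beta_i}$ is (the smoothed max with a strongly convex penalty has a Lipschitz gradient by Danskin's theorem) and $h_i$ is by assumption. The lemma then gives $\gnorm{\nabla f_i^{\beta_i}(x)-\nabla f_i^{\beta_i}(y)}{}{}\le\max\{L_{g_i}^{\beta_i},L_{h_i}\}\gnorm{x-y}{}{}=L_i^{\beta_i}\gnorm{x-y}{}{}$, which is \eqref{eq:tight-Lip-const}. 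The only points needing care are verifying the continuous differentiability hypothesis of Lemma~\ref{lem:tight_lipschitz-const} and keeping track of which of $g_i^{\beta_i}$, $h_i$ supplies which curvature sign.
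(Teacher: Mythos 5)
Your proposal is correct and follows essentially the same route as the paper: Part~1 by bounding the extra quadratic penalty term between $0$ and $\tfrac{(\beta_i-\bar{\beta}_i)D_{\Ycal_i}^2}{2}$ and passing to the supremum, Part~2 by adding the descent lemma for $g_i^{\beta_i}$ to convexity of $h_i$ (and vice versa), and Part~3 by invoking Lemma~\ref{lem:tight_lipschitz-const} with $L=L_{g_i}^{\beta_i}$ and $\mu=L_{h_i}$. Your extra remark about verifying continuous differentiability of $g_i^{\beta_i}$ is a small point the paper leaves implicit, but it does not change the argument.
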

\begin{proof}
Part 1. If $\bar{\beta}<\beta$, then by definition we have $f_i^{\bar{\beta}_i}(x) \ge  f_i^{\beta_i}(x)$. On the other hand, using the boundedness of $\Ycal_i$, we have
\begin{align*}
	f_i^{\beta_i}(x) & = \max_{y_i\in \Ycal_i}\, \inprod{A_ix}{y_i} - p(y_i)-\frac{\bar{\beta}_i}{2}\gnorm{y_i-\hat{y}_i}{}{2} -\frac{\beta_i-\bar{\beta}_i}{2}\gnorm{y_i-\hat{y}_i}{}{2} - h(x)\\
	& \ge \max_{y_i\in \Ycal_i}\, \inprod{A_ix}{y_i} - p(y_i)-\frac{\bar{\beta}_i}{2}\gnorm{y_i-\hat{y}_i}{}{2} - h(x) -  \frac{\beta_i-\bar{\beta}_i}{2} D_{\Ycal_i}^2 \\
	& = f_i^{\bar{\beta}_i}(x) - \frac{\beta_i-\bar{\beta}_i}{2} D_{\Ycal_i}^2.
\end{align*}
Combining the above two results gives the desired inequality.

Part 2. Since $g^{\beta_i}_{i}$ and $h^i$ are both convex and smooth functions, we have
	\begin{align*}
		g^{\beta_i}_i(x_1) &\le g^{\beta_i}_i(x_2) + \inprod{\nabla g^{\beta_i}_{i}(x_2)}{x_1-x_2} + \tfrac{L^{\beta_i}{g_i}}{2}\gnorm{x_1-x_2}{}2, \\
		h_i(x_1) &\le h_i(x_2) - \inprod{\nabla h_i(x_2)}{x_1-x_2}.
	\end{align*}
	Summing up the above two inequalities and noting the definition of $f^{\beta_i}_i, \nabla f^{\beta_i}_i$, we conclude that $f^{\beta_i}_{i}$ has an upper curvature of $L^{\beta_i}_{g_i}$.  
	Similarly, using convexity of $g^{\beta_i}_{i}$ and smoothness of $h_i$, we obtain that  $f^{\beta_i}_{i}$ has a negative lower curvature $-L_{h_i}$.
	
Part 3. The Lipschitz continuity \eqref{eq:tight-Lip-const} is an immediate consequence of part 2) and Lemma~\ref{lem:tight_lipschitz-const}. 
\end{proof}
\begin{remark}
	When $\bar{\beta_i}=0$, Relation \eqref{eq:apprx-bound} reads 
		$f_i^{\beta_i}(x) \le  f_i(x) \le f_i^{\beta_i}(x) + \tfrac{\beta_i D_{\Ycal_i}^2}{2}.$ Together with Assumption~\ref{ass:strict-feasible}, it can be seen that $x^0$ is also strictly feasible for problem~\eqref{prob:smooth_main}. This justifies that {\GD} is well-defined for problem~\eqref{prob:smooth_main}.
\end{remark}
\begin{remark}
	The Lipschitz constant of $\nabla f^{\beta_i}_i$ can be derived in a different way. Since $\nabla g^{\beta_i}_i$ and $\nabla h_i$ are $L_{g_i}^{\beta_i}$ and $L_{h_i}$ Lipschitz continuous, respectively, we can show by triangle inequality that $\nabla f^{\beta_i}_{i}(x)$ is $L_{g_i}^{\beta_i} + L_{h_i}$-Lipschitz continuous.
	In contrast, by exploiting the asymmetry between lower and upper curvature, Proposition~\ref{prop:upp-low-curv-nonsmooth} derived a slightly sharper bound on the gradient Lipschitz constant.
\end{remark}

Throughout this section, we choose specific $\beta_i$ to ensure $\beta_iD_{\Ycal_i}^2$ is constant for all $i \in [m]$. Hence, we can define the additive approximation factor above as 
\begin{equation}
	\label{eq:def_nu}
	\nu := \tfrac{\beta_iD_{\Ycal_i}^2}{2},\ i\in [m].
\end{equation} 
Note that \eqref{eq:int_rel5} provides an approximation error for function values, or the so-called zeroth-order oracle of function $g_i$. However, convergence results for nonconvex optimization are generally given in terms of first-order stationarity measure, implying that we need approximation for first-order oracle for the function $f_i$ and consequently function $g_i$. Below we discuss a widely used approximate subdifferential for convex functions and generalize it for nonsmooth nonconvex functions. 
\begin{definition}
	[$\nu$-subdifferential] We say that a vector $v \in \Rbb^n$ is a $\nu$-subgradient of the convex function  $p(\cdot)$ at $x$ if for any $z$, we have 
	\[p(z) \ge p(x) + \inprod{v}{z-x} -\nu.\]
	The set of all $\nu$-subgradients of $p$ at $x$ is called the $\nu$-subdifferential, denoted by $\partial^{\nu}p(x)$. Moreover, we define $\nu$ subdifferential of nonconvex function $f_i$ as $\partial^{\nu} f_i(x) := \partial^{\nu} g_i(x) + \{-\nabla h_i(x)\}$ where the addition of sets is in Minkowski sense.
\end{definition}

Finally, we define a generalization of type-I KKT convergence criterion for structured nonsmooth nonconvex function constrained optimization problem:
\begin{definition}\label{def:app_KKT_nonsmooth}
	We say that a point $x$ is an $(\epsilon,\nu)$ type-III KKT point of \eqref{prob:main} if there exists $\lambda \ge 0$ satisfying the following conditions:
	\begin{align}
		\gnorm{\partial^{\nu} \psi_{0}(x) + \tsum_{i=1}^m \lambda_i\partial^{\nu}\psi_i(x)}{-}{2} & \le\epsilon,\label{eq:stationary-03}\\
		\tsum_{i=1}^m\lambda_i \vert\psi_i(x)-\eta_i\vert & \le\epsilon,\label{eq:stationary-04}\\
		\gnorm{ [\psi(x)-\eta]_+}{1}{} &\le \epsilon.\label{eq:stationary-05}
	\end{align}
	Moreover, we say that $x$ is a randomized $(\epsilon,\nu)$ type-III KKT point of \eqref{prob:main} if \eqref{eq:stationary-03}, \eqref{eq:stationary-04} and \eqref{eq:stationary-05} are satisfied in expectation.
\end{definition}

The $\epsilon$-subdifferential and the type-III KKT point are essential for associating smooth approximation with the original nonsmooth problem.   We build some important properties in the following proposition. 
\begin{proposition}\label{prop:subdiff-kkt}
	Let $\beta_i$ and $\nu$ satisfy \eqref{eq:def_nu}. 
	\begin{enumerate}
		\item For any $x\in \Rbb^d$,  we have $\nabla f_i^{\beta_i}(x) \in  \partial^{\nu}f_i(x)$, $i=0,1,2,...,m$.%
		\item Suppose that $x$ is a (randomized) Type-I $\epsilon$-KKT point of problem~\eqref{prob:smooth_main} and $\lambda$ is the associated dual variable with bound $\norm{\lambda}\le B$, then $x$ is a (randomized) Type-III $(\bar{\epsilon}, \nu)$-KKT point of problem~\eqref{prob:main} for $\bar{\epsilon}=\max\{\epsilon+B\nu, m\nu\}$.
	\end{enumerate}
\end{proposition}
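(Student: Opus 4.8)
\textbf{Part 1.} The plan is to verify the $\nu$-subgradient inequality directly from the smoothing error bound. Fix $i$ and $x$. By convexity and smoothness of $g_i^{\beta_i}$, for every $z$ we have $g_i^{\beta_i}(z) \ge g_i^{\beta_i}(x) + \inprod{\nabla g_i^{\beta_i}(x)}{z-x}$. Combining this with the two-sided approximation bound \eqref{eq:int_rel5}, namely $g_i^{\beta_i}(z) \le g_i(z)$ and $g_i^{\beta_i}(x) \ge g_i(x) - \tfrac{\beta_i D_{\Ycal_i}^2}{2} = g_i(x) - \nu$, I get $g_i(z) \ge g_i(x) + \inprod{\nabla g_i^{\beta_i}(x)}{z-x} - \nu$, i.e.\ $\nabla g_i^{\beta_i}(x) \in \partial^\nu g_i(x)$. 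Adding $-\nabla h_i(x)$ and using the definition $\partial^\nu f_i(x) = \partial^\nu g_i(x) + \{-\nabla h_i(x)\}$ together with $\nabla f_i^{\beta_i}(x) = \nabla g_i^{\beta_i}(x) - \nabla h_i(x)$ yields $\nabla f_i^{\beta_i}(x) \in \partial^\nu f_i(x)$. (For $i=0$ the same argument applies since $\beta_0 D_{\Ycal_0}^2/2$ also equals $\nu$ by the normalization \eqref{eq:def_nu}.)

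\textbf{Part 2.} Here the strategy is to translate each of the three defining inequalities of a type-I $\epsilon$-KKT point of \eqref{prob:smooth_main} into the corresponding type-III inequality of \eqref{prob:main}, using Part~1 to pass subgradients across, and the approximation bound \eqref{eq:apprx-bound} with $\bar\beta_i = 0$ to control the function-value discrepancies $|\psi_i^{\beta_i}(x) - \psi_i(x)| \le \nu$. For the stationarity condition \eqref{eq:stationary-03}: the type-I condition gives a dual $\lambda$ with $\gnorm{\partial_x \Lcal^\beta(x,\lambda)}{-}{2} \le \epsilon$, where the Lagrangian of \eqref{prob:smooth_main} uses gradients $\nabla f_i^{\beta_i}(x)$ and subdifferentials $\partial\chi_i(x)$; by Part~1 each such vector lies in $\partial^\nu\psi_i(x)$ (recalling $\partial^\nu\psi_i = \partial^\nu f_i + \partial\chi_i$ since $\chi_i$ is convex, so any genuine subgradient of $\chi_i$ is also a $0$- hence $\nu$-subgradient), so the same vector that certifies \eqref{prob:smooth_main}'s stationarity also lies in the set appearing in \eqref{eq:stationary-03}, giving that bound with $\epsilon$ — actually I only need $\le \epsilon \le \bar\epsilon$. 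For feasibility \eqref{eq:stationary-05}: since $x$ is feasible for \eqref{prob:smooth_main}, $\psi_i^{\beta_i}(x) \le \eta_i$, and $\psi_i(x) \le \psi_i^{\beta_i}(x) + \nu \le \eta_i + \nu$ by \eqref{eq:apprx-bound}, so $\gnorm{[\psi(x)-\eta]_+}{1}{} \le m\nu \le \bar\epsilon$. For complementary slackness \eqref{eq:stationary-04}: starting from $-\sum_i \lambda_i[\psi_i^{\beta_i}(x) - \eta_i] \le \epsilon$ and $\lambda_i \ge 0$, I bound $\sum_i \lambda_i|\psi_i(x) - \eta_i| \le \sum_i \lambda_i|\psi_i^{\beta_i}(x) - \eta_i| + \sum_i \lambda_i \nu$; the first sum is $\le \epsilon$ (note $\psi_i^{\beta_i}(x) \le \eta_i$ so $|\psi_i^{\beta_i}(x)-\eta_i| = \eta_i - \psi_i^{\beta_i}(x)$ and $\lambda_i(\eta_i - \psi_i^{\beta_i}(x)) = -\lambda_i[\psi_i^{\beta_i}(x)-\eta_i]$, summing to $\le \epsilon$), and the second is $\le \nu\gnorm{\lambda}{1}{} \le \nu\sqrt{m}\gnorm{\lambda}{}{} \le B\nu$ up to the $\sqrt m$ factor — I should double check whether the paper intends $\gnorm{\lambda}{1}{}\le B$ or $\gnorm{\lambda}{}{}\le B$ and whether the stated $\bar\epsilon = \max\{\epsilon + B\nu, m\nu\}$ absorbs it; I will phrase it so that $\sum_i\lambda_i\nu \le B\nu$ using whichever norm convention makes $\gnorm{\lambda}{1}{}\le B$. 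Collecting the three bounds, all are $\le \bar\epsilon = \max\{\epsilon + B\nu, m\nu\}$. The randomized case follows by taking expectations throughout, since all the estimates are pointwise (deterministic in the smoothing parameters) and the $B$-bound on $\gnorm{\lambda}{}{}$ is assumed to hold surely.

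\textbf{Main obstacle.} The only delicate point I anticipate is bookkeeping the norm on $\lambda$: getting exactly $B\nu$ (rather than $\sqrt m\, B\nu$) in the complementary-slackness bound requires the $\ell_1$-norm convention $\gnorm{\lambda}{1}{}\le B$, so I will state Part~2 under that reading, consistent with how $B$ enters the slackness terms $\langle\lambda^{k+1},|\psi(x^{k+1})-\eta|\rangle$ elsewhere. Everything else is a routine chain of the already-established inequalities \eqref{eq:int_rel5}, \eqref{eq:apprx-bound}, Part~1, and the definitions of the KKT notions.
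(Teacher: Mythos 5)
Your proof is correct and follows essentially the same route as the paper's: Part 1 via the identical three-inequality chain combining \eqref{eq:int_rel5} with convexity of $g_i^{\beta_i}$, and Part 2 by converting each type-I condition into its type-III counterpart using the $\nu$-error bounds (your triangle-inequality bookkeeping for the complementary-slackness term is trivially equivalent to the paper's max-based version). Your concern about the norm on $\lambda$ is well-founded but resolves exactly as you suspected: the paper's own proof explicitly works with $\norm{\lambda}_1\le B$, matching your $\ell_1$ reading.
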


\begin{proof}
	Part 1. It suffices to show $\nabla g_i^\beta(x) \in  \partial^{\nu}g_i(x)$. Due to the convexity of $g_i^{\beta_i}$ and \eqref{eq:int_rel5}, we have
	\begin{equation*}
		g_i(z) \ge g_i^{\beta_i}(z) \ge g_i^{\beta_i}(x) + \inprod{\nabla g_i^{\beta_i}(x)}{z-x} \ge g_i(x) + \inprod{\nabla g_i^{\beta_i}(x)}{z-x} - \tfrac{\beta_i D_{\Ycal_i}^2}{2},
	\end{equation*}
	where the first inequality follows from the first relation in \eqref{eq:int_rel5}, and the third inequality follows from second relation in \eqref{eq:int_rel5}.
	Noting the definition of $\nu$-subgradient, we conclude the proof.
	
	Part 2. It suffices to show the conversion of randomized Type-I KKT points to randomized Type-III KKT points. Suppose that $x$ is a randomized Type-I $\epsilon$-KKT point and we have $\norm{\lambda}_1\le B$.
	Using Part 1 it is easy to see $\partial \Lcal (x, \lambda) \subseteq \partial^\nu \psi_0(x)+\tsum_{i=1}^m \partial^\nu \psi_i(x)$, therefore, we have
	\[
	\Ebb\gnorm{\partial^\nu \psi(x)+\tsum_{i=1}^m \partial^\nu \psi_i^\nu(x)}{-}{2}\le \epsilon.
	\]
Using Proposition~\ref{prop:upp-low-curv-nonsmooth}, we have 
	 \[
	 \lambda_i(\psi_i^{\beta_i}(x)-\eta_i) \le \tsum_{i=1}^m \lambda_i(\psi_i(x)-\eta_i)\le \lambda_i(\psi_i^{\beta_i}(x)-\eta_i) + \lambda_i{\nu} \le \lambda_i{\nu}.\] 
This implies  
	\[\vert\lambda_i(\psi_i(x)-\eta_i)\vert \le \max\{|\lambda_i(\psi_i^{\beta_i}(x)-\eta_i)|, \lambda_i{\nu}\}.\]
	Summing up this inequality over $i=1,2,3,...,m$ and taking expectation with all the randomness, we have the
	\[
	\tsum_{i=1}^m \Ebb\vert\lambda_i(\psi_i(x)-\eta_i)\vert \le \Ebb\tsum_{i=1}^m |\lambda_i(\psi_i^{\beta_i}(x)-\eta_i)| + \tsum_{i=1}^m\lambda_i{\nu} \le \epsilon + B\nu.\]
	Moreover, we have 
	\[ 
	\tsum_{i=1}^m [\psi_i(x)-\eta_i]_+= \tsum_{i=1}^m [\psi_i^{\beta_i}(x)-\eta_i+\psi_i(x)-\psi_i^{\beta_i}(x)]_+\le \tsum_{i=1}^m [\psi_i(x)-\psi_i^{\beta_i}(x)] \le m \nu. 
	\]
	
\end{proof}

Now, we are ready to discuss the convergence rate of {\GD} for nonsmooth nonconvex function constrained optimization. 
\begin{theorem}
	Assume that $\beta_i, \nu$ satisfy \eqref{eq:def_nu} and set $\delta^{k}=\tfrac{\eta-\eta^{0}}{(k+1)(k+2)}$ when running {\GD} to solve problem~\eqref{prob:smooth_main}. 
Denote $c_i = \gnorm{A_i}{}{2}D_{\Ycal_i}^2$ $(0\le i \le m)$, $c=[c_1,c_2,..., c_m]^T$ and let $\norm{\lambda^k}_1\le B$. Suppose that $\nu=o(\frac{c_i}{L_{h_i}})$ for $i=0,1,2,...,m$, then $x^{\hat{k}+1}$ is a randomized Type-III $(\bar{\epsilon},\nu)$-KKT point with 
\[
\bar{\epsilon} =\Ocal\Bcbra{\frac{2}{K+2}\bsbra{\brbra{\frac{8(c_0+B\gnorm{c}{}{})^2}{c_0\nu}+\frac{2B\gnorm{c}{}{}}{c_0}}(\Delta+\nu) +B\gnorm{\eta-\eta^0}{}{}} + B \nu + m\nu}.
\]
\end{theorem}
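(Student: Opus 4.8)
The plan is to run \GD on the smooth surrogate~\eqref{prob:smooth_main}, invoke Corollary~\ref{cor:rate-lcgd} there, and then pull the resulting Type-I guarantee back to a Type-III guarantee for~\eqref{prob:main} via Proposition~\ref{prop:subdiff-kkt}. By the remark following Proposition~\ref{prop:upp-low-curv-nonsmooth}, $x^0$ is strictly feasible for~\eqref{prob:smooth_main}, so \GD is well-defined, and Proposition~\ref{prop:upp-low-curv-nonsmooth}(3) gives the gradient-Lipschitz constant of $f_i^{\beta_i}$ as $L_i^{\beta_i}=\max\{L_{g_i}^{\beta_i},L_{h_i}\}$ with $L_{g_i}^{\beta_i}=\norm{A_i}^2/\beta_i$. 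Since~\eqref{eq:def_nu} forces $\beta_i=2\nu/D_{\Ycal_i}^2$, we get $L_{g_i}^{\beta_i}=\norm{A_i}^2 D_{\Ycal_i}^2/(2\nu)=c_i/(2\nu)$; and the hypothesis $\nu=o(c_i/L_{h_i})$ (equivalently $L_{h_i}=o(c_i/\nu)$) makes $L_{g_i}^{\beta_i}\ge L_{h_i}$ once $\nu$ is small enough, so $L_i^{\beta_i}=c_i/(2\nu)$ for $i=0,1,\dots,m$.

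Next, apply Corollary~\ref{cor:rate-lcgd} to~\eqref{prob:smooth_main} with $\delta^k=(\eta-\eta^0)/((k+1)(k+2))$ and $\alpha_k=k+1$, using the assumed bound $\norm{\lambda^k}_1\le B$ in place of the Lagrange-multiplier bound otherwise furnished by uniform MFCQ. This shows $x^{\hat{k}+1}$ is a randomized Type-I $\epsilon$-KKT point of~\eqref{prob:smooth_main} with
\[
\epsilon=\tfrac{2}{K+2}\max\bcbra{8\brbra{L_0^{\beta_0}+B\norm{L^\beta}}^2 D^2,\ 2B\norm{L^\beta}D^2+B\norm{\eta-\eta^0}},
\]
where $L^\beta=[L_1^{\beta_1},\dots,L_m^{\beta_m}]$, hence $\norm{L^\beta}=\norm{c}/(2\nu)$, and $D^2=(\psi_0^{\beta_0}(x^0)-(\psi_0^{\beta_0})^*)/L_0^{\beta_0}$. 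The one calculation that needs care is the $\nu$-dependence of $D$: applying~\eqref{eq:apprx-bound} with $\bar{\beta}_0=0$ gives $\psi_0^{\beta_0}(x^0)\le\psi_0(x^0)$ and $(\psi_0^{\beta_0})^*\ge\psi_0^*-\nu$, so $\psi_0^{\beta_0}(x^0)-(\psi_0^{\beta_0})^*\le\Delta+\nu$ with $\Delta:=\psi_0(x^0)-\psi_0^*$; together with $L_0^{\beta_0}=c_0/(2\nu)$ this yields $D^2\le 2\nu(\Delta+\nu)/c_0$.

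Substituting $L_0^{\beta_0}=c_0/(2\nu)$, $\norm{L^\beta}=\norm{c}/(2\nu)$ and $D^2\le 2\nu(\Delta+\nu)/c_0$, the squared factor $1/\nu^2$ is partly cancelled by the $\nu$ sitting inside $D^2$: one finds $8(L_0^{\beta_0}+B\norm{L^\beta})^2 D^2=\Ocal\big(\tfrac{(c_0+B\norm{c})^2}{c_0\nu}(\Delta+\nu)\big)$ and $2B\norm{L^\beta}D^2=\Ocal\big(\tfrac{B\norm{c}}{c_0}(\Delta+\nu)\big)$, and bounding the maximum by the sum gives $\epsilon=\Ocal\big(\tfrac{2}{K+2}[(\tfrac{8(c_0+B\norm{c})^2}{c_0\nu}+\tfrac{2B\norm{c}}{c_0})(\Delta+\nu)+B\norm{\eta-\eta^0}]\big)$. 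Finally, since $x^{\hat{k}+1}$ is feasible for~\eqref{prob:smooth_main} with associated dual bounded by $B$, Proposition~\ref{prop:subdiff-kkt}(2) converts it into a randomized Type-III $(\bar{\epsilon},\nu)$-KKT point of~\eqref{prob:main} with $\bar{\epsilon}=\max\{\epsilon+B\nu,\ m\nu\}\le\epsilon+B\nu+m\nu$; inserting the bound on $\epsilon$ produces the stated expression.

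The main obstacle is purely bookkeeping rather than conceptual: one must track the $\nu$-dependence of \emph{every} quantity entering Corollary~\ref{cor:rate-lcgd} — in particular $D$, which itself behaves like $\sqrt{\nu}$ through $L_0^{\beta_0}$, so that the dominant term of the final bound retains exactly one factor of $1/\nu$ and not $1/\nu^2$ — and one must verify that the $o(\cdot)$ hypothesis is precisely what allows $\max\{L_{g_i}^{\beta_i},L_{h_i}\}$ to be replaced by $L_{g_i}^{\beta_i}$ simultaneously for all $i$, so that all the surrogate smoothness constants collapse to $c_i/(2\nu)$.
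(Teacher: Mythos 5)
Your proposal is correct and takes essentially the same route as the paper: apply the \GD{} analysis (the paper re-derives the weighted-sum bound of Theorem~\ref{thm:lcgd:sum-bound} for the surrogate, you invoke Corollary~\ref{cor:rate-lcgd} directly, which amounts to the same thing), track that $L_i^{\beta_i}=c_i/(2\nu)$ and $D^2=\Ocal(\nu(\Delta+\nu)/c_0)$ so that only one factor of $1/\nu$ survives, and then convert the Type-I guarantee for~\eqref{prob:smooth_main} into a Type-III guarantee for~\eqref{prob:main} via Proposition~\ref{prop:subdiff-kkt}(2). Your explicit justification that the $o(c_i/L_{h_i})$ hypothesis collapses $\max\{L_{g_i}^{\beta_i},L_{h_i}\}$ to $c_i/(2\nu)$ is exactly the (implicit) step in the paper's proof.
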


\begin{proof}
	Our analysis resembles the proof of Theorem~\ref{thm:lcgd:sum-bound}. Using a similar argument in~\eqref{eq:weighted-square-sum}, we have
	\begin{align}
\tsum_{k=0}^K {\alpha_k}\gnorm{x^k-x^{k+1}}{}{2} &  \le \tfrac{2\alpha_K}{L_0} \bsbra{\psi_0^{\beta_0}(x^0) - \psi_0^{\beta_0}(x^{K+1})} \nonumber\\
& \le \tfrac{2\alpha_K}{L_0^{\beta_0}} \bsbra{\psi_0(x^0) - \psi_0(x^{K+1})+\nu}. 
\end{align}
Combining this result with Lemma~\ref{lem:lk-bound} we obtain
\begin{equation}
\begin{aligned}
\tsum_{k=0}^{K}\alpha_k\gnorm{ \partial_{x}\Lcal(x^{k+1},\lambda^{k+1})}{-}{2} & \le 8\frac{(L_{0}^{\beta_0}+B\gnorm{L^{\beta}}{}{})^{2}}{L_0^{\beta_0}}\alpha_K (\Delta+\nu),\\
\tsum_{k=0}^{K} \alpha_k \langle \lambda^{k+1}, \vert\psi(x^{k+1})-\eta\vert \rangle  
& \le 2B\frac{\gnorm{L^{\beta}}{}{}}{L_0^{\beta_0}}\alpha_K (\Delta+\nu) + B \tsum_{k=0}^K\alpha_k\gnorm{\eta-\eta^k}{}{},%
\end{aligned}\label{eq:nonsmooth}
\end{equation}
where $\Delta=\psi_0(x^0)-\psi_0(x^*)$,  $\alpha_k\ge 0$ and
		$\Lcal^\beta(x, \lambda) \coloneq \psi_{0}^{\beta_0}(x) + \tsum_{i=1}^m\lambda_{i}(\psi^{\beta_i}_i(x) -\eta_i)$.
Taking $\delta^{k}=\tfrac{\eta-\eta^{0}}{(k+1)(k+2)}$ and $\alpha_k=k+1$ in \eqref{eq:nonsmooth}, we see that $x^{\hat{k}+1}$ is a Type-I $\epsilon$-KKT point for 
\[
\epsilon = \tfrac{2}{K+2} \max \big\{\tfrac{8(L_{0}^{\beta_0}+B\gnorm{L^{\beta}}{}{})^{2}}{L_0^{\beta_0}}(\Delta+\nu),   \tfrac{2B\gnorm{L^{\beta}}{}{}}{L_0^{\beta_0}} (\Delta+\nu)+B\gnorm{\eta-\eta^0}{}{}\big\}.
\]
Noting that  $L_{g_i}^{\beta_i} = \tfrac{\gnorm{A_i}{}2}{\beta_i} = \tfrac{\gnorm{A_i}{}{2}D_{\Ycal_i}^2}{2\nu}=\frac{c_i}{2\nu}$ and $\nu=o(\frac{c_i}{L_{h_i}})$, we have $\tfrac{(L_{0}^{\beta_0}+B\gnorm{L^{\beta}}{}{})^{2}}{L_0^{\beta_0}}=\Ocal\big(\tfrac{(c_0+B\gnorm{c}{}{})^2}{c_0\nu}\big)$ and $\tfrac{\gnorm{L^{\beta}}{}{}}{L_0^{\beta_0}}=\tfrac{\gnorm{c}{}{}}{c_0}$.
 Using the definition of $\hat{k}$ and  Proposition~\ref{prop:subdiff-kkt} we obtain the desired result.
\end{proof}

\section{Inexact {\GD}}\label{sec:inexact-GD}
{\GD} requires the exact optimal solution of subproblem~(\ref{subproblem}), which, however,  poses a  great challenge when the subproblem is difficult to solve. To alleviate such an issue, we consider an inexact variant of \GD~method for which the update of $x^{k+1}$ only  solves problem~\eqref{subproblem} approximately. 
This section is organized as follows. First, we present
 a general convergence property of inexact {\GD} when  the subproblem solutions satisfy certain approximation criterion. Next, we analyze the efficiency  of inexact {\GD} when the subproblems are handled by different external solvers. When the subproblem is a quadratically constrained quadratic program (QCQP), we propose an efficient interior point algorithm  by exploiting the diagonal structure.
  When the subproblem has general proximal components, we propose to solve it by  first-order methods. Particularly, we consider solving the subproblem by the constraint extrapolation ({\conex}) method  and develop the total iteration complexity of {\conex}-based {\GD}.

\subsection{Convergence analysis under an inexactness criterion}
Throughout the rest of this section, we will denote the exact primal-dual solution of \eqref{subproblem} as $(\wtil{x}^{k+1}, \wtil\lambda^{k+1})$.
We use the following criterion for measuring the accuracy of subproblem solutions. 
\begin{definition}\label{def:inexact-criterion}
We say that a point $x$ is an $\epsilon$-solution of~\eqref{subproblem} if 
\begin{equation*}
	\psi_0^{{k}}(x)-\psi_0^{k}{(\wtil{x}^{k+1})} \le \epsilon,\quad
	\gnorm{[\psi^{k}(x)]_+}{}{} \le \epsilon,\quad
	\Lcal_{k}(x, \wtil\lambda^{k+1}) \le \Lcal_{k}(\wtil{x}^{k+1}, \wtil\lambda^{k+1} ) + \epsilon.
\end{equation*}
\end{definition}

The following theorem shows asymptotic convergence to stationarity  for inexact \GD~method under mild assumptions. Since the proof is similar to the previous argument, we present the details in Appendix \ref{appx:LCGD_inexact-nonconvex} for the sake of completeness. Note that the theorem applies to a general nonconvex problem and hence applies to convex problems as well. 
\begin{theorem}\label{thm:lcgd-inexact-asymp}
Suppose that Assumption~\refeq{assu:y-bounded} holds and let $x^{k+1}$ be an $\epsilon_k$-solution of \eqref{subproblem} satisfying $\epsilon_k < \min_{i \in [m]} \delta_{i}^k$. %
Then all the conclusions of Theorem~\refeq{prop:lcgd:bound-dual} still hold. Then the dual sequence $\{\tilde{\lambda}^{k}\}$ is uniformly bounded by a constant $B>0$. Moreover, every limit point of inexact {\GD} is a KKT point.
\end{theorem}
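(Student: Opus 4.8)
The plan is to mirror the two-part argument of Theorem~\ref{prop:lcgd:bound-dual}, carefully tracking the $\epsilon_k$-errors introduced by inexact subproblem solutions. The key structural observation is that the condition $\epsilon_k < \min_{i\in[m]}\delta_i^k$ is exactly what preserves strict feasibility of the iterates along the path, which is the backbone of everything else. First I would establish that $\{x^k\}$ remains feasible for the original problem: since $x^{k+1}$ is an $\epsilon_k$-solution, the bound $\gnorm{[\psi^k(x^{k+1})]_+}{}{}\le\epsilon_k$ together with the quadratic upper bound $\psi_i(x^{k+1})\le\psi_i^k(x^{k+1})$ and the level update $\eta_i^{k+1}=\eta_i^k+\delta_i^k<\eta_i$ gives $\psi_i(x^{k+1})=\psi_i^{k+1}(x^{k+1})\le\eta_i^k+\epsilon_i^k$ wait—more carefully, $\psi_i^k(x^{k+1})\le\eta_i^k+\epsilon_k<\eta_i^k+\delta_i^k=\eta_i^{k+1}$, so strict feasibility for the $(k{+}1)$-st subproblem is maintained, hence $x^{k+1}\in\Xcal_{k+1}\cap\dom\chi_0$ and the sequence is well-defined; feasibility for \eqref{prob:main} then follows from $\eta_i^{k+1}<\eta_i$. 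This strict-feasibility-with-margin is also what guarantees the approximate dual $\tilde\lambda^{k+1}$ exists (Slater holds for each subproblem).

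Next I would recover an (approximate) sufficient descent property. Applying the three-point Lemma~\ref{lem:3pt} now gives inequalities with an additive $\epsilon_k$ slack, so \eqref{eq:sufficient-decrease} becomes $\tfrac{L_0}{2}\gnorm{x^{k+1}-x^k}{}{2}\le\psi_0(x^k)-\psi_0(x^{k+1})+\Ocal(\epsilon_k)$; combined with summability-type control on $\{\epsilon_k\}$ (which the hypothesis $\epsilon_k<\min_i\delta_i^k$ and the typical choice $\delta_i^k=\Theta(1/k^2)$ deliver), one still gets $\lim_k\gnorm{x^{k+1}-x^k}{}{}=0$ and convergence of $\{\psi_0(x^k)\}$. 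Then the dual-boundedness argument of Part 1 of Theorem~\ref{prop:lcgd:bound-dual} goes through almost verbatim: assume $\gnorm{\tilde\lambda^{k+1}}{}{}\to\infty$ along a subsequence, write the approximate variational inequality from the third condition in Definition~\ref{def:inexact-criterion} (i.e. $\Lcal_k(x^{k+1},\tilde\lambda^{k+1})\le\Lcal_k(x,\tilde\lambda^{k+1})+\Ocal(\epsilon_k)$ for admissible $x$), divide by $\gnorm{\tilde\lambda^{k+1}}{}{}$, pass to the limit using $\epsilon_k\to0$, $x^{k+1}\to\bar x$, and the continuity of $f_i,\chi_i$, and derive the same contradiction with uniform MFCQ via the normalized multiplier limit $\bar u$ and the active set $\Acal(\bar x)$. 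The $\epsilon_k$ terms vanish in the limit because $\epsilon_k/\gnorm{\tilde\lambda^{k+1}}{}{}\to0$.

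For the KKT conclusion, with $\{\tilde\lambda^{k+1}\}$ now bounded I would extract a convergent subsequence $\tilde\lambda^{k+1}\to\bar\lambda$ and take limits in the approximate optimality condition and the approximate complementary slackness. The stationarity relation \eqref{eq:kkt-mid-1} survives because the $\Ocal(\epsilon_k)$ perturbation and the $\gnorm{x^{k+1}-x^k}{}{}$-terms both go to zero, and lower semicontinuity of $\chi_0$ is used exactly as before. For complementary slackness, the computation in \eqref{eq:kkt-mid2} picks up extra terms of order $\gnorm{\tilde\lambda^{k+1}}{}{}\epsilon_k\le B\epsilon_k\to0$ and $\gnorm{\tilde\lambda^{k+1}}{}{}\gnorm{\eta-\eta^k}{}{}\to0$, so $\langle\bar\lambda,\psi(\bar x)-\eta\rangle=0$ in the limit; combined with feasibility of $\bar x$ this yields a genuine KKT pair $(\bar x,\bar\lambda)$.

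\textbf{Main obstacle.} The delicate point is handling the inexact variational inequality: unlike the exact case where $x^{k+1}$ is a true minimizer of $\Lcal_k(\cdot,\tilde\lambda^{k+1})$, here we only have the Definition~\ref{def:inexact-criterion} bound $\Lcal_k(x^{k+1},\tilde\lambda^{k+1})\le\Lcal_k(\wtil x^{k+1},\tilde\lambda^{k+1})+\epsilon_k$ relative to the \emph{exact} subproblem solution $\wtil x^{k+1}$, not relative to an arbitrary $x$; so I must first relate $\Lcal_k(\wtil x^{k+1},\tilde\lambda^{k+1})$ to $\Lcal_k(x,\tilde\lambda^{k+1})$ for admissible $x$ using that $(\wtil x^{k+1},\tilde\lambda^{k+1})$ is a true saddle point, and be careful that the primal-dual pair used in the inexactness criterion is the \emph{exact} dual $\wtil\lambda^{k+1}$. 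Ensuring the $\epsilon_k$-perturbations are uniformly $o(1/\gnorm{\tilde\lambda^{k+1}}{}{})$ along the divergent subsequence — rather than just $o(1)$ — is the crux; this is why the explicit coupling $\epsilon_k<\min_i\delta_i^k$ (hence $\epsilon_k\to0$ at a controlled rate) is imposed, and I would make that dependence explicit rather than leaving it implicit.
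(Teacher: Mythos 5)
Your proposal is correct and follows essentially the same route as the paper's proof in Appendix B: use $\epsilon_k<\min_i\delta_i^k$ to preserve strict subproblem feasibility (hence Slater and existence of $\tilde\lambda^{k+1}$), derive an approximate sufficient-descent inequality whose summable $\epsilon_k$ slack still yields $\gnorm{x^{k+1}-x^k}{}{}\to 0$ and $\gnorm{\tilde x^{k+1}-x^k}{}{}\to 0$, chain the inexactness bound through the exact saddle point $(\tilde x^{k+1},\tilde\lambda^{k+1})$ to get the perturbed variational inequality, and then repeat the MFCQ contradiction and limit arguments of Theorem~\ref{prop:lcgd:bound-dual}. The only quibble is that your stated "crux" is milder than you suggest: after dividing by $\gnorm{\tilde\lambda^{k+1}}{}{}\to\infty$ the perturbation term is $\epsilon_k/\gnorm{\tilde\lambda^{k+1}}{}{}$, which vanishes as soon as $\epsilon_k$ is bounded, so no coupling of the form $\epsilon_k=o(1/\gnorm{\tilde\lambda^{k+1}}{}{})$ is actually needed.
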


Under the inexactness condition in Definition~\ref{def:inexact-criterion}, we establish the  complexity of inexact {\GD} in the following theorem. 
\begin{theorem}\label{thm:inexact-rate}
\added{Under the assumptions of Theorem~\refeq{thm:lcgd-inexact-asymp}, we have
\begin{align}
	\tsum_{k=0}^K \alpha_k\gnorm{\partial_{x}\Lcal(\tilde{x}^{k+1},\tilde{\lambda}^{k+1})}{-}{2} &\le \tfrac{8(L_0+B\gnorm{L}{}{})^2}{L_0} \tilde{\Delta}, \label{eq:inexact-mid-5} \\
	\tsum_{k=0}^K \alpha_k\langle\tilde{\lambda}^{k+1}, \vert\psi(\tilde{x}^{k+1})-\eta\vert\rangle & \le B\tsum_{k=0}^K \alpha_k\gnorm{\eta-\eta^k}{}{} + \tfrac{2B\gnorm{L}{}{}}{L_0}\tilde{\Delta},\label{eq:inexact-mid-6}\\
	\tsum_{k=0}^K\alpha_{k}\gnorm{x^k-\wtil{x}^{k+1}}{}{2} &\le \tfrac{2}{L_0}\tilde{\Delta}, \label{eq:inexact-mid-7}
\end{align}
where $\tilde{\Delta}=\tsum_{k=0}^K\alpha_{k}\sbra{\psi_0(x^k)-\psi_0(x^{k+1})+{\vep}_k}$. Moreover, if we choose the index $\hat{k} \in \{0,1, \dots, K\}$ with probability $\prob(\hat{k} = k) = \alpha_k/(\tsum_{i=0}^K\alpha_{i}) $, then $x^{\hat{k}}$ is a randomized $(\epsilon,\delta)$ type-II KKT point with 
\begin{equation}\label{eq:KKT-inexact-case}
	\begin{split}
	\epsilon &= 1\big/(\tsum_{i=0}^K\alpha_{i})\max\braces[\big]{\tfrac{8(L_0+B\gnorm{L}{}{})^2}{L_0} \tilde{\Delta}, B\tsum_{k=0}^K\alpha_k \gnorm{\eta-\eta^k}{}{} + \tfrac{2B\gnorm{L}{}{}}{L_0}\tilde{\Delta}
	},\\
	\delta &= 2\tilde{\Delta}\big/{(L_0\tsum_{i=0}^K\alpha_{i})}.
	\end{split}
\end{equation}
In particular, using $\alpha_k = k+1$, $\epsilon_k = \min_{i \in [m]} \tfrac{\delta^k_i}{2}$ and $\delta_{i}^k = \tfrac{\eta_i - \eta^k_i}{(k+1)(k+2)}$, we have $x^{\hat{k}}$ is $(\epsilon, \delta)$ type-II KKT point of \eqref{prob:main} where 
\begin{equation}\label{eq:conv-KKT-inexact}
	\begin{split}
		\epsilon &= \tfrac{2}{K+2}\max\bcbra{ 4(L_0+B\gnorm{L}{}{})^2\bracket{2D^2 + \tfrac{\gnorm{\eta-\eta^0}{}{}}{L_0} }, B\gnorm{\eta-\eta^0}{}{} + B\gnorm{L}{}{}
			\bracket{2D^2 + \tfrac{\gnorm{\eta-\eta^0}{}{}}{L_0} }
		},\\
		\delta &= \tfrac{2}{K+2}\brbra{ 2D^2  + \tfrac{\gnorm{\eta-\eta^0}{}{}}{L_0}}.
	\end{split}
\end{equation} 
}
\end{theorem}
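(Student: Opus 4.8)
The plan is to mirror the structure of the proof of Theorem~\ref{thm:lcgd:sum-bound}, but using the inexactness-adjusted descent and the exact primal-dual pair $(\wtil x^{k+1},\wtil\lambda^{k+1})$ of the subproblem. First I would record the modified sufficient descent inequality: since $x^{k+1}$ is an $\vep_k$-solution, Definition~\ref{def:inexact-criterion} together with the argument behind \eqref{eq:sufficient-decrease} (comparing $\psi_0^k(x^{k+1})$ to $\psi_0^k(\wtil x^{k+1})$ to $\psi_0^k(x^k)=\psi_0(x^k)$, and using $L_0$-smoothness to pass from $\psi_0^k$ back to $\psi_0$) yields something of the form
\begin{equation*}
\tfrac{L_0}{2}\gnorm{x^k-\wtil x^{k+1}}{}{2}\le \psi_0(x^k)-\psi_0(x^{k+1})+\vep_k .
\end{equation*}
Summing this weighted by $\alpha_k$ and invoking the same Abel-summation/monotonicity manipulation as in \eqref{eq:weighted-square-sum} (the $\{\psi_0(x^k)\}$ sequence is still eventually controlled; one has to be a little careful since $\psi_0(x^k)$ need not be exactly monotone, but the telescoping with the $\vep_k$ terms absorbed into $\tilde\Delta$ gives $\sum_k\alpha_k\gnorm{x^k-\wtil x^{k+1}}{}{2}\le\tfrac{2}{L_0}\tilde\Delta$), which is exactly \eqref{eq:inexact-mid-7}.

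Next I would bound $\gnorm{\partial_x\Lcal(\wtil x^{k+1},\wtil\lambda^{k+1})}{-}{}$. Because $(\wtil x^{k+1},\wtil\lambda^{k+1})$ is the \emph{exact} KKT pair of subproblem \eqref{subproblem}, Lemma~\ref{lem:lk-bound} applies verbatim with $x^{k+1}\rightsquigarrow\wtil x^{k+1}$ and $x^k$ the linearization point, giving $\gnorm{\partial_x\Lcal(\wtil x^{k+1},\wtil\lambda^{k+1})}{-}{}\le 2(L_0+\inner{\wtil\lambda^{k+1}}{L})\gnorm{x^k-\wtil x^{k+1}}{}{}$; then Theorem~\ref{thm:lcgd-inexact-asymp} supplies the uniform bound $\gnorm{\wtil\lambda^{k+1}}{}{}\le B$, so squaring and summing against \eqref{eq:inexact-mid-7} produces \eqref{eq:inexact-mid-5}. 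For the complementary-slackness bound \eqref{eq:inexact-mid-6}, I would re-run the chain \eqref{eq:bound-cs-k}: expand $\psi_i(\wtil x^{k+1})-\eta_i$ as $[\psi_i^k(\wtil x^{k+1})-\eta_i^k]-(\eta_i-\eta_i^k)+[\psi_i(\wtil x^{k+1})-\psi_i^k(\wtil x^{k+1})]$, use exact complementary slackness of the subproblem to kill the first bracket, use $L_i$-smoothness to bound the last bracket by $L_i\gnorm{x^k-\wtil x^{k+1}}{}{2}$, and use $\gnorm{\wtil\lambda^{k+1}}{}{}\le B$; summing weighted by $\alpha_k$ and plugging in \eqref{eq:inexact-mid-7} gives \eqref{eq:inexact-mid-6}. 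The randomized type-II claim with parameters \eqref{eq:KKT-inexact-case} is then immediate: \eqref{eq:inexact-mid-5}–\eqref{eq:inexact-mid-6} divided by $\sum_i\alpha_i$ say that the exact point $\wtil x^{\hat k+1}$ is an $\epsilon$ type-I KKT point on average, and \eqref{eq:inexact-mid-7} divided by $\sum_i\alpha_i$ says $\Ebb\gnorm{x^{\hat k}-\wtil x^{\hat k+1}}{}{2}\le\delta$ (modulo an index-shift convention between $x^{\hat k}$ and $\wtil x^{\hat k+1}$), which is precisely Definition~\ref{def:type2-KKT}.

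Finally, for the concrete parameter choice $\alpha_k=k+1$, $\delta_i^k=\tfrac{\eta_i-\eta_i^0}{(k+1)(k+2)}$, $\vep_k=\tfrac12\min_i\delta_i^k$, I would just substitute as in Corollary~\ref{cor:rate-lcgd}: $\sum_{k=0}^K\alpha_k=\tfrac{(K+1)(K+2)}{2}$, $\sum_k\alpha_k\gnorm{\eta-\eta^k}{}{}=(K+1)\gnorm{\eta-\eta^0}{}{}$, and $\tilde\Delta=\sum_k\alpha_k[\psi_0(x^k)-\psi_0(x^{k+1})+\vep_k]\le\alpha_K(\psi_0(x^0)-\psi_0^*)+\sum_k\alpha_k\vep_k\le (K+1)L_0D^2+\tfrac12\gnorm{\eta-\eta^0}{}{}\cdot\tfrac{K+1}{2}$, where the last step uses $\sum_k\alpha_k\vep_k\le\tfrac12\sum_k(k+1)\cdot\tfrac{\gnorm{\eta-\eta^0}{}{}}{(k+1)(k+2)}\le\tfrac{\gnorm{\eta-\eta^0}{}{}}{2}\cdot\tfrac{K+1}{2}$ (bounding $\sum_{k=0}^K\tfrac1{k+2}$ crudely), so that $\tfrac{\tilde\Delta}{\sum_i\alpha_i}\le\tfrac{2}{K+2}(D^2+\tfrac{\gnorm{\eta-\eta^0}{}{}}{4L_0})$; cleaning up the constants gives \eqref{eq:conv-KKT-inexact}. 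The main obstacle is the bookkeeping around $\tilde\Delta$: the $\vep_k$ terms must be shown summable at the right rate and the Abel-summation must be handled without exact monotonicity of $\psi_0(x^k)$; everything else is a routine transcription of the exact-case arguments with $\wtil x^{k+1}$ in place of $x^{k+1}$.
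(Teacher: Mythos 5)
Your proposal is correct and follows essentially the same route as the paper's proof: the inexactness-adjusted descent inequality $\tfrac{L_0}{2}\gnorm{x^k-\wtil{x}^{k+1}}{}{2}\le\psi_0(x^k)-\psi_0(x^{k+1})+\vep_k$, then Lemma~\ref{lem:lk-bound} and the chain \eqref{eq:bound-cs-k} applied to the exact pair $(\wtil{x}^{k+1},\wtil{\lambda}^{k+1})$ with the uniform bound $B$ from Theorem~\ref{thm:lcgd-inexact-asymp}, and finally the substitution of the specific parameters (note that \eqref{eq:inexact-mid-7} needs no Abel summation at all, since $\tilde{\Delta}$ is by definition the weighted sum of the right-hand sides). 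The one loose point is your displayed bound $\tilde{\Delta}\le\alpha_K(\psi_0(x^0)-\psi_0^*)+\tsum_k\alpha_k\vep_k$: because $\psi_0(x^k)$ is only almost monotone ($\psi_0(x^{k+1})\le\psi_0(x^k)+\vep_k$), the Abel summation produces the additional term $\tsum_{k=1}^{K}(\alpha_k-\alpha_{k-1})\tsum_{i<k}\vep_i=\tsum_{i=0}^{K-1}(K-i)\vep_i$, which the paper keeps and controls by observing $(K-i)\vep_i+\alpha_i\vep_i=(K+1)\vep_i\le\tfrac{(K+1)\gnorm{\eta-\eta^0}{}{}}{2(i+1)(i+2)}$ and summing the telescoping series; this extra term is of the same order, so your final rate is unaffected, but the inequality as written skips it.
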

\begin{proof}
Using \eqref{eq:int_rel20} with $x^{k+1}$ replaced by $\wtil{x}^{k+1}$ (the optimal solution of problem~\eqref{subproblem}) and adding $f(x^k) + \tfrac{L_0}{2}\gnorm{x^k-\wtil{x}^{k+1}}{}{2}$ on both sides, we have
\begin{align}
		\tfrac{L_0}{2}\gnorm{x^k-\wtil{x}^{k+1}}{}{2} &\le \psi_{0}^k(x^k)-\psi_{0}^k(\wtil{x}^{k+1}) \nonumber\\
		&\le \psi_{0}(x_k) - \psi_{0}^k(x^{k+1})  + \epsilon_k \nonumber\\
		&\le \psi_{0}(x_k) - \psi_{0}(x^{k+1})  + \epsilon_k,\label{eq:almost_suff_descent}
\end{align}
where the second inequality follows from $\psi_{0}^k(x^k) = \psi_{0}(x^k)$ as well as $x^{k+1}$ being an $\epsilon_k$-solution (see Definition~\ref{def:inexact-criterion}) of subproblem~\eqref{subproblem}, and the third inequality follows from the fact that $\psi_{0}^k(x) \ge \psi_{0}(x)$ for all $x \in \dom{\chi_{0}}$.
	
Using Lemma~\ref{lem:lk-bound} (again $x^{k+1}$ is replaced by $\wtil{x}^{k+1}$), noting that $\epsilon_{k}$ satisfies the requirements of Theorem~\ref{thm:lcgd-inexact-asymp} implying that $\gnorm{\wtil{\lambda}^k}{}{} \le B$ and using \eqref{eq:almost_suff_descent}, we have 
	\begin{align}
		\gnorm{\partial_{x}\Lcal(\tilde{x}^{k+1},\tilde{\lambda}^{k+1})}{-}{2} &\le4(L_0+B\gnorm{L}{}{})^2 \gnorm{\tilde{x}^{k+1}-x^k}{}2 \nonumber \\
		& \le \tfrac{8(L_0+B\gnorm{L}{}{})^2}{L_0}\big[\psi_0(x^k)-\psi_0(x^{k+1})+\vep_k\big].\label{eq:inexact-mid-4}
	\end{align}
Similar to the argument of (\ref{eq:bound-cs-k}), we have
	\begin{align}
		\tsum_{i=1}^m\tilde{\lambda}_{i}^{k+1}\vert\psi_{i}(\tilde{x}^{k+1})-\eta_{i}\vert & \le  B \gnorm{\eta-\eta^k}{}{} + B\gnorm{L}{}{}\gnorm{\tilde{x}^{k+1}-x^k}{}2 \nonumber \\
		& \le  B \gnorm{\eta-\eta^k}{}{} +  \tfrac{2B\gnorm{L}{}{}}{L_0}\big[\psi_0(x^k)-\psi_0(x^{k+1})+\vep_k\big]. \label{eq:inexact-mid-3}
	\end{align}
Multiplying \eqref{eq:almost_suff_descent},  \eqref{eq:inexact-mid-4} and \eqref{eq:inexact-mid-3} by $\alpha_{k}$ and summing over $k=0, 1, \ldots, K$ give \eqref{eq:inexact-mid-5}, \eqref{eq:inexact-mid-6} and \eqref{eq:inexact-mid-7}. 
	
	We derive a convergence rate  based on the specified parameters. First, from relation \eqref{eq:almost_suff_descent}, we note that $\psi_{0}(x^{k+1}) \le \psi_{0}(x^k) + \epsilon_k$. Hence, we have by induction that 
	\begin{equation}\label{eq:int_rel21}
		\psi_{0}(x^{k+1}) \le \psi_{0}(x^0) + \tsum_{i=0}^k\epsilon_i.
	\end{equation}
	By setting $\alpha_k = (k+1)$ and $\epsilon_{k} = \min_{i \in [m]} \tfrac{\delta_{i}^k}{2} = \min_{i \in [m]} \tfrac{\eta_i-\eta^k_i}{2(k+1)(k+2)}$ for all $k \ge 0$ (note that $\epsilon_k$ satisfies the requirement of Theorem \ref{thm:lcgd-inexact-asymp}), we have 
	\begin{align}
		\wtil{\Delta} &= \tsum_{k =0}^K\alpha_k[\psi_{0}(x^k) - \psi_{0}(x^{k+1})] + \tsum_{k =0}^K\alpha_{k}\epsilon_k \nonumber\\
		&= \alpha_0\psi_{0}(x^0) + \tsum_{k =0}^{K-1}(\alpha_{k+1}-\alpha_{k})\psi_{0}(x^{k+1}) -\alpha_{K}\psi_{0}(x^{K+1}) + \tsum_{k =0}^K \alpha_k\epsilon_k \nonumber\\	
		&\myrel{(i)}{\le} \alpha_0\psi_{0}(x^0) + \tsum_{k =0}^{K-1}(\alpha_{k+1}-\alpha_{k})[\psi_{0}(x^0) + \tsum_{i=0}^k\epsilon_i]- \alpha_K\psi_{0}(x^{K+1}) + \tsum_{k =0}^K\alpha_{k}\epsilon_k \nonumber\\
		&\myrel{(ii)}{=} \alpha_K[\psi_{0}(x^0) - \psi_{0}(x^{K+1})] + \tsum_{k =0}^{K-1}\tsum_{i=0}^k\epsilon_i + \tsum_{k =0}^K \alpha_k\epsilon_k \nonumber\\
		&=\alpha_K[\psi_{0}(x^0) - \psi_{0}(x^{K+1})] + \tsum_{i=0}^{K-1}\tsum_{k =i}^{K-1}\epsilon_i + \tsum_{k =0}^K \alpha_k\epsilon_k \nonumber\\
		&\myrel{(iii)}{=}\alpha_K[\psi_{0}(x^0) - \psi_{0}(x^{K+1})] + \tsum_{i=0}^{K}(K-i)\epsilon_i + \tsum_{k =0}^K \alpha_k\epsilon_k \nonumber\\
		&\myrel{(iv)}{\le} \alpha_K[\psi_{0}(x^0) - \psi_{0}(x^{K+1})] + \tfrac{\gnorm{\eta-\eta^0}{}{}}{2}\tsum_{k=0}^{K}\tfrac{K-k}{(k+1)(k+2)} + \tfrac{1}{k+2} \nonumber\\
		&=\alpha_K[\psi_{0}(x^0) - \psi_{0}(x^{K+1})] + \tfrac{\gnorm{\eta-\eta^0}{}{}}{2}\tsum_{k=0}^{K}\tfrac{K+1}{(k+1)(k+2)} \nonumber\\
		&\le \alpha_K\bracket[\big]{\psi_{0}(x^0) - \psi_{0}(x^{K+1}) + \tfrac{\gnorm{\eta-\eta^0}{}{}}{2}}. \label{eq:int_rel22}
	\end{align} 
	Here, {\sffamily (i), (ii)} follows from \eqref{eq:int_rel21} and $\alpha_{k+1}-\alpha_k = 1 (> 0)$, {\sffamily (iii)} follows $(K-i)$ is $0$ at $i = K$, {\sffamily (iv)} follows by observing $\epsilon_k \le \tfrac{\gnorm{\eta-\eta^0}{}{}}{2(k+1)(k+2)}$ and last inequality follows since $\tsum_{k =0}^\infty \tfrac{1}{(k+1)(k+2)} = 1$ and $\alpha_K = K+1$. 
	
	Applying same arguments as those in Corollary \ref{cor:rate-lcgd}, we have $\tsum_{k =0}^K\alpha_k\gnorm{\eta-\eta^k}{}{} = \alpha_K\gnorm{\eta-\eta^0}{}{}$. Using this relation along with $\tsum_{k =0}^K\alpha_k = \tfrac{(K+1)(K+2)}{2}$ and \eqref{eq:int_rel22} inside \eqref{eq:KKT-inexact-case}, we have \eqref{eq:conv-KKT-inexact}. Hence, we conclude the proof.	
\end{proof}

\begin{remark} 
Compared to the convergence result \eqref{eq:sum-slack-1} for  exact {\GD}, we have to control the accumulated error  in  $\tilde{\Delta}$  for the inexact case \eqref{eq:KKT-inexact-case}.
However, we need an even more stringent condition on the error to ensure asymptotic convergence. 
Specifically, we assume ${\vep}_k$ to be smaller than the level increments $\delta_{i}^k$ to ensure that each subsequent subproblem is strictly feasible.
As long as the subproblems are solved deterministically with sufficient accuracy, we can ensure such feasibility as well as the boundedness of the dual.
\end{remark}

\begin{remark}
	{Note that the convergence analysis of the inexact method for the stochastic case will go through in a similar fashion. In particular, the subproblems of \SGD{} are still deterministic in nature. Hence, a deterministic error can be easily incorporated into the analysis of the stochastic outer loop. In particular, Proposition \ref{lem:sgd-suff-decrease} will have an additional $\epsilon_k$ in the RHS. We can use $\epsilon_k = \min_{i \in [m]} \tfrac{\delta^k_i}{2}$ to ensure the strict feasibility. Following the analysis in Theorem \ref{thm:main-sgd}, we will get the additional term $\tsum_{k=0}^K \alpha_k\epsilon_k $. Note that we have identical policies for $\alpha_k$ in the above analysis and Corollary \ref{cor:rate_lcsgd}. Furthermore, since $\delta_k$ used above and in Corollary \ref{cor:rate_lcsgd} are the same, we have identical values of $\epsilon_k$ as well. Following the above development, we can easily bound the additional $\tsum_{k=0}^K \alpha_k\epsilon_k $ term. 
 }
\end{remark}

\subsection{Solving the subproblem with the interior point method}\label{sec:solving-subproblem}

Our goal  is to develop an efficient interior point algorithm to solve
 problem~\eqref{subproblem} when $\chi_i(x)=0$, $i\in[m]$. Without loss of generality, we express the subproblem as the following QCQP:
\begin{equation}\label{prob:diag-qp}
\begin{aligned}
	\min_{x\in\Rbb^d} & \quad g_0(x)\coloneq \tfrac{L_0}{2}\|x-a_0\|^2 \\
	\st & \quad g_i(x)\coloneq  \tfrac{L_0}{2}\|x-a_i\|^2 - b_i \le 0, \quad  i\in[m].
\end{aligned}
\end{equation}
We assume that the  initial solution $\hat{x}$ of such problem is strictly feasible, namely, there exists $\delta>0 $ such that
\begin{equation}
	g_i(\hat{x})\le -\delta,\quad i=1,2,\ldots m.
\end{equation}
Let $e_1=[1,0,\ldots,0]^\trans\in\Rbb^{d+1}$. With a slight abuse of notation, we can formulate~\eqref{prob:diag-qp} as the following problem
\begin{equation}\label{prob:lin-form}
	\begin{aligned}
		\min & \qquad \quad e_1^\trans u \\
		 \st & \quad \til{g}_0(u) = g_0(x)-\eta \le 0, \\
		 	& \quad \til{g}_i(u) = g_i(x)\le 0,\ i\in[m], \\
		 	& \quad \til{g}_{m+1}(u)=\tfrac{L_{m+1}}{2}\norm{u-(0, a_{m+1})^\trans}^2-b_{m+1} \le 0, \\
		 	& \quad u = (\eta, x) \in \Rbb\times \Rbb^d.
	\end{aligned}
\end{equation}
Here we set artificial variables $L_{m+1}=1, a_{m+1}=0$ and $b_{m+1}=\frac{1}{2}R^2$ for some sufficiently large $R$.   We explicitly add such a ball constraint to ensure bound on $(\eta, x)$. Note that the bound $R$ always exists since our domain is compact and the objective is Lipschitz continuous. 
Our goal is to apply the path-following method to solve (\ref{prob:lin-form}).  We denote 
\begin{equation}
	\phi(u)= -\tsum_{i=0}^{m+1}\log -\til{g_i}(u)
\end{equation}
Since each $\til{g}_i(u)$ is convex quadratic in $u$, $\phi(u)$ is a self-concordant barrier with $\upsilon=m+2$.
The key idea of the path-following algorithm is to approximately solve a sequence of penalized problems
\begin{mini}
{u}{\phi_\tau(u) := \tau \eta +\phi(u)}{}{}\label{prob:barrier}
\end{mini}
with increased values of $\tau$,
and generate a sequence of strictly feasible solution $u_\tau$ close to the central path--a trajectory composed of the  minimizers $u_\tau^*=\argmin_u \phi_\tau(u)$.

We apply a standard path-following algorithm (See~\cite[Chapter~4]{nemirovski2004interior}) for solving (\ref{prob:lin-form}), and outline the overall procedure in Algorithm~\ref{alg:barrier}. This algorithm consists of two main steps: 
\begin{enumerate}
	\item Initialization: We seek a solution $u^0$ near the analytic center (i.e. minimizer of $\phi(u)$). To this end, we  solve a sequence of auxiliary problems $\hat{\phi}_\tau(u)=\tau w^\trans u + \phi(u)$ where $w=-\nabla \phi(\hat{u})$. It can be readily seen that $\hat{u}$ is in the central path of this auxiliary problem with $\tau=1$.  Performing a reverse path-following scheme ( decreasing rather than increasing $\tau$), we gradually converge to the analytic center.
	\item Path-following: We solve a sequence of penalized problems with  an increasing value of $\tau$ by a damped version of Newton's method, which ensures the solutions in the proximity of the central path.
\end{enumerate}
\begin{algorithm}[ht]
	\begin{algorithmic}[1]
		\LeftComment{Newton decrement $n(f, u) \coloneq \sqrt{\nabla f(u)^\trans [\nabla^2 f(u)]^{-1} \nabla f(u)}$}
		\State {\bf Input: }$\hat{u}, \kappa\in(0,1)$, $\gamma>0$, $\vep$; Set $\tau_0=1$, $u^0=\hat{u}$;
		\LeftComment{Phase Zero: Approximate analytic center}
		\For{$i = 0,1, \dots$}
			\State $\tau_{i+1}=\big(1+\tfrac{\gamma}{\sqrt{\upsilon}}\big)^{-1}\tau_i$;
			\State Obtain $u^{i+1}$ from calling \texttt{Newton}$(\phi_{\tau_{i+1}}, u^i,\tau_{i+1}, \kappa/2)$;
			\If{$n(\phi, u^{i+1})\le\tfrac{3}{4}\kappa$}
				\State Set $u^*=u^{i+1}$ and \bf{Break};
			\EndIf
		\EndFor
		\LeftComment{Phase One:  Path-following scheme}
		\State Set $u^0=u^*$, $\tau_0=\max\{\tau:\, n(\phi_\tau, u^0)\le\kappa\}$, $s= \big\lceil \tfrac{\sqrt{\upsilon}}{\gamma}\ln\tfrac{2\upsilon}{\tau_0\vep}\big\rceil-1$;
		\For{$i =0,1, \dots, s$}
			\State $\tau_{i+1}=\big(1+\tfrac{\gamma}{\sqrt{\upsilon}}\big)\tau_i$;
			\State Obtain $u^{i+1}$ from calling \texttt{Newton}$(\phi_{\tau_{i+1}}, u^i, \tau_{i+1}, \kappa)$;
		\EndFor
		\State {\bf Output: } $u^{s+1}$.
		\LeftComment{Damped Newton method for solving the subproblem.}
		\Function{\texttt{Newton}}{$f$, $v^0$, $\tau$, $\epsilon$}
			\For{$s=0, 1,2,\dots$}
				\If{$n(f, v^s)\le \epsilon $} {\bf Break};
				\EndIf
				\State $v^{s+1}=v^s - \tfrac{1}{1+n(f, v^s)}[\nabla^2f(v^s)]^{-1} \nabla f(v^s)$;
			\EndFor
		\EndFunction
	\end{algorithmic}
	\caption{Path-following Interior Point Method (\cite{nemirovski2004interior})}\label{alg:barrier}
\end{algorithm}

\subsubsection{Solving the Newton Equation\label{newtonsys}}
First, we calculate the gradient and Hessian map of $\phi_t(\cdot)$:
\begin{align*}
\nabla\phi_\tau(u)& = \tau e_1 +\tsum_{i=0}^{m+1} \theta_i \nabla\til{g}_i(u), \\
	\nabla^2 \phi_\tau(u) &= \tsum_{i=0}^{m+1} \theta_i^2 \nabla\til{g}_i(u)\til{g}_i(u)^\trans+\tsum_{i=0}^{m+1}\theta_i \nabla^2\til{g}_i(u) = NN^\trans+\Gamma,
\end{align*}
where $\theta_i = -\til{g}_i(u)^{-1}$, and 
\begin{equation*}
\begin{aligned}
N & = \begin{bmatrix}
	\theta_0  \nabla\til{g}_0(u),\ldots, \theta_{m+1}  \nabla\til{g}_{m+1}(u)
\end{bmatrix} \in\Rbb^{(d+1)\times (m+2)}, \\
\Gamma & = \begin{bmatrix}
	\theta_{m+1}L_{m+1} & 0 \\
	0 & \tsum_{i=0}^{m+1}\theta_iL_i I_d
\end{bmatrix}	\in\Rbb^{(d+1)\times(d+1)}.
\end{aligned}
\end{equation*}
Note that computing the gradient $\nabla \phi_t(u)$ takes $\Ocal(dm)$, hence the computation burden is from forming and solving the Newton systems.  This is divided into two cases.
\begin{enumerate}
\item  $m < d$. Then the Hessian is the sum of a low rank matrix and a diagonal matrix. Based on the Sherman-Morrison-Woodbury formula, we have 
\begin{equation}\label{eq:phi-Hes-inv}
[\nabla^2\phi_\tau(u)]^{-1} = \Gamma^{-1}-\Gamma^{-1}N\big(I+N^\trans \Gamma^{-1} N\big)^{-1}N^\trans \Gamma^{-1}.	
\end{equation}
Computing the product $N^\trans \Gamma^{-1} N$ takes $\Ocal(m^2d)$ while performing  Cholesky factorization takes $\Ocal({m^3})$.  Therefore, the overall complexity of each Newton step is $\Ocal(m^3+m^2d)=\Ocal(m^2d)$.
\item $m \ge d$. In such case, we can directly compute  $NN^\trans$ in $\Ocal(md^{2})$ and then perform
Cholesky factorization  $\nabla^2\phi_\tau(x)=LL^\trans $
in $\Ocal(d^{3})$ , followed by  two triangle systems. Hence the overall complexity of a Newton step is $\Ocal(d^{3}+md^2)=\Ocal(md^2)$.
\end{enumerate}
Due to the above discussion, the cost of computing each Newton system is 
\begin{equation}\label{eq:cost-newton}
	\Ocal\big(\min\{d, m\}\cdot md\big).
\end{equation}

\subsubsection{Complexity}
Before deriving the complexity of solving the subproblems, we require some additional assumptions.
We assume that $M=\max_x\big\{ \| \nabla g_i(x)\|\big\}$  and $\max g_0({x})-\min_x g_0(x)\le V$.  Note that these assumptions are easily satisfied  if we assume functions in the original problem have bounded level sets.

According to~\cite[Theorem 4.5.1]{nemirovski2004interior}, the complexity of interior point methods depends not only on the time to follow the central path, but also on the time to arrive near the analytic center from an arbitrary initial point. Let us put it in the context of Algorithm~\ref{alg:lcgd}. Despite the strict feasibility guarantee, we do not know whether  $x^k$ is near the analytic center of each subproblem. It remains to show how to control the complexity of approximating the analytic center.

To measure the strict feasibility of the initial point,  we use the Minkowsky function of the domain, which is  defined by $\pi_x(y)=\inf\{t>0: \, x+t^{-1}(y-x)\in D\}$ for any given $x$ in the interior of the domain.
With the help of the Minkowsky function, we bound the distance between the initial point and the boundary in the following proposition.
\begin{proposition}\label{prop:Minko}
Let  $\hat{u}=(g_0(\hat{\eta}), \hat{x})$ where $\hat{\eta}=g_0(\hat{x})+\delta$.  If $\|u-\hat{u}\| \le \tfrac{\delta}{M+1}$, then $u$ is feasible for problem~\eqref{prob:lin-form}. 
Moreover, we have 
\begin{equation}\label{eq:bound-pistar}
\pi_{u^*}(\hat{u})\le \tfrac{(M+1)R}{(M+1)R+\delta},
\end{equation}
where $u^*$ is defined in phase zero of Algorithm~\refeq{alg:barrier}.
\end{proposition}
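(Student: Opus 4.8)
The plan is to establish the two assertions in turn: the first is a direct perturbation estimate, and the second a short geometric consequence of it. For the first part I would fix $u=(\eta,x)$ with $\norm{u-\hat u}\le\frac{\delta}{M+1}$, so that in particular $\norm{x-\hat x}\le\frac{\delta}{M+1}$ and $|\eta-\hat\eta|\le\frac{\delta}{M+1}$, and verify $\til g_i(u)\le 0$ for each family of constraints. For $i\in[m]$ we have $\til g_i(u)=g_i(x)$, and combining the strict feasibility $g_i(\hat x)\le-\delta$ with the gradient bound $\norm{\nabla g_i}\le M$ gives $g_i(x)\le g_i(\hat x)+M\norm{x-\hat x}\le-\delta+\frac{M\delta}{M+1}<0$. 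For $i=0$, note $\til g_0(\hat u)=g_0(\hat x)-\hat\eta=-\delta$ by the choice of $\hat\eta$, and $|\til g_0(u)-\til g_0(\hat u)|\le M\norm{x-\hat x}+|\eta-\hat\eta|\le(M+1)\norm{u-\hat u}\le\delta$, hence $\til g_0(u)\le 0$. Finally $\til g_{m+1}(u)\le 0$ holds because $R$ is taken large enough, which is possible by compactness of the original domain, that the ball of radius $\frac{\delta}{M+1}$ around $\hat u$ stays inside $\{\norm{u}\le R\}$. This shows $u$ is feasible for~\eqref{prob:lin-form}, i.e. the ball of radius $\frac{\delta}{M+1}$ around $\hat u$ is contained in the feasible region $D$ of~\eqref{prob:lin-form}.

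For the second part, I would rewrite the Minkowsky function as $\pi_{u^*}(\hat u)=1/s_{\max}$ where $s_{\max}=\sup\{s\ge 0:\ u^*+s(\hat u-u^*)\in D\}$, and then lower-bound $s_{\max}$. For $s\ge 1$ one has $u^*+s(\hat u-u^*)=\hat u+(s-1)(\hat u-u^*)$, which by the first part lies in $D$ as soon as $(s-1)\norm{\hat u-u^*}\le\frac{\delta}{M+1}$; hence $s_{\max}\ge 1+\frac{\delta}{(M+1)\norm{\hat u-u^*}}$ and therefore $\pi_{u^*}(\hat u)\le\frac{(M+1)\norm{\hat u-u^*}}{(M+1)\norm{\hat u-u^*}+\delta}$. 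Since both $\hat u$ and $u^*$ are feasible for~\eqref{prob:lin-form}, the artificial ball constraint $\til g_{m+1}\le 0$ forces $\norm{\hat u},\norm{u^*}\le R$, so $\norm{\hat u-u^*}\le R$ (after taking $R$ to be the diameter bound of the domain, absorbing the harmless factor $2$ into the choice of $R$). Plugging this into the previous display and using that $t\mapsto\frac{t}{t+\delta}$ is increasing yields exactly~\eqref{eq:bound-pistar}.

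The main, and essentially the only, obstacle I anticipate is the bookkeeping around the auxiliary radius $R$: one must make sure $R$ is chosen (i) large enough that the perturbation ball around $\hat u$ of radius $\frac{\delta}{M+1}$ is not clipped by the constraint $\til g_{m+1}\le 0$, and (ii) large enough that it dominates the distance $\norm{\hat u-u^*}$; both are consequences of the compactness of the original feasible set together with Lipschitz continuity of $g_0$, which are exactly the standing assumptions stated just before the proposition. A secondary point is that the gradient bound $M$ has to cover $g_0$ as well as $g_1,\dots,g_m$ (equivalently, $g_0$ is Lipschitz on the relevant bounded region with constant absorbed into $M$); this is precisely what produces the factor $M+1$ rather than $M$ in the $\til g_0$ estimate. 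With these conventions fixed, all steps above are elementary, and the bound~\eqref{eq:bound-pistar} is exactly what is needed to control the number of Newton iterations spent in phase zero of Algorithm~\ref{alg:barrier}.
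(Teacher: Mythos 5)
Your proposal is correct and follows essentially the same route as the paper: the feasibility of the $\tfrac{\delta}{M+1}$-ball around $\hat u$ is proved by the same Lipschitz perturbation estimates, and your lower bound $s_{\max}\ge 1+\tfrac{\delta}{(M+1)\norm{\hat u-u^*}}$ is just the reciprocal reformulation of the paper's choice of $t^+$ and the feasible point $u^+=\hat u+\tfrac{\delta(\hat u-u^*)}{(M+1)\norm{\hat u-u^*}}$. Your explicit remark about bounding $\norm{\hat u-u^*}$ by $R$ (modulo a factor of $2$ absorbed into $R$) is a fair, slightly more careful reading of the last step, which the paper states tersely.
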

\begin{proof}
We have 
\[
	\vert  \til{g}_0(u)-\til{g}_0(\hat{u})\vert  \le \vert g_0(x)-g_0(\hat{x}) \vert +\vert\eta-\hat\eta \vert  
	\le  \tfrac{M}{M+1}\delta + \tfrac{\delta}{M+1}  = \delta,
\]
implying that  \[ \til{g}_0(u)\le  \til{g}_0(\hat{u})+\delta = g_0(\hat{x})-\hat{\eta}+\delta=0.\]
Analogously, 	for $i=0,1,\ldots,m$, we have 
\[
 | g_i(x)-g_i(\hat{x})| \le M \| x-\hat{x}\|\le \tfrac{M}{M+1}\delta \le  \delta.	
\]
Using triangle inequality, we have $\til{g}_i(u)=g_i(x)\le g_i(\hat{x})+\delta $=0. The last constraint in~(\ref{prob:lin-form}) is trivially satisfied for sufficiently large $R$. Therefore,  $u$ is a feasible point of (\ref{prob:lin-form}).

Let  $t^+=\tfrac{(M+1)\|\hat{u}-u^* \|}{(M+1)\|\hat{u}-u^*\|+\delta}$, then from the above analysis, we know that the point
\[
u^+ =u^*+\tfrac{1}{t^+} (\hat{u}-u^*)  = \hat{u}+\tfrac{\delta(\hat{u}-u^*)}{(M+1)\|\hat{u}-u^*\|}	
\]
must be  a feasible solution. Using the last constraint $\|u\|\le R$, we immediately obtain the bound~(\ref{eq:bound-pistar}).
\end{proof}

Using~\cite[Theorem~4.5.1]{nemirovski2004interior} and Proposition~\ref{prop:Minko}, we can derive the total complexity of solving the diagonal QCQP.
\begin{theorem}\label{thm:barrier-bound}
Under the assumptions of Proposition~\refeq{prop:Minko}, the total number of Newton steps to get an $\vep$ solution is 
\[
N_{\vep} = \Ocal(1)\sqrt{m+2}\ln\left( \tfrac{(m+2) V((M+1)R+\delta)}{\delta\vep} +1\right).
\]
\end{theorem}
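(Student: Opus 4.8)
The plan is to invoke the standard complexity guarantee for path-following interior point methods, namely \cite[Theorem~4.5.1]{nemirovski2004interior}, and to feed it the problem-specific parameters that we have already assembled. That theorem bounds the total number of Newton iterations required by a two-phase path-following scheme (Phase Zero to approximate the analytic center, Phase One to trace the central path) in terms of three quantities: the barrier parameter $\upsilon$, the ``quality'' of the initial point as measured by the Minkowsky function $\pi_{u^*}(\hat u)$, and the size of the objective range relative to the target accuracy $\vep$. We already have $\upsilon = m+2$ because $\phi(u)=-\sum_{i=0}^{m+1}\log(-\til g_i(u))$ is a self-concordant barrier with parameter equal to the number of log terms (each $\til g_i$ convex quadratic). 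So the skeleton of the proof is: (i) record $\upsilon = m+2$; (ii) quote the generic iteration bound of the form $\Ocal(1)\sqrt{\upsilon}\,\ln\!\big(\tfrac{\upsilon\,\text{(range)}}{(1-\pi_{u^*}(\hat u))\,\vep}\big)$ (Phase One contributes $\lceil \tfrac{\sqrt\upsilon}{\gamma}\ln\tfrac{2\upsilon}{\tau_0\vep}\rceil$ outer steps, each needing $\Ocal(1)$ damped Newton steps, and Phase Zero contributes $\Ocal(\sqrt\upsilon\ln\tfrac{1}{1-\pi_{u^*}(\hat u)})$ steps); (iii) substitute the bounds we have proved.

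First I would substitute the Minkowsky bound from Proposition~\ref{prop:Minko}: since $\pi_{u^*}(\hat u)\le \tfrac{(M+1)R}{(M+1)R+\delta}$, we get $1-\pi_{u^*}(\hat u)\ge \tfrac{\delta}{(M+1)R+\delta}$, so $\tfrac{1}{1-\pi_{u^*}(\hat u)}\le \tfrac{(M+1)R+\delta}{\delta}$. This controls the Phase Zero cost by $\Ocal(\sqrt{m+2}\,\ln\tfrac{(M+1)R+\delta}{\delta})$. Next I would handle the objective-range term: the relevant ``range'' quantity in Theorem~4.5.1 is $\max g_0 - \min g_0 \le V$ by assumption, and $\tau_0$ (the initial penalty parameter entering Phase One) is $\Theta(1)$ after Phase Zero by construction of the path-following scheme, so the Phase One cost is $\Ocal(\sqrt{m+2}\,\ln\tfrac{(m+2)V}{\vep})$. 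Combining both phases and merging the logarithms (using $\ln a + \ln b = \ln ab$ and absorbing lower-order additive constants into the ``$+1$'' inside the log) yields
\[
N_\vep = \Ocal(1)\sqrt{m+2}\,\ln\!\left(\tfrac{(m+2)V\big((M+1)R+\delta\big)}{\delta\vep}+1\right),
\]
which is exactly the claimed bound.

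The main obstacle I expect is not conceptual but bookkeeping: one must be careful that the accuracy $\vep$ appearing in the interior point theorem is the \emph{duality-gap} accuracy of \eqref{prob:lin-form}, and relate it to the $\vep$-solution notion in Definition~\ref{def:inexact-criterion} (which demands $\vep$-suboptimality in $\psi_0^k$, $\vep$-feasibility of the constraint residual, and $\vep$-suboptimality of the Lagrangian at the true dual). Because all $\til g_i$ are quadratic with identity-scaled Hessians and the feasible region is bounded by the artificial ball of radius $R$, the gap-to-accuracy conversion only costs constants and a factor polynomial in $m$, which is already absorbed into the logarithm. A second minor point is to confirm that the damped Newton inner loop in the \texttt{Newton} subroutine terminates in $\Ocal(1)$ steps from a point within the $\kappa$-neighborhood of the central path — this is the standard self-concordance argument (each outer $\tau$-update moves the target by a controlled amount, so a constant number of damped Newton steps restores proximity), and it is exactly the content behind the $\Ocal(1)$ factor in Theorem~4.5.1, so I would simply cite it rather than reprove it. With these two conversions in hand, the statement follows directly by plugging Proposition~\ref{prop:Minko} and the assumptions $M,V$ into the cited theorem.
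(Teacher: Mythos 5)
Your proposal is correct and follows essentially the same route as the paper: the paper proves Theorem~\ref{thm:barrier-bound} by directly citing \cite[Theorem~4.5.1]{nemirovski2004interior} with barrier parameter $\upsilon=m+2$, bounding the objective variation of \eqref{prob:lin-form} by $V$, and bounding $1/(1-\pi_{u^*}(\hat u))$ by $((M+1)R+\delta)/\delta$ via Proposition~\ref{prop:Minko}, exactly as you do. Your additional remarks on converting duality-gap accuracy to the $\epsilon$-solution criterion of Definition~\ref{def:inexact-criterion} go slightly beyond what the paper records here (that conversion is only handled informally in the subsequent corollary), but they do not change the argument.
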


\begin{corollary}
In the inexact {\GD} method, assume that the subproblems are solved by Algorithm~\refeq{alg:barrier} and the returned solution satisfies the inexactness requirement in Theorem~\refeq{thm:inexact-rate}. Then, to get an $\Ocal(\epsilon, \epsilon)$ Type-II KKT point, the overall arithmetic cost of Algorithm~\refeq{alg:barrier} is 
\begin{equation*}
\Tcal = \Ocal\big(\min\{m, d\}\cdot m^{1.5}d \cdot \frac{1}{\vep} \ln\brbra{\frac{1}{\vep}}\big).\label{eq:total-flops}	
\end{equation*}
\end{corollary}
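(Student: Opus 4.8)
The plan is to combine three facts already in hand: (i) by Theorem~\ref{thm:inexact-rate}, with $\alpha_k=k+1$, $\delta_i^k=\tfrac{\eta_i-\eta_i^k}{(k+1)(k+2)}$ and $\epsilon_k=\min_{i\in[m]}\tfrac{\delta_i^k}{2}$, the inexact \GD{} method produces an $(\epsilon,\delta)$ type-II KKT point with $\epsilon=\delta=\Ocal(1/K)$, so $K=\Ocal(1/\epsilon)$ outer iterations suffice to reach an $\Ocal(\epsilon,\epsilon)$ type-II KKT point; (ii) by Theorem~\ref{thm:barrier-bound}, Algorithm~\ref{alg:barrier} solves the $k$-th diagonal QCQP subproblem to accuracy $\epsilon_k$ in $N_{\epsilon_k}$ Newton steps; and (iii) by~\eqref{eq:cost-newton}, each Newton step costs $\Ocal(\min\{m,d\}\cdot md)$ arithmetic operations. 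The total cost is then $\big(\tsum_{k=0}^{K}N_{\epsilon_k}\big)\cdot\Ocal(\min\{m,d\}\cdot md)$, so the whole proof reduces to bounding $\tsum_{k=0}^{K}N_{\epsilon_k}$.

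For that sum, first observe $\eta_i-\eta_i^k=\tfrac{1}{k+1}(\eta_i-\eta_i^0)$, hence $\epsilon_k=\Theta(\operatorname{poly}(1/k))$; since $k\le K=\Ocal(1/\epsilon)$ we get $\ln(1/\epsilon_k)=\Ocal(\ln(1/\epsilon))$ uniformly in $k$. Second, I would check that running Algorithm~\ref{alg:barrier} to accuracy $\vep=\epsilon_k$ yields a point satisfying Definition~\ref{def:inexact-criterion}: the interior-point iterates are strictly feasible for~\eqref{prob:lin-form}, so $\gnorm{[\psi^k(x)]_+}{}{}=0$; the reformulation~\eqref{prob:lin-form} makes the $\vep$-solution deliver the objective gap $\psi_0^k(x)-\psi_0^k(\wtil{x}^{k+1})\le\epsilon_k$; and complementary slackness of $(\wtil{x}^{k+1},\wtil{\lambda}^{k+1})$ together with strict feasibility gives $\Lcal_k(x,\wtil{\lambda}^{k+1})\le\psi_0^k(x)\le\Lcal_k(\wtil{x}^{k+1},\wtil{\lambda}^{k+1})+\epsilon_k$, so all three conditions follow from the single objective-gap estimate. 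Third, the constants $V=\max g_0-\min g_0$ and $M=\max\gnorm{\nabla g_i}{}{}$ in Theorem~\ref{thm:barrier-bound} are uniform over all subproblems (the feasible set is compact and the $L_i$ are fixed), $R$ is the fixed auxiliary ball radius, and the strict-feasibility margin of the $k$-th subproblem is $\delta=\min_{i\in[m]}\delta_i^{k-1}$, which by~\eqref{eq:x^k_feasibility} (giving $\psi_i^k(x^k)=\psi_i(x^k)\le\eta_i^{k-1}<\eta_i^k$ with slack $\eta_i^k-\eta_i^{k-1}=\delta_i^{k-1}$) is again $\Theta(\operatorname{poly}(1/k))$. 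Plugging these into the logarithm in Theorem~\ref{thm:barrier-bound} yields $N_{\epsilon_k}=\Ocal\big(\sqrt{m}\,\ln(1/\epsilon)\big)$ uniformly in $k$.

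Summing, $\tsum_{k=0}^{K}N_{\epsilon_k}=\Ocal\big(K\sqrt{m}\,\ln(1/\epsilon)\big)=\Ocal\big(\tfrac{\sqrt{m}}{\epsilon}\ln(1/\epsilon)\big)$, and multiplying by the per-step cost $\Ocal(\min\{m,d\}\cdot md)$ gives $\Tcal=\Ocal\big(\min\{m,d\}\cdot m^{1.5}d\cdot\tfrac{1}{\epsilon}\ln(\tfrac{1}{\epsilon})\big)$, as claimed. The main obstacle is the middle step: confirming that the interior-point output genuinely meets all of Definition~\ref{def:inexact-criterion} (especially the Lagrangian-gap condition, which the IPM does not control directly), and that the subproblem strict-feasibility margin $\delta$—which governs the analytic-center phase of Algorithm~\ref{alg:barrier} and enters the IPM bound as $1/\delta$—degrades only polynomially in $k$ so that it survives harmlessly inside the logarithm. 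Everything else is routine substitution and summation.
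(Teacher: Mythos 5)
Your proposal is correct and follows essentially the same route as the paper: $K=\Ocal(1/\epsilon)$ outer iterations from Theorem~\ref{thm:inexact-rate}, $N_k=\Ocal(\sqrt{m}\ln(1/\epsilon_k))=\Ocal(\sqrt{m}\ln(1/\epsilon))$ Newton steps per subproblem from Theorem~\ref{thm:barrier-bound} since $\epsilon_k$ decays only polynomially in $k\le K$, and the per-Newton-step cost \eqref{eq:cost-newton}, multiplied together. The extra checks in your middle paragraph (that the IPM output meets Definition~\ref{def:inexact-criterion} and that the margins $V,M,R,\delta$ behave) are sensible diligence but are already subsumed by the corollary's hypothesis that the returned solution satisfies the inexactness requirement; the paper's own proof simply cites the two theorems and sums, exactly as you do.
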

\begin{proof}
According to Theorem~\ref{thm:inexact-rate}, the total number of {\GD} is $K=\Ocal(1/\vep)$. In the  $k$-th iteration of {\GD}, we set the error criteria $\nu=\Ocal(\tfrac{1}{k^2})$ and $\vep=\Ocal(\tfrac{1}{k^2})$. Theorem~\ref{thm:barrier-bound} implies that the number of Newton steps is $N_k=\Ocal(\sqrt{m}\ln (k))$. Therefore, the total number of Newton steps in {\GD} is
$T_K = \tsum_{k=0}^K N_k = \Ocal\brbra{\sqrt{m}\frac{1}{\vep}\ln\brbra{\frac{1}{\vep}}}. $
Combining this result with \eqref{eq:cost-newton} gives us the desired bound.
\end{proof}
\begin{remark}
First,  at the $k$-th step of {\GD}, we need $\log(k)$ iterations of interior point methods, of which the complexity order  is equally contributed by the two phases of IPM. Specifically, we first require  $\Ocal(\ln(k))$ Newton steps to pull the iterates from near the boundary to  the proximity of the central path, and then require $\Ocal(\ln(k))$ to obtain an $\Ocal(1/k^2)$-accurate solution. 
Second, it is interesting to consider the case when the constraint is far less than the feature dimensionality, namely, $m \ll d $. We observe that the total computation 
\[ \Ocal\big(d m^{2.5} K\ln K\big) \]
is linear in dimensionality.
Third, despite the simplicity,  the basic barrier method offers a relatively stronger approximate solution than what is needed in Theorem~\ref{thm:inexact-rate}, the feasibility of the solution path allows us to weaken the  assumption  to $\hat{\vep}_k=0$. 
Nevertheless, besides  our approach, it is possible to employ  long-step and infeasible primal-dual interior point methods which may give a better empirical performance. 
\end{remark}

\subsection{Solving subproblems with the first-order method}\label{sec:ConEx+LCGD} 
In this section, we use a previously proposed ConEx method~\cite{boob2019proximal} to solve the subproblem \eqref{subproblem} when general proximal functions $\chi_{i}$ are present. Then, we  analyze the overall complexity of \GD~method with {\conex} method as a subproblem solver.  First, we formally state the extended version of problem \eqref{prob:diag-qp} as follows:
\begin{equation} \label{prob:subprob-2}
	\begin{split}
		\min_{x \in X} \quad &\phi_0(x)\coloneq g_0(x) + \chi_{0}(x)\\
		\text{s.t.} \quad &\phi_i(x) \coloneq g_i(x) + \chi_{i}(x) \le 0, \quad i = 1, \dots, m.
	\end{split}
\end{equation} 
For the application of {\conex}  for the subproblem, we need access to a  convex compact set $X$ such that $\cap_i\dom{\chi_{i}} \subseteq X$. Moreover, $X$ is a ``simple'' set in the sense that it allows easy computation of  the proximal operator of $\chi_{0}(x) + \tsum_{i=1}w_i\chi_{i}(x)$ for any given weights $w_i, i =1, \dots, m$. Such assumptions are not very restrictive as many machine learning and engineering problems explicitly seek the optimal solution from a bounded set. Under these assumptions, we apply {\conex} to solve the subproblem \eqref{subproblem} of {\GD}. We now reproduce a simplified optimality guarantee of the {\conex} method below without necessarily going into the details of the algorithm.
\begin{theorem}\cite{boob2019proximal}\label{thm:conv_conex}
	Let $x$ be the output of {\conex} after $T$ iterations for  problem~\eqref{prob:subprob-2}. Assume that $\phi_0$ is a strongly convex function and $(\wtil{x}, \wtil{\lambda})$ is the optimal primal-dual solution. Moreover, Let $B$ be a parameter of the {\conex} method which satisfies $B> \gnorm{\wtil{\lambda}}{}{}$. Then, the solution $x$ satisfies
	\begin{align*}
		\phi_0(x) - \phi_0(\wtil{x}) &\le O\paran[\big]{\tfrac{1}{T^2}\paran{ B^2 + \gnorm{\wtil{\lambda}}{}{2} }},\\
		\gnorm{[\phi(x)]_+ }{}{} &\le O\paran[\big]{\tfrac{1}{T^2}\paran{ B^2 + \gnorm{\wtil{\lambda}}{}{2} }}.
	\end{align*}
\end{theorem}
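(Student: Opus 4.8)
Since Theorem~\ref{thm:conv_conex} is quoted from \cite{boob2019proximal}, I would recover it by instantiating the general \conex{} analysis on the specific subproblem \eqref{prob:subprob-2}, in which each $g_i$ is a strongly convex quadratic, each $\chi_i$ is convex, and $\phi_0=g_0+\chi_0$ is strongly convex. The plan is to start from the Lagrangian saddle-point reformulation
\[
\min_{x\in X}\ \max_{\lambda\in\Rbb^m_+}\ \Lcal(x,\lambda) := \phi_0(x) + \inner{\lambda}{\phi(x)},
\]
which is strongly convex in $x$ and linear in $\lambda$, and whose saddle point is $(\wtil x,\wtil\lambda)$. First I would record the \conex{} updates: a primal proximal step on $g_0$ together with the quadratic prox term, a projected dual ascent step $\lambda^{t+1}=[\lambda^{t}+\tau_t\,\wtil\phi^{\,t}]_+$ driven by the \emph{extrapolated} constraint surrogate $\wtil\phi^{\,t}=\phi(x^{t})+\theta_t(\phi(x^{t})-\phi(x^{t-1}))$, and the output $x$ taken as the $\alpha_t$-weighted average of the $x^{t}$.

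The core of the argument is a one-iteration inequality: for every fixed pair $(\bar x,\lambda)$ with $\bar x\in X$, $\lambda\ge 0$,
\[
\alpha_t\bigl(\Lcal(x^{t+1},\lambda)-\Lcal(\bar x,\lambda^{t+1})\bigr)\le (\text{primal telescoping terms}) + (\text{dual telescoping terms}) + (\text{extrapolation cross-terms}),
\]
which I would obtain by applying the three-point inequality (Lemma~\ref{lem:3pt}) to the primal prox-step, its analogue to the dual step, and the Lipschitz smoothness of each $g_i$. Summing over $t=1,\dots,T$ with $\alpha_t$ growing linearly — this is exactly where the strong convexity of $\phi_0$ is used, as in accelerated gradient methods — telescopes all boundary terms and leaves, after Cauchy--Schwarz on the remaining cross-terms, a bound of the form
\[
\Lcal(x,\lambda)-\Lcal(\wtil x,\wtil\lambda)\le \frac{c}{T^2}\bigl(\gnorm{\lambda}{}{2}+\gnorm{\lambda^{0}-\wtil\lambda}{}{2}+\text{const}\bigr),\qquad \tsum_{t}\alpha_t=\Omega(T^2),
\]
valid for all admissible $\lambda$. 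Taking $\lambda^{0}=0$ and using $B>\gnorm{\wtil\lambda}{}{}$ produces the $B^2+\gnorm{\wtil\lambda}{}{2}$ dependence on the right-hand side.

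It then remains to split this gap bound into the two stated estimates. Choosing $\lambda=\wtil\lambda$ and using $\Lcal(x,\wtil\lambda)\ge\phi_0(x)$ together with $\Lcal(\wtil x,\wtil\lambda)=\phi_0(\wtil x)$ gives the a-priori lower bound $\phi_0(x)-\phi_0(\wtil x)\ge -\gnorm{\wtil\lambda}{}{}\,\gnorm{[\phi(x)]_+}{}{}$. For the feasibility bound I would feed the worst-case test multiplier $\lambda=2B\,[\phi(x)]_+/\gnorm{[\phi(x)]_+}{}{}$ — which lies inside the ball of radius governing the \conex{} parameter because $B>\gnorm{\wtil\lambda}{}{}$ — into the gap inequality; since $\inner{\lambda}{\phi(x)}\ge 2B\gnorm{[\phi(x)]_+}{}{}$ and $\phi_0(x)-\phi_0(\wtil x)\ge -B\gnorm{[\phi(x)]_+}{}{}$, the left-hand side is at least $B\gnorm{[\phi(x)]_+}{}{}$, so $\gnorm{[\phi(x)]_+}{}{}=\Ocal\bigl((B^2+\gnorm{\wtil\lambda}{}{2})/T^2\bigr)$; substituting this back into the lower bound yields the matching bound on $\phi_0(x)-\phi_0(\wtil x)$.

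The main obstacle is the extrapolation bookkeeping: the cross-terms $\inner{\lambda^{t+1}-\lambda^{t}}{\phi(x^{t})-\phi(x^{t-1})}$ generated by the dual extrapolation must be reorganized so that they telescope rather than accumulate, which forces a precise matching of $\theta_t$, $\alpha_t$, $\tau_t$ to the smoothness constants of the $g_i$ and to the strong-convexity modulus of $\phi_0$; the assumption $B>\gnorm{\wtil\lambda}{}{}$ is exactly what keeps the dual iterates in the region where this stepsize schedule stays admissible. Everything beyond that is a telescoping-plus-Cauchy--Schwarz computation.
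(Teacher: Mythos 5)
The paper does not actually prove this statement: Theorem~\ref{thm:conv_conex} is imported verbatim from \cite{boob2019proximal} as a black-box guarantee for the subproblem solver, so there is no internal proof to compare against. Judged on its own terms, your sketch is a faithful reconstruction of the ConEx analysis in that reference: the saddle-point gap function for $\Lcal(x,\lambda)=\phi_0(x)+\inner{\lambda}{\phi(x)}$, the one-iteration three-point inequalities for the primal prox step and the extrapolated dual step, linearly growing weights $\alpha_t$ (sourced from the strong convexity of $\phi_0$) giving $\tsum_t\alpha_t=\Omega(T^2)$, and the standard conversion of a gap bound over a dual ball into separate optimality and feasibility estimates via a test multiplier proportional to $[\phi(x)]_+/\gnorm{[\phi(x)]_+}{}{}$. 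This is the right skeleton, and the identification of the extrapolation cross-terms as the delicate bookkeeping step is accurate.

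Two small points to tighten. First, your justification of the a-priori lower bound is off as written: $\Lcal(x,\wtil\lambda)\ge\phi_0(x)$ is false in general since $\inner{\wtil\lambda}{\phi(x)}$ can be negative; the correct route is the saddle inequality $\Lcal(x,\wtil\lambda)\ge\Lcal(\wtil x,\wtil\lambda)=\phi_0(\wtil x)$ together with $\inner{\wtil\lambda}{\phi(x)}\le\gnorm{\wtil\lambda}{}{}\,\gnorm{[\phi(x)]_+}{}{}$, which does yield the bound $\phi_0(x)-\phi_0(\wtil x)\ge-\gnorm{\wtil\lambda}{}{}\,\gnorm{[\phi(x)]_+}{}{}$ you state. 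Second, your test multiplier has norm $2B$, so the gap inequality must be established on a dual ball of radius at least $2B$; in \cite{boob2019proximal} the stepsizes are tuned to a ball of radius $B>\gnorm{\wtil\lambda}{}{}$ and the test multiplier is taken with norm $\gnorm{\wtil\lambda}{}{}+1\le B$ (or the radius is enlarged consistently). Either convention works, but the radius in the gap bound and the norm of the test multiplier must be matched, otherwise the final constants do not follow. Neither issue changes the $\Ocal\bigl((B^2+\gnorm{\wtil\lambda}{}{2})/T^2\bigr)$ conclusion.
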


Even though {\conex}  can be applied to a wider variety of convex function constrained problems, it has two vital and intricate issues that need to be addressed in our context:
\begin{enumerate}
	\item The solution path of {\conex}  can be arbitrarily infeasible in the early iterations, while the successive iterations make the solutions infeasibility smaller. Note that the approximation criterion in Definition~\ref{def:inexact-criterion} requires guarantees on the amount of infeasibility. %
	This implies {\conex}  has to run a significant number of iterations before getting sufficiently close to the feasible set.
	\item Since {\conex} is a primal-dual method, its convergence guarantees depend on the optimal dual solution $\lambda^*$. Moreover, a bound on the dual, $B (> \gnorm{\lambda^*}{}{})$, is required to implement the algorithm to achieve an accelerated convergence rate of $O(1/T^2)$ for strongly convex problems.
\end{enumerate} 
 From Theorem \ref{thm:conv_conex}, it is clear that {\conex}  requires a bound $B$. This requirement naturally leads to two cases: (1) bound $B$ can be estimated apriori, e.g., see Lemma \ref{lem:strong_feas_prior_bound}; and (2) bound $B$ is known to exist but cannot be estimated, e.g., see Theorem \ref{prop:lcgd:bound-dual}. Both cases have different convergence rates for the subproblem which leads to different overall computational complexity.

\paragraph{Case 1: $B$ can be estimated apriori.}
In this case, we do not need to estimate $B^k$ as in \eqref{eq:B^k_bound}. Using the bound $B$, we can get accelerated convergence of \conex~in accordance with Theorem \ref{thm:conv_conex} which leads to better performance of the \GD~method. The corollary below formally states the total computational complexity of \GD~method for this case.
\begin{corollary}
    If an explicit value of $B$ is known, the \GD~method with \conex~as subproblem solver obtains $O(\tfrac{1}{K},\tfrac{1}{K})$ type-II KKT point in $O(K^2)$ computations.
\end{corollary}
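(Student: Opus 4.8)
The plan is to feed the convergence rate of \conex~on the subproblem into the inexact complexity bound of Theorem~\ref{thm:inexact-rate}, keeping track of the per-iteration cost. First I would instantiate Theorem~\ref{thm:inexact-rate} with $\alpha_k=k+1$, $\delta_i^k=\tfrac{\eta_i-\eta_i^0}{(k+1)(k+2)}$ and $\epsilon_k=\min_{i\in[m]}\tfrac{\delta_i^k}{2}=\Theta(1/k^2)$; this already gives that, after $K$ outer iterations, \GD~outputs a randomized $(\epsilon,\delta)$ type-II KKT point with $\epsilon=\Ocal(1/K)$ and $\delta=\Ocal(1/K)$, i.e. an $\Ocal(1/K,1/K)$ type-II KKT point --- provided each subproblem~\eqref{subproblem} is solved to an $\epsilon_k$-solution in the sense of Definition~\ref{def:inexact-criterion} (and $\epsilon_k<\min_i\delta_i^k$, which holds). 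So the task reduces to bounding the number of \conex~iterations needed per outer step.

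Second I would check that \conex~delivers an $\epsilon_k$-solution in few iterations. The $k$-th subproblem, written as~\eqref{prob:subprob-2} with $\phi_0=\psi_0^k$, $\phi_i=\psi_i^k-\eta_i^k$, is $L_0$-strongly convex thanks to the proximal term $\tfrac{L_0}{2}\norm{x-x^k}^2$ in $\psi_0^k$, so Theorem~\ref{thm:conv_conex} applies with a parameter slightly above the (known) bound $B$; using $\norm{\tilde\lambda^{k+1}}\le B$ (Theorem~\ref{thm:lcgd-inexact-asymp}), after $T$ iterations \conex~returns $x$ with $\phi_0(x)-\phi_0(\tilde x^{k+1})=\Ocal(B^2/T^2)$ and $\norm{[\phi(x)]_+}=\Ocal(B^2/T^2)$. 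For the third requirement of Definition~\ref{def:inexact-criterion} I would write $\Lcal_k(x,\tilde\lambda^{k+1})-\Lcal_k(\tilde x^{k+1},\tilde\lambda^{k+1})=[\phi_0(x)-\phi_0(\tilde x^{k+1})]+\inprod{\tilde\lambda^{k+1}}{\phi(x)}$ using complementary slackness $\inprod{\tilde\lambda^{k+1}}{\phi(\tilde x^{k+1})}=0$, and bound $\inprod{\tilde\lambda^{k+1}}{\phi(x)}\le\norm{\tilde\lambda^{k+1}}\,\norm{[\phi(x)]_+}=\Ocal(B^3/T^2)$. All three quantities are thus $\Ocal(B^3/T^2)$, so $T_k=\Ocal(B^{3/2}/\sqrt{\epsilon_k})=\Ocal(B^{3/2}k)=\Ocal(k)$ \conex~iterations suffice since $B$ is a fixed constant.

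Third I would account for per-iteration cost: each outer step evaluates $f_i(x^k),\nabla f_i(x^k)$ only once, and every \conex~iteration then uses only a proximal step of $\chi_0+\tsum_i w_i\chi_i$ over the simple set $X$ plus cheap vector operations, because $g_i(x)=f_i(x^k)+\inprod{\nabla f_i(x^k)}{x-x^k}+\tfrac{L_i}{2}\norm{x-x^k}^2$ makes $\nabla g_i(x)=\nabla f_i(x^k)+L_i(x-x^k)$ reuse the already-computed gradient. Counting \conex~iterations as the unit of computation, the total is $\tsum_{k=0}^{K}T_k=\tsum_{k=0}^K\Ocal(k)=\Ocal(K^2)$, which together with the first step gives the $\Ocal(1/K,1/K)$ type-II KKT guarantee in $\Ocal(K^2)$ computations.

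The step needing the most care is the second one: \conex~certifies only the objective-gap and infeasibility bounds of Theorem~\ref{thm:conv_conex}, not directly the Lagrangian-gap condition of Definition~\ref{def:inexact-criterion}, so this reduction must be carried out explicitly, and one must verify that the dependence on $B$ stays polynomial so that the accuracy-to-iteration conversion $T_k=\Ocal(k)$ is legitimate rather than concealing an unbounded factor.
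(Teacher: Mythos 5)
Your proposal is correct and follows essentially the same route as the paper: invoke Theorem~\ref{thm:inexact-rate} with $\epsilon_k=\Theta(1/k^2)$ to get the $(\Ocal(1/K),\Ocal(1/K))$ type-II guarantee, bound the per-subproblem \conex{} iteration count by $\Ocal(1/\sqrt{\epsilon_k})=\Ocal(k)$ using the known constant $B$, and sum to $\Ocal(K^2)$. Your second step is in fact more careful than the paper's one-line claim $T^k=\Ocal(B/\sqrt{\epsilon_k})$: you explicitly convert the objective-gap and infeasibility bounds of Theorem~\ref{thm:conv_conex} into the Lagrangian-gap condition of Definition~\ref{def:inexact-criterion} via complementary slackness, arriving at $T_k=\Ocal(B^{3/2}/\sqrt{\epsilon_k})$ — a different (and more honest) power of $B$ that is immaterial to the final bound since $B$ is a fixed constant.
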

\begin{proof}
    According to Theorem \ref{thm:conv_conex}, the required \conex~iterations for each subproblem can be bounded by 
    \[T^k = O(\tfrac{B}{\sqrt{\epsilon_k}}).\]
    Since $B$ is a constant, we have $T^k = O(\epsilon_k^{-1/2}) = O(k)$. Finally, we have total computations $\tsum_{k=1}^K T^k = O(K^2)$. Hence, we conclude the proof.
\end{proof}

 \paragraph{Case 2: $B$ is known to exist but cannot be estimated.}
 For the subproblem~\eqref{subproblem}, we can easily find $B^k > \gnorm{\wtil{\lambda}^{k+1}}{}{}$ by using the difference in levels of successive iterations. This bound is weak, especially in the limiting case as it does not take into account
\begin{proposition}\label{prop:bound_B^k}
	For subproblem \eqref{subproblem}, we have 
	\begin{equation}\label{eq:B^k_bound}
		\gnorm{\wtil{\lambda}^{k+1}}{}{} \le \tfrac{\psi_{0}(x^k) - \psi_{0}^*}{\min_{i \in [m]}\delta^k_i}.
	\end{equation}
\end{proposition}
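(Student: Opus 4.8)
The plan is to mimic the argument used in Lemma~\ref{lem:strong_feas_prior_bound}, but now exploiting only the artificial level gap $\delta^k$ rather than the strong feasibility constant. The key observation is that the point $x^k$ itself is strictly feasible for subproblem~\eqref{subproblem}: by~\eqref{eq:x^k_feasibility} we have $\psi_i^k(x^k) = \psi_i(x^k) \le \eta_i^{k-1} = \eta_i^k - \delta_i^{k-1}$. Actually, to get a clean bound I would use a slightly different slack: since $\psi_i^k(x^k)\le \eta_i^k$ and the levels increase, the amount of strict feasibility available at $x^k$ is at least $\min_i \delta_i^k$ only if we compare against the \emph{next} level, but here it is cleaner to note $\psi_i^k(x^{k}) \le \eta_i^{k}$ with a genuine gap; alternatively, one can invoke that $\eta^{k+1} = \eta^k + \delta^k$ and re-examine the subproblem at level $\eta^{k+1}$. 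The simplest route: by Proposition~\ref{prop:lcgd-suff-descent}, $x^{k+1} = \argmin_x \{\psi_0^k(x) + \inprod{\wtil\lambda^{k+1}}{\psi^k(x) - \eta^k}\}$, so for any $x\in\dom\chi_0$,
\[
\psi_0^k(x^{k+1}) = \psi_0^k(x^{k+1}) + \inprod{\wtil\lambda^{k+1}}{\psi^k(x^{k+1}) - \eta^k} \le \psi_0^k(x) + \inprod{\wtil\lambda^{k+1}}{\psi^k(x) - \eta^k},
\]
where the first equality is complementary slackness.

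Next I would plug in $x = x^k$. Since $\psi_i^k(x^k) = \psi_i(x^k) \le \eta_i^{k-1} = \eta_i^k - \delta_i^{k-1}$, actually I realize the bound in the statement has $\delta^k$ in the denominator, so the intended comparison point must make the gap $\delta_i^k$ appear — this is the one genuinely fiddly bookkeeping point. The natural fix is to observe that $\psi_i^k(x^k)\le \eta_i^k - \delta_i^{k-1}$; but since the corollary uses a policy where $\delta_i^k$ is decreasing, $\delta_i^{k-1}\ge \delta_i^k$, so $\psi_i^k(x^k)\le \eta_i^k - \delta_i^k$ holds and suffices. Granting this, we get
\[
\inprod{\wtil\lambda^{k+1}}{\eta^k - \psi^k(x^k)} \ge \tsum_{i=1}^m \wtil\lambda_i^{k+1}\,\delta_i^k \ge \paran[\big]{\min_{i\in[m]}\delta_i^k}\,\gnorm{\wtil\lambda^{k+1}}{1}{} \ge \paran[\big]{\min_{i\in[m]}\delta_i^k}\,\gnorm{\wtil\lambda^{k+1}}{}{},
\]
and combining with the displayed optimality inequality at $x=x^k$,
\[
\paran[\big]{\min_{i\in[m]}\delta_i^k}\,\gnorm{\wtil\lambda^{k+1}}{}{} \le \psi_0^k(x^k) - \psi_0^k(x^{k+1}) = \psi_0(x^k) - \psi_0^k(x^{k+1}) \le \psi_0(x^k) - \psi_0^* ,
\]
using $\psi_0^k(x^k) = \psi_0(x^k)$ (the proximal term vanishes at $x^k$) and $\psi_0^k \ge \psi_0 \ge \psi_0^*$. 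Rearranging gives the claim.

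The main obstacle is purely one of index/constant matching: one must be careful about whether the available strict-feasibility slack at $x^k$ is $\delta^{k-1}$ or $\delta^k$, and the statement as written implicitly uses that the level increments are nonincreasing (true under the parameter choices of Corollary~\ref{cor:rate-lcgd} and Theorem~\ref{thm:inexact-rate}). If one wants a statement with no monotonicity assumption, the denominator should read $\min_{i\in[m]}(\eta_i^{k} - \eta_i^{k-1})$; I would either add that caveat or simply state it for the relevant parameter regime. Beyond that, every inequality above is either complementary slackness, optimality of $x^{k+1}$, the elementary bound $\gnorm{\cdot}{}{}\le\gnorm{\cdot}{1}{}$, the fact $\wtil\lambda^{k+1}\ge 0$, or $\psi_0^k\ge\psi_0\ge\psi_0^*$, so no real difficulty remains.
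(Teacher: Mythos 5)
Your proof is correct and follows essentially the same route as the paper's: complementary slackness plus optimality of the exact subproblem solution, evaluated at $x = x^k$, combined with the strict-feasibility slack of $x^k$, the bound $\gnorm{\cdot}{}{}\le\gnorm{\cdot}{1}{}$, and $\psi_0^k(x^k)-\psi_0^k(\wtil{x}^{k+1})\le\psi_0(x^k)-\psi_0^*$ (the last step using that $\wtil{x}^{k+1}$ is feasible for \eqref{prob:main}). The one place you diverge is the bookkeeping point you flagged: you keep the subproblem level at $\eta^k$, obtain a slack of $\delta^{k-1}$ at $x^k$, and then invoke monotonicity of the increments ($\delta^{k-1}\ge\delta^k$, true under the parameter policies actually used) to land on the stated denominator $\min_i\delta_i^k$; the paper instead writes the Lagrangian of the $k$-th subproblem with level $\eta^{k+1}$, so that the slack $\eta^{k+1}-\psi(x^k)\ge\delta^k$ falls out directly without any monotonicity assumption. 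Your reading is the one consistent with the subproblem as defined in \eqref{subproblem}, so your caveat (either assume nonincreasing $\delta^k$ or restate the denominator as $\min_i(\eta_i^k-\eta_i^{k-1})$) is a fair and slightly more careful resolution of the same argument; no substantive gap remains.
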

\begin{proof}
	By Slater's condition, we know that $\wtil{\lambda}^{k+1}$ exists. Then, due to saddle point property of $(\wtil{x}^{k+1}, \wtil{\lambda}^{k+1})$, we have for all $x \in X$
	\begin{align*}
		\psi^k_{0}(x)  + \wtil{\lambda}^{k+1}[\psi^k(x) -\eta^{k+1}] &\ge \psi^k_{0}(\wtil{x}^{k+1})  + \wtil{\lambda}^{k+1}[\psi^k(\wtil{x}^{k+1})  -\eta^{k+1}] \\
		&=\psi^k_{0}(\wtil{x}^{k+1}) ,
	\end{align*}
	where the equality follows by complementary slackness.
	Using $x = x^k$ in the above relation and noting that $x^k$ satisfies $\psi(x^k) \le \psi^{k-1}(x^k) \le \eta^k$, we have $\eta^{k+1} - \psi^{k}(x^k) = \eta^{k+1} - \psi(x^k) \ge \eta^{k+1} - \eta^k = \delta^k$ implying that
	\begin{align*}
    \psi_{0}^k(x^k) - \psi_{0}^k(\wtil{x}^{k+1}) &\ge \inprod{\wtil{\lambda}^{k+1}}{\delta^k} \ge \gnorm{\lambda^{k+1}}{1}{} \cdot \min_{i \in [m]}\delta_{i}^k \ge \gnorm{\lambda^{k+1}}{}{} \cdot \min_{i \in [m]}\delta_{i}^k,
    \end{align*}
	where second inequality follows from $\wtil{\lambda}^{k+1} \ge 0$ and $\delta^k > 0$ and last inequality follows due to the fact that $\gnorm{\wtil{\lambda}^{k+1}}{1}{} \ge \gnorm{\wtil{\lambda}^{k+1}}{}{}$. We can further upper bound the LHS of the above relation as follows
    \begin{equation*}
	   \psi^k_{0}(x^k) - \psi_{0}^k(\wtil{x}^{k+1})= \psi_{0}(x^k) - \psi_{0}^k(\wtil{x}^{k+1}) \le \psi_{0}(x^k) - \psi_{0}(\wtil{x}^{k+1}) \le  \psi_{0}(x^k) - \psi_{0}^*,
    \end{equation*}
    where the last inequality follows since $\wtil{x}^{k+1}$ is feasible for the original problem \eqref{prob:main}.
    Combining the above two relations, we obtain \eqref{eq:B^k_bound}. Hence, we conclude the proof.
\end{proof}
We now state the final computation complexity of \GD~with\conex~which uses the bound in \eqref{eq:B^k_bound}.
\begin{corollary}
    If an explicit value of $B$ is not known, the \GD~method with \conex~as subproblem solver obtains $O(\tfrac{1}{K},\tfrac{1}{K})$ type-II KKT point in $O(K^4)$ computations.
\end{corollary}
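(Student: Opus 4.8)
The plan is to combine the $\Ocal(1/K)$ outer-iteration bound for inexact \GD~from Theorem~\ref{thm:inexact-rate} with the per-subproblem guarantee of \conex~from Theorem~\ref{thm:conv_conex}, the only new ingredient being that the dual bound fed to \conex~is no longer a constant but the iteration-dependent, a~posteriori-computable quantity $B^k$ of Proposition~\ref{prop:bound_B^k}. Concretely, I would run inexact \GD~with $\alpha_k=k+1$, $\delta^k_i=\tfrac{\eta_i-\eta^0_i}{(k+1)(k+2)}$ and subproblem target accuracy $\epsilon_k=\min_{i\in[m]}\tfrac{\delta^k_i}{2}$; Theorem~\ref{thm:inexact-rate} then guarantees that the output $x^{\hat{k}}$ is an $\Ocal(\tfrac1K,\tfrac1K)$ type-II KKT point after $K$ outer iterations, so all that is left is to total the \conex~work over the $K$ subproblems.

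First I would bound $B^k$. Iterating the inexact descent inequality \eqref{eq:almost_suff_descent} (as in \eqref{eq:int_rel21}) gives $\psi_0(x^k)\le\psi_0(x^0)+\tfrac12\gnorm{\eta-\eta^0}{}{}$, so $\psi_0(x^k)-\psi_0^*=\Ocal(1)$ uniformly in $k$; since $\min_{i\in[m]}\delta^k_i=\Theta(k^{-2})$, Proposition~\ref{prop:bound_B^k} yields $\gnorm{\wtil\lambda^{k+1}}{}{}\le B^k=\Ocal(k^2)$, a value that can be computed at step $k$ and passed to \conex.

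Next I would invoke Theorem~\ref{thm:conv_conex}: after $T$ iterations on the strongly convex subproblem \eqref{subproblem}, \conex~returns a point whose primal gap and infeasibility are $\Ocal\bigl(\tfrac1{T^2}\bigl((B^k)^2+\gnorm{\wtil\lambda^{k+1}}{}{2}\bigr)\bigr)=\Ocal\bigl(\tfrac{(B^k)^2}{T^2}\bigr)$, and — using $\gnorm{\wtil\lambda^{k+1}}{}{}\le B^k$ to bound $\inprod{\wtil\lambda^{k+1}}{[\psi^k(\cdot)-\eta^k]_+}$ — the Lagrangian-value gap of Definition~\ref{def:inexact-criterion} is controlled by a quantity of the same order. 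Hence $T^k=\Ocal\bigl(B^k/\sqrt{\epsilon_k}\bigr)$ \conex~iterations suffice to produce an $\epsilon_k$-solution, and with $B^k=\Ocal(k^2)$ and $\epsilon_k=\Theta(k^{-2})$ this is $T^k=\Ocal(k^3)$. Summing, the total (cheap) \conex~iterations amount to $\tsum_{k=1}^K T^k=\tsum_{k=1}^K\Ocal(k^3)=\Ocal(K^4)$, which is the asserted bound.

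The hard part will be the passage from Theorem~\ref{thm:conv_conex}, which certifies only the primal gap and the infeasibility, to the third (Lagrangian-gap) requirement of Definition~\ref{def:inexact-criterion}: this is exactly where the multiplier bound $B^k$ must be inserted, and one has to check that doing so keeps $T^k$ within the $\Ocal(k^3)$ budget. A secondary point to nail down is that $B^k$ from Proposition~\ref{prop:bound_B^k} is genuinely $\Theta(k^2)$ and not smaller, because this bound ignores the limiting strength of MFCQ (cf. the discussion in Section~\ref{sec:B_bounded}); it is precisely this quadratic-in-$k$ growth of the dual bound, absent when $B$ is known a priori, that inflates the overall complexity from $\Ocal(K^2)$ to $\Ocal(K^4)$.
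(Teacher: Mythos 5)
Your proposal is correct and follows essentially the same route as the paper's proof: bound the computable dual estimate $B^k$ of Proposition~\ref{prop:bound_B^k} by $O(k^2)$ (using the uniform bound $\psi_0(x^k)-\psi_0^*=O(1)$ from the inexact descent recursion), feed it into the \conex{} rate of Theorem~\ref{thm:conv_conex} to get $T^k=O(B^k/\sqrt{\epsilon_k})=O(\epsilon_k^{-3/2})=O(k^3)$, and sum to $O(K^4)$. Your extra care in passing from the \conex{} primal-gap/infeasibility guarantees to the Lagrangian-gap clause of Definition~\ref{def:inexact-criterion} addresses a detail the paper's proof leaves implicit, but it does not change the argument.
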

\begin{proof}
    Using Proposition~\ref{prop:bound_B^k}, we can set $B^k := \tfrac{\psi_{0}(x^k)-\psi_{0}^*}{\delta_{i_*}^k}$ where $i_* := \argmin_{i \in m} \eta_i - \eta^0_i$. Then, required {\conex} iterations $T^k$ can be bounded by 
    \[ T^k = O\big(\tfrac{B^k}{\sqrt{\epsilon_{k}}}\big).\]
    Finally, in view of \eqref{eq:int_rel21} and the fact that $\tsum_{i=0}^\infty \epsilon_i \le \gnorm{\eta - \eta^0}{}{}$ implies that $B^k \le \tfrac{1}{\delta_{i_*}^k}[\psi_{0}(x^0) - \psi_{0}^*+ \gnorm{\eta -\eta^0}{}{}]$ for all $k$. Moreover, for all $k \le K$, we have $\epsilon_k = \frac{\delta^k_{i_*}}{2}$. Hence, we get $T^k = O\rbra{{\epsilon_k^{-3/2}}} = O(k^3)$. Finally, we have $\sum_{k= 1}^{K} T^k = O(K^4)$ which is the overall computational complexity of \GD~method with {\conex} as subproblem solver to obtain $(O(\tfrac{1}{K}),  O(\frac1K))$ type-II KKT point.  
\end{proof}
\begin{remark}\label{rem:grad_compx_vs_comp_compx}[Gradient complexity vs. computational complexity]
		Note that evaluating the gradient of $\psi_{i}^k(x)$ is relatively simple  since it does not involve any new computation of $\grad f_i(x)$. In that sense, the entire inner loop requires only one $\grad f_i$ computation;  hence the total gradient complexity of $\grad f_i$ equals the total outer loops of inexact \GD. On the other hand, inner loop computation does contribute to the problem's computational complexity. However, such iterations are expected to be very cheap given the ease of obtaining gradients for the QP subproblem~\eqref{subproblem} with identity hessian matrices.
\end{remark}

\section{{\GD} for convex optimization}\label{sec:convex-programming}
	In this section, we establish the complexity of  {\GD} (i.e., Algorithm~\ref{alg:lcgd})  when the objective $f_0$ and constraint $f_i,  i \in [m]$ are convex. In particular, we consider two  convex  problems, depending on whether $f_0$ is convex or strongly convex. To provide a combined analysis of the two cases, we assume the following:
	\begin{assumption}
		$f_0(x)$ is $\mu_0$-convex function for some $\mu_0\ge 0$. Namely,
		\[f_0(x)\ge f_0(y)+\inner{\nabla f_0(y)}{x-y}+\tfrac{\mu_0}{2}\gnorm{x-y}{}{2}, \hspace{1em}\text{for any }x,y\in \Rbb^d.\] 
	\end{assumption}
Note that if $\mu_0= 0$ then $f_0$ is simply a convex function.  Now we provide the convergence rate of {\GD}  to optimality.

For more generality, we consider an inexact variant of {\GD} for which an  approximate solution in terms of Definition~\ref{def:inexact-criterion} is returned in each iteration. Let $(\wtil{x}^{k+1}, \wtil\lambda^{k+1})$ be the saddle-point solution $\Lcal_k(x, \wtil\lambda)$, i.e., $\wtil{x}^{k+1}$ is an exact solution of the subproblem \eqref{subproblem}. 
First, we extend the three-point inequality in Lemma~\ref{lem:3pt} for an inexact solution.
\begin{lemma}\label{lem:3-point-lem-appx}
	Let $z^+$ be an $\epsilon$-approximate solution of problem $\min_{x \in \Rbb^d}\{g(x) + \tfrac{\gamma}{2}\gnorm{x-z}{}{2}\}$ where $g(x)$ is a proper, lsc. and convex function. Then, 
	\begin{equation}\label{eq:3pt-mid-5}
		g(z^+) -g(x) \le \tfrac{\gamma}{2}\bsbra{ \gnorm{z-x}{}{2} - \gnorm{z^+-x}{}{2} - \gnorm{z^+-z}{}{2}} + \epsilon + \mathfontsize{11}{\sqrt{2\gamma\epsilon}}\, \gnorm{z^+-x}{}{}.
	\end{equation}
\end{lemma}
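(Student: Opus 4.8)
\textbf{Proof plan for the inexact three-point inequality (Lemma~\ref{lem:3-point-lem-appx}).}

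The plan is to compare the inexact solution $z^+$ with the exact minimizer $\bar{z}^+ := \argmin_{x}\{g(x) + \tfrac{\gamma}{2}\gnorm{x-z}{}{2}\}$ and then invoke the exact three-point inequality (Lemma~\ref{lem:3pt}) at $\bar{z}^+$. First I would note that the objective $h(x) := g(x) + \tfrac{\gamma}{2}\gnorm{x-z}{}{2}$ is $\gamma$-strongly convex, so from the optimality of $\bar{z}^+$ we have $h(z^+) - h(\bar{z}^+) \ge \tfrac{\gamma}{2}\gnorm{z^+ - \bar{z}^+}{}{2}$. Since $z^+$ is an $\epsilon$-approximate solution, the left-hand side is at most $\epsilon$, which yields the crucial distance bound $\gnorm{z^+ - \bar{z}^+}{}{} \le \sqrt{2\epsilon/\gamma}$.

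Next I would write $g(z^+) - g(x) = [g(\bar{z}^+) - g(x)] + [g(z^+) - g(\bar{z}^+)]$. For the first bracket apply Lemma~\ref{lem:3pt} with $x^+ = \bar{z}^+$, $y = z$, giving $g(\bar{z}^+) - g(x) \le \tfrac{\gamma}{2}(\gnorm{z-x}{}{2} - \gnorm{\bar{z}^+-z}{}{2} - \gnorm{x-\bar{z}^+}{}{2})$. The remaining work is to replace $\bar{z}^+$ by $z^+$ in this right-hand side and to control $g(z^+) - g(\bar{z}^+)$, paying the price in terms of $\epsilon$ and $\gnorm{z^+-z}{}{}$ or $\gnorm{z^+-x}{}{}$ rather than the exact-point quantities. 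Two sub-steps are needed: (i) bound $g(z^+) - g(\bar{z}^+)$ — since $h(z^+) \le h(\bar{z}^+) + \epsilon$ we get $g(z^+) - g(\bar{z}^+) \le \epsilon + \tfrac{\gamma}{2}(\gnorm{\bar{z}^+-z}{}{2} - \gnorm{z^+-z}{}{2})$, and (ii) replace $\gnorm{x-\bar{z}^+}{}{2}$ and $\gnorm{\bar{z}^+-z}{}{2}$ appearing with negative sign using the triangle inequality $\gnorm{x-\bar{z}^+}{}{} \ge \gnorm{x-z^+}{}{} - \gnorm{z^+ - \bar{z}^+}{}{}$, so that $-\gnorm{x-\bar{z}^+}{}{2} \le -\gnorm{x-z^+}{}{2} + 2\gnorm{x-z^+}{}{}\gnorm{z^+-\bar{z}^+}{}{}$. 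Adding everything, the $\pm\tfrac{\gamma}{2}\gnorm{\bar{z}^+-z}{}{2}$ terms cancel, the $-\tfrac{\gamma}{2}\gnorm{z^+-z}{}{2}$ survives, and using $\gnorm{z^+-\bar{z}^+}{}{} \le \sqrt{2\epsilon/\gamma}$ the cross term $2\cdot\tfrac{\gamma}{2}\gnorm{x-z^+}{}{}\gnorm{z^+-\bar{z}^+}{}{}$ becomes $\sqrt{2\gamma\epsilon}\,\gnorm{z^+-x}{}{}$, giving exactly \eqref{eq:3pt-mid-5}.

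The main obstacle is bookkeeping: keeping careful track of which squared-distance terms carry which sign, ensuring the $\gnorm{\bar{z}^+-z}{}{2}$ contributions telescope away cleanly, and confirming that every slack incurred when swapping $\bar{z}^+$ for $z^+$ is absorbed into just the two advertised error terms $\epsilon + \sqrt{2\gamma\epsilon}\,\gnorm{z^+-x}{}{}$ (in particular, not leaving a stray $\gnorm{z^+-z}{}{}$-dependent error that would spoil the descent-type use of the lemma). I expect the algebra to be routine once the distance bound $\gnorm{z^+-\bar{z}^+}{}{}\le\sqrt{2\epsilon/\gamma}$ is in hand; possibly a slightly cruder constant (e.g.\ replacing $\sqrt{2\gamma\epsilon}$ by a marginally larger constant) would suffice and the stated form should follow after a careful grouping of terms. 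No additional assumptions beyond properness, lower-semicontinuity and convexity of $g$ are required.
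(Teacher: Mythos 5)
Your proposal is correct and follows essentially the same route as the paper: both rest on the exact three-point inequality at the true minimizer $\bar z^+$, the distance bound $\|z^+-\bar z^+\|\le\sqrt{2\epsilon/\gamma}$ obtained from $\gamma$-strong convexity of the subproblem objective (equivalently, the paper's three-point lemma evaluated at $x=z^+$ combined with $\epsilon$-suboptimality), and the swap $-\|x-\bar z^+\|^2\le-\|x-z^+\|^2+2\|x-z^+\|\,\|z^+-\bar z^+\|$; your two-bracket decomposition of $g(z^+)-g(x)$ is just a reorganization of the paper's direct summation of the same two inequalities, and the $\|\bar z^+-z\|^2$ terms cancel exactly as you predict. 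The only cosmetic caveat is that the swap in your step (ii) should be justified by expanding $\|(x-z^+)+(z^+-\bar z^+)\|^2$ via Cauchy--Schwarz and dropping $\|z^+-\bar z^+\|^2\ge 0$ (as the paper does), rather than by squaring the reverse triangle inequality, which is not legitimate when $\|x-z^+\|<\|z^+-\bar z^+\|$ --- the claimed bound itself is correct either way.
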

\begin{proof}First, let $x^+$ be the optimal solution of $\min_{x \in \Rbb^d}\{g(x) + \tfrac{\gamma}{2}\gnorm{x-z}{}{2}\}$. In view of Lemma~\ref{lem:3pt}, for any $x$, we have
\begin{equation}\label{eq:3pt-mid-1}
g(x^+)+\frac{\gamma}{2}\norm{x^+-z}^2 + \frac{\gamma}{2}\norm{x-x^+}^2
\le g(x) + \frac{\gamma}{2}\norm{x-z}^2.
\end{equation}
Placing $x=z^+$ above, we have
\begin{equation}\label{eq:3pt-mid-2}
g(x^+)+\frac{\gamma}{2}\norm{x^+-z}^2 + \frac{\gamma}{2}\norm{z^+-x^+}^2
\le g(z^+) + \frac{\gamma}{2}\norm{z^+-z}^2.
\end{equation}
On the other hand, by the definition of $\epsilon$-solution, we have
\begin{equation}\label{eq:3pt-mid-3}
		g(z^+)+\frac{\gamma}{2}\norm{z^+-z}^2 \le g(x^+)+\frac{\gamma}{2}\norm{x^+-z}^2 + \epsilon.
\end{equation}
Combining the above two inequalities gives
\begin{equation}\label{eq:3pt-mid-4}
	\frac{\gamma}{2}\norm{z^+-x^+}^2 \le \epsilon.
\end{equation}
Summing up \eqref{eq:3pt-mid-1} and \eqref{eq:3pt-mid-3} again and then rearranging the terms, we get
	\begin{align*}
	g(z^+) - g(x) 
	& \le \frac{\gamma}{2}\norm{x-z}^2 - \frac{\gamma}{2}\norm{z^+-z}^2 - \frac{\gamma}{2}\norm{x-x^+}^2 +\epsilon \nonumber \\
	& \le \frac{\gamma}{2}\norm{x-z}^2 - \frac{\gamma}{2}\norm{z^+-z}^2 - \frac{\gamma}{2}\norm{x-z^+}^2 +\gamma\norm{x-z^+}\norm{z^+-x^+} +\epsilon,
	\end{align*}
	where the last inequality uses the fact that $-\frac{1}{2}\norm{a+b}^2\le -\frac{1}{2}\norm{a}^2-\inprod{a}{b}\le -\frac{1}{2}\norm{a}^2+\norm{a}\norm{b}$ with $a=x-z^+$ and $b=z^+-x^+$.
	Finally, combining the above two results gives the desired inequality~\eqref{eq:3pt-mid-5}.
\end{proof}
Using the above lemma, we provide the main convergence property of {\GD} for convex optimization.
\begin{lemma} \label{lem:suff-descent-cvx-inexact}
	Let $x$ be {feasible solution}. Then, we have
	\begin{align}
		\psi_{0}(x^{k+1}) - \psi_{0}(x) &\le \inprod{\wtil\lambda^{k+1}}{\psi(x) - \eta^k} + \tfrac{L_0+ \inprod{\wtil\lambda^{k+1}}{L} -\mu_0}{2} \gnorm{x^k-x}{}{2} - \tfrac{L_0 + \inprod{\wtil\lambda^{k+1}}{L}}{2} \gnorm{x^{k+1}-x}{}{2} \nonumber\\
		&\quad+ {2}\epsilon_k + 
		\mathfontsize{9}{\sqrt{2(L_0+ \inprod{\wtil\lambda^{k+1}}{L}) \epsilon_k}} \gnorm{x^{k+1}-x}{}{}.\label{eq:appx-3pt}
	\end{align}
\end{lemma}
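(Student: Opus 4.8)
### Proof Proposal for Lemma~\ref{lem:suff-descent-cvx-inexact}

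The plan is to apply the inexact three-point inequality (Lemma~\ref{lem:3-point-lem-appx}) to the subproblem \eqref{subproblem} written as an unconstrained minimization of the Lagrangian $\Lcal_k(\cdot, \wtil\lambda^{k+1})$ at the point $\wtil\lambda^{k+1}$, and then convert the bound on the surrogate objective $\psi_0^k$ into a bound on the true objective $\psi_0$ via convexity/smoothness. First I would set $g(x) = \langle \nabla f_0(x^k), x\rangle + \chi_0(x) + \sum_{i=1}^m \wtil\lambda_i^{k+1}[\langle \nabla f_i(x^k), x\rangle + \chi_i(x)]$ and $\gamma = L_0 + \langle \wtil\lambda^{k+1}, L\rangle$, so that $\Lcal_k(x, \wtil\lambda^{k+1}) = g(x) + \tfrac{\gamma}{2}\gnorm{x-x^k}{}{2} + \text{const}$, where the constant collects the $f_i(x^k)$ and $-\eta_i^k$ terms. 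Since $x^{k+1}$ is an $\epsilon_k$-solution of the subproblem, the condition $\Lcal_k(x^{k+1}, \wtil\lambda^{k+1}) \le \Lcal_k(\wtil x^{k+1}, \wtil\lambda^{k+1}) + \epsilon_k$ in Definition~\ref{def:inexact-criterion} says exactly that $x^{k+1}$ is an $\epsilon_k$-approximate minimizer of $g(x) + \tfrac{\gamma}{2}\gnorm{x-x^k}{}{2}$. Applying Lemma~\ref{lem:3-point-lem-appx} with $z = x^k$, $z^+ = x^{k+1}$ and a generic feasible $x$ gives
\[
g(x^{k+1}) - g(x) \le \tfrac{\gamma}{2}\bsbra{\gnorm{x^k-x}{}{2} - \gnorm{x^{k+1}-x}{}{2} - \gnorm{x^{k+1}-x^k}{}{2}} + \epsilon_k + \sqrt{2\gamma\epsilon_k}\,\gnorm{x^{k+1}-x}{}{}.
\]

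Next I would unpack $g(x^{k+1}) - g(x)$ and relate each linearized piece to the actual function values. For the objective part: by $L_0$-smoothness of $f_0$,
\[
f_0(x^{k+1}) \le f_0(x^k) + \langle \nabla f_0(x^k), x^{k+1}-x^k\rangle + \tfrac{L_0}{2}\gnorm{x^{k+1}-x^k}{}{2},
\]
and by $\mu_0$-convexity of $f_0$,
\[
f_0(x) \ge f_0(x^k) + \langle \nabla f_0(x^k), x-x^k\rangle + \tfrac{\mu_0}{2}\gnorm{x-x^k}{}{2}.
\]
Subtracting, the $f_0(x^k)$ terms cancel and the linear terms reassemble into $\langle \nabla f_0(x^k), x^{k+1}-x\rangle$ (which is part of $g(x^{k+1})-g(x)$, together with $\chi_0(x^{k+1})-\chi_0(x)$ to form $\psi_0(x^{k+1}) - \psi_0(x)$ up to the quadratic slacks). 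For each constraint part: similarly $f_i(x^k) + \langle \nabla f_i(x^k), x^{k+1}-x^k\rangle \le f_i(x^{k+1})$ is \emph{not} what I want — instead I only need an upper bound on the linearized surrogate $\psi_i^k(x)$ in terms of $\psi_i(x)$, which is the standard $\psi_i^k(x) \ge \psi_i(x)$ from $L_i$-smoothness evaluated only at $x$; and on the $x^{k+1}$ side I keep $\psi_i^k(x^{k+1})$ as is and use complementary slackness $\wtil\lambda_i^{k+1}[\psi_i^k(x^{k+1}) - \eta_i^k] = 0$ to drop it. This is exactly how the exact-case bound \eqref{eq:middle-06}/\eqref{eq:int_rel20} was derived, so I would mimic that bookkeeping.

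Carrying out the arithmetic: the $\tfrac{L_0}{2}\gnorm{x^{k+1}-x^k}{}{2}$ from smoothness of $f_0$ gets absorbed against part of the $-\tfrac{\gamma}{2}\gnorm{x^{k+1}-x^k}{}{2}$ term (leaving $-\tfrac{\langle\wtil\lambda^{k+1},L\rangle}{2}\gnorm{x^{k+1}-x^k}{}{2} \le 0$, which we discard), the $-\tfrac{\mu_0}{2}\gnorm{x-x^k}{}{2}$ from convexity of $f_0$ combines with $+\tfrac{\gamma}{2}\gnorm{x^k-x}{}{2}$ to give the coefficient $\tfrac{L_0 + \langle\wtil\lambda^{k+1},L\rangle - \mu_0}{2}$ on $\gnorm{x^k-x}{}{2}$, the $-\tfrac{\gamma}{2}\gnorm{x^{k+1}-x}{}{2}$ term survives with coefficient $\tfrac{L_0+\langle\wtil\lambda^{k+1},L\rangle}{2}$, and the constraint contribution produces $\langle \wtil\lambda^{k+1}, \psi(x) - \eta^k\rangle$. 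The one subtlety is that Lemma~\ref{lem:3-point-lem-appx} gives error terms $\epsilon_k$ (once) plus a $\sqrt{2\gamma\epsilon_k}\gnorm{x^{k+1}-x}{}{}$ term, whereas the statement claims $2\epsilon_k$; the extra $\epsilon_k$ comes from also needing to pass from the surrogate objective value at $x^{k+1}$ to within $\epsilon_k$ of the optimal surrogate value (the first inequality of Definition~\ref{def:inexact-criterion}, $\psi_0^k(x^{k+1}) - \psi_0^k(\wtil x^{k+1}) \le \epsilon_k$), or equivalently from slightly loosely combining the three-point inequality with the $\epsilon_k$-optimality in $\Lcal_k$ — I expect one clean way is to invoke Definition~\ref{def:inexact-criterion} so that $\psi_0^k(x^{k+1})$ can be replaced by $\psi_0^k(\wtil x^{k+1})$ at cost $\epsilon_k$, and then apply the exact three-point Lemma~\ref{lem:3pt} to $\wtil x^{k+1}$, picking up the remaining $\epsilon_k + \sqrt{2\gamma\epsilon_k}\gnorm{\cdot}{}{}$ via $\gnorm{x^{k+1}-\wtil x^{k+1}}{}{2}\le 2\epsilon_k/\gamma$ from \eqref{eq:3pt-mid-4} and the triangle inequality. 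The main obstacle is precisely this careful tracking of the two sources of $\epsilon_k$ error and ensuring the cross term is expressed in $\gnorm{x^{k+1}-x}{}{}$ (not $\gnorm{\wtil x^{k+1}-x}{}{}$) with the stated constant $\sqrt{2(L_0+\langle\wtil\lambda^{k+1},L\rangle)\epsilon_k}$; everything else is the routine smoothness/convexity bookkeeping already rehearsed in Proposition~\ref{prop:lcgd-suff-descent} and Theorem~\ref{prop:lcgd:bound-dual}.
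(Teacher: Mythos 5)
Your proposal follows essentially the same route as the paper's proof: apply the inexact three-point inequality (Lemma~\ref{lem:3-point-lem-appx}) to the Lagrangian $\Lcal_k(\cdot,\wtil\lambda^{k+1})$ with $\gamma = L_0+\inprod{\wtil\lambda^{k+1}}{L}$ and $z=x^k$, $z^+=x^{k+1}$, then convert the surrogates back to the true functions via $L_0$-smoothness and $\mu_0$-convexity of $f_0$ and convexity of the $f_i$, which yields the coefficients $\tfrac{L_0+\inprod{\wtil\lambda^{k+1}}{L}-\mu_0}{2}$ and $\tfrac{L_0+\inprod{\wtil\lambda^{k+1}}{L}}{2}$ exactly as you describe. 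One step in your middle paragraph is mis-stated: complementary slackness $\wtil\lambda_i^{k+1}[\psi_i^k(\cdot)-\eta_i^k]=0$ holds at the \emph{exact} solution $\wtil{x}^{k+1}$, not at the inexact iterate $x^{k+1}$, so you cannot simply discard $\inprod{\wtil\lambda^{k+1}}{\psi^k(x^{k+1})-\eta^k}$ --- this quantity can be negative (it is only bounded below by $-\epsilon_k$), which is the wrong sign for dropping it. The paper resolves this precisely the way your final paragraph anticipates: it chains $\psi_0(x^{k+1}) \le \psi_0^k(x^{k+1}) \le \psi_0^k(\wtil{x}^{k+1}) + \epsilon_k = \Lcal_k(\wtil{x}^{k+1},\wtil\lambda^{k+1}) + \epsilon_k \le \Lcal_k(x^{k+1},\wtil\lambda^{k+1}) + \epsilon_k$, using the first condition of Definition~\ref{def:inexact-criterion}, complementary slackness at $\wtil{x}^{k+1}$, and optimality of $\wtil{x}^{k+1}$ for the Lagrangian; this is exactly where the second $\epsilon_k$ in the $2\epsilon_k$ term originates, and only then is Lemma~\ref{lem:3-point-lem-appx} invoked. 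With that correction your bookkeeping goes through and matches the paper's argument.
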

\begin{proof}Note that
	\begin{align}
		{\psi_{0}(%
			{x}^{k+1})} & \ {\myrel{(i)}{\le}}\ \psi_{0}^k(%
			{x}^{k+1}) \myrel{(ii)}{\le} \psi_{0}^k(\wtil{x}^{k+1}) + \epsilon_k \nonumber\\
		&\myrel{(iii)}{=} \psi_{0}^k(\wtil{x}^{k+1}) + \inprod{\wtil\lambda^{k+1}}{\psi^k(\wtil{x}^{k+1}) - \eta^k} +\ \epsilon_k \nonumber \\
		&{\le} \psi_{0}^k(x^{k+1}) + \inprod{\wtil\lambda^{k+1}}{\psi^k(x^{k+1}) - \eta^k} {+\ \epsilon_k}\nonumber \\
		&\myrel{(iv)}{\le} \psi_{0}^k(x) + \inprod{\wtil\lambda^{k+1}}{\psi^k(x) - \eta^k} - \tfrac{L_0 + \inprod{\wtil\lambda^{k+1}}{L}}{2} \gnorm{x^{k+1}-x}{}{2} \nonumber \\
		& \quad + 2\epsilon_k +\mathfontsize{8}{\sqrt{2(L_0+ \inprod{\wtil\lambda^{k+1}}{L}) \epsilon_k}} 
		\gnorm{x^{k+1}-x}{}{},\label{eq:int_rel16}
	\end{align}
	where {\sffamily(i)} follows from the definition of $\psi_{0}^k$, {\sffamily (ii)} follows since $x^{k+1}$ is an $\epsilon_k$ solution of \eqref{subproblem}, {\sffamily(iii)} follows by complementary slackness for the optimal primal-dual solution for \eqref{subproblem} and {\sffamily(iv)} follows from Lemma~\ref{lem:3-point-lem-appx}. In particular, we use $g(x) + \tfrac{\gamma}{2}\gnorm{x-z}{}{2} = \psi_0^k(x) + \inprod{\wtil\lambda^{k+1}}{\psi^k(x) - \eta^k}$ with $z = x^k$, $z^+ = x^{k+1}$, $\epsilon = \epsilon_k$ and $\gamma = L_0 + \inprod{\wtil\lambda^{k+1}}{L}$. Note that $x^{k+1}$ is an $\epsilon_k$-approximate solution for $\min_{x \in \Rbb^d} \psi_{0}^k(x) + \inprod{\wtil\lambda^{k+1}}{\psi^k(x) -\eta^k}$ due to Definition \ref{def:inexact-criterion}.

Finally, note that 
	\begin{align*}
		\psi_{0}^k(x) + \tfrac{\mu_0}{2} \gnorm{x-x^k}{}{2} &\le \psi_{0}(x)  + \tfrac{L_0}{2}\gnorm{x-x^k}{}{2},\\
		\psi_{i}^k(x) &\le \psi_i(x) + \tfrac{L_i}{2} \gnorm{x-x^k}{}{2} . 
	\end{align*}
	Using the above two relations in \eqref{eq:int_rel16}, we obtain \eqref{eq:appx-3pt}. Hence, we conclude the proof.
\end{proof}

Let	$x^*$ be an optimal solution of \eqref{prob:main}  and $\wtil{D} := \max\{\gnorm{x-y}{}{}:x, y \in \dom{\chi_{0}},  \psi_i(x) \le \eta_i, \psi_{i}(y) \le \eta_{i}, \text{ for all } i\in [m] \}$.
Now, we show convergence rate guarantees.
\begin{theorem}
Consider general convex optimization problems with $\mu_0 = 0$. Suppose Assumption~\refeq{assu:y-bounded} is satisfied and set $\delta^k = \tfrac{(\eta-\eta^0)}{(k+1)(k+2)}$. 
	Then we have
	\begin{align}
		\psi_{0}(\bar{x}_K) - \psi_{0}(x^*) \le \tfrac{L_0+ B\gnorm{L}{}{}}{(K+1)}\bracket[\big]{\wtil{D}^2 + \tfrac{(4B+2)\gnorm{\eta-\eta^0}{}{}}{L_0} + \wtil{D}\sqrt{\tfrac{\gnorm{\eta-\eta^0}{}{}}{L_0}} +\tfrac{\gnorm{\eta-\eta^0}{}{}}{L_0}\tfrac{\log{K}}{K} }\label{eq:cvx-convegence rate}
	\end{align}
\end{theorem}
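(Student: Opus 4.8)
The plan is to adapt the telescoping argument of Theorem~\ref{thm:inexact-rate}, but now anchored at an optimal solution $x^*$ of \eqref{prob:main} instead of at $x^k$, so that convexity of $\psi_0$ and of the feasible set can be exploited. The starting point is Lemma~\ref{lem:suff-descent-cvx-inexact} applied with $\mu_0=0$ and $x=x^*$. Since $x^*$ is feasible we have $\psi(x^*)\le\eta$, and since $\eta^k<\eta$ and $\wtil\lambda^{k+1}\ge 0$ this yields $\inprod{\wtil\lambda^{k+1}}{\psi(x^*)-\eta^k}\le\inprod{\wtil\lambda^{k+1}}{\eta-\eta^k}$. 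With $\delta^k=\tfrac{\eta-\eta^0}{(k+1)(k+2)}$ the level updates telescope exactly as in Corollary~\ref{cor:rate-lcgd} to $\eta-\eta^k=\tfrac{\eta-\eta^0}{k+1}$, and combined with the uniform dual bound $\gnorm{\wtil\lambda^{k+1}}{}{}\le B$ from Theorem~\ref{prop:lcgd:bound-dual} (which holds under Assumption~\ref{assu:y-bounded}, and for the inexact iterates under the condition $\epsilon_k<\min_i\delta^k_i$ of Theorem~\ref{thm:lcgd-inexact-asymp}, enforced by taking $\epsilon_k=\tfrac12\min_i\delta^k_i$), Cauchy--Schwarz gives $\inprod{\wtil\lambda^{k+1}}{\psi(x^*)-\eta^k}\le\tfrac{B\gnorm{\eta-\eta^0}{}{}}{k+1}$.

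Writing $\gamma_k:=L_0+\inprod{\wtil\lambda^{k+1}}{L}$, so that $L_0\le\gamma_k\le L_0+B\gnorm{L}{}{}=:\bar L$, the per-iteration estimate becomes
\[
\psi_0(x^{k+1})-\psi_0(x^*)\ \le\ \tfrac{B\gnorm{\eta-\eta^0}{}{}}{k+1}+\tfrac{\gamma_k}{2}\gnorm{x^k-x^*}{}{2}-\tfrac{\gamma_k}{2}\gnorm{x^{k+1}-x^*}{}{2}+2\epsilon_k+\sqrt{2\gamma_k\epsilon_k}\,\gnorm{x^{k+1}-x^*}{}{}.
\]
The one nonstandard point is that the quadratic modulus $\gamma_k$ varies with $k$, so a plain telescoping of the two squared-distance terms does not work; the remedy is to weight the $k$-th estimate by $\alpha_k:=\tfrac{k+1}{\gamma_k}$ (these weights are determined by the subproblem duals). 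Then $\alpha_k\gamma_k=k+1$, and summation by parts gives $\sum_{k=0}^K(k+1)\big(\gnorm{x^k-x^*}{}{2}-\gnorm{x^{k+1}-x^*}{}{2}\big)\le\gnorm{x^0-x^*}{}{2}+\sum_{k=1}^K\gnorm{x^k-x^*}{}{2}\le(K+1)\wtil D^2$, using that every $x^k$ and $x^*$ lies in the convex, compact feasible set of \eqref{prob:main} (Proposition~\ref{prop:lcgd-suff-descent} and Assumption~\ref{assum:bounded_set}), so $\gnorm{x^k-x^*}{}{}\le\wtil D$. For the remaining terms, $\alpha_k\le 1/L_0$; the factor $k+1$ in $\alpha_k$ cancels the $\tfrac1{k+1}$ in the feasibility term, giving $\sum_k\alpha_k\tfrac{B\gnorm{\eta-\eta^0}{}{}}{k+1}\le\tfrac{B\gnorm{\eta-\eta^0}{}{}}{L_0}(K+1)$; and the choice $\epsilon_k=O\big(\tfrac{\gnorm{\eta-\eta^0}{}{}}{(k+1)(k+2)}\big)$ yields $\sum_k\alpha_k\,2\epsilon_k=O\big(\tfrac{\gnorm{\eta-\eta^0}{}{}}{L_0}\log K\big)$ and $\sum_k\alpha_k\sqrt{2\gamma_k\epsilon_k}\,\gnorm{x^{k+1}-x^*}{}{}\le\wtil D\sum_k(k+1)\sqrt{2\epsilon_k/L_0}=O\big(\wtil D\sqrt{\gnorm{\eta-\eta^0}{}{}/L_0}\,(K+1)\big)$, since $\sqrt{\epsilon_k}=O\big(\sqrt{\gnorm{\eta-\eta^0}{}{}}/(k+1)\big)$.

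Finally I would define $\bar x_K:=\big(\sum_{k=0}^K\alpha_k\big)^{-1}\sum_{k=0}^K\alpha_k x^{k+1}$, which is feasible for \eqref{prob:main} by convexity of its feasible set; convexity of $\psi_0$ then gives $\psi_0(\bar x_K)-\psi_0(x^*)\le\big(\sum_k\alpha_k\big)^{-1}\sum_k\alpha_k\big(\psi_0(x^{k+1})-\psi_0(x^*)\big)$. Because $\sum_k\alpha_k\ge\tfrac{(K+1)(K+2)}{2\bar L}$, dividing the summed estimate by $\sum_k\alpha_k$ converts the $O\big((K+1)\wtil D^2\big)$, $O\big(\tfrac{B(K+1)\gnorm{\eta-\eta^0}{}{}}{L_0}\big)$ and $O\big(\wtil D\sqrt{\gnorm{\eta-\eta^0}{}{}/L_0}\,(K+1)\big)$ contributions into the $\tfrac{\bar L}{K+1}$-order terms $\wtil D^2$, $\tfrac{B\gnorm{\eta-\eta^0}{}{}}{L_0}$ and $\wtil D\sqrt{\gnorm{\eta-\eta^0}{}{}/L_0}$ of \eqref{eq:cvx-convegence rate}, while the $O\big(\tfrac{\gnorm{\eta-\eta^0}{}{}}{L_0}\log K\big)$ contribution becomes the lower-order correction $\tfrac{\bar L}{K+1}\cdot\tfrac{\gnorm{\eta-\eta^0}{}{}}{L_0}\cdot\tfrac{\log K}{K}$. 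The main obstacle is exactly the varying modulus $\gamma_k$: obtaining the clean $O(1/K)$ rate (rather than $O(1/\sqrt K)$, or a constant that does not vanish) forces the $\gamma_k$-dependent weighting $\alpha_k=(k+1)/\gamma_k$, after which one must carefully verify that the cross-term $\sqrt{2\gamma_k\epsilon_k}\,\gnorm{x^{k+1}-x^*}{}{}$, which is the source both of the $\log K$ factor and of the $\wtil D\sqrt{\gnorm{\eta-\eta^0}{}{}/L_0}$ term, is controlled using only the uniform bounds $\gnorm{x^{k+1}-x^*}{}{}\le\wtil D$ and $\epsilon_k=O(1/k^2)$ together with (near-)summability of the relevant series.
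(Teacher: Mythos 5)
Your proposal is correct, and its engine is the same as the paper's: Lemma~\ref{lem:suff-descent-cvx-inexact} at $x=x^*$, the dual bound $B$ to control $\inprod{\wtil\lambda^{k+1}}{\eta-\eta^k}$, and the normalization by the iteration-dependent modulus $\gamma_k=L_0+\inprod{\wtil\lambda^{k+1}}{L}$ followed by the weight $k+1$ (your $\alpha_k=(k+1)/\gamma_k$ is exactly the paper's ``divide by $\gamma_k/2$, then multiply by $k+1$''), with the same telescoping bound $\sum_{k}(k+1)(\gnorm{x^k-x^*}{}{2}-\gnorm{x^{k+1}-x^*}{}{2})\le (K+1)\wtil{D}^2$ and the same treatment of the $\epsilon_k$ terms producing the $\log K$ correction and the $\wtil{D}\sqrt{\gnorm{\eta-\eta^0}{}{}/L_0}$ term. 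The one place you diverge is the final conversion of the weighted sum $\sum_k\alpha_k\bigl(\psi_0(x^{k+1})-\psi_0(x^*)\bigr)$ into a single-point guarantee: you define $\bar{x}_K$ as the $\alpha_k$-weighted average and invoke Jensen together with convexity of the feasible set, whereas the paper keeps the last iterate and uses the near-descent property $\psi_0(x^K)\le\psi_0(x^{k+1})+\sum_{i>k}\epsilon_i$ to pull every term up to $\psi_0(x^K)$. Your route matches the $\bar{x}_K$ notation in the statement more literally and needs only convexity (which holds here), at the cost of an output point whose weights depend on the subproblem duals through $\gamma_k$; the paper's route returns the last iterate and would survive even if one only had approximate monotonicity rather than convexity of $\psi_0$ at that step. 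Both yield the stated $O\bigl((L_0+B\gnorm{L}{}{})/K\bigr)$ rate with constants of the same form, so the difference is cosmetic rather than substantive.
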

\begin{proof}
	From Lemma \ref{lem:suff-descent-cvx-inexact} with $\mu_0 = 0$ for convex part and $\psi(x^*) \le \eta$, we have 
	\begin{align*}
		\psi_{0}(x^{k+1}) - \psi_{0}(x^*) 
		&= \inprod{\wtil\lambda^{k+1}}{\eta-\eta^k} + \tfrac{L_0 + \inprod{\wtil\lambda^{k+1}}{L}}{2}\gnorm{x^k-x^*}{}{2} - 
		\tfrac{L_0 + \inprod{\wtil\lambda^{k+1}}{L}}{2}\gnorm{x^{k+1}-x^*}{}{2} \\
		&\quad+ 2\epsilon_k 
		+ \mathfontsize{9}{\sqrt{2(L_0+ \inprod{\wtil\lambda^{k+1}}{L}) \epsilon_k}} \gnorm{x^{k+1}-x}{}{}.
	\end{align*}
	Dividing both sides by $\tfrac{L_0 + \inprod{\wtil\lambda^{k+1}}{L}}{2}$, we have
	\[\tfrac{2[\psi_{0}(x^{k+1}) - \psi_{0}(x^*) ]}{L_0 + \inprod{\wtil\lambda^{k+1}}{L}} \le \gnorm{x^k-x^*}{}{2} - \gnorm{x^{k+1}-x^*}{}{2} +  \tfrac{2\inprod{\wtil\lambda^{k+1}}{\eta - \eta^k}}{L_0 + \inprod{\wtil\lambda^{k+1}}{L}} + \tfrac{4\epsilon_k}{L_0 + \inprod{\wtil\lambda^{k+1}}{L}} + \sqrt{\tfrac{8\epsilon_k}{L_0 + \inprod{\wtil\lambda^{k+1}}{L}} }\gnorm{x^{k+1}-x^*}{}{}
	\]
	Note that the sequence $\{\wtil\lambda^{k+1}\}$ is uniformly bounded above such that $\gnorm{\wtil\lambda^{k+1}}{}{} \le B$ for all $k \ge 0$. Using this fact and the above relation, we have
	\begin{align}
		\tfrac{2[\psi_{0}(x^{k+1}) - \psi_{0}(x^*) ]}{L_0 + B\gnorm{L}{}{}} \le \gnorm{x^k-x^*}{}{2} - \gnorm{x^{k+1}-x^*}{}{2} + \tfrac{2B\gnorm{\eta-\eta^k}{}{}}{L_0} + \tfrac{4\epsilon_k}{L_0} + \sqrt{\tfrac{8\epsilon_k}{L_0}}\gnorm{x^{k+1}-x^*}{}{}.\label{eq:int_rel18}
	\end{align}
	Using $\delta_k = \tfrac{\eta-\eta^0}{(k+1)(k+2)}$ and $\epsilon_k = \tfrac{\gnorm{\eta-\eta^0}{}{}}{2(k+1)(k+2)}$, we have $x^{k}$ is strictly feasible solutions for \eqref{subproblem} for all~$k$. Hence, under Assumption \ref{assu:y-bounded}, we can follow the steps of Theorem \ref{prop:lcgd:bound-dual} to show uniform bound $B$ on sequence~$\{\gnorm{\wtil\lambda^k}{}{}\}$.
	Using these values in \eqref{eq:int_rel18}, we have
	\begin{equation}\label{eq:convergence_bd_inexact}
		\tfrac{2[\psi_{0}(x^{k+1}) - \psi_{0}(x^*) ]}{L_0 + B\gnorm{L}{}{}} \le \gnorm{x^k-x^*}{}{2} - \gnorm{x^{k+1}-x^*}{}{2} + \tfrac{2B\gnorm{\eta-\eta^0}{}{}}{L_0(k+1)} + \tfrac{2\gnorm{\eta-\eta^0}{}{}}{L_0(k+1)(k+2)} + \sqrt{\tfrac{\gnorm{\eta-\eta^0}{}{}}{L_0}}\tfrac{1}{k+1}\gnorm{x^{k+1}-x^*}{}{}.
	\end{equation}
	Due to the optimality of the exact solution $\wtil{x}^{k+1}$, we have $\psi^k_{0}(\wtil{x}^{k+1}) \le \psi^k_{0}(x^k)  = \psi_{0}(x^k)$. We also have
		$\psi_0(x^{k+1}) \le \psi_0^k(x^{k+1}) \le \psi_{0}^k(\wtil{x}^{k+1}) + \epsilon_k$. Combining these two relations, we get: 
		\[\psi_{0}(x^{k+1})  \le \psi_{0}(x^k) + \epsilon_k.\] 
		Effectively, inexact \GD~method is almost (up to an additive error of $\epsilon_k$) a descent method. Using this relation recursively, we have 
		\begin{align*}
			\psi_{0}(x^K) &\le \psi_{0}(x^{k+1}) + \tsum_{i=k+1}^{K-1}\epsilon_i.\\
			&\le \psi_0(x^{k+1}) + \tfrac{\gnorm{\eta-\eta^0}{}{}}{2}\tsum_{i=k+1}^{K-1}\tfrac{1}{(i+1)(i+2)}\\
			&= \psi_0(x^{k+1}) + \tfrac{\gnorm{\eta-\eta^0}{}{}}{2}\tfrac{(K-k-1)}{(k+2)(K+1)}.
		\end{align*}
		Using the above relation in \eqref{eq:convergence_bd_inexact}, we have
		\begin{align*}
			\tfrac{2[\psi_{0}(x^K) - \psi_{0}(x^*)]}{L_0 + B\gnorm{L}{}{}}  &\le \gnorm{x^k-x^*}{}{2} - \gnorm{x^{k+1}-x^*}{}{2} + \tfrac{2B\gnorm{\eta-\eta^0}{}{}}{L_0(k+1)} + \tfrac{\added{2}\gnorm{\eta-\eta^0}{}{}}{L_0(k+1)(k+2)} + \sqrt{\tfrac{\gnorm{\eta-\eta^0}{}{}}{L_0}}\tfrac{1}{k+1}\gnorm{x^{k+1}-x^*}{}{}\\
			&\quad+\tfrac{\gnorm{\eta-\eta^0}{}{}}{L_0+B\gnorm{L}{}{}}\tfrac{K-k-1}{(k+2)(K+1)}.
		\end{align*}
	Multiplying the above relation by $k+1$ and summing from $k = 0$ to $K-1$, we have
	\begin{align*}
		\tfrac{K(K+1)[\psi_{0}(\added{x_K}) -\psi_{0}(x^*)]}{L_0 + B\gnorm{L}{}{}}&\le \tsum_{k =0}^{K-1}\gnorm{x^k-x^*}{}{2} + \tfrac{2B\gnorm{\eta-\eta^0}{}{}K}{L_0} + \tfrac{\gnorm{\eta-\eta^0}{}{}\log(K+2)}{L_0} \\
		&\qquad + \sqrt{\tfrac{\gnorm{\eta-\eta^0}{}{}}{L_0}}\tsum_{k =0}^{K-1}\gnorm{x^{k+1}-x^*}{}{} \added{+ \tfrac{\gnorm{\eta-\eta^0}{}{}K}{L_0 + B\gnorm{L}{}{}}}\\
		&\le K\wtil{D}^2 + \tfrac{2(B\added{+1})\gnorm{\eta-\eta^0}{}{}K}{L_0} %
		+\sqrt{\tfrac{\gnorm{\eta-\eta^0}{}{}}{L_0}}K\wtil{D}. %
	\end{align*}
	After rearranging, this relation implies \eqref{eq:cvx-convegence rate}. Hence, we conclude the proof.
\end{proof}

\begin{theorem}\label{thm:inexact-strongly-cvx}
Consider strongly convex problems ($\mu_0>0$) and suppose that Assumption~\refeq{assu:y-bounded} is satisfied. Set $\delta^k = \rho^k(1-\rho)(\eta-\eta^0)$ where \added{$\rho =\tfrac{L_0 - \mu_0}{2(L_0-a\mu_0)}$}, $2\epsilon_k \le a (1-\rho)\rho^k\gnorm{\eta-\eta^0}{}{}$ and $a\in(0,1)$.  Then we have
	\begin{align}\label{eq:inex-str-cvx}
		\psi_{0}(x^K) - \psi_{0}(x^*) & \le \exp\brbra{-\tfrac{(1-a)\mu_0 K}{L_0+B\gnorm{L}{}{}-a\mu_0}} \rbra{L_0+B\gnorm{L}{}{}-\mu_0}\nonumber \\
		&	\quad \big\{\bsbra{\tfrac{(4B+1)}{2(L_0-\mu_0)}+\tfrac{L_0+B\norm{L}+2a\mu_0}{\mu_0(L_0 - \mu_0)}\rbra{1-\rho}} \gnorm{\eta-\eta^0}{}{} + \tfrac{1}{2}\norm{x^0-x^*}^2\big\}. 
	\end{align}
Moreover, if $\epsilon_k=0$, we have
	\begin{equation}\label{eq:ex-str-cvx}
		\psi_{0}(x^K) - \psi_{0}(x^*) \le \exp\brbra{-\tfrac{\mu_0 K}{L_0+B\gnorm{L}{}{}}} \rbra{L_0+B\gnorm{L}{}{}-\mu_0} \Bcbra{\bsbra{\tfrac{(4B+1)}{2(L_0-\mu_0)}} \gnorm{\eta-\eta^0}{}{} + \tfrac{1}{2}\norm{x^0-x^*}^2}. 
	\end{equation}
\end{theorem}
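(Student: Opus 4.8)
The plan is to run a one-step Lyapunov recursion on $V_k := \tfrac12\gnorm{x^k-x^*}{}{2}$ (with $x^*$ the unique optimal solution) driven by Lemma~\ref{lem:suff-descent-cvx-inexact}. First I would specialize that lemma to $x=x^*$: since $x^*$ is feasible and $\wtil\lambda^{k+1}\ge 0$ we have $\inner{\wtil\lambda^{k+1}}{\psi(x^*)-\eta^k}\le\inner{\wtil\lambda^{k+1}}{\eta-\eta^k}$; dropping the nonnegative quantity $\psi_0(x^{k+1})-\psi_0(x^*)$ and abbreviating $\Gamma_k:=L_0+\inner{\wtil\lambda^{k+1}}{L}$ yields
\[ \Gamma_k V_{k+1}\ \le\ \inner{\wtil\lambda^{k+1}}{\eta-\eta^k}+(\Gamma_k-\mu_0)V_k+2\epsilon_k+\sqrt{2\Gamma_k\epsilon_k}\,\gnorm{x^{k+1}-x^*}{}{}. \]
Before iterating, I would note that the prescribed $\delta^k$ makes $\eta-\eta^k=\rho^k(\eta-\eta^0)$ a geometric sequence shrinking to $0$, and that $2\epsilon_k\le a(1-\rho)\rho^k\gnorm{\eta-\eta^0}{}{}<\min_i\delta_i^k$ keeps every subproblem~\eqref{subproblem} strictly feasible; hence, repeating the argument of Theorem~\ref{prop:lcgd:bound-dual} (equivalently, applying Theorem~\ref{thm:lcgd-inexact-asymp}) under Assumption~\ref{assu:y-bounded}, the multipliers are uniformly bounded, $\gnorm{\wtil\lambda^{k+1}}{}{}\le B$, so that $L_0\le\Gamma_k\le L_0+B\gnorm{L}{}{}$ and $\inner{\wtil\lambda^{k+1}}{\eta-\eta^k}\le B\rho^k\gnorm{\eta-\eta^0}{}{}$.

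The technical heart is absorbing the cross term $\sqrt{2\Gamma_k\epsilon_k}\,\gnorm{x^{k+1}-x^*}{}{}$. I would apply Young's inequality with weight $a\mu_0$, namely $\sqrt{2\Gamma_k\epsilon_k}\,\gnorm{x^{k+1}-x^*}{}{}\le a\mu_0 V_{k+1}+\tfrac{\Gamma_k\epsilon_k}{a\mu_0}$, move $a\mu_0 V_{k+1}$ to the left, and divide by $\Gamma_k-a\mu_0\ge L_0-a\mu_0>0$, producing the affine recursion $V_{k+1}\le q_k V_k+r_k$ with $q_k=\tfrac{\Gamma_k-\mu_0}{\Gamma_k-a\mu_0}$ and $r_k=\tfrac{1}{\Gamma_k-a\mu_0}\big(\inner{\wtil\lambda^{k+1}}{\eta-\eta^k}+2\epsilon_k+\tfrac{\Gamma_k\epsilon_k}{a\mu_0}\big)$. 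Using $\Gamma_k\le L_0+B\gnorm{L}{}{}$ one gets $q_k\le 1-\tfrac{(1-a)\mu_0}{L_0+B\gnorm{L}{}{}-a\mu_0}=:\rho_0$, exactly the contraction in the exponent, while the dual bound and the size of $\epsilon_k$ give $r_k\le C\rho^k$ for an explicit $C=C(B,L_0,\gnorm{L}{}{},\mu_0,a,\rho)$. The crucial elementary fact is $\rho<\rho_0$: the same estimate $\Gamma_k\le L_0+B\gnorm{L}{}{}$ yields $\rho_0\ge\tfrac{L_0-\mu_0}{L_0-a\mu_0}=2\rho$, so the geometric sum telescopes cleanly, $\sum_{j=0}^{K-1}\rho_0^{K-1-j}\rho^j\le\tfrac{\rho_0^{K}}{\rho_0-\rho}$.

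Unrolling gives $V_{K-1}\le\rho_0^{K-1}V_0+C\,\tfrac{\rho_0^{K-1}}{\rho_0-\rho}\gnorm{\eta-\eta^0}{}{}$. To pass from a distance bound to the claimed function-value bound, I would invoke Lemma~\ref{lem:suff-descent-cvx-inexact} once more at step $K-1$ with $x=x^*$, this time keeping $-\tfrac{\Gamma_{K-1}}{2}\gnorm{x^K-x^*}{}{2}$ solely to absorb the last cross term via $bt-\tfrac c2 t^2\le\tfrac{b^2}{2c}$; this gives $\psi_0(x^K)-\psi_0(x^*)\le\inner{\wtil\lambda^K}{\eta-\eta^{K-1}}+(\Gamma_{K-1}-\mu_0)V_{K-1}+3\epsilon_{K-1}\le(L_0+B\gnorm{L}{}{}-\mu_0)V_{K-1}+O(\rho^{K-1}\gnorm{\eta-\eta^0}{}{})$. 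Substituting the bound on $V_{K-1}$, writing $\rho_0^{K-1}\le\rho_0^{-1}\exp(-K(1-\rho_0))$ with $1-\rho_0=\tfrac{(1-a)\mu_0}{L_0+B\gnorm{L}{}{}-a\mu_0}$, and collecting constants yields~\eqref{eq:inex-str-cvx}. For the exact case $\epsilon_k=0$ the cross terms vanish: the recursion is simply $V_{k+1}\le(1-\tfrac{\mu_0}{\Gamma_k})V_k+\tfrac{\inner{\wtil\lambda^{k+1}}{\eta-\eta^k}}{\Gamma_k}$ with contraction bounded by $1-\tfrac{\mu_0}{L_0+B\gnorm{L}{}{}}$, and the identical unrolling with the cleaner constants produces~\eqref{eq:ex-str-cvx}.

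I expect the main obstacle to be the constant bookkeeping in the cross-term absorption: the Young weight $a\mu_0$, the rate $\rho$ baked into $\delta^k$, and the tolerance $\epsilon_k$ must be chosen mutually compatibly so that (i) $q_k$ stays uniformly below $1$, (ii) $\rho<\rho_0$ so $1/(\rho_0-\rho)$ is controlled, and (iii) the accumulated error remains $O(\rho^k)$ with precisely the stated constants. A secondary point requiring care is verifying that the prescribed $\epsilon_k,\delta^k$ keep every subproblem strictly feasible so that the dual iterates exist and are uniformly bounded; this simply reuses the proof of Theorem~\ref{prop:lcgd:bound-dual} and Theorem~\ref{thm:lcgd-inexact-asymp}.
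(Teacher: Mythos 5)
Your proposal follows essentially the same route as the paper's proof: both specialize Lemma~\ref{lem:suff-descent-cvx-inexact} to $x=x^*$, absorb the cross term $\sqrt{2(L_0+\inprod{\wtil\lambda^{k+1}}{L})\epsilon_k}\,\gnorm{x^{k+1}-x^*}{}{}$ via Young's inequality with weight $a\mu_0$, invoke the uniform dual bound $B$ (after checking the subproblems stay strictly feasible), and exploit the fact that the prescribed $\rho$ makes the accumulated error decay geometrically against the contraction factor $1-\tfrac{(1-a)\mu_0}{L_0+B\gnorm{L}{}{}-a\mu_0}$. The only difference is organizational: the paper telescopes the weighted one-step inequalities with the function gap kept on the left-hand side, using the near-descent property $\psi_0(x^K)\le\psi_0(x^{k+1})+\tsum_{i>k}\epsilon_i$, whereas you first run a pure distance recursion $V_{k+1}\le q_kV_k+r_k$ and convert to a function-value bound only at the last step — this is sound (you only need feasibility of $x^{k+1}$ to drop the gap), but your constants (e.g.\ the extra factor of your $\rho_0^{-1}$ and the last-step error terms) would come out in the same form without literally reproducing the expressions in \eqref{eq:inex-str-cvx}.
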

\begin{proof}
	Proceeding similar to the convex case, using Lemma~\ref{lem:suff-descent-cvx-inexact}, we obtain
		\begin{align*}
		\psi_{0}({x^{k+1}}) - \psi_{0}(x^*) 
		&= \inprod{\wtil\lambda^{k+1}}{\eta-\eta^k} + \tfrac{L_0 + \inprod{\wtil\lambda^{k+1}}{L} - \mu_0}{2}\gnorm{x^k-x^*}{}{2} - 
		\tfrac{L_0 + \inprod{\wtil\lambda^{k+1}}{L}}{2}\gnorm{x^{k+1}-x^*}{}{2} \\
		&\quad+ 2\epsilon_k 
		+ \mathfontsize{9}{\sqrt{2(L_0+ \inprod{\wtil\lambda^{k+1}}{L}) \epsilon_k}} \gnorm{x^{k+1}-x}{}{}. 
	\end{align*}
	For $0< a < 1$, we have 
	\begin{align*}
	{\sqrt{2(L_0+ \inprod{\wtil\lambda^{k+1}}{L}) \epsilon_k}} \gnorm{x^{k+1}-x}{}{} \le \tfrac{L_0+\inprod{\wtil\lambda^{k+1}}{L}}{a\mu_0}\epsilon_k + \tfrac{a\mu_0}{2}	\norm{x^{k+1}-x^*}^2.
	\end{align*}
	Combining the above two results, we have
	\begin{align*}
	\psi_{0}({x^{k+1}}) - \psi_{0}(x^*) 
	& \le \inprod{\wtil\lambda^{k+1}}{\eta-\eta^k} + \tfrac{L_0 + \inprod{\wtil\lambda^{k+1}}{L} - \mu_0}{2}\gnorm{x^k-x^*}{}{2}
		- \tfrac{L_0 + \inprod{\wtil\lambda^{k+1}}{L}-a\mu_0}{2}\gnorm{x^{k+1}-x^*}{}{2} \\
		& \quad + \brbra{2+\tfrac{L_0+\inprod{\wtil\lambda^{k+1}}{L}}{a\mu_0}} \epsilon_k.
	\end{align*} 
Let us denote 
	\[\Gamma_k=
	\begin{cases}
		1 & \text{ if } k = 0;\\
		\tfrac{L_0+\inner{\wtil\lambda^k}{L}-a\mu_0}{L_0+\inner{\wtil\lambda^k}{L}-\mu_0} \Gamma_{k-1} &\text{ if } k \ge 1.
	\end{cases}\]
	Multiplying both sides of the above inequality by $\tfrac{\Gamma_{k}}{L_0+\inner{\wtil\lambda^{k+1}}{L}-\mu_0}$ and noting that $\eta-\eta^k = \rho^k (\eta-\eta^0)$ (follows by the choice of $\delta^k$), we obtain
	\begin{align}
		& \tfrac{\Gamma_{k}}{L_0+\inner{\wtil\lambda^{k+1}}{L}-\mu_0} \big[\psi_0(x^{k+1})- \psi_0(x^*)\big] \nonumber \\
		& \le \tfrac{\Gamma_{k}}{L_0+\inner{\wtil\lambda^{k+1}}{L}-\mu_0} \rho^k\inner{\wtil\lambda^{k+1}}{\eta-\eta^0}
		 +\tfrac{\Gamma_k}{2} \gnorm{x^*-x^{k}}{}{2}  - \tfrac{\Gamma_{k+1}}{2} \gnorm{x^*-x^{k+1}}{}{2} 
		 + \tfrac{ L_0+\inprod{\wtil\lambda^{k+1}}{L}+2a\mu_0}{a\mu_0(L_0 + \inprod{\wtil\lambda^{k+1}}{L} - \mu_0)} \Gamma_{k}\epsilon_k. \label{eq:int_rel1}
	\end{align}
	Since $\gnorm{\wtil\lambda^k}{}{} \le B$, we have $\paran{\tfrac{L_0 + B\gnorm{L}{}{}-a\mu_0}{L_0 + B\gnorm{L}{}{} - \mu_0}}^k \le \Gamma_k \le \paran{\tfrac{L_0-a\mu_0}{L_0 - \mu_0}}^k$. Moreover,  we have $2\epsilon_k \le a (1-\rho)\rho^k\gnorm{\eta-\eta^0}{}{}$ and $\rho =\tfrac{L_0 - \mu_0}{2(L_0-a\mu_0)}$. Using these relations in \eqref{eq:int_rel1}, we have
	\begin{align}
	&	\tfrac{\Gamma_{k}}{L_0+B\gnorm{L}{}{}-\mu_0} \big[\psi_0(x^{k+1})- \psi_0(x^*)\big]  \nonumber \\
	& \quad  \le \tfrac{B\gnorm{\eta-\eta^0}{}{}}{L_0-\mu_0}\Gamma_{k}\rho^k + \tfrac{L_0+B\norm{L}+2a\mu_0}{a\mu_0(L_0 - \mu_0)}\Gamma_k\epsilon_k +\tfrac{\Gamma_k}{2} \gnorm{x^k-x^*}{}{2}  - \tfrac{\Gamma_{k+1}}{2} \gnorm{x^{k+1}-x^*}{}{2}  \nonumber\\
	& \quad  \le \tfrac{B\gnorm{\eta-\eta^0}{}{}}{L_0-\mu_0}\tfrac{1}{2^k} + \tfrac{L_0+B\norm{L}+2a\mu_0}{\mu_0(L_0 - \mu_0)} \tfrac{1-\rho}{2^{k+1}}\norm{\eta-\eta^0} +\tfrac{\Gamma_k}{2} \gnorm{x^k-x^*}{}{2}  - \tfrac{\Gamma_{k+1}}{2} \gnorm{x^{k+1}-x^*}{}{2}.   \label{eq:int_rel2}
	\end{align}
	Similar to the convex part, we also have 
	\begin{align*}
		\psi_{0}(x^K) &\le \psi_{0}(x^{k+1}) + \tsum_{i=k+1}^{K-1}\epsilon_i 
		\le \psi_0(x^{k+1}) + \tfrac{\gnorm{\eta-\eta^0}{}{}(1-\rho)}{2}\tsum_{i=k+1}^{K-1}\rho^i 
		\le \psi_0(x^{k+1}) + \tfrac{\gnorm{\eta-\eta^0}{}{}\rho^{k+1}}{2}.
	\end{align*}
	Using the above relation into \eqref{eq:int_rel2}, we have
	\begin{align*}
		\tfrac{\Gamma_{k}}{L_0+B\gnorm{L}{}{}-\mu_0} \big[\psi_0(x^K)- \psi_0(x^*)\big] 
		&\le \tfrac{(4B+1)\gnorm{\eta-\eta^0}{}{}}{L_0-\mu_0}\tfrac{1}{2^{k+2}} + \tfrac{L_0+B\norm{L}+2a\mu_0}{\mu_0(L_0 - \mu_0)} \tfrac{1-\rho}{2^{k+1}}\norm{\eta-\eta^0} \nonumber\\
		&\quad  +\tfrac{\Gamma_k}{2} \gnorm{x^k-x^*}{}{2}  - \tfrac{\Gamma_{k+1}}{2} \gnorm{x^{k+1}-x^*}{}{2}.
	\end{align*}
	Summing the above relation from $k = 0$ to $K-1$,  we have
	\begin{align*}
	\tfrac{\Gamma_{K-1}}{L_0+B\gnorm{L}{}{}-\mu_0} \big[\psi_0(x^K)- \psi_0(x^*)\big] & \le \tsum_{k=0}^{K-1}\tfrac{\Gamma_{k}}{L_0+B\gnorm{L}{}{}-\mu_0} \big[\psi_0(x^K)- \psi_0(x^*)\big] \\
	 & \le \bsbra{\tfrac{(4B+1)}{2(L_0-\mu_0)}+\tfrac{L_0+B\norm{L}+2a\mu_0}{\mu_0(L_0 - \mu_0)}\rbra{1-\rho}} \gnorm{\eta-\eta^0}{}{} + \tfrac{1}{2}\norm{x^0-x^*}^2.
	\end{align*}
	Note that
	\[
	\Gamma_{K-1}^{-1}\le \brbra{1-\tfrac{(1-a)\mu_0}{L_0+B\gnorm{L}{}{}-a\mu_0}}^{K}\le \exp\brbra{-\tfrac{(1-a)\mu_0 K}{L_0+B\gnorm{L}{}{}-a\mu_0}}.
	\]
Combining the above two relations we obtain the desired result \eqref{eq:inex-str-cvx}.
\end{proof}

\global\long\def\DCCP{\texttt{DCCP}}%
\global\long\def\CVXPY{\texttt{CVXPY}}

\section{Numerical study}\label{sec:experiments}
	In this section, we conduct some preliminary studies to examine our theoretical results and the performance of  the \GD{} method. The experiments are run on CentOS with Intel Xeon (2.60 GHz)  and 128 GB memory.

\subsection{A simulated study on the QCQP}
In the first experiment, we compare \GD{} with some established open-source solvers such as \CVXPY{}~\cite{diamond2016cvxpy} and \DCCP{}~\cite{shen2016disciplined}.
We consider the penalized Quadratically Constrained Quadratic Program (QCQP) described as follows,
	\begin{equation}\label{eq:ncvx-qcqp}
	\begin{aligned}
	\min_{x\in \Rbb^n} &\quad  \frac{1}{2}x^TQ_0x + b_0^T x +\alpha \norm{x}_1 \\
	\st & \quad  \frac{1}{2}x^TQ_i x + b_i^T x +c_i\le 0, \quad i=1,2,\ldots, m-1 \\
	\st & \quad \|x\|\le r
	\end{aligned}
	\end{equation} 
where each $Q_i$ ($0\le i \le m$) is an $n\times n$ matrix, $b_0,b_1,\ldots, b_m$ are $n$-dimensional real vectors, $\alpha$ is a positive weight on the $\ell_1$ norm penalty, which helps to promote sparse solution.  In the first setting, we consider a convex constrained problem where each $Q_i$ is a positive semidefinite matrix.
We set $Q_i=VDV^\trans$ where $V$ is an $n\times n$ random sparse matrix with density $0.01$, and its nonzero entries are uniformly distributed in $[0,1]$.  $D$ is a diagonal matrix whose diagonal elements are uniformly distributed in between $[0, 100]$.  We set $b_i=10e+v$, where $e$ is a vector of ones and $v\in\Ncal(0, I_{n\times n})$ is sampled from standard Gaussian distribution.  We set $c_i=-10$ to make $x=0$ a strictly feasible initial solution.
Furthermore, we add a ball constraint to ensure that the domain is a compact set. We set $r=\sqrt{20}$ and $\alpha=1$. We fix $m=10$ and explore different dimensions $n$ from the set $\{500, 1000, 2000, 3000, 4000\}$. 

We solve Problem~\eqref{eq:ncvx-qcqp} by both  \texttt{CVXPY} and \GD{}. Both use the initial solution $x=0$.  For \CVXPY{}, we use MOSEK as the internal solver due to its superior performance in quadratic optimization.  In \GD{}, for simplicity, we also solve the diagonal quadratic subproblem by \texttt{MOSEK} through \CVXPY{}. Note that calling the external API repetitively for each \GD{} subproblem only causes more overheads to run \GD{}. Nonetheless, as we shall see,  the standard IPM solvers can still fully leverage the diagonal structure and exhibit fast convergence. 

In Table~\ref{tab:cvxl1qcqp}, we present the experiment results of the compared algorithms. The final objective, the norm of the dual solution (DNorm), and for \GD{},  the maximum dual norm in the solution path (Max DNorm) are reported. All values represent the average of 5 independent runs.  From the results, we observe that while \GD{} does not outperform \CVXPY{} for the small-size problem ($n=500$), \GD{} becomes increasingly favorable  as the problem dimension increases. This justifies the empirical advantage of our proposed approach as we do not need to construct a full Hessian matrix. Moreover, interestingly,  we observe that the dual solution norm $\{\|\lambda^k\|\}$ is increasing, reaching the maximum at the last iteration. This accounts for the equal values of DNorm and Max DNorm. Meanwhile, in all the cases,  the dual remains bounded and the reported dual norm closely aligns with the solution returned by \CVXPY{}. This result confirms our intuition that the dual bound is  intricately tied to the nature of problems. 

\begin{table}[ht]
    \centering
\begin{tabular}{c|c|c|c|c|c|c|c}
\hline
 \multirow{2}{*}{$n$} & \multicolumn{4}{c|}{LCPG} & \multicolumn{3}{c}{CVXPY} \\ 
\cline{2-8}
& Objective & Time & DNorm  & Max DNorm & Objective & Time & DNorm   \\ 
\hline
500 & -1.785e+02  & 9.46e+00  & 1.773e-01   & 1.773e-01 &-1.785e+02  & 3.29e+00  & 1.774e-01  \\ 
\hline
1000 & -9.698e+01  & 1.97e+01  & 1.478e-01   & 1.478e-01 &-9.698e+01  & 1.61e+01  & 1.479e-01  \\ 
\hline
2000 & -5.418e+01  & 6.30e+01  & 1.117e-01   & 1.117e-01 &-5.419e+01  & 7.82e+01  & 1.120e-01  \\ 
\hline
3000 & -4.010e+01  & 1.44e+02  & 9.525e-02   & 9.525e-02 &-4.011e+01  & 2.36e+02  & 9.602e-02  \\ 
\hline
4000 & -3.293e+01 & 2.45e+02  & 8.255e-02   & 8.255e-02 &-3.294e+01 & 4.96e+02  & 8.384e-02   \\ 
\hline
\end{tabular}
    \caption{Comparison of algorithms on convex quadratic problems. Running time is measured in seconds.}
    \label{tab:cvxl1qcqp}
\end{table}

In the second setting of this experiment, we examine the performance of \GD{} on nonconvex constrained optimization. 
Specifically,  we express $Q_i$ as the difference of two matrices: ${Q}_i=P_i-S_i$, where $P_i$ is generated in the  same manner as $Q_i$  in the first setting,  and $S_i = 10 I_{n\times n}$.
  Given the construction of the quadratic components, it is natural to view the function $ \frac{1}{2}x^TQ_i x + b_i^T x +c_i$ as a difference of two convex quadratic functions: $\frac{1}{2}x^TP_i x + b_i^T x +c_i-\frac{1}{2}x^T S_i x$. Leveraging this decomposition, we apply the DC programming, and more specifically, the \texttt{DCCP} framework to solve \eqref{eq:ncvx-qcqp}. Each convex subproblem of \DCCP{} is solved by \texttt{MOSEK} through the CVXPY interface. 
In Table~\ref{tab:ncvx-qcqp}, we describe the performance of \GD{} and the \texttt{DCCP} algorithm.  It can be observed that \GD{} compares favorably against the \texttt{DCCP} solver. Furthermore, the boundedness of the dual for both algorithms is also observed, which is consistent with our intuition.

{
\footnotesize
\begin{table}[ht]
\begin{tabular}{c|c|c|c|c|c|c|c|c}
\hline
 \multirow{3}{*}{$n$} & \multicolumn{4}{c|}{LCPG} & \multicolumn{4}{c}{DCCP} \\ 
\cline{2-9}
& \multirow{2}{*}{Objective} & \multirow{2}{*}{Time} & \multirow{2}{*}{DNorm}  & \multirow{2}{1.2cm}{Max\\ DNorm} & \multirow{2}{*}{Objective} & \multirow{2}{*}{Time} & \multirow{2}{*}{DNorm}  & \multirow{2}{1.2cm}{Max\\ DNorm} \\ 
&&&&&&&&\\
\hline
500 & -3.056e+01 & 1.525e+01  & 1.194e-01 &  1.194e-01&-3.056e+01 & 1.400e+01  & 1.199e-01 &  1.199e-01\\ 
\hline
1000 & -2.112e+01 & 3.350e+01  & 7.602e-02 &  7.608e-02&-2.112e+01 & 5.665e+01  & 7.672e-02 &  7.672e-02\\ 
\hline
2000 & -1.609e+01 & 1.052e+02  & 4.502e-02 &  4.588e-02&-1.609e+01 & 2.678e+02  & 4.571e-02 &  4.571e-02\\ 
\hline
3000 & -1.425e+01 & 1.918e+02  & 3.216e-02 &  3.523e-02&-1.426e+01 & 6.505e+02  & 3.301e-02 &  3.301e-02\\ 
\hline
4000 & -1.329e+01 & 3.837e+02  & 2.485e-02 &  2.738e-02&-1.330e+01 & 1.201e+03  & 2.574e-02 &  2.574e-02\\ 
\hline
\end{tabular}
\caption{Comparison of algorithms on nonconvex quadratic problems.}\label{tab:ncvx-qcqp}
\end{table}
}

\subsection{Study of gradient complexities}
\begin{figure}[h]
    \centering
    \includegraphics[width=0.45\textwidth]{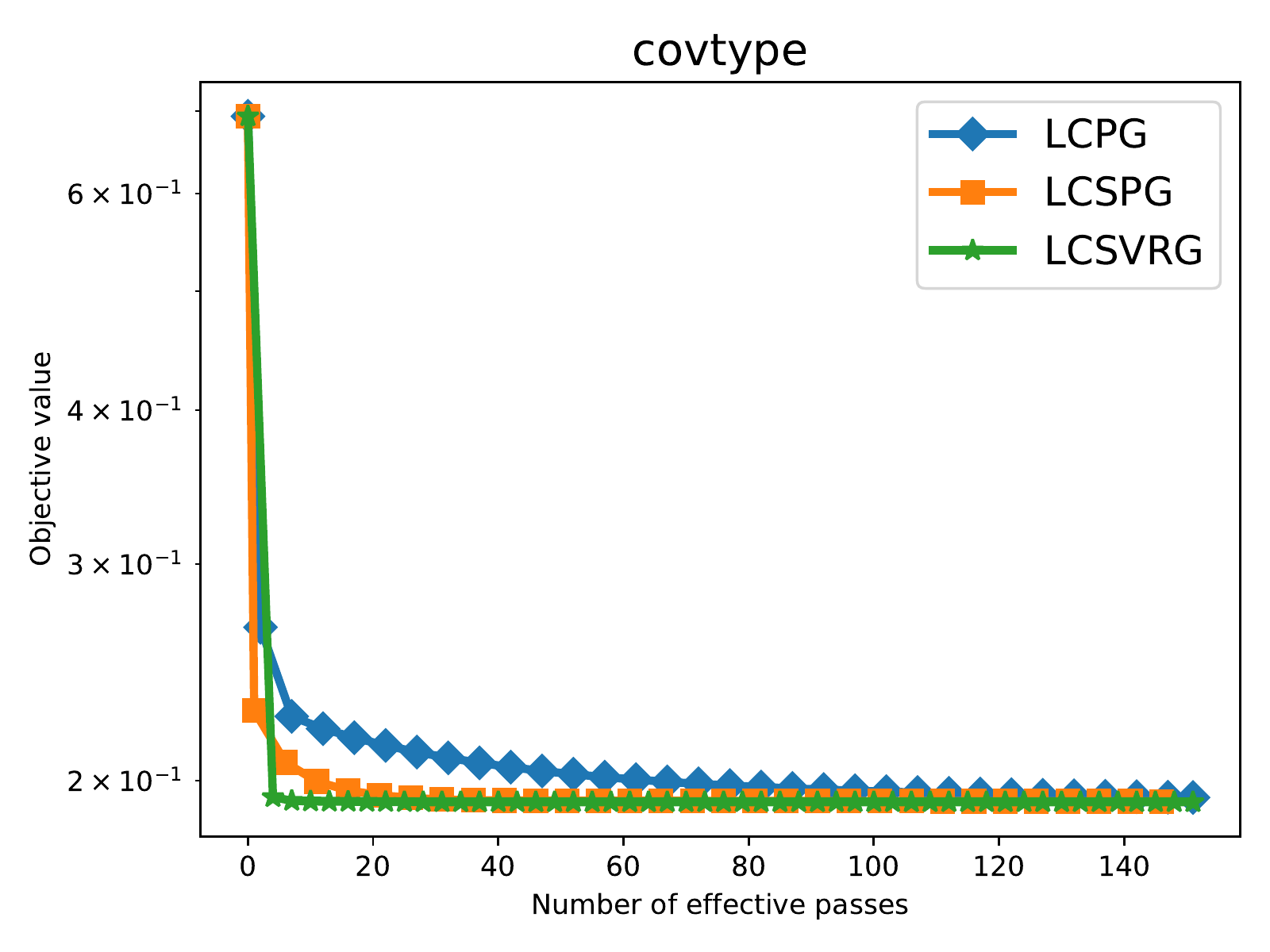}
    \includegraphics[width=0.45\textwidth]{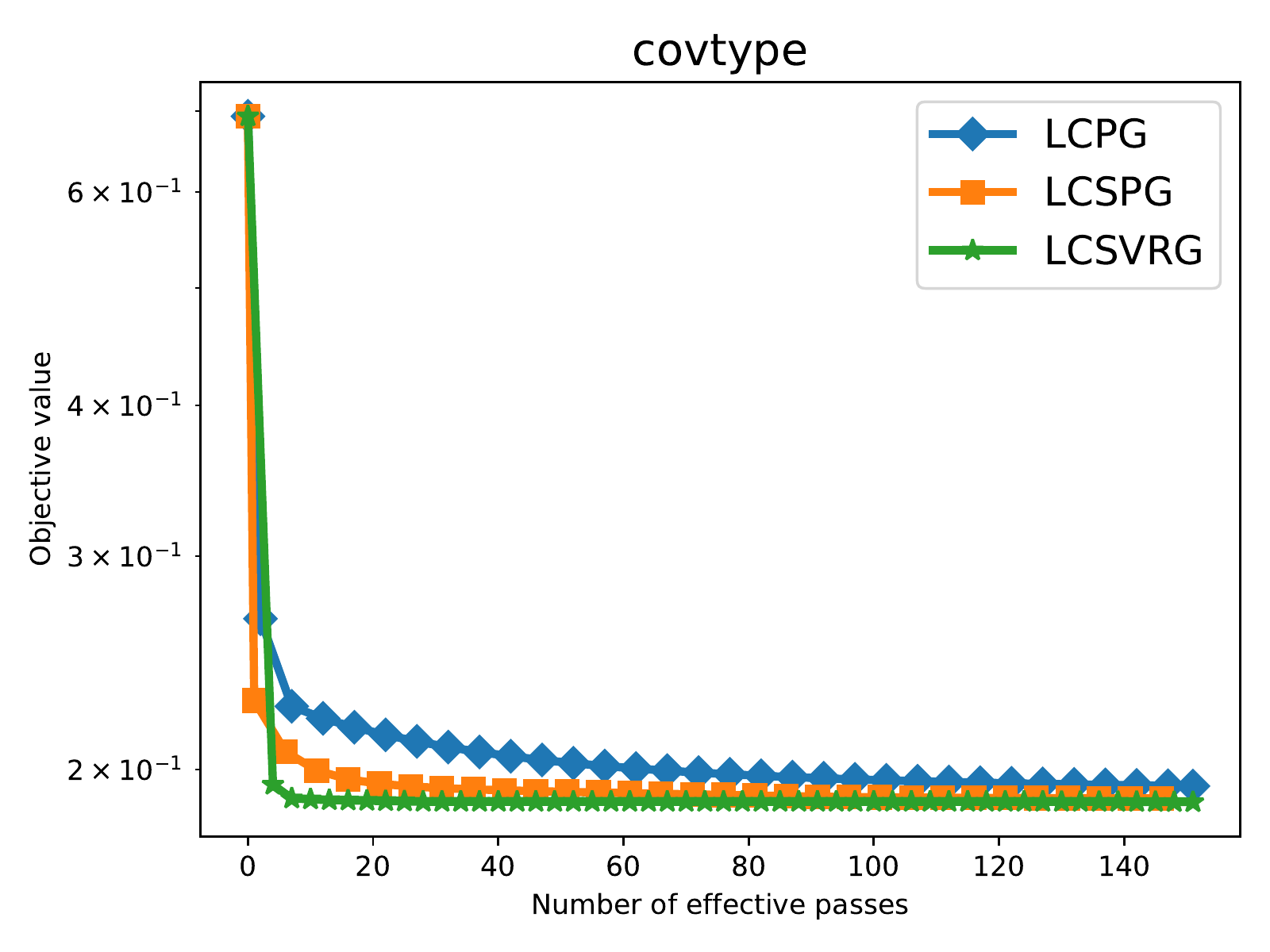}
    \includegraphics[width=0.45\textwidth]{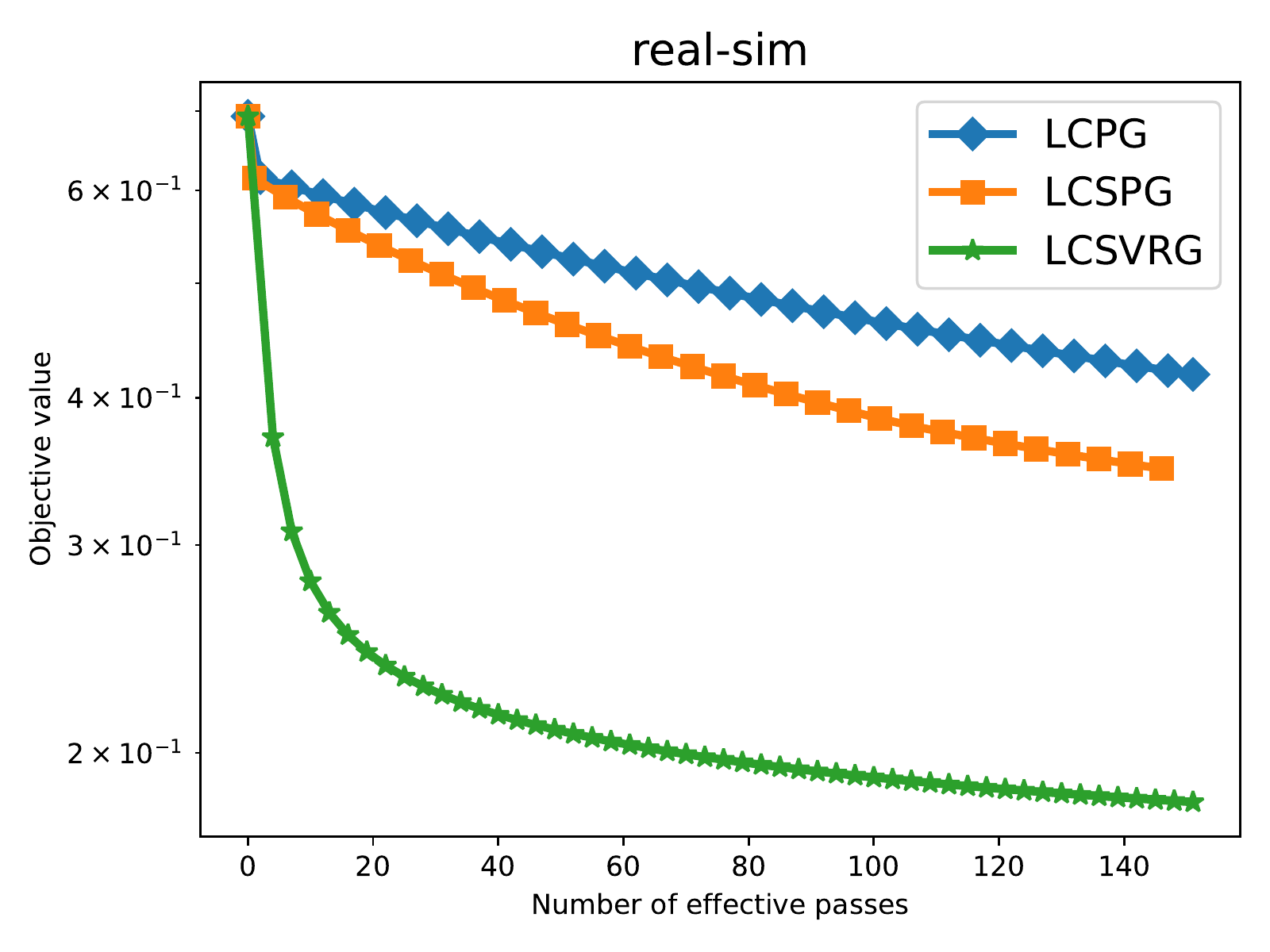}
    \includegraphics[width=0.45\textwidth]{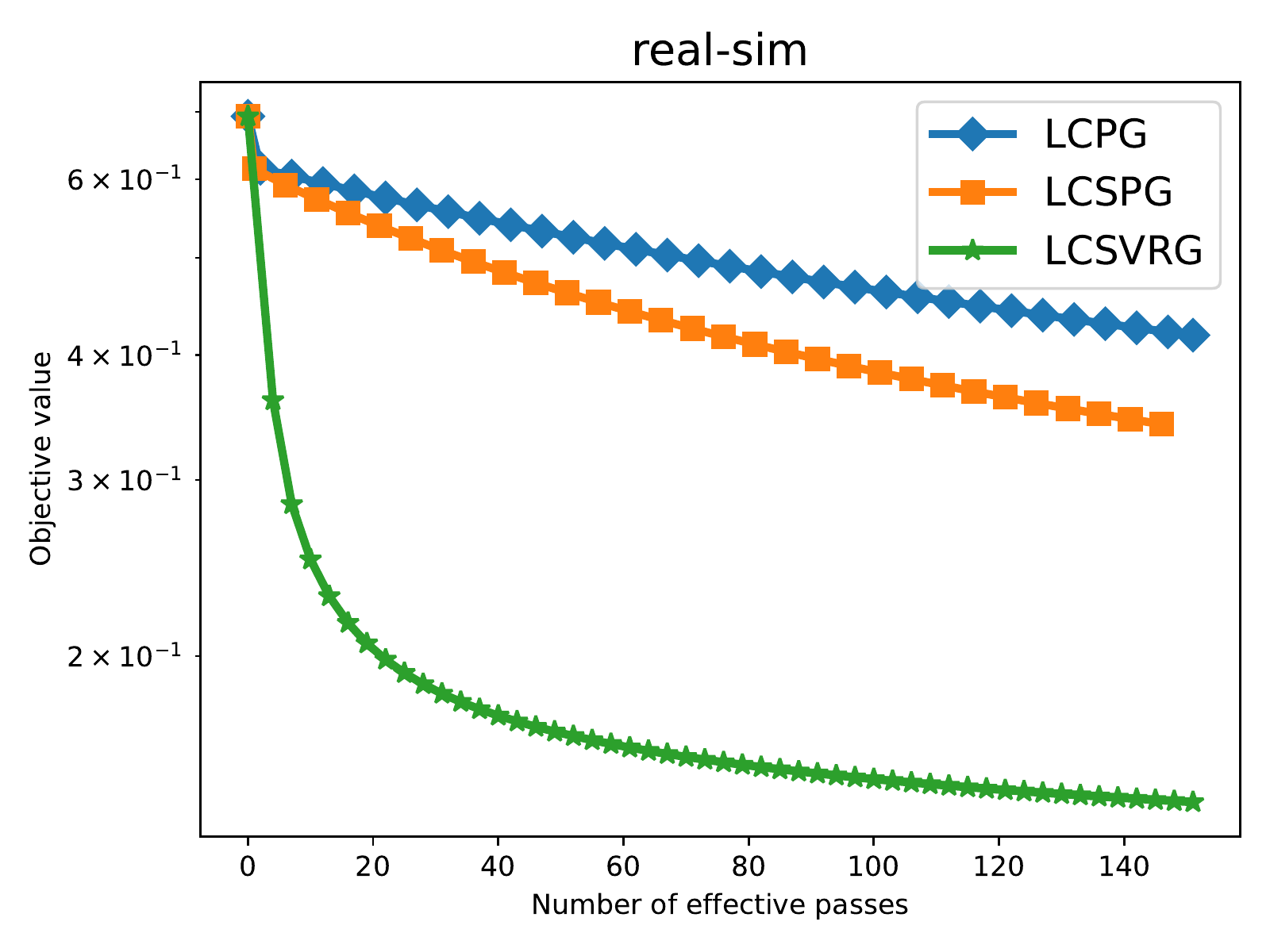}
    \caption{Comparison of \GD{}, \SGD{} and \SVRG{}. The first row reports the results on covtype~(left: $\sigma=0.4$; right: $\sigma=0.6$). 
    The second row reports the results on real-sim~(left: $\sigma=0.1$; right: $\sigma=0.2$).}
    \label{fig:stochastic}
\end{figure}

In the next experiment,  our primary goal is to examine the main theoretical results, namely, the gradient complexities  of \GD{}, its stochastic variants \SGD{} and \SVRG{}.  We apply all these algorithms to a sparsity-induced finite-sum problem, wherein a nonconvex constraint is incorporated into the supervised learning framework to actively enforce a sparse solution. The optimization problem is as follows
\begin{equation} \label{prob:spars}
	\begin{aligned}
		\min_{x\in\Rbb^d} &\quad  \psi_0(x)\coloneq \frac{1}{n}\sum_{i=1}^n f_i(x) \\
		\st & \quad  \psi_1(x)= \beta\norm{x}_1 - g(x) \le \eta_1,
	\end{aligned}
\end{equation}
where  $f_i(x)$ is a smooth loss function associated with the $i$-th sample,  $\psi_1(x)$ is the difference between $\ell_1$-penalty and  a convex smooth function $g(x)$. Employing a difference-of-convex constraint is seen as a tighter relaxation of the cardinality constraint $\norm{x}_0\le \kappa$ than the $\ell_1$ relaxation. The appealing properties of  difference-of-convex penalties have been demonstrated in various studies~\cite{fan2001variable,zhang2010nearly,gotoh2018dc,gong2013general,Boob2020feasible,zhang2012general}.

In view of the concave structure of $-g(x)$, there is a strong   asymmetry between the lower and upper curvature of $-g(x)$, namely, the following  
\begin{equation}\label{eq:g-concave}
-\frac{L_g}{2}\norm{y-x}^2 -\nabla g(y)^T(x-y)-g(y)\le  -g(x)\le -g(y) -\nabla g(y)^T(x-y)
\end{equation}
holds for certain $L_g>0$. Note that this is much stronger than the $L_g$ smoothness condition which adds an extra $\frac{L_g}{2}\norm{y-x}^2$ on the right-hand side of \eqref{eq:g-concave}. 
Due to this feature, one can impose a tighter piece-wise linear surrogate function constraint 
\[\beta\norm{x}_1 - g(x^k)-\nabla g(x^k)(x-x^k) \le \eta^k_1\] in the \GD{}~subproblem.  It should be noted that  our analysis is readily adaptable to accommodate this scenario since it is the smoothness,  as opposed to concavity/convexity, that plays a central role in our convergence analysis and that remains valid. An empirical advantage of this approach is that we now have a tractable subproblem solvable in nearly linear time. See more discussion in \cite{Boob2020feasible}. 

Our experiment considers the task of binary classification with logistic loss, denoted by $f_i(x)=\log(1+\exp(-b_i(a_i^Tx))$, where $a_i\in\Rbb^d,b_i\in\{1,-1\}$, $1\le i \le n$. We use the SCAD penalty $g(x)=\sum_{j=1}^d h_{\beta,\theta}(x_j) $ where $h_{\beta,\theta}(\cdot)$ is defined in \eqref{eq:scad}.
We use the real-sim dataset from the LibSVM repository~\cite{Chang2011libsvm} and the covtype data from the UCI repository~\cite{Kelly2019}. For the latter, we formulate a binary classification task by distinguishing  class ``3'' from the other classes. We set $\beta=2$, $\theta=5$, and $\eta_1=\sigma d$, with $\sigma\in\{0.4, 0.6\}$ for covtype and $\sigma \in\{0.1, 0.2\}$ for real-sim dataset. For each algorithm, we use its theoretically suggested batch size and stepsize. for a fair comparison, we count $n$ evaluations of the stochastic gradient as an effective pass over the dataset and plot the objective value over the number of effective passes.  Figure~\ref{fig:stochastic} plots the convergence result of the compared algorithms. It can be readily seen that \SGD{} performs better than \GD{} in terms of the number of gradient samples, and \SVRG{} achieves the best performance among the three algorithms. The empirical findings further confirm our theoretical complexity analysis.

\section{Conclusion}
In this work, we presented a new {\GD} method for nonconvex function constrained optimization which can achieve gradient complexity of the same order as that of unconstrained nonconvex problems. The key ingredient in our algorithm design is the use of constraint levels to ensure the subproblem feasibility,  which allows us to overcome a well-known difficulty in bounding the Lagrange multipliers in the presence of nonsmooth constraints. Moreover, a merit of our convergence analysis is its striking similarity with that of gradient descent methods for unconstrained problems. Therefore, we can easily  extend our method to minimizing  stochastic, finite-sum and structured nonsmooth functions with nonconvex function constraints; many of the complexity results were not known before. 
Another important feature of our work is that the method can deal with complex scenarios where the subproblems are not exactly solvable. To the best of our knowledge, existing work on sequential convex optimization (\SQP, \MBA) only assume the subproblems to be exactly solved.
 We provided a detailed complexity analysis of {\GD} when the subproblems are inexactly solved by customized interior point method and first order method. 
Finally, we clearly distinguished the notion of gradient complexity from that of computational complexity. In terms of gradient complexity, all of our proposed methods are state of the art and  easy to implement. Whether the computational complexity can be further improved for composite case remains an open problem which we leave as a future direction.

\bibliographystyle{abbrv}
{\footnotesize\bibliography{main}}

\appendix

\section{Proof of Lemma \ref{lem:tight_lipschitz-const}}\label{appx:tight_Lipschitz_const}

	Let $\{\delta_n\}_{n \ge 1}$ be a function of $\Ccal^{\infty}$-smooth, real-valued mollifier functions over $\Rbb^d$ where, for every $n \ge 1$, we have: (i) $\delta_n \ge 0$, (ii) $\int \delta_n(\tau)d\tau = 1$, and (iii) and $\delta_n(\tau) = 0$ for $\tau$ satisfying $\gnorm{\tau}{}{} \ge \tfrac{1}{n}$. Moreover, we define $p_n = \delta_n * p$. It now follows that
	\begin{align*}
		p_n(x) - p_n(y) - \inprod{\grad p_n(y) }{x-y} &= \delta_n *[p(x) - p(y)] - \inprod{\delta_n * \grad p(y)}{x-y} \\
		&= \textstyle{\int}_{\tau} \delta_n(\tau) [p(x-\tau) - p(y-\tau)]d\tau - \inprod{\int_{\tau} \delta_n(\tau) \grad p(y-\tau)d\tau}{x-y}\\
		&= \textstyle{\int}_{\tau} \delta_n(\tau) [p(x-\tau) - p(y-\tau) - \inprod{\grad p(y-\tau)}{x-y}] d\tau
	\end{align*}
	Using  \eqref{eq:lower-upper-curvature} along with the above relation, noting that $\delta_n \ge 0$ and the fact $\int \delta_n(\tau)d\tau = 1$, we have
	\begin{equation}\label{eq:upp-low-curv-p_n}
		-\tfrac{\mu}{2} \gnorm{x-y}{}{2} \le p_n(x) - p_n(y) - \inprod{\grad p_n(y) }{x-y} \le \tfrac{L}{2}\gnorm{x-y}{}{2},
	\end{equation}
	for all $x,y$. Note that $p_n$ is $\Ccal^{\infty}$-smooth. Hence, using Taylor's theorem, there exist $\xi \in [x,y]$ such that 
	$p_n(x) - p_n(y) - \inprod{\grad p_n(y) }{x-y} = \tfrac{1}{2} \inprod{x-y}{\grad^2 p_n(\xi)(x-y)} + o(\gnorm{x-y}{}{2})$. Using this relation along with~\eqref{eq:upp-low-curv-p_n} and denoting $v := y-x$, we have
	\begin{equation}
		-\tfrac{\mu}{2} \gnorm{v}{}{2} \le \tfrac{1}{2} \inprod{v}{\grad^2 p_n(\xi)v} + o(\gnorm{v}{}{2}) \le \tfrac{L}{2}\gnorm{v}{}{2}
	\end{equation} 
	Now, diving both sides of the above relation by $\gnorm{v}{}{2}$, taking $y \to x$ which implies $\xi \to x$ and $\grad^2 p_n(\xi) \to \grad^2 p_n(x)$, we have
	\[-\mu \le \inprod{\tfrac{v}{\gnorm{v}{}{} }}{\grad^2 p_n(x) \tfrac{v}{\gnorm{v}{}{}} } \le L,\]
	for all $v$ and $x$. The above relation is equivalent to the fact that $\gnorm{\grad^2 p_n(x)}{}{} \le \max\{L, \mu\}$ for all $x$. From here, for any $x, y$, we have
	\begin{align*}
		\gnorm{\grad p_n(x) - \grad  p_n(y)}{}{} &= \gnorm{\textstyle\int_{t = 0}^1 \grad^2 p(y + t(x-y))(x-y)dt }{}{}\\
		&\le \textstyle\int_{t = 0}^1 \gnorm{\grad^2 p(y + t(x-y))(x-y)}{}{}dt\\
		&\le \textstyle\int_{t = 0}^1 \gnorm{\grad^2 p(y + t(x-y))}{}{}\gnorm{x-y}{}{}dt\\
		&\le \max\{L, \mu\} \gnorm{x-y}{}{}.
	\end{align*}
	Now, taking $n \to \infty$ and noting that $\grad p_n(x) \to \grad p(x)$ for all $x$, we have \eqref{eq:tight-Lipschitz-const}. Hence, we conclude the proof.

\section{Proof of Theorem~\ref{thm:lcgd-inexact-asymp}}\label{appx:LCGD_inexact-nonconvex}
First of all, using the definition of $\epsilon_k$ and the fact that $\psi_i^k(x^{k+1}) \le \eta^k _i + \epsilon_k$ for all $i \in [m]$, we have %
\[
\psi_i^{k}(x^{k}) = \psi_{i}(x^{k}) \le \psi_{i}^{k-1}(x^{k}) \le \eta_i^{k-1} + \epsilon_{k-1}< \eta_i^{k-1}+\delta_i^{k-1} = \eta_i^{k},\quad i=1,2,\ldots, m. 
\]
Hence $x^{k}$ is strictly feasible solution of \eqref{subproblem}. Due to Slater condition, there exists a pair of optimal primal and dual solutions, which we denote by
 $\tilde{x}^{k+1}$ and ${\lambda}^{k+1}$. We first prove the following lemma.
 Our argument will be based on the following key result.
 \begin{lemma}\label{lem:inexact-seq-conv}
 	 \begin{equation} 
 	\lim_{k\raw\infty}\gnorm{\tilde{x}^{k+1}-x^k}{}{}=0,\quad 
 	\lim_{k\raw\infty}\gnorm{{x}^{k+1}-x^k}{}{}=0 \label{eq:asymp-xk}
 \end{equation}
 \end{lemma}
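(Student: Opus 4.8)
\textbf{Proof plan for Lemma~\ref{lem:inexact-seq-conv}.}

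The plan is to mimic the sufficient-descent argument of Proposition~\ref{prop:lcgd-suff-descent}, but now allowing for the accumulated inexactness error $\{\epsilon_k\}$. First I would establish a near-descent inequality for $\psi_0(x^k)$. Applying the three-point inequality for inexact solutions (Lemma~\ref{lem:3-point-lem-appx}) to the subproblem~\eqref{subproblem}, or more directly arguing as in \eqref{eq:almost_suff_descent} of Theorem~\ref{thm:inexact-rate}, one obtains
\[
\tfrac{L_0}{2}\gnorm{x^k-\wtil{x}^{k+1}}{}{2}\le \psi_0(x^k)-\psi_0(x^{k+1})+\epsilon_k,
\]
where the key facts used are $\psi_0^k(x^k)=\psi_0(x^k)$, that $x^{k+1}$ is an $\epsilon_k$-solution (Definition~\ref{def:inexact-criterion}), and $\psi_0^k(x)\ge\psi_0(x)$ on $\dom\chi_0$. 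In particular $\psi_0(x^{k+1})\le\psi_0(x^k)+\epsilon_k$, so by induction $\psi_0(x^{k+1})\le\psi_0(x^0)+\sum_{i=0}^{k}\epsilon_i$. Since $\epsilon_k<\min_{i}\delta_i^k$ and $\sum_k\delta_i^k=\eta_i-\eta_i^0<\infty$, the series $\sum_k\epsilon_k$ converges; combined with the lower bound $\psi_0^*>-\infty$ from Assumption~\ref{ass:obj-cst}, this forces $\sum_{k=0}^\infty[\psi_0(x^k)-\psi_0(x^{k+1})+\epsilon_k]<\infty$ (telescoping the descent part and adding the summable tail). Hence the left-hand sides are summable, giving $\gnorm{x^k-\wtil{x}^{k+1}}{}{}\to 0$.

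Next I would transfer this to $\gnorm{x^{k+1}-x^k}{}{}$ using the inexactness criterion. By Definition~\ref{def:inexact-criterion}, $\Lcal_k(x^{k+1},\wtil\lambda^{k+1})\le\Lcal_k(\wtil x^{k+1},\wtil\lambda^{k+1})+\epsilon_k$, and since $\Lcal_k(\cdot,\wtil\lambda^{k+1})$ is $(L_0+\inprod{\wtil\lambda^{k+1}}{L})$-strongly convex with minimizer $\wtil x^{k+1}$, this yields $\tfrac{L_0}{2}\gnorm{x^{k+1}-\wtil x^{k+1}}{}{2}\le\tfrac{L_0+\inprod{\wtil\lambda^{k+1}}{L}}{2}\gnorm{x^{k+1}-\wtil x^{k+1}}{}{2}\le\epsilon_k$, so $\gnorm{x^{k+1}-\wtil x^{k+1}}{}{}\to 0$ because $\epsilon_k\to 0$. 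Then the triangle inequality $\gnorm{x^{k+1}-x^k}{}{}\le\gnorm{x^{k+1}-\wtil x^{k+1}}{}{}+\gnorm{\wtil x^{k+1}-x^k}{}{}$ finishes \eqref{eq:asymp-xk}.

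The main obstacle is the circularity lurking in the strong-convexity constant: to conclude $\gnorm{x^{k+1}-\wtil x^{k+1}}{}{}\to0$ from $\Lcal_k(x^{k+1},\wtil\lambda^{k+1})\le\Lcal_k(\wtil x^{k+1},\wtil\lambda^{k+1})+\epsilon_k$ one does \emph{not} need boundedness of $\wtil\lambda^{k+1}$ (since $L_0\le L_0+\inprod{\wtil\lambda^{k+1}}{L}$ and $\wtil\lambda^{k+1}\ge0$, the modulus is at least $L_0>0$), so that step is actually clean. The genuinely delicate point is ensuring that every subproblem is in fact strictly feasible so that $\wtil\lambda^{k+1}$ exists at all — this is exactly where the hypothesis $\epsilon_k<\min_i\delta_i^k$ is used, propagated inductively as in the displayed chain $\psi_i^k(x^k)=\psi_i(x^k)\le\psi_i^{k-1}(x^k)\le\eta_i^{k-1}+\epsilon_{k-1}<\eta_i^k$, which relies on the smoothness bound $f_i(x^k)\le f_i(x^{k-1})+\inprod{\grad f_i(x^{k-1})}{x^k-x^{k-1}}+\tfrac{L_i}{2}\gnorm{x^k-x^{k-1}}{}{2}$ from Assumption~\ref{assum:f0-lip}. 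Once Lemma~\ref{lem:inexact-seq-conv} is in hand, the remaining conclusions of Theorem~\ref{thm:lcgd-inexact-asymp} (boundedness of $\{\wtil\lambda^k\}$ and KKT-ness of limit points) follow verbatim from the arguments in Theorem~\ref{prop:lcgd:bound-dual}, since those proofs use only $\gnorm{x^{k+1}-x^k}{}{}\to0$, feasibility of the iterates, and the subproblem KKT conditions, all of which now hold (with $\eta^k\to\eta$ still valid and the inexact KKT residual vanishing as $\epsilon_k\to0$).
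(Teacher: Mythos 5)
Your proposal is correct and follows essentially the same route as the paper: the chain $\psi_0(x^{k+1})\le\psi_0^k(x^{k+1})\le\psi_0^k(\wtil x^{k+1})+\epsilon_k\le\psi_0(x^k)-\tfrac{L_0}{2}\gnorm{x^k-\wtil x^{k+1}}{}{2}+\epsilon_k$, summability of $\epsilon_k$ plus $\psi_0^*>-\infty$ to get $\gnorm{x^k-\wtil x^{k+1}}{}{}\to0$, then $\gnorm{x^{k+1}-\wtil x^{k+1}}{}{}\to0$ and the triangle inequality. If anything, your use of the Lagrangian-gap criterion together with the $(L_0+\inprod{\wtil\lambda^{k+1}}{L})$-strong convexity of $\Lcal_k(\cdot,\wtil\lambda^{k+1})$ makes the step $\gnorm{x^{k+1}-\wtil x^{k+1}}{}{}\to0$ more explicit than the paper's brief appeal to uniqueness of the subproblem optimum, and your observation that this step needs only the modulus $L_0$ (not boundedness of the multipliers) correctly defuses the apparent circularity.
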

\begin{proof}
	Using optimality of $\wtil{x}^{k+1}$, strong convexity of $\psi_{0}^k$, feasibility of $x_k$ for the subproblem \eqref{subproblem} and $\psi_{0}^k(x^{k+1}) \le \psi_{0}^k(\wtil{x}^{k+1}) + \epsilon_k$, we have, %
	\begin{align*}
		\psi_0(x^{k+1}) &\le \psi_0^k(x^{k+1}) \\
		&\le \psi_{0}^k(\wtil{x}^{k+1}) +\epsilon_k \\
		&\le \psi_{0}^k(x^k) -   \tfrac{L_0}{2}\gnorm{x^k - \wtil{x}^{k+1}}{}{2} + \epsilon_k \\
		&= \psi_{0}(x^k) -   \tfrac{L_0}{2}\gnorm{x^k - \wtil{x}^{k+1}}{}{2}  + \epsilon_k.
	\end{align*}
	Since $\epsilon_k$ is summable, we have that $\gnorm{x^k-\wtil{x}^{k+1}}{}{2}$ is summable. This implies $\gnorm{x^k-\wtil{x}^{k+1}}{}{} \to 0$. Since $\epsilon_k \to 0$ and $\wtil{x}^{k+1}$ is a unique optimal solution of \eqref{subproblem}, we have $\gnorm{x^{k+1} - \wtil{x}^{k+1}}{}{} \to 0$. Then, using Cauchy-Schwarz inequality,  we have $\gnorm{x^{k+1} -x^k}{}{} \le \gnorm{x^{k+1} - \wtil{x}^{k+1}}{}{} + \gnorm{\wtil{x}^{k+1}-x^k}{}{}$ and hence, $\gnorm{x^{k+1} -x^k}{}{} \to 0$. Hence, we conclude the proof.
	\end{proof}
	 Now, we show boundedness of $\tilde{\lambda}^{k+1}$. Assume, for the sake of contradiction, that $\tilde{\lambda}^{k+1}$ is unbounded. Let $\bar{x}$ be a limit point of $\{x^k\}$. Passing to a subsequence if necessary, we have $x^{k} \to \bar{x}$. Using Lemma \ref{lem:inexact-seq-conv}, we have $\wtil{x}^{k+1} \to \bar{x}$ and $x^{k+1} \to \bar{x}$. %
	 Then, we have
	 \begin{align*}
	 	\psi_{0}^{k}(x^{k+1}) + \inprod{\tilde{\lambda}^{k+1}}{\psi^{k}(x^{k+1})} &\le \psi_{0}^{k}(\wtil{x}^{k+1}) + \inprod{\tilde{\lambda}^{k+1}}{\psi^{k}(\wtil{x}^{k+1})} + \epsilon_{k}\\ 
	 	&\le \psi_{0}^{k}(x) + \inprod{\tilde{\lambda}^{k+1}}{\psi^{k}(x)} + \epsilon_{k} %
	 \end{align*}
 	Note that the above relation is comparable to \eqref{eq:lag-opt-x_k} up to an error term of $\epsilon_k$. Following the arguments in the proof of Theorem \ref{prop:lcgd:bound-dual} (Part 1, \eqref{eq:lag-opt-x_k} onwards) and noting that $\epsilon_k$ is summable, we conclude that $\{\tilde{\lambda}^{k+1}\}$ is bounded.

 	Now, we prove limit point of $\{x^k\}$ is a KKT point. Since $\Lcal_{k}(x^{k+1}, \tilde{\lambda}^{k+1}) \le \Lcal_{k}(\wtil{x}^{k+1}, \tilde{\lambda}^{k+1}) + \epsilon_k$, we rewrite \eqref{eq:middle-06} as 
 	\begin{align}
 		& {}\big\langle\nabla f_{0}(x^{k})+\tsum_{i=1}^{m}\tilde{\lambda}_i^{k+1}\nabla f_{i}(x^{k}),x^{k+1}-x\big\rangle+\chi_{0}(x^{k+1}) - \chi_{0}(x) +\langle\tilde{\lambda}^{k+1},\chi(x^{k+1}) - \chi(x)\big\rangle \nonumber \\
 		\le & {}\big\langle\nabla f_{0}(x^{k})+\tsum_{i=1}^{m}\tilde{\lambda}_i^{k+1}\nabla f_{i}(x^{k}),\wtil{x}^{k+1}-x\big\rangle+\chi_{0}(\wtil{x}^{k+1})-\chi_{0}(x)+\langle\tilde{\lambda}^{k+1},\chi(\wtil{x}^{k+1})-\chi(x)\big\rangle + \epsilon_k\nonumber \\
 		\le & {}\tfrac{L_{0}+\langle\tilde{\lambda}^{k+1},L\rangle}{2}\big[\gnorm{x-x^{k}}{}{2}-\gnorm{\wtil{x}^{k+1}-x^{k}}{}{2}-\gnorm{x-\wtil{x}^{k+1}}{}{2}\big] + \epsilon_k.\label{eq:middle-08}
 	\end{align}
 	Let $\bar{x}$ be a limit point of sequence $\{x^k\}$. Since $\tilde{\lambda}^{k+1}$ is bounded, we assume limit point $\bar{\lambda}$. Without loss of generality, we have $x^k \to \bar{x}$ and $\tilde{\lambda}^{k+1} \to \bar{\lambda}$.  Then, in view of Lemma~\ref{lem:inexact-seq-conv}, we have $\lim_{k\raw\infty}x^{k+1} \to \bar{x}$ and $\lim_{k\raw\infty} \wtil{x}^{k+1} \to \bar{x}$. Taking limit $k \to \infty$ in \eqref{eq:middle-08}, we have
 	\begin{equation*}
 		{}\big\langle\nabla f_{0}(\bar{x})+\tsum_{i=1}^{m}\bar{\lambda}_i\nabla f_{i}(\bar{x}),\bar{x}-x\big\rangle+\chi_{0}(\bar{x}) - \chi_{0}(x) +\langle\bar{\lambda},\chi(\bar{x}) - \chi(x)\big\rangle  \le 0.
 	\end{equation*}
 	Note that the above equation matches with \eqref{eq:opt_interemediate_KKT} exactly. 
 	From here, we follow the proof of Theorem~\ref{prop:lcgd:bound-dual} (Part 2, \eqref{eq:opt_interemediate_KKT} onwards) to conclude first-order stationarity of $(\bar{x}, \bar{\lambda})$. A similar argument can show complimentary slackness. Hence, we have that  $(\bar{x}, \bar{\lambda})$ is KKT-solution and we conclude the proof.

\end{document}